\newcommand{\R}{\mathbb{R}}    
\newcommand{\C}{\mathbb{C}}
\newcommand{\Q}{\mathbb{Q}}
\newcommand{\N}{\mathbb{N}}
\newcommand{\Z}{\mathbb{Z}}
\let\swap=\phi
\let\phi=\varphi
\let\varphi=\swap
\let\swap=\epsilon
\let\epsilon=\varepsilon
\let\varepsilon=\swap
\def\dist{\qopname\relax o{Dist}}
\def\dist{\qopname\relax o{dist}}
\def\viiva #1 #2 {\mathop{\Big/}\limits_{\!\!\!{#1}}^{\>\,{#2}}}
\theoremstyle{plain}
\newtheorem{lause}[equation]{Theorem}
\newtheorem{lem}[equation]{Lemma}
\newtheorem{prop}[equation]{Proposition}
\theoremstyle{definition}
\theoremstyle{remark}
\newtheorem{huom}[equation]{Remark}
\numberwithin{equation}{section}
\begin{document}
\author{Tuomas P. Hyt\"onen \qquad Antti V. V\"ah\"akangas}
\title[The local non-homogeneous $Tb$ theorem]{The local non-homogeneous $Tb$ theorem for vector-valued functions}
\address{Department of Mathematics and Statistics, 
Gustaf H\"allstr\"omin katu 2b, FI-00014 University of Helsinki, Finland}
\email{tuomas.hytonen@helsinki.fi}
\email{antti.vahakangas@helsinki.fi}
\thanks{
T.P.H. was supported by the Academy of Finland, grants 1130166, 1133264 and 1218148, and by the European Union through the ERC Starting grant `Analytic--probabilistic methods for borderline singular integrals'.
A.V.V. was supported by the Academy of Finland, grants 75166001 and 1134757.
}
\subjclass[2010]{42B20 (Primary); 42B25, 46E40, 60G46 (Secondary)}
\keywords{Accretive system, Banach function lattice, Calder\'on--Zygmund operator, non-doubling measure, singular integral, UMD space}

\maketitle

\begin{abstract}
We extend the local non-homogeneous $Tb$ theorem of Nazarov, Treil and Volberg to the setting of singular integrals with operator-valued kernel that act on vector-valued functions. Here, `vector-valued' means `taking values in a function lattice with the UMD (unconditional martingale differences) property'. A similar extension (but for general UMD spaces rather than UMD lattices) of Nazarov--Treil--Volberg's global non-homogeneous $Tb$ theorem was achieved earlier by the first author, and it has found applications in the work of Mayboroda and Volberg on square-functions and rectifiability. Our local version requires several elaborations of the previous techniques, and raises new questions about the limits of the vector-valued theory.
\end{abstract}

\setcounter{tocdepth}{1}
\tableofcontents





\section{Introduction}

\subsection*{Background and motivation}
This paper is a continuation of \cite{hytonen1}, where the first author extended the `global' non-homogeneous $Tb$ theorem of Nazarov, Treil and Volberg \cite{NTV:Tb} to $L^p$ spaces of vector-valued functions. The goal of the paper at hand is to obtain a similar extension for the `local' version of Nazarov, Treil and Volberg's result \cite{NTV}.

By `local' we understand that the $Tb$ conditions involve a family  (an `accretive system') of testing functions $b_Q$, one for each cube $Q$, where $b_Q$ is only required to satisfy a non-degeneracy condition on its `own' $Q$; this contrasts with the `global' $Tb$ conditions, where a single testing function $b$ should be appropriately non-degenerate over all positions and length-scales. While the two types of $Tb$ theorems are not strictly comparable, the verification of the local conditions has turned out more approachable in a number of applications.

By `vector-valued' we understand functions taking values in a possibly infinite-dimensional Banach space $X$. It is well known that the most general class of Banach spaces in which extensions of deeper results in harmonic analysis can be hoped for consists of the spaces with the $\mathrm{UMD}$ property (unconditionality of martingale differences); see \cite{Bourgain:83,Burkholder}. The quest for vector-valued extensions of theorems in classical analysis has three types of motivation:

First, by revisiting a proof in a more general framework we can often develop new insight into the original argument; in particular, the tools available in an abstract $\mathrm{UMD}$ space often lead us into discovering new martingale structure behind the classical scene. In the present case, for example, we are led to study the $L^p$ estimates for martingale difference expansions adapted to an accretive system of functions, where mainly the Hilbert space $L^2$ theory for such expansions existed so far. While the $L^p$ theory for the `globally' adapted martingale differences was developed in \cite{hytonen1}, the local setting brings several new complications, most prominently the fact that the expansion is no longer with respect to a basis of adapted Haar functions but rather with respect to an overdetermined frame.

Second, new connections between different properties of Banach spaces are revealed, when looking for the minimal conditions under which we can run a given classical analysis argument. In particular, for vector-valued functions, there appears a subtle difference between the square function estimates for the adapted martingale difference operators and for their adjoints, and we are only able to handle the latter case under the additional assumption that our Banach space is a function lattice. Whether this assumption could be eliminated from certain key inequalities raises interesting questions for further investigation.

 Finally, the extended scope of the theorem allows for wider applications. The applications of vector-valued singular integrals in general are widespread; for the vector-valued non-homogeneous $Tb$ theorem \cite{hytonen1} in particular, we mention the work of Mayboroda and Volberg \cite{MayVol} on square functions and rectifiability, see \cite[p.~1056]{MayVol}.

Now, let us turn to a more detailed discussion of the objects of this paper.

\subsection*{Calder\'on--Zygmund operators} Let $\mu$ be a compactly supported Borel measure
on $\R^N$ which satisfies the upper bound
\begin{equation}\label{mesas}
\mu(B(x,r)) \le r^d,\qquad d\in (0,N],
\end{equation}
for any ball $B(x,r)$ of centre $x\in \R^N$ and radius $r>0$. A {\em $d$-dimensional
Calder\'on--Zygmund kernel} is a complex-valued function $K(x,y)$ of variables
$x,y\in \R^N$, $x\not=y$, such that
\begin{equation}\label{size}
|K(x,y)|\le \frac{1}{|x-y|^d}
\end{equation}
and, if $2|x-x'|\le |x-y|$, then
\begin{equation}\label{smooth}
|K(x,y)-K(x',y)|+|K(y,x)-K(y,x')|\le \frac{|x-x'|^\alpha}{|x-y|^{d+\alpha}}
\end{equation}
for some $\alpha>0$.
An operator $T$ acting on some functions is called a {\em Calder\'on--Zygmund operator} with kernel $K$ if
\begin{equation}\label{action}
Tf(x)=\int_{\R^N} K(x,y)f(y)\,d\mu(y),\qquad x\not\in \mathrm{supp}\,f.
\end{equation}

\subsection*{Testing functions}
Following \cite{NTV} we say
that a collection of functions $\{b_Q\}$ is an {\em $L^\infty$-accretive
system (supported on cubes)} if for every cube $Q$ in $\R^N$ there exists a function $b_Q$ from the system such
that
\begin{equation}\label{accre}
\mathrm{supp}\,b_Q \subset Q,\qquad \|b_Q\|_{L^\infty(\mu)}\le 1,\qquad \bigg|\int_Q b_Q\,d\mu\bigg|\ge \delta\,\mu(Q).
\end{equation}
Here the constant $\delta\in (0,1)$ is not allowed to depend on $Q$.

We say that $\{b_Q\}$ is  an $L^\infty$-accretive system for 
a Calder\'on--Zygmund operator $T$ if,
for every cube $Q$ in $\R^N$, there is a function $b_Q$ from the system such that
the conditions \eqref{accre} hold true and 
\begin{equation}\label{accreass}
\|Tb_Q\|_{L^\infty(\mu)}\le B,
\end{equation}
where $B>0$ is a constant.


\subsection*{Banach spaces}
We want to study the action of $T$ as in \eqref{action} on the Bochner space $L^p(\R^N,\mu;X)$ of functions with values in the Banach space $X$. As is well-known, even for the simplest non-trivial case where $T$ is the Hilbert transform with $d=N=1$ and $\mu$ is the Lebesgue measure, a necessary condition for the boundedness on the Bochner space is that $X$ be a $\mathrm{UMD}$ space \cite{Bourgain:83}. For the more complicated operators as described, we will need to assume some further conditions.

We will make the more restrictive assumption that $X$ is a $\mathrm{UMD}$ function lattice, i.e., $X$ is a $\mathrm{UMD}$ space whose elements are represented by functions on some measure space, and the norm of $X$ is compatible with the pointwise comparison of functions in that $|f|\leq |g|$ pointwise implies that $\|f\|_X\leq\|g\|_X$. See \cite{RdF} for more information on function lattices with the $\mathrm{UMD}$ property. We will make use of this assumption both directly, 
via Theorem \ref{haa},
and through the following consequence established by Hyt\"onen, McIntosh and Portal \cite{HMP}: such spaces satisfy the so-called RMF property, also introduced in \cite{HMP}, which means the boundedness of the so-called Rademacher maximal function from $L^p(\R^N,\mu;X)$ to $L^p(\R^N,\mu)$. A detailed study of this property can be found in Kemppainen \cite{Kemppainen}. The RMF property is used to estimate the so-called paraproducts arising in the proof of the $Tb$ theorem; for the same purpose, RMF was also assumed in an earlier version of the global non-homogeneous $Tb$ theorem \cite{hytonen1}, but it was subsequently circumvented there. In addition, we make explicit use of the lattice structure at one specific point of the proof to obtain a certain auxiliary square function estimate. We do not know about the necessity of this assumption, so it seems interesting to single out the place where we use it for possible further investigation. Note that many of the concrete $\mathrm{UMD}$ spaces appearing in harmonic analysis, like the $L^p$ and Lorentz spaces, are all lattices; others, like Sobolev spaces, can still be identified with closed subspaces of such lattices, e.g. for $\Omega\subset\C^N$, we have $W^{1,p}(\Omega)\simeq\{(f,g)\in L^p(\Omega)\times L^p(\Omega)^N:g=\nabla f\}\subset L^p(\Omega)^{N+1}\eqsim L^p(\bigcup_{i=0}^N\Omega_i)$, where the $\Omega_i$'s are disjoint copies of $\Omega$; but some other examples of $\mathrm{UMD}$ spaces like the Schatten ideals $C_p$ fall outside this class of spaces.

We are ready to formulate our main result.

\begin{lause}\label{mainth}
Suppose that $X$ is a $\mathrm{UMD}$ function lattice.
Assume that $T$ is a Calder\'on--Zygmund operator, and that there exists
two $L^\infty$-accretive systems, $b^1=\{b_Q^1\}$ for $T$ and $b^2=\{b_Q^2\}$ for $T^*$.
Then, under the qualitative a priori assumption that
$T\in\mathcal{L}(L^p(\R^N,\mu;X))$ for some $p\in (1,\infty)$,
we have the quantitative bound
\[
\|T\|_{\mathcal{L}(L^p(\R^N,\mu;X))}\le C,
\]
where the constant $C=(N,d,\alpha,\delta,B,p,X)>0$ is independent of $T$.
\end{lause}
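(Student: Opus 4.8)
My plan is to follow the Nazarov–Treil–Volberg scheme, adapting the vector-valued modifications from \cite{hytonen1} to the local setting. First I would set up the probabilistic machinery: one introduces a random dyadic lattice $\mathcal{D}=\mathcal{D}_\omega$ à la NTV, splitting cubes into \emph{good} and \emph{bad} ones, where badness means proximity to a much larger cube's boundary. The qualitative a priori bound $T\in\mathcal{L}(L^p(\mu;X))$ lets us freely expand $\langle Tf,g\rangle$ in adapted martingale differences and reduce, by the standard probabilistic argument (averaging over $\omega$ and discarding the small bad part), to estimating the bilinear form only over pairs of good cubes. For the vector-valued expansion I would use the adapted martingale difference operators $\Delta_Q^{b}$ built from the accretive system, which in the local case — as the introduction warns — form an overdetermined frame rather than a Haar basis; the key linear-algebra input is the construction of these projections on each cube $Q$ from $\{b_R:R\in\mathrm{ch}(Q)\}$ and the verification that $\sum_Q\Delta_Q^b=\mathrm{id}$ in a suitable sense, together with the $L^p(\mu;X)$ square-function bounds for $\Delta_Q^b$ and, via the lattice/RMF hypothesis, for the adjoints $(\Delta_Q^b)^*$ (this is exactly the auxiliary square function estimate flagged in the text, and invoking Theorem~\ref{haa}).

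Next I would perform the standard decomposition of the bilinear form $\langle Tf,g\rangle=\sum_{Q,R\text{ good}}\langle T\Delta_Q^{b^1}f,\Delta_R^{b^2}g\rangle$ into the familiar pieces according to the relative size and position of $Q$ and $R$: (i) the \emph{long-range / separated} terms, handled by the kernel size and smoothness bounds \eqref{size}, \eqref{smooth} together with the measure bound \eqref{mesas} and the goodness (which prevents a small good cube from being close to the boundary of a comparable one); (ii) the \emph{diagonal} terms where $Q,R$ have comparable size and are close, summed by Cauchy–Schwarz and the square-function estimates; and (iii) the \emph{paraproduct} terms, where one of the cubes is deeply nested in the other — here the accretivity \eqref{accre} and the testing condition \eqref{accreass} enter, producing a paraproduct operator that is controlled by the RMF property of the UMD lattice $X$ exactly as in \cite{hytonen1,HMP}. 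Each of these reductions is by now routine in the scalar local theory; the point is to carry the UMD-valued estimates through, replacing $L^2$ orthogonality by $L^p$ randomized (Rademacher) square functions and Khintchine–Kahane.

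The main obstacle, as the introduction itself signals, is the frame (rather than basis) nature of the locally adapted martingale differences and the resulting asymmetry between the square function estimates for $\Delta_Q^b$ and for $(\Delta_Q^b)^*$. Concretely: in the global case each $\Delta_Q^b$ is a genuine projection onto a span of adapted Haar functions, so duality gives both directions for free; in the local case the defining $b_Q$ may degenerate off its own cube, the natural ``adapted Haar system'' on $Q$ is overdetermined, and one must instead build $\Delta_Q^b$ as an honest (but non-orthogonal) projection and separately prove the reverse square-function inequality $\big\|\big(\sum_Q|(\Delta_Q^b)^* g|^2\big)^{1/2}\big\|_{L^p(\mu)}\lesssim\|g\|_{L^p(\mu;X)}$ — this is precisely where the lattice structure of $X$ is used (via Theorem~\ref{haa}), since the argument relies on pointwise lattice operations that are unavailable in a general UMD space. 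Once both square-function bounds are in hand, the rest of the proof is a bookkeeping exercise: assemble (i)–(iii), sum the geometric series in the scale gap using the extra decay from $\alpha>0$ and goodness, take expectation over the random lattice, and absorb the $\varepsilon$-fraction of bad cubes into the a priori norm, yielding the claimed bound $C=C(N,d,\alpha,\delta,B,p,X)$ independent of $T$.
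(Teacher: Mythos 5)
Your high-level plan agrees with the paper's strategy: random dyadic lattices, adapted martingale differences $D_Q^a$, the good/bad split with absorption into the a priori bound, the square function estimate for $D_Q^a$ and the dual estimate for $(D_Q^a)^*$ via the lattice hypothesis (Theorem~\ref{haa}), and the separation into well-separated, deeply nested, and comparable pairs of cubes. But there is a genuine gap in your treatment of the comparable (``diagonal'') cubes. You propose to sum them ``by Cauchy--Schwarz and the square-function estimates,'' and this cannot work. In the non-doubling setting --- already in Nazarov--Treil--Volberg's scalar argument, and a fortiori here --- the nearby comparable cubes produce matrix coefficients $\langle 1_{R_j}\psi_{R,j}, T(1_{Q_i}\phi_{Q,i})\rangle$ for which there is no useful size bound beyond $\|T\|_{\mathcal{L}(L^p(\mu;X))}$ itself. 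The paper therefore decomposes each such term into boundary pieces (introducing two small parameters $\eta,\epsilon$ and a \emph{third} independent random lattice $\mathcal{D}^\star$, Sections~\ref{comparable}--\ref{intersecting}) whose contribution is $(C(r)\eta^{1/t}+C(r,\eta)\epsilon^{1/t})\|T\|_{\mathcal{L}(L^p(\mu;X))}$ and is later absorbed, plus a non-boundary piece (Lemma~\ref{talkaa}) whose estimate uses the $L^\infty$-accretive system for $T^*$ --- so the dual testing condition is needed not only in the paraproducts, as you claim, but also here. Without this machinery the argument does not close; this is the most technical part of the entire proof, not a ``bookkeeping exercise.''

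Two secondary inaccuracies. First, the adapted projections $D_Q^a$ are not built from $\{b_R : R\in\mathrm{ch}(Q)\}$; the paper runs a stopping-time construction producing layers $\mathcal{D}^j$, assigns each $Q$ the smallest layer cube $Q^a\supset Q$, and uses $b_{Q^a}$ throughout. The geometric decay $\mu(\cup_{S\in\mathcal{D}^{M+j}:S\subsetneq Q}S)\le(1-\tau)^{j-1}\mu(Q)$ (Lemma~\ref{basicsum}) is what makes the sequences $\{\chi_k\}$ and $\{d_k\}$ Carleson and is essential to Theorems~\ref{nests} and~\ref{haa}; without specifying this the frame property and the square-function theorems have no content. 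Second, saying the paraproduct is controlled by RMF ``exactly as in \cite{hytonen1}'' is misleading: in the final version of the global theorem \cite{hytonen1} the RMF hypothesis was circumvented, whereas here it is invoked directly through the Carleson embedding (Theorem~\ref{carlesonRMF} and Lemma~\ref{genemb}).
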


Having stated this, we should admit two things. First, this result remains valid for general $\mathrm{UMD}$ spaces. Second, it follows relatively easily, even in the just mentioned more general form, from a combination of the results of \cite{hytonen1}, \cite{NTV} and \cite{NTV:Tb}. Namely, Nazarov, Treil and Volberg's local $Tb$ theorem \cite{NTV} states that under the mentioned assumptions we have the scalar valued bound $\|T\|_{\mathcal{L}(L^2(\R^N,\mu))}\le C$. Then, the converse direction of Nazarov, Treil and Volberg's global $Tb$ theorem \cite{NTV:Tb} tells that $T$ satisfies the global $Tb$ (or even $T1$ conditions) $\|T1\|_{\mathrm{BMO}(\R^N,\mu)}+\|T^*1\|_{\mathrm{BMO}(\R^N,\mu)}\leq C$, where $\mathrm{BMO}(\R^N,\mu)$ is an appropriate bounded mean oscillation space adapted to the non-homogeneous situation. Finally, the vector-valued global $Tb$ theorem of \cite{hytonen1} completes the argument, as we have just checked that its assumptions are satisfied.

What, then, is the point of struggling for a weaker statement, when a stronger one is available for free? Sure, we can still develop some new insight into the proof technique of \cite{NTV}, but there is also a more substantial reason on the level of actual results. Namely, the proof of Theorem~\ref{mainth} that we give immediately yields a further generalization to the case of operator-valued kernels $K$, i.e., for 
 kernels $K(x,y)\in\mathcal{L}(X)$.
 Then the associated $\mathcal{L}(X)$-valued operator
  $T$ in \eqref{action} is
genuinely an object of the vector-valued realm, and the above shortcut via the scalar-valued theory is no longer available. 

\subsection*{Rademacher--Calder\'on--Zygmund operators}
Let us consider an operator $T$ given by the same formula \eqref{action} as before, but with $K(x,y)\in\mathcal{L}(X)$. The kernel bounds above will have to be replaced by certain operator-theoretic analogues involving the notion of $\mathcal{R}$-boundedness (see definition in \eqref{r_bounded}), and we refer the reader to Section~\ref{oper_kernels} for a precise statement. We then say that $K$ is a $d$-dimensional Rademacher--Calder\'on--Zygmund kernel, and that $T$ is an $\mathcal{L}(X)$-valued Rademacher--Calderon--Zygmund operator.
For further details, we refer to Section \ref{oper_kernels}.

The testing functions are now as follows.
We  say that $\{b_Q^1\}$ is an
$L^\infty$-accretive system for
an $\mathcal{L}(X)$-valued Rademacher--Calder\'on--Zygmund operator $T$ if, for every cube $Q$ in $\R^N$,
there is a function $b_Q^1$ from the system such that the conditions \eqref{accre} 
hold true for $b_Q=b_Q^1$ and $Tb_Q^1:\R^N\to Y$ satisfies
$||Tb_Q^1||_{L^\infty(\R^N,\mu;Y)}\le B$,
where $Y\subset\mathcal{L}(X)$ is an
$\mathrm{UMD}$ function lattice 
which has cotype $2$ (see definition in \eqref{cotype_s}), and 
whose unit ball $\bar B_{Y}$ is an $\mathcal{R}$-bounded subset of $\mathcal{L}(X)$.
In a similar manner, we say that $\{b_Q^2\}$ is an $L^\infty$-accretive system is for  $T^*$ 
if, for every cube $Q$ in $\R^N$,
there is a function $b_Q^2$ from the system such that the conditions \eqref{accre} 
hold true for $b_Q=b_Q^2$ and $T^*b_Q^2:\R^N\to Z$ satisfies
$||T^*b_Q^2||_{L^\infty(\R^N,\mu;Z)}\le B$,
where $Z\subset \mathcal{L}(X^*)$ is an
$\mathrm{UMD}$ function lattice 
which has cotype $2$ and 
whose unit ball $\bar B_{Z}$ is an $\mathcal{R}$-bounded subset of $\mathcal{L}(X^*)$.

The following local $Tb$ theorem for operator-valued kernels is obtained by employing the entire power of the proof of Theorem~\ref{mainth}, with minor necessary adjustments. Unlike Theorem~\ref{mainth}, it cannot be obtained by a shortcut from the scalar-valued $Tb$ theorem of Nazarov, Treil and Volberg \cite{NTV}.

\begin{lause}\label{mainth_operator}
Suppose that $X$ is a $\mathrm{UMD}$ function lattice.
Assume that  $T$ is an $\mathcal{L}(X)$-valued Rademacher--Calder\'on--Zygmund operator, and that there exists two $L^\infty$-accretive systems,
 $b^1=\{b_Q^1\}$ for $T$ and
$b^2=\{b_Q^2\}$ for  $T^*$.
Then, under the qualitative a priori assumption that
$T\in\mathcal{L}(L^p(\R^N,\mu;X))$ for some $p\in (1,\infty)$,
we have the quantitative bound
\[
\|T\|_{\mathcal{L}(L^p(\R^N,\mu;X))}\le C,
\]
where the constant $C=C(N,d,\alpha,\delta,B,p,X,Y,Z)>0$ is independent of $T$.\end{lause}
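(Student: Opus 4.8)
The plan is to run the proof of Theorem~\ref{mainth} essentially verbatim and observe that it survives the passage to operator-valued kernels after the adjustments indicated below. The a priori hypothesis $T\in\mathcal{L}(L^p(\R^N,\mu;X))$ is used only to guarantee that the bilinear pairings $\langle Tf,g\rangle$ that occur (for $f$ and $g$ in a suitable dense class of finite linear combinations of normalized bumps, valued in $X$ and $X^*$ respectively) are finite and may be freely manipulated; a standard absorption argument then upgrades the qualitative bound to the quantitative one.

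For the scalar-kernel statement Theorem~\ref{mainth} itself one follows the Nazarov--Treil--Volberg local $Tb$ scheme \cite{NTV} in the vector-valued form of \cite{hytonen1}, but with the modifications forced by the local, accretive-system hypotheses. First, introduce the probability space of randomly shifted and dilated dyadic grids $\mathcal{D}=\mathcal{D}(\omega)$ and, averaging over $\omega$, reduce to pairs of \emph{good} cubes; the bad cubes (those lying too near the boundary of a much larger cube) have small probability, and their contribution is bounded, via the a priori boundedness, by a small multiple of $\|T\|$ that is absorbed. Next, expand $f$ by the martingale-difference-type operators $\Delta_Q^{b^1}$ adapted to $b^1$ and $g$ by the operators $\Delta_R^{b^2}$ adapted to $b^2$. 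In the local setting these project onto an \emph{overdetermined adapted frame} rather than onto an adapted Haar basis, and the needed $L^p(\mu;X)$ and $L^{p'}(\mu;X^*)$ square-function bounds for these operators \emph{and for their adjoints} are supplied by the adapted martingale theory in $\mathrm{UMD}$ function lattices, i.e.\ Theorem~\ref{haa} together with the RMF property of \cite{HMP}. The pairing becomes a double sum $\sum_{Q,R}\langle T\Delta_Q^{b^1}f,\Delta_R^{b^2}g\rangle$, which one splits according to the relative size and separation of $Q$ and $R$: on the separated pairs and the nested pairs lying well inside one another one sums a geometric series in the scales using the kernel bounds \eqref{size}--\eqref{smooth}; on the diagonal and near-diagonal pairs one uses the testing conditions $\|Tb_Q^1\|_\infty\le B$ and $\|T^*b_Q^2\|_\infty\le B$; and on the remaining \emph{paraproduct} pairs one invokes the boundedness of the associated paraproduct, which is exactly where the RMF property enters. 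Combining these estimates with the square-function bounds yields $\|T\|\le C$.

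For the Rademacher--Calder\'on--Zygmund kernel of Theorem~\ref{mainth_operator} the changes are the anticipated ones. Every estimate in which the scalar argument invoked the pointwise kernel bounds \eqref{size}--\eqref{smooth} is replaced by the corresponding $\mathcal{R}$-boundedness statement for the rescaled kernel family (Section~\ref{oper_kernels}), used together with Kahane's contraction principle and the square functions already at hand; this is precisely the substitution of $\mathcal{R}$-bounds for absolute values that the $\mathrm{UMD}$ framework is built to permit. In the diagonal and paraproduct terms the data $Tb_Q^1$ now take values in $Y\subset\mathcal{L}(X)$: the cotype $2$ and $\mathrm{UMD}$-lattice hypotheses on $Y$ provide the auxiliary square-function estimate for these $Y$-valued functions, while the $\mathcal{R}$-boundedness of the unit ball $\bar B_Y$ converts the resulting $Y$-norm bounds into $\mathcal{R}$-bounds in $\mathcal{L}(X)$ that can be tested against the $X$-valued input; the symmetric statements for $Z\subset\mathcal{L}(X^*)$ handle the $T^*$ side. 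No genuinely new phenomenon surfaces, and the constant still depends only on $N,d,\alpha,\delta,B,p,X,Y,Z$.

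The main difficulty is not this operator-valued bookkeeping but the square-function estimate for the \emph{adjoints} of the locally adapted martingale differences: because the adapted frame is overdetermined, the naive duality between $\Delta_Q^{b}$ and its adjoint is lost, and it is here that one genuinely needs $X$ to be a function lattice --- through an auxiliary pointwise (lattice) square-function inequality --- rather than merely a $\mathrm{UMD}$ space. The paraproduct bounds (relying on RMF) and the careful organization of the many pair types in the splitting are the other substantial components; once these are secured, passing to operator-valued kernels and to the cotype-$2$, $\mathcal{R}$-bounded lattices $Y$ and $Z$ is routine.
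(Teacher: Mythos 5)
Your proposal follows essentially the same route as the paper: run the proof of Theorem~\ref{mainth} (adapted martingale differences in $\mathrm{UMD}$ lattices, random dyadic grids and good/bad cubes, splitting into separated/nested/comparable pairs with paraproducts and RMF, absorbing the small bad and boundary contributions via the a priori bound), then replace each place where scalar kernel bounds are pulled out of a randomized series with the corresponding $\mathcal{R}$-boundedness hypothesis, and use the cotype~$2$ $\mathrm{UMD}$-lattice structure and $\mathcal{R}$-bounded unit balls of $Y\subset\mathcal{L}(X)$ and $Z\subset\mathcal{L}(X^*)$ to handle the operator-valued testing data. This matches the paper's Section~\ref{oper_kernels}, including your observation that the genuinely new difficulty is the adjoint square-function bound (Theorem~\ref{haa}), which is needed already in the scalar-kernel case.
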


Concerning the interest and potential applicability of such a result over the simpler Theorem~\ref{mainth}, we make the following remarks. First, in applying the global vector-valued $Tb$ theorem from \cite{hytonen1}, Mayboroda and Volberg \cite[p.~1056]{MayVol} specifically use the operator-kernel version \cite[$Tb$ theorem 4]{hytonen1}. Second, in the mentioned application, all the Banach spaces are function lattices, so that this assumption is not too restrictive for such purposes. We also recall, although this is not directly connected to the non-homogeneous issues at hand, that the theory of singular integrals with operator-kernel has been a necessary strengthening of the vector-valued scalar-kernel theory in applications like the maximal regularity question for partial differential equations; see in particular the influential paper \cite{Weis}.

\subsection*{Organization of the paper}
In order to keep the notation somewhat lighter, we will concentrate in the main body of the paper on the proof of Theorem~\ref{mainth} about scalar-valued kernels. Mostly, however, this argument goes through without trouble for the operator-kernel version of Theorem~\ref{mainth_operator} as well, and we only explain a few necessary modifications in the final Section~\ref{oper_kernels}. After collecting some preliminaries in Section~\ref{preparations}, the proof of Theorem~\ref{mainth} is presented in Sections \ref{decfun} through \ref{synthesis}:

Sections \ref{decfun} and \ref{normest} present a detailed analysis, and related inequalities, of functions $f\in L^p(\R^N,\mu;X)$ and $g\in L^q(\R^N,\mu;X^*)$ in terms of appropriate adapted martingale difference operators $D^a_Q$. In Section~\ref{dec_cald}, these expansions of functions then give a representation of the operator $T$ in terms of matrix elements $T_{RQ}$, where $R$ and $Q$ range over dyadic cubes, and the rest of the proof is concerned with the estimation of different parts of this matrix.

Section \ref{decoul} presents a general martingale decoupling inequality --- our best substitute for orthogonality estimates in $L^2$ ---, which will be used several times during the proof. 
The parts of the matrix $T_{RQ}$ leading to different types of treatment are as follows: the separated cubes (handled in Section~\ref{separated}), the deeply nested cubes (Sections \ref{prepare} through \ref{paraproducts}, where the last one deals with the paraproduct part of the operator), and the near-by cubes of comparable size (Sections \ref{comparable} and \ref{intersecting}). Finally, Section~\ref{synthesis} collects the different estimates together, and also takes care of the remaining `bad' cubes which were excluded from the previous cases.

\subsection*{Acknowledgements}
This paper benefited from the interaction with a simultaneously on-going (but earlier completed) project of the first author with Henri Martikainen; see \cite{hm}. We would like to thank him for this fruitful exchange of ideas.

\section{Preparations}\label{preparations}

\subsection*{Notation}
We denote $\N=\{1,2,\ldots\}$ and $\N_0=\{0,1,\ldots\}$.
All distances in $\R^N$ are measured in terms of the 
supremum norm, defined by $|x|=||x||_\infty$ for $x\in \R^N$. 
Accordingly, we henceforth write $B(x,t)$ for the $\ell^{\infty}$  ball in $\R^N$ centered at $x$ with radius $t>0$. (Note that the main assumption~\eqref{mesas} is still true, possibly after scaling $\mu$ by a constant.)
We assume that $K$ is a $d$-dimensional Calder\'on--Zygmund kernel in $\R^N$, satisfying
both \eqref{size} and \eqref{smooth} for some $\alpha>0$.
In the sequel $r>0$ is a (large) integer which is  to be quantified later. We fix
a constant $\gamma$,
\begin{equation}\label{gdef}
\gamma\in (0,1),\quad d\gamma/(1-\gamma)\le \alpha/4,\quad  \gamma\le \frac{\alpha}{2(d+\alpha)}.
\end{equation}
We 
denote
\[
\theta(j)=\Big\lceil\frac{\gamma j+r}{1-\gamma}\Big\rceil\quad \text{ for }j=0,1,2,\ldots.
\]

A {\em cube} $Q$ in $\R^N$ has sides parallel to the coordinate axes, and
its side length is denoted by $\ell(Q)$.
If $Q,R\subset \R^N$ are cubes, 
their {\em long distance} $D(Q,R)$ 
is defined by
$D(Q,R)=\ell(Q)+\dist(Q,R)+\ell(R)$.

\subsection*{$\mathcal{R}$-boundedness}
Let $(\epsilon_k)_{k\in\Z}$ be a sequence of Rademacher functions, i.e.,
a sequence of independent random variables attaining values $\pm 1$
with an equal probability $\mathbf{P}(\epsilon_k=-1)=\mathbf{P}(\epsilon_k=1)=1/2$.
By $\Omega$ we denote the probability
space supporting the distribution of $(\epsilon_k)_{k\in\Z}$. The Khintchine--Kahane inequality says that
\begin{equation*}
  \bigg\|\sum_{k=1}^n \epsilon_k \xi_k\bigg\|_{L^p(\Omega;X)}
  \eqsim\bigg\|\sum_{k=1}^n \epsilon_k \xi_k\bigg\|_{L^2(\Omega;X)}
\end{equation*}
for all $p\in(0,\infty)$. For $X=\C$, this is called just Khintchine's inequality, and the right hand side can be written as the quadratic expression $\big(\sum_{k=1}^n|\xi_k|^2\big)^{1/2}$. Because of this, inequalities for the random series involving the $\epsilon_k$ are often referred to as `square-function' estimates even in the vector-valued case, even if no squares explicitly appear.

We recall that an operator family $\mathcal{T}\subset \mathcal{L}(X_1,X_2)$ is
called {\em Rademacher-bounded}, or $\mathcal{R}$-bounded, if
there is a constant $c$ such that for all $n\in \N$, all $\xi_1,\ldots,\xi_n\in X_1$ and
all $T_1,\ldots,T_n\in \mathcal{T}$,
\begin{equation}\label{r_bounded}
\bigg\|\sum_{k=1}^n \epsilon_k T_k\xi_k\bigg\|_{L^2(\Omega;X_2)}
\le c\bigg\|\sum_{k=1}^n \epsilon_k \xi_k\bigg\|_{L^2(\Omega;X_1)}.
\end{equation}
Denote the smallest admissible $c$ by $\mathcal{R}(\mathcal{T})$.

We will often use the following {\em Stein's inequality} (more precisely, its vector-valued extension due to Bourgain \cite{bourgain}), which says that an increasing sequence of conditional expectations $E_k$ is $\mathcal{R}$-bounded on $L^p(\R^n,\mu;X)$ if $X$ is a UMD space:
\begin{equation*}
\bigg\|\sum_{k=1}^n \epsilon_k E_k f_k\bigg\|_{L^p(\Omega\times\R^N;X)}
\le c\bigg\|\sum_{k=1}^n \epsilon_k f_k\bigg\|_{L^p(\Omega\times\R^N;X)}.
\end{equation*}

A different condition arises by requiring the pointwise (in $x\in\R^N$) $\mathcal{R}$-boudedness of the sequence of vectors $E_k f(x)\in X\simeq\mathcal{L}(\C,X)$, where the last identification is the obvious one: $\xi\in X$ is identified with the operator $\lambda\in\C\mapsto \lambda\xi\in X$. We denote
\begin{equation*}
  M_R f(x):=\mathcal{R}(\{E_k f(x):k\in\Z\}),
\end{equation*}
and say that $X$ has the {\em RMF (Rademacher maximal function) property}, if $M_R:L^p(\R^N,\mu;X)\to L^p(\R^N,\mu)$ boundedly for some (and then all) $p\in(1,\infty)$. This notion was introduced in \cite{HMP}; see \cite{HytKem,HMP,Kemppainen} for more information.

\subsection*{Cotype of a Banach space}
A Banach space $X$ is said to have cotype $s\in[2,\infty)$, i.e., there is a constant $C>0$ such
that for all sequences $(x_j)_{j=1}^n$ in $X$ we have
\begin{equation}\label{cotype_s}
\bigg(\sum_{j=1}^n |\xi_j|_X^s\bigg)^{1/s}\le
C\bigg\|\sum_{j=1}^n \epsilon_j \xi_j\bigg\|_{L^2(\Omega;X)}.
\end{equation}
This leads to an improvement
of the contraction principle, \cite[Proposition 11.4]{hytonen1}.

\begin{prop}\label{improved}
Let $X$ be a Banach space of cotype $s\in [2,\infty)$
and suppose that $\{\rho_j\,:\,j\in \N\}\subset L^t(\tilde\Omega)$ for some
$\sigma$-finite measure space $\tilde\Omega$ and $t\in (s,\infty)$.
Then
\[
\bigg\| \sum_{j=1}^\infty \epsilon_j \rho_j\xi_j\bigg\|_{L^t(\tilde\Omega;L^2(\Omega;X))}
\lesssim \sup_{j} \|\rho_j\|_{L^t(\tilde\Omega)}\cdot \bigg\|\sum_{j=1}^\infty \epsilon_j\xi_j\bigg\|_{L^2(\Omega;X)}
\]
if $\{\xi_j\,:\,j\in \N\}\subset X$.
\end{prop}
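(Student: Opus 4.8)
The plan is to linearise the exponents by Kahane--Khintchine, isolate a cotype--based ``$L^{s}$-version'' of the inequality, and then upgrade it to the $L^{t}$-version by Stein's complex interpolation, the strict gap $t>s$ being precisely what the interpolation consumes.

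First I would normalise $\sup_j\|\rho_j\|_{L^t(\tilde\Omega)}=1$ and reduce to finitely many $j$ (the constant must not depend on their number). By Kahane--Khintchine the inner $L^2(\Omega;X)$-norm may be replaced throughout by the $L^t(\Omega;X)$-norm; after Fubini the left-hand side is then comparable to $\bigl\|\sum_j\epsilon_j\rho_j\xi_j\bigr\|_{L^t(\tilde\Omega\times\Omega;X)}$ and the right-hand side to $\bigl\|\sum_j\epsilon_j\xi_j\bigr\|_{L^t(\Omega;X)}$. Thus the claim becomes: the linear multiplier map $S\colon \sum_j\epsilon_j\xi_j\mapsto \sum_j\epsilon_j\rho_j\xi_j$ is bounded, with norm $\lesssim\sup_j\|\rho_j\|_{L^t(\tilde\Omega)}$, from the space of finite Rademacher sums in $L^t(\Omega;X)$ into $L^t\bigl(\tilde\Omega;L^t(\Omega;X)\bigr)$.

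Next I would record two endpoints. The $L^{\infty}$-endpoint is just the contraction principle, giving $\|S\|\le\sup_{j}\|\rho_j\|_{L^\infty(\tilde\Omega)}$ with target $L^\infty(\tilde\Omega;\,\cdot\,)$. The $L^{s}$-endpoint is
\[
\Bigl\|\sum_j\epsilon_j\sigma_j\xi_j\Bigr\|_{L^s(\tilde\Omega;L^2(\Omega;X))}\lesssim \sup_j\|\sigma_j\|_{L^s(\tilde\Omega)}\cdot\Bigl\|\sum_j\epsilon_j\xi_j\Bigr\|_{L^2(\Omega;X)},
\]
and this is the one point where the cotype hypothesis enters: one works with $W:=L^s(\tilde\Omega;X)$, which inherits cotype $s$ from $X$ because the integrability exponent equals the cotype, uses the Fubini identification $L^s(\tilde\Omega\times\Omega;X)\cong L^s(\Omega;W)$, and feeds in the cotype inequality, which controls $\bigl(\|\xi_j\|_X\bigr)_j$ in $\ell^s$ by the Rademacher sum. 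With these two endpoints I would apply Stein's interpolation theorem along the analytic family $S_z\colon\sum_j\epsilon_j\xi_j\mapsto\sum_j\epsilon_j\rho_j^{(t/s)(1-z)}\xi_j$ (taking $\rho_j\ge 0$, and otherwise splitting into real and imaginary parts): on $\operatorname{Re}z=1$ one has $S_1=\mathrm{id}$ with the $L^\infty$-bound, on $\operatorname{Re}z=0$ the integrand is a unimodular twist of $\rho_j^{t/s}$, whose $L^s$-norm equals $\|\rho_j\|_{L^t}^{t/s}$, so the $L^s$-endpoint applies uniformly, and at $z=\theta=(t-s)/t$ one has $S_\theta=S$ with target $\bigl[L^s(\tilde\Omega;\,\cdot\,),L^\infty(\tilde\Omega;\,\cdot\,)\bigr]_\theta=L^t(\tilde\Omega;\,\cdot\,)$; interpolating the norms yields $\|S\|\lesssim 1=\sup_j\|\rho_j\|_{L^t(\tilde\Omega)}$, which is the assertion.

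The substantive step, and the one I expect to be the main obstacle, is the $L^{s}$-endpoint above: neither elementary tool suffices for it alone. The contraction principle would require bounding $\int_{\tilde\Omega}\sup_j|\sigma_j|^s$ by $\sup_j\|\sigma_j\|_{L^s}^s$, which is false, while the triangle inequality in $X$ produces an $\ell^1$-quantity in $\bigl(\|\xi_j\|_X\bigr)_j$ rather than the available $\ell^s$-quantity; so the cotype inequality has to be interwoven with the Fubini structure and the exponent $s$ with some care. This is exactly the content of \cite[Proposition~11.4]{hytonen1}, from which the present statement is quoted, and I would invoke that proof for the details.
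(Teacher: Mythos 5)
The first thing to note is that the paper contains no proof of this proposition at all: it is quoted directly from \cite[Proposition~11.4]{hytonen1} (``This leads to an improvement of the contraction principle, [Proposition 11.4]''). Your proposal, by contrast, tries to manufacture a proof by interpolation, and this is where a genuine gap sits. The load-bearing step of your scheme is the claimed $L^s$-endpoint
$\bigl\|\sum_j\epsilon_j\sigma_j\xi_j\bigr\|_{L^s(\tilde\Omega;L^2(\Omega;X))}\lesssim\sup_j\|\sigma_j\|_{L^s(\tilde\Omega)}\bigl\|\sum_j\epsilon_j\xi_j\bigr\|_{L^2(\Omega;X)}$.
Your sketch for it does not work: setting $W=L^s(\tilde\Omega;X)$ (which indeed inherits cotype $s$) and ``feeding in the cotype inequality'' only yields the lower bound $(\sum_j\|w_j\|_W^s)^{1/s}\lesssim\|\sum_j\epsilon_j w_j\|_{L^2(\Omega;W)}$ for Rademacher sums, whereas the chain you need is the reverse, upper ``type-$s$'' estimate $\|\sum_j\epsilon_j w_j\|_{L^2(\Omega;W)}\lesssim(\sum_j\|w_j\|_W^s)^{1/s}$ applied to $w_j=\sigma_j\otimes\xi_j$; that estimate is false for $s>2$ (take all $w_j$ equal: the left side is $\sqrt N\,\|w\|$, the right side $N^{1/s}\|w\|$), and for $s=2$ it would force $W$ to have type $2$. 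Moreover, your appeal to the literature for this step is both circular and misattributed: \cite[Proposition~11.4]{hytonen1} \emph{is} the statement being proved, with the hypothesis $t\in(s,\infty)$; it is not the $t=s$ endpoint, and the strict gap $t>s$ that your interpolation is designed to ``consume'' is precisely the hypothesis under which the known proof operates. So the endpoint on which your argument rests is neither proved by your sketch nor available from the cited source, and its validity for general cotype-$s$ spaces is doubtful.

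The surrounding machinery is fine as bookkeeping (Kahane--Khintchine to equalize exponents, the contraction principle at the $L^\infty$ end, Stein interpolation along $S_z$ with $\theta=(t-s)/t$ and $[L^s,L^\infty]_\theta=L^t$), but it only relocates the difficulty into the unavailable endpoint. If your intention was ultimately just to quote \cite[Proposition~11.4]{hytonen1} for the whole statement, that would coincide with what the paper itself does -- but then the interpolation scaffolding adds nothing and should be dropped; as an actual proof, the proposal has a gap exactly where the cotype hypothesis and the strict inequality $t>s$ must do real work.
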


Some of the subsequent estimates are based on the fact that every $\mathrm{UMD}$ has cotype $s$ for some $s\in [2,\infty)$. This well-known fact can be seen as follows: First, one can explicitly check that the $\mathrm{UMD}$ constants of the finite-dimensional spaces $\ell^{\infty}(n)$ blow up as $n\to\infty$, and therefore a $\mathrm{UMD}$ space cannot contain the copies of these spaces uniformly. Second, the property that the spaces $\ell^{\infty}(n)$ are not uniformly contained in $X$ is equivalent to some cotype $s\in[2,\infty)$ by the celebrated Maurey--Pisier theorem.

\subsection*{Generic dyadic systems}
Let $\hat{\mathcal{D}}$ denote the standard dyadic system, consisting
of all of the cubes of the form $2^k(m+[0,1[^N)$, where $k\in \Z$ and $m\in \Z^N$.
We also denote \[\hat{\mathcal{D}}_k=\{Q\in \hat{\mathcal{D}}\,:\,\ell(Q)=2^{k}\}.\]
A generic dyadic system, parametrized by $\beta\in (\{0,1\}^N)^\Z$, 
is of the form
\[\mathcal{D}(\beta)=\bigcup_{k\in\Z} \mathcal{D}_k(\beta),\] where
\[\mathcal{D}_k(\beta)=\{\hat R+x_k(\beta)\,:\,\hat R\in \hat{\mathcal{D}}_k\},\qquad x_k(\beta)=\sum_{j<k} \beta_j 2^j.\] 
Given $Q\in\mathcal{D}(\beta)$ and $n\in\N_0$, then the expression
$Q^{(n)}$ denotes the dyadic ancestor of $Q$ of the $n$'th generation, i.e.,
it is the unique cube such that $Q\subset Q^{(n)}\in\mathcal{D}(\beta)$ 
and $\ell(Q^{(n)})=2^n \ell(Q)$.

\subsection*{Random dyadic systems}
The generic dyadic systems give rise to random dyadic systems 
by assigning the complete product probability measure $\mathbf{P}_\beta$ on the set $(\{0,1\}^N)^\Z$ so that
the coordinate functions $\beta_j$, $j\in\Z$, are independent and $\mathbf{P}_\beta[\beta_j=\eta]=2^{-N}$
if $\eta\in \{0,1\}^N$. 
We use two independent random dyadic systems, denoted by
$\mathcal{D}=\mathcal{D}(\beta)$ and $\mathcal{D}'=\mathcal{D}(\beta')$.

Let $n\in\Z$. A cube $Q\in \mathcal{D}$ is called
{\em $n$-bad} (w.r.t. $\mathcal{D}'$)  if there exists $R\in\mathcal{D}'$ such
that 
\[\ell(Q)\le 2^{-(n\vee r)}\ell(R),\qquad \dist(Q,\partial R)\le \ell(Q)^\gamma\ell(R)^{1-\gamma}.\]
If $Q$ is not $n$-bad (w.r.t. $\mathcal{D}'$) then it is {\em $n$-good} (w.r.t $\mathcal{D}'$).
The set of $n$-good cubes
in $\mathcal{D}$  is denoted by \[\mathcal{D}_{n\textrm{-good}}=\mathcal{D}_{n\textrm{-good}(\gamma,r)}.\]
The family of $n$-bad cubes in $\mathcal{D}$ is denoted by $\mathcal{D}_{n\text{-bad}}=\mathcal{D}_{n\text{-bad}(\gamma,r)}$. 
A cube $Q\in\mathcal{D}_i$ is {\em $R$-bad}, $R\in\mathcal{D}'_j$, if
$Q$ is $(j-i-1)$-bad.
A cube $Q\in\mathcal{D}$ is {\em $R$-good} if it is not $R$-bad.
The family of $R$-good cubes in $\mathcal{D}$ is denoted by $\mathcal{D}_{R\textrm{-good}}=
\mathcal{D}_{R\text{-good}(\gamma,r)}$. 
The family of $R$-bad cubes in $\mathcal{D}$ is denoted by $\mathcal{D}_{R\textrm{-bad}}=
\mathcal{D}_{R\text{-bad}(\gamma,r)}$. 

In a symmetric manner we define the $n$/$Q$-bad and $n$/$Q$-good cubes in $\mathcal{D}'$.

\begin{huom}\label{new_hyvmeas}
Assume that $Q\in\mathcal{D}$ is $\bar R$-good, where
$\bar R\in\mathcal{D}'$. Then
\[
\dist(Q,\partial R)> \ell(Q)^\gamma \ell(R)^{1-\gamma}\ge \ell(Q)^{\alpha/2(d+\alpha)}
\ell(R)^{1-\alpha/2(d+\alpha)}.
\]
for every $R\in\mathcal{D}'$ satisfying $\ell(R)\ge 2^{-1}\ell(\bar R)\vee 2^r\ell(Q)$.
The second inequality follows from the estimate $\gamma\le \alpha/2(d+\alpha)$ in \eqref{gdef}.
On the other hand, assuming that $Q\in\mathcal{D}$ is $\bar R$-bad,
where $\bar R\in \mathcal{D}'$, then 
\[
\dist(Q,\partial R)\le \ell(Q)^\gamma \ell(R)^{1-\gamma}
\]
for some $R\in\mathcal{D}'$ for which $2^{-1}\ell(\bar R)\le \ell(R)$
and $\ell(Q)\le 2^{-r}\ell(R)$.
\end{huom}

Here is a useful lemma controlling the probability of bad cubes:

\begin{lem}\label{nmeas}
Let $n\in\Z$ and $Q\in\mathcal{D}=\mathcal{D}(\beta)$ be fixed. Then
\[
\mathbf{P}_{\beta'}[Q\in\mathcal{D}_{n\text{-bad}(\gamma,r)}]\le 2N\frac{2^{-(r\vee n)\gamma}}{1-2^{-\gamma}}.
\]
\end{lem}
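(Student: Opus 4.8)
I would estimate $\mathbf{P}_{\beta'}[Q\in\mathcal{D}_{n\text{-bad}}]$ by a union bound over the dyadic generations of $\mathcal{D}'$ that can produce badness. Recall that $Q$ is $n$-bad precisely when there is a cube $R\in\mathcal{D}'$ with $\ell(R)\ge 2^{n\vee r}\ell(Q)$ and $\dist(Q,\partial R)\le \ell(Q)^\gamma\ell(R)^{1-\gamma}$. Fix the scale of $R$: write $\ell(R)=2^m\ell(Q)$ for an integer $m\ge n\vee r$. At this fixed scale there is (up to boundary ambiguities, and for $\ell(Q)$ small compared to $\ell(R)$, which is the relevant case since $m\ge r$ and $r$ is large) a unique cube $R=R_m\in\mathcal{D}'$ containing $Q$, and $\dist(Q,\partial R_m)$ is a deterministic function of $Q$ and the single translation parameter $x_m(\beta')=\sum_{j<m}\beta'_j2^j$, which modulo $\ell(R_m)=2^m\ell(Q)$ is uniformly distributed by the independence and uniform distribution of the coordinates $\beta'_j$. (In each of the $N$ coordinate directions, the distance from $Q$ to the two faces of $R_m$ perpendicular to that direction is governed by $x_m(\beta')$ mod $2^m\ell(Q)$.)

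**Main computation.** For a fixed scale $m$, the event ``$\dist(Q,\partial R_m)\le \ell(Q)^\gamma\ell(R_m)^{1-\gamma} = \ell(Q)\,2^{m(1-\gamma)}$'' asks that, in at least one of the $N$ coordinate directions, the (uniformly distributed mod $2^m\ell(Q)$) offset falls within distance $2^{m(1-\gamma)}\ell(Q)$ of one of the two relevant faces. In a single direction this is a favorable set of relative measure at most $2\cdot 2^{m(1-\gamma)}\ell(Q)/(2^m\ell(Q)) = 2\cdot 2^{-m\gamma}$, so by a further union bound over the $N$ directions the probability at scale $m$ is at most $2N\,2^{-m\gamma}$. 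Summing the geometric series over $m\ge n\vee r$,
\[
\mathbf{P}_{\beta'}[Q\in\mathcal{D}_{n\text{-bad}}]\le \sum_{m\ge n\vee r}2N\,2^{-m\gamma}=2N\,\frac{2^{-(n\vee r)\gamma}}{1-2^{-\gamma}},
\]
which is exactly the claimed bound.

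**The main obstacle.** The genuinely delicate point is making the ``at each scale the relevant translation is uniform mod the side length'' statement precise and correct: one must check that for $m\ge r$ (and $r$ chosen large) the cube $Q$ is small enough relative to scale-$m$ cubes that there is effectively a single scale-$m$ cube to worry about (so we are not double-counting or missing cases near a corner of $\mathcal{D}'$), and that the offset $x_m(\beta')$ modulo $2^m\ell(Q)$ really is uniform — this uses that $\ell(Q)\in\{2^k:k\in\Z\}$ so that $2^m\ell(Q)$ is a dyadic number and the partial sum $\sum_{j<m}\beta'_j 2^j$ reduced modulo it is uniform on the dyadic lattice of that spacing. Everything else (the per-direction estimate, the union bound over directions, the geometric summation) is routine. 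I would also remark that the same proof gives the ``$R$-bad'' and ``$Q$-bad'' variants by translation/symmetry, so no separate argument is needed there.
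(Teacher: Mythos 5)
Your approach is the right one and is exactly what the paper intends: the paper's own ``proof'' is a one-line reference to \cite[Lemma~7.1]{NTV}, and the argument there is precisely your union bound over scales $m\ge n\vee r$, the observation that at each scale the skeleton position is uniform modulo $\ell(R)=2^m\ell(Q)$, a per-direction probability estimate, a union bound over the $N$ directions, and a geometric sum. Your remarks about where the uniformity of the offset $\sum_{j<k+m}\beta'_j 2^j$ modulo $2^{k+m}$ comes from (where $\ell(Q)=2^k$), and about not needing to worry about which cube $R$ at scale $m$ realizes the small distance (since $\bigcup_R\partial R$ is the full grid skeleton), are also correct.

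One small inaccuracy in the per-scale computation: with $\dist(Q,\partial R)$ the set distance (as the paper uses it, e.g.\ in Remark~\ref{new_hyvmeas}), the favorable set of offsets in a single coordinate direction is the strip of all $t$ for which a grid hyperplane lands within distance $\epsilon=\ell(Q)^{\gamma}\ell(R)^{1-\gamma}$ of the projected interval $[a,a+\ell(Q)]$; this strip has length $2\epsilon+\ell(Q)$, not $2\epsilon$. So the per-direction per-scale probability is $\big(2\epsilon+\ell(Q)\big)/\ell(R)=2\cdot 2^{-m\gamma}+2^{-m}$ rather than $2\cdot 2^{-m\gamma}$. Since $\gamma<1$ you have $2^{-m}\le 2^{-m\gamma}$, so you still get a bound $\lesssim N\,2^{-(r\vee n)\gamma}/(1-2^{-\gamma})$ --- just with a constant like $3N$ in place of $2N$. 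This does not affect the use of the lemma (only the exponential decay in $r\vee n$ matters, e.g.\ in Section~\ref{synthesis}), but if you want to reproduce the stated constant exactly you would need either to interpret the distance as from a point of $Q$ rather than the set $Q$, or to accept a slightly larger numerical constant.
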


\begin{proof}
Just follow the proof of \cite[Lemma~7.1]{NTV} with $n\vee r$ in place of $r$.
\end{proof}

Various estimates are
conducted while keeping the parameters $\beta,\beta'\in (\{0,1\}^N\}^\Z$, and hence also the associated dyadic systems, fixed.
During these estimates, we will assume that these fixed dyadic systems satisfy the following condition:
there are (fixed) cubes $Q_0\in\mathcal{D}(\beta)$ and $R_0\in\mathcal{D}(\beta')$
for which 
\begin{equation}\label{rsubs}
\ell(Q_0)=\ell(R_0)=2^s\text{ and }\mathrm{supp}\,\mu \subset Q_0\cap R_0.
\end{equation}
From the probabilistic point-of-view this assumption is justified by the following lemma,
when applied to the compact set $K=\mathrm{supp}\,\mu$:

\begin{lem}
Let $K\subset \R^N$ be a bounded set. Denote
by $A$ the set of parameters $\sigma\in (\{0,1\}^N)^\Z$ for which
$K$ is not contained in any cube $R\in\mathcal{D}(\sigma)$.
Then $\mathbf{P}(A)=0$.
\end{lem}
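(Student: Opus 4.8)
The plan is to estimate, for each scale $k\in\Z$, the probability of the event
\[
A_k=\{\sigma:\ K\not\subset R\text{ for every }R\in\mathcal{D}_k(\sigma)\},
\]
and then to exploit the obvious inclusion $A\subset A_k$, valid for \emph{every} $k$, in order to let $k\to\infty$. Thus it suffices to prove that $\mathbf{P}(A_k)\to 0$ as $k\to\infty$.

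First I would reduce the containment problem to the one-dimensional coordinate projections. A cube of $\mathcal{D}_k(\sigma)$ is a product $\prod_{i=1}^N\big[2^km_i+x_k(\sigma)_i,\,2^k(m_i+1)+x_k(\sigma)_i\big)$ with $m\in\Z^N$, where $x_k(\sigma)_i$ is the $i$-th coordinate of the shift $x_k(\sigma)=\sum_{j<k}\sigma_j2^j$. Hence $K$ lies in some cube of $\mathcal{D}_k(\sigma)$ precisely when, for every $i$, the $i$-th coordinate projection of $K$ is contained in a single interval of the shifted grid $2^k\Z+x_k(\sigma)_i$; this can fail for a given $i$ only if some point of $2^k\Z+x_k(\sigma)_i$ separates the infimum and supremum of that projection. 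The second ingredient is the law of the random shift: since the bits $(\sigma_j)_i$ (for $j<k$, $1\le i\le N$) are independent and uniform on $\{0,1\}$, writing (for $k\ge0$) $x_k(\sigma)_i=\sum_{0\le j<k}(\sigma_j)_i2^j+\sum_{j<0}(\sigma_j)_i2^j$ exhibits $x_k(\sigma)_i$ as an independent sum of a uniform variable on $\{0,1,\ldots,2^k-1\}$ and a uniform variable on $[0,1)$ (the case $k<0$ is identical after scaling), so $x_k(\sigma)_i$ is uniformly distributed on $[0,2^k)$ and the $N$ coordinates are independent.

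Putting these together: each coordinate projection of $K$ has length at most $D:=\diam(K)$, and the grid $2^k\Z+x_k(\sigma)_i$ is a $2^k$-periodic point set whose phase is uniform over one period, so the probability that it separates the endpoints of the projection is at most $D/2^k$, the endpoint coincidences forming a null set. A union bound over $i=1,\ldots,N$ gives $\mathbf{P}(A_k)\le ND\,2^{-k}$, and therefore $\mathbf{P}(A)\le\inf_k\mathbf{P}(A_k)=0$. I do not expect a genuine obstacle here; the only points requiring a little care are the reduction to coordinate projections (together with the harmless half-open versus closed endpoint issues) and the computation that the shift is \emph{exactly} uniform over a period, while measurability of $A_k$, and hence of $A=\bigcap_{k\in\Z}A_k$, is routine since $\sigma\mapsto x_k(\sigma)$ is continuous for the product topology.
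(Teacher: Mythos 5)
Your proof is correct, and it takes a genuinely different and more elementary route than the paper's. The paper first covers $K$ by $2^N$ standard dyadic cubes $Q_1,\dots,Q_{2^N}$ of some fixed scale, observes that if no cube of $\mathcal{D}(\sigma)$ contains $K$ then for every $n\ge r$ some $Q_j$ must touch the boundary of a cube of $\mathcal{D}(\sigma)$ roughly $n$ generations larger, and hence is $n$-bad; it then appeals to the bad-cube probability estimate (Lemma~\ref{nmeas}) and a Borel--Cantelli-type argument. You instead compute directly: for each scale $k$ you observe that each coordinate shift $x_k(\sigma)_i$ is exactly uniform modulo $2^k$ and the $N$ coordinates are independent, so the chance that a grid line of step $2^k$ separates the $i$-th coordinate projection of $K$ (of diameter at most $D$) is at most $D2^{-k}$, giving $\mathbf{P}(A_k)\le ND2^{-k}\to 0$ and hence $\mathbf{P}(A)\le\inf_k\mathbf{P}(A_k)=0$. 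Your argument is self-contained and avoids the notion of good/bad cubes entirely, at the modest cost of re-deriving the exact uniform law of the translated grid (which the paper's proof of Lemma~\ref{nmeas}, following \cite{NTV}, effectively also uses but keeps hidden); the paper's argument is shorter on the page precisely because it leverages that already-proved lemma. One small remark: the paper deliberately invokes completeness of $\mathbf{P}$ to avoid discussing measurability of $A$, whereas you establish measurability directly via continuity of $\sigma\mapsto x_k(\sigma)$; both are fine, and your route is arguably cleaner since it also shows $A=\bigcap_k A_k$ is itself measurable rather than merely contained in a null set.
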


\begin{proof}
By using completeness of $\mathbf{P}$, it suffices to show that
$A$ is contained in a set of probability zero.
To this end, we use the fact
that $K$ is bounded as follows: there are dyadic cubes $Q_1,\ldots,Q_{2^N}\in\hat{\mathcal{D}}_k$
from the standard dyadic system of sufficiently large generation $k\in \Z$
such that $K\subset \cup_{j=1}^{2^N} Q_j$. Thus, if $\sigma\in A$ and  $n\in\N$, $n\ge r$, there exists an index $j\in \{1,2,\ldots,2^N\}$ and a cube $R\in\mathcal{D}_{k+n}(\sigma)$ so
that $\dist(Q_j,\partial R)=0$. Hence
$Q_j\in\hat{\mathcal{D}}_{n\textrm{-bad}(\gamma,r)}$ with respect to $\mathcal{D}(\sigma)$. We have shown that
\[
A\subset \bigcap_{n\ge r} \bigcup_{j=1}^{2^N}\{\sigma\,:\,Q_j\in\hat{\mathcal{D}}_{n\textrm{-bad}(\gamma,r)}\text{ w.r.t. }\mathcal{D}(\sigma)\}.
\]
Using Lemma \ref{nmeas}, we see that
the probability of the right hand side is zero.
\end{proof}

\subsection*{Layers of cubes}\label{layers}
Following \cite{NTV} we will define certain layers of cubes in a given dyadic system $\mathcal{D}$.
For this purpose, we fix $\beta,\beta'\in (\{0,1\}^\N)^\Z$, and
assume that $Q_0\in\mathcal{D}=\mathcal{D}(\beta)$ and $R_0\in \mathcal{D}'=\mathcal{D}(\beta')$ are
cubes such that \eqref{rsubs} holds true. 
By $s\in\Z$ we denote
a sufficiently large integer for which $Q_0\in\mathcal{D}_s$ and
$R_0\in\mathcal{D}'_s$.

Let $\mathcal{D}^0 = \{Q_0\}$ be the {\em zeroth layer of cubes}.
Assume that the layers $\mathcal{D}^0,\ldots, \mathcal{D}^{j-1}$ of cubes have been defined.
We then define the {\em $j$'th layer  of cubes}  $\mathcal{D}^j$  as follows.
If $\mathcal{D}^{j-1}=\emptyset$, we set $\mathcal{D}^j=\emptyset$. Otherwise 
we consider a cube $R\in\mathcal{D}^{j-1}$. We say
$Q\in\mathcal{D}$ is $R$-{\em maximal}, if it is the maximal cube in $\mathcal{D}$ satisfying
the conditions $Q\subsetneq R$ and
\[
\bigg|\int_{Q} b_R^1 \,d\mu\bigg|< \delta^2\,\mu(Q).
\]
Then we denote $\mathcal{D}^j = \cup_{R\in\mathcal{D}^{j-1}}\{Q\in\mathcal{D}\,:\,Q\text{ is }R\text{-maximal}\}$.
By analogy, we define the layers $(\mathcal{D}')^{j}$ for $j\ge 0$.

Assuming that  $\mathcal{D}\ni Q\subset Q_0$, we denote by $Q^a$ the smallest cube in 
$\cup_{j\ge 0}\mathcal{D}^j$ that
contains $Q$. Such a cube exists because $Q\subset Q_0\in\cup_{j\ge 0}\mathcal{D}^j$, and
it is also unique due to properties of dyadic cubes; hence $Q^a$ is well defined.
If $Q\not\subset Q_0$ then we denote $Q^a=Q_0$ for the sake of convenience.
Note that, in any case, we
have
\begin{equation}\label{a_iso}
\bigg|\int_{Q} b_{Q^a}^1 \,d\mu\bigg|\ge \delta^2\,\mu(Q).
\end{equation}
In an analogous manner, we define $R^a$ for cubes $R\in \mathcal{D}'$.

For a fixed  $j\in\N$, we have the estimate
\begin{equation}\label{basic}
\mu\bigg(\bigcup_{R\in\mathcal{D}^j:\,R\subsetneq Q}  R\bigg)\le (1-\tau)\mu(Q)\quad \text{ for }Q\in\mathcal{D}^{j-1}.
\end{equation}
A proof is in \cite[pp. 269--270]{NTV}.
Here $\tau\in (0,1)$ is a constant, depending only on $\delta_b$. 
This estimate generalizes by simple iteration as follows:

\begin{lem}\label{basicsum}
Let $Q\in\mathcal{D}$ and $Q^a\in\mathcal{D}^M$ for $M\in \N_0$. Then, for $j\ge 1$, we have the estimate
\[
\mu\bigg(\bigcup_{S\in\mathcal{D}^{M+j}:S\subsetneq Q} S\bigg)=\sum_{S\in\mathcal{D}^{M+j}:S\subsetneq Q}\mu(S)\le (1-\tau)^{j-1}\mu(Q).
\]
\end{lem}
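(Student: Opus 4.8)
The plan is to iterate the basic estimate \eqref{basic} down through the layers, starting from $Q$ and proceeding to successively deeper generations. The first observation is the disjointness statement: the cubes $S\in\mathcal{D}^{M+j}$ with $S\subsetneq Q$ are pairwise disjoint, since any two distinct dyadic cubes in a common dyadic system $\mathcal{D}$ are either disjoint or one contains the other, and two cubes in the \emph{same} layer $\mathcal{D}^{M+j}$ cannot be nested (a cube and its proper ancestor lie in different layers by construction, because the $R$-maximal cubes are strictly contained in $R$). Hence the union over such $S$ has measure equal to the sum of the individual measures, which gives the first equality in the statement for free.

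For the inequality I would argue by induction on $j\ge 1$. For the base case $j=1$: since $Q^a\in\mathcal{D}^M$ is by definition the smallest layer cube containing $Q$, and $Q\subset Q_0\in\mathcal{D}^0$, every layer cube $S\in\mathcal{D}^{M+1}$ with $S\subsetneq Q$ is in particular contained in $Q^a$, and in fact each such $S$ is $R$-maximal for a unique $R\in\mathcal{D}^M$ with $S\subsetneq R$; since $S\subsetneq Q\subset Q^a$ and layer cubes are nested consistently with $\mathcal{D}$, that parent must be $R=Q^a$ itself. Thus the $S\in\mathcal{D}^{M+1}$ with $S\subsetneq Q$ form a subfamily of the $Q^a$-maximal cubes contained in $Q$, and applying \eqref{basic} with $Q^a\in\mathcal{D}^M$ in the role of `$Q$' there, but restricting the union to those maximal cubes lying inside $Q$, gives $\mu\bigl(\bigcup_{S\in\mathcal{D}^{M+1}:S\subsetneq Q}S\bigr)\le\mu(Q)=(1-\tau)^{0}\mu(Q)$. (Here one only needs the trivial bound by $\mu(Q)$, consistent with the exponent $j-1=0$.)

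For the inductive step, suppose the estimate holds for some $j\ge 1$ and all relevant configurations. Given $S\in\mathcal{D}^{M+j+1}$ with $S\subsetneq Q$, let $S'=S^{a}$ be the smallest layer cube strictly containing $S$; then $S'\in\mathcal{D}^{M+j}$, $S'\subset Q$, and $S$ is $S'$-maximal, so by \eqref{basic} applied with $S'\in\mathcal{D}^{M+j}$ in the role of the parent,
\[
\sum_{S\in\mathcal{D}^{M+j+1}:\,S\subsetneq S'}\mu(S)=\mu\Bigl(\bigcup_{S\in\mathcal{D}^{M+j+1}:\,S\subsetneq S'}S\Bigr)\le(1-\tau)\mu(S').
\]
Summing this over all $S'\in\mathcal{D}^{M+j}$ with $S'\subsetneq Q$ (a disjoint family, by the same nesting argument as above), and noting that every $S\in\mathcal{D}^{M+j+1}$ with $S\subsetneq Q$ is counted exactly once because its parent layer cube $S'$ necessarily satisfies $S'\subsetneq Q$ (if $S'$ contained $Q$ it would contain $S$ but also, being a layer cube, would have to be one of the cubes in $\mathcal{D}^{M+j}$ below $Q$ only if $S'\subsetneq Q$; the case $S'=Q$ is impossible unless $Q$ itself is a layer cube of generation $M+j$, and the case $Q\subsetneq S'$ would force $S\subset S'$ with $S\subsetneq Q$, so $S'$ is a layer ancestor of $Q$, contradicting $S$ being the child in $\mathcal{D}^{M+j+1}$ directly under $S'$ with $S\subsetneq Q$), we obtain
\[
\sum_{S\in\mathcal{D}^{M+j+1}:\,S\subsetneq Q}\mu(S)\le(1-\tau)\sum_{S'\in\mathcal{D}^{M+j}:\,S'\subsetneq Q}\mu(S')\le(1-\tau)\cdot(1-\tau)^{j-1}\mu(Q)=(1-\tau)^{j}\mu(Q),
\]
using the induction hypothesis in the last step. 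This closes the induction.

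The only genuinely delicate point I anticipate is the bookkeeping in the inductive step: one must be careful that when summing the local estimates $\sum_{S\subsetneq S'}\mu(S)\le(1-\tau)\mu(S')$ over parents $S'$, each grandchild-or-deeper cube $S\in\mathcal{D}^{M+j+1}$ with $S\subsetneq Q$ is accounted for exactly once and that its layer parent $S'$ genuinely lies strictly inside $Q$ rather than equalling $Q$ or containing it. This is a consequence of the fact that $Q^{a}\in\mathcal{D}^{M}$ is the minimal layer cube over $Q$ together with the standard dyadic nesting dichotomy; once that is pinned down, the estimate is a routine geometric-series iteration of \eqref{basic}, exactly as in \cite[pp.~269--270]{NTV}.
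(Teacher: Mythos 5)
Your proof is correct and takes exactly the approach the paper indicates (the paper gives no explicit proof, only that the estimate ``generalizes by simple iteration'' from \eqref{basic}); your induction over layers, using the no-nesting/disjointness of cubes within a single layer and the fact that the layer parent $S'$ of any $S\in\mathcal{D}^{M+j+1}$ with $S\subsetneq Q$ must itself lie strictly inside $Q$, is precisely that iteration. One small notational quibble: by the paper's convention $S^a$ is the smallest layer cube \emph{containing} $S$, so $S^a=S$ when $S$ is itself a layer cube; the object you actually use (and describe correctly in words) is the unique $S'\in\mathcal{D}^{M+j}$ for which $S$ is $S'$-maximal, and it would be cleaner to introduce it that way rather than writing $S'=S^a$. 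The parenthetical ruling out $S'=Q$ and $Q\subsetneq S'$ is somewhat garbled as written — the cleanest argument is that either case would produce a layer cube of index $M+j\ge M+1$ containing $Q$, whose layer-$M$ ancestor would then be a cube in $\mathcal{D}^M$ strictly containing $Q^a$ while also containing $Q$, forcing it to equal $Q^a$ and giving a contradiction — but the conclusion you reach is right.
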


%

\begin{huom}\label{harva}
Lemma \ref{basicsum} yields that
$\mu$-a.e. point $x\in Q_0$ belongs to at most finitely many cubes
in the family $\cup_{j\in\N_0} \mathcal{D}^j$. 
To prove this, let us denote
$f=\sum_{j=0}^\infty\sum_{Q\in\mathcal{D}^j} \chi_Q.$
The cubes in $\mathcal{D}^j$ for $j\in\N$ are disjoint,
and they are all included in $Q_0=Q_0^a\in\mathcal{D}^0$. Using
Lemma \ref{basicsum} with $Q=Q_0$ and $M=0$, we get
\begin{equation}\label{esti}
\begin{split}
\|f\|_{L^1(Q_0)}&\le \sum_{j=0}^\infty\sum_{Q\in\mathcal{D}^j} \mu(Q)\le \mu(Q_0)\bigg(1+\sum_{j=0}^\infty(1-\tau)^j\bigg)<\infty.
\end{split}
\end{equation}
The claim follows.
\end{huom}

\subsection*{Carleson embeddings}
Let 
$\mathcal{D}$ be a generic dyadic system and let
$d_k\in L^1(\R^N,\mu;\R)$, $k\in\Z$, be a sequence of functions, and
denote
\[
\|\{d_k\}_{k\in\Z}\|_{\mathrm{Car}^p(\mathcal{D})}=\sup_{\substack{Q\in\mathcal{D}\\ \mu(Q)\not=0}} \frac{1}{\mu(Q)^{1/p}} \bigg\|1_Q \sum_{k\,:\,2^k\le \ell(Q)} \epsilon_kd_k\bigg\|_{L^p(\R^N\times\Omega,\mu\otimes\mathbf{P};\R)}.
\]
If $d_k=E_kd_k$ for all $k\in\Z$, then the Carleson norms are equivalent for all $p\in [1,\infty)$. For a proof, see Proposition 3.1 in \cite{hytonen1}.
We recall  two Carleson embedding theorems; The following result is Theorem 3.4 in \cite{hytonen1}.

\begin{lause}\label{carleson}
Let $X$ be a $\mathrm{UMD}$ space and $1<p<\infty$.
Let $\{d_k\}_{k\in\Z}\subset L^1(\R^N,\mu;\R)$ be a sequence be such that $d_k=E_kd_k$ for every $k\in\Z$.  Then
\[
\bigg\|\sum_{k\in\Z} \epsilon_kd_k E_kf\bigg\|_{L^p(\R^N\times\Omega,\mu\otimes\mathbf{P};X)}\lesssim\|\{d_k\}_{k\in\Z}\|_{\mathrm{Car}^1(\mathcal{D})}\|f\|_{L^p(\R^N,\mu;X)}
\]
for every $f\in L^p(\R^N,\mu;X)$.
\end{lause}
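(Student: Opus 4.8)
The plan is to prove this by a short argument combining an algebraic identity, Bourgain's vector-valued Stein inequality (quoted above), Khintchine's inequality, and the equivalence $\mathrm{Car}^1\eqsim\mathrm{Car}^p$ of Carleson norms from Proposition~3.1 of \cite{hytonen1}; in particular no stopping-time or principal-cube decomposition of $f$ is needed. After a routine truncation reducing to finitely many nonzero $d_k$ (and assuming $\|\{d_k\}\|_{\mathrm{Car}^1(\mathcal D)}<\infty$, as otherwise there is nothing to prove), the starting observation is that, since $d_k=E_kd_k$ is $\sigma(\mathcal D_k)$-measurable, one has the pointwise identity $d_k\,E_kf=E_k(d_kf)$, whence $\sum_k\epsilon_kd_k\,E_kf=\sum_k\epsilon_kE_k(d_kf)$.

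First I would apply Stein's inequality with $f_k:=d_kf$ to this last expression, obtaining
\[
\Big\|\sum_k\epsilon_kd_k\,E_kf\Big\|_{L^p(\R^N\times\Omega,\mu\otimes\mathbf{P};X)}
\lesssim\Big\|\sum_k\epsilon_kd_kf\Big\|_{L^p(\R^N\times\Omega,\mu\otimes\mathbf{P};X)}.
\]
On the right the vector $f(x)$ factors out of the $X$-norm, the $\epsilon_kd_k(x)$ being scalars, so by Fubini and Khintchine's inequality this norm is $\lesssim\big\|\,|f|_X\,g\,\big\|_{L^p(\mu)}\le\|g\|_{L^\infty(\mu)}\,\|f\|_{L^p(\mu;X)}$, where $g:=\big(\sum_k d_k^2\big)^{1/2}$. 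It then remains only to identify $\|g\|_{L^\infty(\mu)}$ with a Carleson norm. Writing $G_n:=\sum_{k\le n}d_k^2$, note that $G_n\uparrow g^2$ pointwise and that $G_n$ is constant on each cube $Q\in\mathcal D_n$; since the $\epsilon_k$ are orthonormal in $L^2(\Omega)$,
\[
\frac1{\mu(Q)}\Big\|1_Q\sum_{k:\,2^k\le\ell(Q)}\epsilon_kd_k\Big\|^2_{L^2(\R^N\times\Omega;\R)}
=\frac1{\mu(Q)}\int_Q G_n\,d\mu=G_n\big|_Q\qquad(Q\in\mathcal D_n),
\]
so that $\|\{d_k\}\|_{\mathrm{Car}^2(\mathcal D)}^2=\sup_{n}\sup_{Q\in\mathcal D_n}G_n|_Q=\|g\|_{L^\infty(\mu)}^2$ (by monotone convergence). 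Since $d_k=E_kd_k$, Proposition~3.1 of \cite{hytonen1} gives $\|\{d_k\}\|_{\mathrm{Car}^2(\mathcal D)}\eqsim\|\{d_k\}\|_{\mathrm{Car}^1(\mathcal D)}$, and chaining the inequalities completes the proof.

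I would expect the only point requiring care to be the last one: verifying that the low-frequency truncation built into the definition of $\|\cdot\|_{\mathrm{Car}^p}$ really upgrades to the pointwise bound $g\in L^\infty(\mu)$ with the stated constant. It is exactly this feature that makes Stein's inequality applicable so cleanly and that lets one avoid the stopping-time machinery (e.g.\ principal cubes for $\langle|f|_X\rangle$ together with the $L^p$-boundedness of the dyadic maximal function) that a first attempt would naturally suggest but which is in fact lossy here. The vector-valued nature of the statement ($X$ merely $\mathrm{UMD}$) enters only through Stein's inequality and Khintchine--Kahane, both available at that level of generality; no lattice structure or RMF property is used.
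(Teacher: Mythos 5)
There is a genuine gap, and it already occurs at the Stein's-inequality step: the reduction to $\big\|\sum_k\epsilon_kd_kf\big\|_p$ discards precisely the averaging structure that the Carleson condition encodes, and the resulting quantity cannot be controlled by $\|\{d_k\}\|_{\mathrm{Car}^1}\|f\|_p$. Concretely, take $\mu$ to be Lebesgue measure on $[0,1]$, the standard dyadic grid, and $d_{-j}:=1_{[0,2^{-j})}$ for $j\ge1$ (all other $d_k=0$); each $d_{-j}$ is constant on intervals of length $2^{-j}$, so $d_{-j}=E_{-j}d_{-j}$, and a direct computation gives $\|\{d_k\}\|_{\mathrm{Car}^2}^2=2$. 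Yet with $f=1_{[0,2^{-M})}$ one finds $\big\|\sum_k\epsilon_kd_kE_kf\big\|_{L^2(\mu\otimes\mathbf{P})}\eqsim\|f\|_2$, while the post-Stein quantity satisfies $\big\|\sum_k\epsilon_kd_kf\big\|_{L^2(\mu\otimes\mathbf{P})}\eqsim\sqrt{M}\,\|f\|_2$, which blows up as $M\to\infty$. So after Stein there is no way back.

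The auxiliary identification $\|\{d_k\}\|_{\mathrm{Car}^2}=\|g\|_{L^\infty}$ with $g=(\sum_kd_k^2)^{1/2}$ is also false, and for a reason worth isolating: your step $\frac1{\mu(Q)}\int_QG_n\,d\mu=G_n|_Q$ for $Q\in\mathcal D_n$ relies on $G_n:=\sum_{k\le n}d_k^2$ being constant on cubes in $\mathcal D_n$. But in the paper's filtration $E_kf\to f$ as $k\to-\infty$ (cubes in $\mathcal D_k$ have side $2^k$, so $\sigma(\mathcal D_k)\supset\sigma(\mathcal D_n)$ for $k\le n$): each $d_k$ with $k<n$ is $\sigma(\mathcal D_k)$-measurable, i.e.\ constant on the \emph{finer} grid, not on $\mathcal D_n$. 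In the example above $g$ is in fact unbounded ($g^2\eqsim\log_2(1/x)$ near $0$) while the $\mathrm{Car}^2$ norm is finite, so $\mathrm{Car}^2\ne\|g\|_{L^\infty}$ and the Carleson condition is strictly weaker than $g\in L^\infty$. The moral is that Theorem~\ref{carleson} cannot be reached by the chain ``Stein $\Rightarrow$ Khintchine $\Rightarrow$ H\"older against $\|g\|_\infty$'': one really does need a stopping-time/principal-cube argument that exploits averages of $\sum_{k\le n(Q)}d_k^2$ over $Q$ rather than pointwise values, which is exactly what the proof of \cite[Theorem~3.4]{hytonen1} does.
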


The following embedding result for $\mathrm{RMF}$ spaces is essentially Theorem 8.2 in \cite{HMP}, where it is stated for the Lebesgue measure; see also \cite{HytKem} for a general measure $\mu$ and an interesting converse statement.

\begin{lause}\label{carlesonRMF}
Let $X$ be an $\mathrm{RMF}$ space, $1<p<\infty$, and $\eta>0$. Assuming that
 $\{d_k\}_{k\in\Z}$ is a sequence in $L^1(\R^N,\mu;\R)$, then
\[
\bigg\|\sum_{k\in\Z} \epsilon_kd_k E_kf\bigg\|_{L^p(\R^N\times\Omega,\mu\otimes\mathbf{P};X)}\lesssim\|\{d_k\}_{k\in\Z}\|_{\mathrm{Car}^{p+\eta}(\mathcal{D})}\|f\|_{L^p(\R^N,\mu;X)}
\]
for every $f\in L^p(\R^N,\mu;X)$.
\end{lause}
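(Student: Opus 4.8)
\textbf{Proof proposal for Theorem \ref{carlesonRMF}.}

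The plan is to reduce the weighted square-function bound to the boundedness of the Rademacher maximal function together with the scalar Carleson embedding theorem. First I would pass to a local, stopping-time decomposition of the Carleson sequence. For a fixed cube $Q_0\in\mathcal{D}$ it suffices, by the usual reduction, to estimate the left-hand side with the sum restricted to $k$ with $2^k\le\ell(Q_0)$ and with $f$ supported in $Q_0$; the general case follows by decomposing $\R^N$ into a grid of such cubes and using that the $\epsilon_k$ make the pieces unconditional. Write $a_k:=d_k$ and regard $N_k:=\big(\sum_{j\le k}|a_j|\big)$; the Carleson condition $\|\{d_k\}\|_{\mathrm{Car}^{p+\eta}(\mathcal{D})}\le 1$ says exactly that $\{|a_k|\}$ is a Carleson sequence with the $(p+\eta)$-averaged normalization.

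The key step is the pointwise domination
\[
\bigg\|\sum_k \epsilon_k d_k E_k f(x)\bigg\|_{L^2(\Omega;X)}\le M_R f(x)\cdot\bigg\|\sum_k \epsilon_k d_k(x)\bigg\|_{L^2(\Omega;\R)},
\]
which I would derive from the very definition of $\mathcal{R}$-boundedness: the numbers $d_k(x)\in\C\simeq\mathcal{L}(\C,X)$ applied to the scalars, no — rather, one applies the $\mathcal{R}$-bound of the family $\{E_kf(x):k\in\Z\}\subset\mathcal{L}(\C,X)$ to the scalar sequence $(d_k(x))_k$, which gives precisely the above with constant $M_Rf(x)=\mathcal{R}(\{E_kf(x):k\})$. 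Taking $L^p(\R^N,\mu)$ norms in $x$ and applying Hölder with exponents $\tfrac{p+\eta}{p}$ and its conjugate, the first factor is controlled by $\|M_Rf\|_{L^{q}(\mu)}$ for a suitable $q$ slightly larger than $p$ (here one uses that $M_R$ is bounded $L^{q}(\R^N,\mu;X)\to L^{q}(\R^N,\mu)$ for every $q\in(1,\infty)$ since $X$ is RMF, and interpolates/raises the exponent as needed — more precisely one should set things up so that the exponent on the $M_Rf$ factor can be taken to be $p$ after first splitting off a bit of room with Hölder), and the second factor, $\big\|1_{Q_0}\sum_k\epsilon_k d_k\big\|_{L^{p+\eta}(\mu\otimes\mathbf{P})}$-type quantity localized to the relevant generations, is dominated by $\|\{d_k\}\|_{\mathrm{Car}^{p+\eta}(\mathcal{D})}\,\mu(Q_0)^{1/(p+\eta)}$ by definition of the Carleson norm.

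To make the exponents match cleanly I would instead not use the crude pointwise bound globally but combine it with the scalar Carleson embedding theorem: the scalar sequence $\{d_k\}$ satisfies a Carleson condition in $L^{p+\eta}$, hence by Theorem~\ref{carleson} applied in the scalar case (or rather its $L^t$ version, noting the Carleson norms are comparable across exponents when $d_k=E_kd_k$, which can be arranged by replacing $d_k$ with $E_kd_k$ at the cost of the maximal function) one has $\|\sum_k\epsilon_k d_k E_k h\|_{L^{t}(\mu\otimes\mathbf{P})}\lesssim\|\{d_k\}\|_{\mathrm{Car}}\|h\|_{L^t(\mu)}$; choosing $h$ via a duality/linearization of the $X$-norm against $M_Rf$ and using the RMF bound gives the claim. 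The main obstacle I anticipate is precisely this exponent bookkeeping — arranging that the loss of $\eta$ in the Carleson exponent is exactly what is consumed by the Hölder step that converts the RMF bound (available for all exponents) into an estimate with the correct power $p$ on $\|f\|_{L^p(\R^N,\mu;X)}$ — together with the technical point that $M_R$ controls $\|\cdot\|_X$ of the randomized sum only after one knows the $E_kf(x)$ form an $\mathcal{R}$-bounded family, which is the content of the RMF hypothesis and must be invoked pointwise before integrating. Everything else (the localization to $Q_0$, the reduction $d_k=E_kd_k$, the Khintchine--Kahane passage between $L^2(\Omega)$ and $L^p(\Omega)$) is routine.
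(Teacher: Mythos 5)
The paper does not give a self-contained proof of Theorem~\ref{carlesonRMF}; it simply cites Theorem~8.2 of \cite{HMP} (and \cite{HytKem} for general $\mu$). So the comparison can only be with that outside proof, not with an argument in the paper.

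Your starting point --- the pointwise estimate
\[
\Big\|\sum_k \epsilon_k d_k(x) E_k f(x)\Big\|_{L^2(\Omega;X)}\le M_R f(x)\,\Big\|\sum_k \epsilon_k d_k(x)\Big\|_{L^2(\Omega)}
\]
obtained by viewing $E_k f(x)\in\mathcal{L}(\C,X)$ and applying the $\mathcal{R}$-bound to the scalar sequence $(d_k(x))_k$ --- is correct, and it is indeed the key way that the RMF hypothesis enters. However, the H\"older step you sketch afterwards does not close. Splitting with exponents $(p+\eta)/p$ and $(p+\eta)/\eta$ puts the factor $M_R f$ into $L^{p(p+\eta)/\eta}(\mu)$, which is far above $p$, not ``slightly larger'' as you write. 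The RMF bound then controls $\|M_Rf\|_{L^{p(p+\eta)/\eta}}$ by $\|f\|_{L^{p(p+\eta)/\eta}(\mu;X)}$, and this cannot be traded down for $\|f\|_{L^p(\mu;X)}$ by any compactness of the support: there is no inclusion $L^p(\mu)\subset L^{t}(\mu)$ for $t>p$. You explicitly flag this as the ``exponent bookkeeping'' obstacle but do not resolve it; your fallback paragraph, which mixes the scalar Carleson theorem with a duality/linearization step, is too vague to constitute a proof and, as stated, still relies on the same mismatched H\"older step. The missing idea is precisely the content of the cited proof: after the pointwise $M_R$-bound, one must run a stopping-time (good-$\lambda$) decomposition in $\lambda$ over the level sets of $M_R f$, and on each selected dyadic stopping cube $Q$ apply the local Carleson bound in $L^{p+\eta}$ over the scales $2^k\le\ell(Q)$, with the $\eta$-slack absorbing the resulting geometric/combinatorial loss. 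A direct global H\"older inequality cannot replace this decomposition, so the proposal as written has a genuine gap at its central step.
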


\section{Adapted martingale decompositions}\label{decfun}

Throughout this section we assume that $X$ is
a Banach space and that $b=b^1$ is
an $L^\infty$-accretive system. 
The assumption that $b=b^1$ is only for notational convenience, and
all of the
results throughout this section remain valid
if we replace $b^1$ with $b^2$ and
the random dyadic system $\mathcal{D}$
with $\mathcal{D}'$.

\subsection*{Adapted conditional expectations}
Let $\mathcal{D}=\bigcup_{k\in\Z}Ê\mathcal{D}_k$
be a generic system of dyadic cubes in $\R^N$ and
$f\in L^1_{\mathrm{\mathrm{loc}}}(\R^N;X)$. 
In what follows we will define various operators acting
on this function.
First of all, the conditional expectation for $k\in\Z$ is defined by
\[
E_k f := \sum_{Q\in\mathcal{D}_k} 1_Q \langle f\rangle_Q,\qquad \langle f\rangle_Q := 
\frac{1}{\mu(Q)}\int_Q f\,d\mu.
\]
If $\mu(Q)=0$ for a cube $Q$, we agree
that $\langle f\rangle_Q =0$. 
If $Q\in\mathcal{D}_k$ is a cube,  then
the local version of this conditional expectation is
defined by $E_Q f:=1_QE_k f$.
The corresponding
martingale difference is defined by
$D_k f:=E_{k-1}f-E_k f$ and
its local version is  $D_Q f:= 1_Q D_k f$.
For the $L^\infty$-accretive system $b=\{b_Q\}$ and $k\in\Z$, we define
\[
b_k^{a}:=\sum_{Q\in\mathcal{D}_k} 1_Q b_{Q^a}.
\]
The $b$-adapted conditional expectation and its local version,
for $k\in \Z$ and $Q\in\mathcal{D}_k$, are defined by
\begin{align*}
&E_k^{a} f:=b_k^{a} \frac{E_k f}{E_k b_k^{a}},\qquad E_Q^{a} f := 1_Q E_k^{a} f.
\end{align*}
The corresponding $b$-adapted martingale difference and its local version are 
\[
D_k^{a} f:=E_{k-1}^{a} f - E_k^{a} f,\qquad D_Q^{a} f := 1_Q D_k^{a} f,
\]
where $k\in \Z$ and $Q\in\mathcal{D}_k$.

We agree on the following slightly abusive notation:
\begin{equation}\label{bmaar}
\begin{split}
&{\{{b_{k-1}^{a} = b_{k}^{a}}\}}:=\bigcup_{\substack{ Q\in\mathcal{D}_{k-1} \\ Q^a = (Q^{(1)})^a}} Q, \\
&\chi_{k-1}:={\{{b_{k-1}^{a} \not= b_{k}^{a}}\}} :=\R^N\setminus {\{{b_{k-1}^{a} = b_{k}^{a}}\}}=
\bigcup_{\substack{ Q\in\mathcal{D}_{k-1} \\ Q = Q^a\not=Q_0}} Q,
\end{split}
\end{equation}
where $k\in\Z$.

\subsection*{A representation for $D_Q^{a}$}\label{secs}
Here we  compute a useful representation for the adapted
martingale differences of  $f\in L^1_{\mathrm{loc}}(\R^N;X)$. For this purpose, we let
 $Q\in\mathcal{D}_k$ and denote by $Q_i,\ldots,Q_{2^N}\in \mathcal{D}_{k-1}$
 the subcubes
of $Q$ (in some order) so that 
$\bigcup_{i=1}^{2^N} Q_i =Q$. Then
\begin{align*}
D_Q^a f&= 1_Q (E_{k-1}^a f - E_k^a f)
=\sum_{i=1}^{2^N} b_{Q_i^a}\frac{\langle f\rangle_{Q_i}}{\langle b_{Q_i^a}\rangle_{Q_i}}1_{Q_i}-b_{Q^a}\frac{\langle f\rangle_Q}{\langle b_{Q^a}\rangle_Q}1_Q.
\end{align*}
Writing $\langle f \rangle_Q = \frac{1}{\mu(Q)} \sum_{i=1}^{2^N} \mu(Q_i)\langle f\rangle_{Q_i}$, we get
\begin{align*}
D_Q^a f&=\sum_{i=1}^{2^N} \langle f\rangle_{Q_i}\bigg(\frac{b_{Q_i^a}}{\langle b_{Q_i^a}\rangle_{Q_i}}1_{Q_i}-\frac{\mu(Q_i)}{\mu(Q)}\frac{b_{Q^a}}{\langle b_{Q^a}\rangle_Q}1_Q\bigg).
\end{align*}
This computations motivates the following definition:
If $\mu(Q_i)\not=0$, we define
\begin{equation}\label{phideff}
\phi_{Q,i}^a:=\frac{b_{Q_i^a}}{\langle b_{Q_i^a}\rangle_{Q_i}}1_{Q_i}-\frac{\mu(Q_i)}{\mu(Q)}\frac{b_{Q^a}}{\langle b_{Q^a}\rangle_Q}1_Q.
\end{equation}
Otherwise we define $\phi_{Q,i}\equiv 0$.

The following lemma draws conclusions from above,
and provides further properties for the resulting decomposition.
The proof is straightforward.

\begin{lem}\label{phiprop} 
Let $f\in L^1_{\mathrm{loc}}(\R^N;X)$ and $Q\in\mathcal{D}_k$. 
Let $Q_1,\ldots,Q_{2^N}\in \mathcal{D}_{k-1}$ denote the subcubes
of $Q$ in some order.
Then we can write
\[
D_Q^a f=\sum_{i=1}^{2^N} \langle f\rangle_{Q_i}\phi_{Q,i}^a.
\]
Furthermore, if $i\in \{1,2,\ldots,2^N\}$, then
\begin{itemize}
\item[{\em a)}] $\|\phi_{Q,i}^a\|_{L^\infty(\mu)}\lesssim 1$;
\item[{\em b)}] $\|\phi_{Q,i}^a\|_{L^1(d\mu)}\lesssim \mu(Q_i)$;
\item[{\em c)}] $\mathrm{supp}(\phi_{Q,i}^a)\subset Q$;
\item[{\em d)}] $\int_{\R^N} \phi_{Q,i}^a\,d\mu=0$.
\end{itemize}
Within these estimates, the implicit
constant depends on $\delta$ in \eqref{accre}.
\end{lem}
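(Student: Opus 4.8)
The representation formula requires no new work: the identity $D_Q^a f=\sum_{i=1}^{2^N}\langle f\rangle_{Q_i}\phi_{Q,i}^a$ is exactly the computation performed just above \eqref{phideff}, once one substitutes the definition \eqref{phideff} of $\phi_{Q,i}^a$. The only thing I would add is the observation that if $\mu(Q_i)=0$ the $i$-th summand vanishes on both sides (the left side because $\langle f\rangle_{Q_i}=0$ by our convention, the right side because $\phi_{Q,i}^a\equiv 0$ by definition), so the formula holds literally for every $i$. Hence in verifying a)--d) I may assume $\mu(Q_i)\neq 0$; since $Q_i\subset Q$ this forces $\mu(Q)\neq 0$ as well, and all averages occurring in \eqref{phideff} are well defined.

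The only quantitative inputs I need are the $L^\infty$ bounds $\|b_{Q_i^a}\|_{L^\infty(\mu)}\le 1$ and $\|b_{Q^a}\|_{L^\infty(\mu)}\le 1$ from \eqref{accre}, together with the lower bounds $|\langle b_{Q_i^a}\rangle_{Q_i}|\ge\delta^2$ and $|\langle b_{Q^a}\rangle_Q|\ge\delta^2$, which are precisely \eqref{a_iso}. Given these, for a) I would estimate the two summands of \eqref{phideff} separately: the first has $L^\infty(\mu)$-norm at most $\delta^{-2}$ on $Q_i$, and the second at most $(\mu(Q_i)/\mu(Q))\,\delta^{-2}\le\delta^{-2}$ on $Q$, so $\|\phi_{Q,i}^a\|_{L^\infty(\mu)}\le 2\delta^{-2}$. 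For b) I would integrate the same two summands: $\int_{Q_i}|b_{Q_i^a}|\,|\langle b_{Q_i^a}\rangle_{Q_i}|^{-1}\,d\mu\le\mu(Q_i)\,\delta^{-2}$, and the second summand contributes $(\mu(Q_i)/\mu(Q))\int_Q|b_{Q^a}|\,|\langle b_{Q^a}\rangle_Q|^{-1}\,d\mu\le\mu(Q_i)\,\delta^{-2}$, the point being that the weight $\mu(Q_i)/\mu(Q)$ cancels the mass $\mu(Q)$ picked up from integrating $1_Q$. Hence $\|\phi_{Q,i}^a\|_{L^1(\mu)}\le 2\mu(Q_i)\,\delta^{-2}$.

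Property c) is immediate from \eqref{phideff}, since the first summand is supported in $Q_i\subset Q$ and the second in $Q$. For d) the cancellation is built into the normalisation: $\int\frac{b_{Q_i^a}}{\langle b_{Q_i^a}\rangle_{Q_i}}1_{Q_i}\,d\mu=\mu(Q_i)$ and $\int\frac{b_{Q^a}}{\langle b_{Q^a}\rangle_Q}1_Q\,d\mu=\mu(Q)$ are just the definitions of the respective averages, so $\int_{\R^N}\phi_{Q,i}^a\,d\mu=\mu(Q_i)-(\mu(Q_i)/\mu(Q))\,\mu(Q)=0$. I do not anticipate any real obstacle here; the only points deserving a moment's care are the degenerate cases $\mu(Q_i)=0$ (and $\mu(Q)=0$), which are absorbed into the conventions, and the uniform positivity of the denominators $\langle b_{Q_i^a}\rangle_{Q_i}$ and $\langle b_{Q^a}\rangle_Q$ — which is exactly what the layer construction and \eqref{a_iso} were arranged to supply.
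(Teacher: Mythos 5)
Your proof is correct and matches what the paper does — the paper declares the proof ``straightforward'' and leaves it at that, and your verification of a)--d) directly from \eqref{phideff}, \eqref{accre}, and \eqref{a_iso} is exactly the intended argument.
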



\subsection*{A representation for $(D_k^a)^*$}
We compute the adjoint of
the $b$-adapted martingale difference operator $D_k^a$ for $k\in\Z$. For
this purpose we fix
$f\in L^p(\R^N;X)$ and $g\in L^{q}(\R^N;X^*)$, where $1/p+1/q=1$.
Recall that
\begin{equation}\label{adid}
D_k^a f = E_{k-1}^a f - E_{k}^a f = b_{k-1}^a\frac{E_{k-1}f}{E_{k-1}b_{k-1}^a}-
b_{k}^a\frac{E_{k}f}{E_{k}b_{k}^a}.
\end{equation}
The
self-adjointness of the expectation operator $E_k$ yields
\begin{align*}
\langle f,E_k^a g\rangle = \bigg\langle b_k^a \frac{f}{E_k b_k^a}, E_k g\bigg\rangle =
\bigg\langle E_k\bigg(b_k^a \frac{f}{E_k b_k^a}\bigg), g\bigg\rangle.
\end{align*}
As a consequence, we find that
\[
A_k f:=(E_k^a)^* f = E_k\bigg(b_k^a \frac{f}{E_k b_k^a}\bigg)
=\frac{E_k(b_k^a f)}{E_k b_k^a}.
\]
Substituting this identity to \eqref{adid}, we get the representation
\begin{equation}\label{vaid}
\begin{split}
(D_k^a)^*f = A_{k-1}^a f- A_k^a f 
= \frac{E_{k-1}(b_{k-1}^a f)}{E_{k-1} b_{k-1}^a}-\frac{E_k(b_k^a f)}{E_k b_k^a}.
\end{split}
\end{equation}

\subsection*{A representation for $(D_k^a)^2$}\label{secqr}

Assuming that $k,l\in \Z$,
\begin{align*}
E_k^a E_l^a f &=\frac{b_k^a}{E_k b_k^a} E_k\bigg(b_l^a \frac{E_l f}{E_l b_l^a}\bigg)=
\begin{cases}
\frac{b_k^a}{E_k b_k^a} E_k b_l^a \frac{E_l f}{E_l b_l^a},&\text{ if }l\ge k;\\
E_k^a f,&\text{ if }l\le k.
\end{cases}
\end{align*}
Here we used twice the identity $E_k=E_k E_l$ if $l\le k$. As a consequence, we have
the identity
\begin{align*}
(D_k^a)^2 = (E_{k-1}^a - E_k^a)^2&= (E_{k-1}^a)^2 - E_{k-1}^a E_k^a-E_k^a E_{k-1}^a + (E_k^a)^2\\
&=(E_{k-1}^a)^2 - E_{k-1}^a E_k^a \underbrace{-E_k^a + E_k^a}_{=0} = E_{k-1}^a D_k^a.
\end{align*}
Using the notation \eqref{bmaar}, we write $D_k^a f$ for a
function $f\in L^1_{\mathrm{loc}}(\R^N;X)$  as follows:
\begin{align*}
D_k^a f &= 1_{\{b_{k-1}^a =b_k^a\}} b_k^a \bigg(\frac{E_{k-1}f}{E_{k-1} b_k^a} - \frac{E_k f}{E_k b_k^a}\bigg)\\
&\qquad +1_{\{b_{k-1}^a \not= b_k^a\}} \bigg(b_{k-1}^a \frac{E_{k-1} f}{E_{k-1} b_{k-1}^a}-b_k^a \frac{E_k f}{E_k b_k^a}\bigg).
\end{align*}
Using this and the basic properties of the operator $E_{k-1}$, we get
\begin{align*}
E_{k-1}^a D_k^a f &= \frac{b_{k-1}^a}{E_{k-1} b_{k-1}^a} E_{k-1} D_k^a f\\&= 
\frac{b_{k-1}^a}{E_{k-1} b_{k-1}^a}1_{\{b_{k-1}^a =b_k^a\}} E_{k-1}b_k^a \bigg(\frac{E_{k-1}f}{E_{k-1} b_k^a} - \frac{E_k f}{E_k b_k^a}\bigg)\\
&\qquad +\frac{b_{k-1}^a}{E_{k-1} b_{k-1}^a}1_{\{b_{k-1}^a \not= b_k^a\}} \bigg(E_{k-1}b_{k-1}^a \frac{E_{k-1} f}{E_{k-1} b_{k-1}^a}-E_{k-1}b_k^a \frac{E_k f}{E_k b_k^a}\bigg)\\
&=1_{\{b_{k-1}^a =b_k^a\}} b_k^a\bigg(\frac{E_{k-1}f}{E_{k-1} b_k^a} - \frac{E_k f}{E_k b_k^a}\bigg)\\
&\qquad +1_{\{b_{k-1}^a \not= b_k^a\}} \bigg(b_{k-1}^a\frac{E_{k-1} f}{E_{k-1} b_{k-1}^a}-b_k^a\frac{E_k f}{E_k b_k^a}\bigg)\\
&\qquad + 1_{\{b_{k-1}^a \not= b_k^a\}} E_k f\bigg(\frac{b_k^a}{E_{k} b_{k}^a}-\frac{b_{k-1}^a}{E_{k-1} b_{k-1}^a}\frac{E_{k-1} b_k^a}{E_k b_k^a}\bigg)\\
&= D_k^a f + \omega_k^a E_k f,
\end{align*}
where we have denoted
\[
\omega_k^a  := 1_{\{b_{k-1}^a \not= b_k^a\}} \bigg(\frac{b_k^a}{E_{k} b_{k}^a}-\frac{b_{k-1}^a}{E_{k-1} b_{k-1}^a}\frac{E_{k-1} b_k^a}{E_k b_k^a}\bigg).
\]

The following
lemma draws conclusions from the computations above.

\begin{lem}\label{ombasic}
Let $k\in \Z$. Then 
\begin{equation}\label{qr}
(D_k^a)^2  f = D_k^a f + \omega_k^a E_k f,\qquad\text{ if }f\in L^1_{\mathrm{loc}}(\R^N;X).
\end{equation}
Furthermore, the functions $\omega_k^a$ have the following properties a)--c):
\begin{itemize}
\item[{\em a)}] $\omega_k^a(x)=0$ if $x\in\R^N\setminus \{b_{k-1}^a\not= b_k^a\}$,
\item[{\em b)}] $\|\omega_k^a\|_{L^\infty(\mu)}\lesssim 1$,
\item[{\em c)}] $E_{k-1} \omega_k^a= 0$.
\end{itemize}
The implicit constant in b) depends
on $\delta$ defined in \eqref{accre}.
\end{lem}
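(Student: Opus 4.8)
The plan is to read off everything directly from the computations already carried out in the text preceding the lemma, so that the ``proof'' is really just an extraction and verification of properties. The identity \eqref{qr}, namely $(D_k^a)^2 f = D_k^a f + \omega_k^a E_k f$ with
\[
\omega_k^a = 1_{\{b_{k-1}^a \not= b_k^a\}}\Bigl(\frac{b_k^a}{E_k b_k^a}-\frac{b_{k-1}^a}{E_{k-1}b_{k-1}^a}\frac{E_{k-1}b_k^a}{E_k b_k^a}\Bigr),
\]
was established line by line using $(D_k^a)^2 = E_{k-1}^a D_k^a$ (itself a consequence of the $E_k^a E_l^a$ formula from the ``representation for $(D_k^a)^2$'' subsection) together with the expansion of $D_k^a f$ on the two sets $\{b_{k-1}^a = b_k^a\}$ and $\{b_{k-1}^a \not= b_k^a\}$; I would simply cite that display and note it holds for every $f\in L^1_{\mathrm{loc}}(\R^N;X)$ since only the basic properties of the (unconditional, linear, $L^1_{\mathrm{loc}}$-defined) operators $E_k$, $E_{k-1}$ were used.

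For property a), I would observe that $\omega_k^a$ carries the explicit prefactor $1_{\{b_{k-1}^a \not= b_k^a\}}$, so it vanishes identically off the set $\chi_{k-1} = \{b_{k-1}^a \not= b_k^a\}$; this is immediate. For property b), the idea is that on $\{b_{k-1}^a \not= b_k^a\}$ each of $b_{k-1}^a$, $b_k^a$, $E_{k-1}b_k^a$ is a function bounded by $1$ in $L^\infty(\mu)$ (since each is built from the $b_Q$ with $\|b_Q\|_{L^\infty(\mu)}\le 1$ via convex averages and indicator gluing), while the denominators $E_{k-1}b_{k-1}^a$, $E_k b_k^a$ are, on the relevant set, bounded below in modulus by a constant multiple of $\delta$: indeed $|\langle b_{R^a}\rangle_R|\ge \delta^2$ whenever $R = R^a$ by the layer construction, and more generally $|E_k b_k^a| = |\langle b_{Q^a}\rangle_Q|\ge \delta^2$ on each $Q\in\mathcal{D}_k$ by \eqref{a_iso}; hence each quotient is $\lesssim_\delta 1$ and the triangle inequality gives $\|\omega_k^a\|_{L^\infty(\mu)}\lesssim 1$. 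For property c), I would apply $E_{k-1}$ to the displayed formula for $\omega_k^a$, pull out the $\mathcal{D}_{k-1}$-measurable factor $1_{\{b_{k-1}^a\not=b_k^a\}}$ together with $b_{k-1}^a/E_{k-1}b_{k-1}^a$ and $1/E_k b_k^a$-type pieces that are constant on the relevant cubes, and use $E_{k-1}E_k = E_{k-1}$ to see that $E_{k-1}\bigl(b_k^a/E_k b_k^a\cdot(\ldots)\bigr)$ and $E_{k-1}b_k^a$ combine to cancel; alternatively, and more cleanly, note that $E_{k-1}\omega_k^a E_k f = E_{k-1}(D_k^a)^2 f - E_{k-1}D_k^a f$ and, since $(D_k^a)^2 f$ and $D_k^a f$ have the same $E_{k-1}$-average on each cube of $\mathcal{D}_{k-1}$ (both equal $E_{k-1}^a D_k^a f$ applied appropriately, using $E_{k-1}D_k^a = E_{k-1}E_{k-1}^a D_k^a$-type identities), the difference has vanishing $(k-1)$-conditional expectation; I would then conclude $E_{k-1}\omega_k^a = 0$ by testing against $f$ ranging over functions with $E_k f$ an arbitrary $\mathcal{D}_k$-step function.

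The only genuinely delicate point is property c): one must be slightly careful that $\omega_k^a$ really has conditional expectation zero \emph{as a function}, not merely that $\omega_k^a E_k f$ does for the particular $f$. The safe route is the direct computation: on a fixed cube $P\in\mathcal{D}_{k-1}$ with $P = P^a \not= Q_0$ (so $1_P\subset\{b_{k-1}^a\not=b_k^a\}$), write $\langle\omega_k^a\rangle_P = \langle b_k^a/E_k b_k^a\rangle_P - (b_{P^a}/\langle b_{P^a}\rangle_P)\langle E_{k-1}b_k^a/E_k b_k^a\rangle_P$, and note that on $P$ one has $b_{k-1}^a = b_{P^a}$ and $E_{k-1}b_{k-1}^a = \langle b_{P^a}\rangle_P$ constant, and $E_{k-1}(b_k^a/E_k b_k^a) = E_{k-1}\bigl(\sum_i 1_{Q_i}b_{Q_i^a}/\langle b_{Q_i^a}\rangle_{Q_i}\bigr)$ where $Q_i$ are the children of $P$; this equals $\frac{1}{\mu(P)}\sum_i \mu(Q_i)\langle b_{Q_i^a}\rangle_{Q_i}/\langle b_{Q_i^a}\rangle_{Q_i} = 1$ — wait, more precisely one gets $E_{k-1}(b_k^a/E_k b_k^a)$ on $P$, and separately $E_{k-1}(E_{k-1}b_k^a/E_k b_k^a)$ is, since $E_{k-1}b_k^a$ is $\mathcal{D}_{k-1}$-constant on $P$, equal to $(E_{k-1}b_k^a)\cdot E_{k-1}(1/E_k b_k^a)$ — and matching these against $b_{P^a}/\langle b_{P^a}\rangle_P$ the two terms cancel. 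I would present this as a short explicit verification on an arbitrary cube of $\mathcal{D}_{k-1}$, which suffices since $\omega_k^a$ and hence $E_{k-1}\omega_k^a$ are supported on the union of such cubes. Everything else is bookkeeping with the definitions \eqref{bmaar}, \eqref{a_iso}, and \eqref{accre}.
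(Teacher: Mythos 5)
Your argument is correct and is essentially the paper's: \eqref{qr} is lifted from the preceding display, a) and b) are read off using \eqref{accre} and \eqref{a_iso}, and for c) the paper's one-liner is precisely your per-cube computation --- apply $E_{k-1}$ to the formula for $\omega_k^a$, note that $1_{\{b_{k-1}^a\neq b_k^a\}}$, $E_k b_k^a$, $E_{k-1}b_{k-1}^a$ and $E_{k-1}b_k^a$ are all $\sigma(\mathcal{D}_{k-1})$-measurable, so only $b_k^a,b_{k-1}^a$ get averaged, and the two resulting terms cancel. (Two cosmetic slips in your write-up of c): the factor $b_{P^a}/\langle b_{P^a}\rangle_P$ in your display for $\langle\omega_k^a\rangle_P$ should already be the constant $\langle b_{P^a}\rangle_P/\langle b_{P^a}\rangle_P=1$ after averaging over $P$, and ``children of $P$'' is off by a generation since in this paper's convention $\mathcal{D}_{k-1}$ refines $\mathcal{D}_k$, so $b_k^a|_P$ is the single function $b_{(P^{(1)})^a}$; neither affects the conclusion.)
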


\begin{proof}
The identity \eqref{qr} is established above. 
The property a) is clear; b) follows from \eqref{accre} and \eqref{a_iso}.
For c) we notice that
\begin{align*}
E_{k-1} \omega_k^a = 1_{\{b_{k-1}^a \not= b_k^a\}}\bigg(\frac{E_{k-1}b_k^a}{E_{k} b_{k}^a}-\frac{E_{k-1}b_{k-1}^a}{E_{k-1} b_{k-1}^a}\frac{E_{k-1} b_k^a}{E_k b_k^a}\bigg)=0.
\end{align*}
\end{proof}

We also define the following two local versions of $\omega_k^a$.
Let $Q\in\mathcal{D}_k$ and denote by $Q_1,\ldots, Q_{2^N}\in\mathcal{D}_{k-1}$ the 
subcubes of $Q$. Then we define 
\begin{equation}\label{localde}
\omega_Q^a := 1_Q \omega_k^a,\qquad \omega_{Q,i}^a := 1_{Q_i} \omega_Q^a.
\end{equation}
The following lemma collects the basic properties of these local
versions.

\begin{lem}\label{localdq}
If $Q\in\mathcal{D}$ and $f\in L^1_{\mathrm{loc}}(\R^N;X)$, then
\begin{equation}\label{qr2}
(D_Q^a)^2 f=  D_Q^a f + \omega_Q^a E_Q f.
\end{equation}
Also,
\begin{equation}\label{loc_om}
\|\omega_{Q}^a\|_{L^1(\mu)}\lesssim \mu(Q)\text{ and } \|\omega_{Q,i}^a\|_{L^1(\mu)}\lesssim \mu(Q_i)
\text{ for } i=1,2,\ldots,2^N.
\end{equation}
\end{lem}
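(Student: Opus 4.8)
The statement has two parts: the pointwise operator identity \eqref{qr2} and the $L^1(\mu)$-bounds \eqref{loc_om}. For \eqref{loc_om} there is essentially nothing to do: since $\omega_Q^a = 1_Q\omega_k^a$ and $\omega_{Q,i}^a = 1_{Q_i}\omega_k^a$ with $Q\in\mathcal{D}_k$ and $Q_i\in\mathcal{D}_{k-1}$, the bound $\|\omega_k^a\|_{L^\infty(\mu)}\lesssim 1$ from Lemma~\ref{ombasic}~b) immediately gives $\|\omega_Q^a\|_{L^1(\mu)}\le\|\omega_k^a\|_{L^\infty(\mu)}\,\mu(Q)\lesssim\mu(Q)$, and likewise $\|\omega_{Q,i}^a\|_{L^1(\mu)}\lesssim\mu(Q_i)$. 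So the real content is \eqref{qr2}, which I would obtain by localizing the global identity \eqref{qr}.

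The first step is a support-preservation observation: \emph{if $g\in L^1_{\mathrm{loc}}(\R^N;X)$ is supported on a cube $Q\in\mathcal{D}_k$, then $E_k^a g$, $E_{k-1}^a g$ and hence $D_k^a g$ are also supported on $Q$.} Indeed, $E_k g$ is supported on $Q$ (being $1_Q\langle g\rangle_Q$), and $E_{k-1}g$ is supported on the union of the $\mathcal{D}_{k-1}$-children of $Q$, again inside $Q$; multiplying by $b_k^a$, resp.\ $b_{k-1}^a$, and dividing by $E_k b_k^a$, resp.\ $E_{k-1}b_{k-1}^a$ --- which are nondegenerate on the relevant sets by \eqref{accre} and \eqref{a_iso} --- does not enlarge the support. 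The same computation shows $1_Q D_k^a f = 1_Q D_k^a(1_Q f)$ for arbitrary $f$, i.e.\ $D_Q^a f = 1_Q D_k^a(1_Q f)$.

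With this in hand I would argue as follows. Decomposing $D_k^a f = \sum_{Q'\in\mathcal{D}_k} 1_{Q'}D_k^a f$ and applying $D_k^a$, linearity and support preservation give $(D_k^a)^2 f = \sum_{Q'\in\mathcal{D}_k} D_k^a(1_{Q'}D_k^a f)$, with the $Q'$-summand supported on $Q'$; hence $1_Q(D_k^a)^2 f = D_k^a(1_Q D_k^a f)$. On the other hand, $(D_Q^a)^2 f = D_Q^a(1_Q D_k^a f) = 1_Q D_k^a(1_Q D_k^a f)$ by the last line of the previous paragraph (note $1_Q D_k^a f$ is already supported on $Q$), and by support preservation the outer $1_Q$ is redundant, so $(D_Q^a)^2 f = D_k^a(1_Q D_k^a f) = 1_Q(D_k^a)^2 f$. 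Now \eqref{qr} yields $1_Q(D_k^a)^2 f = 1_Q D_k^a f + 1_Q\omega_k^a E_k f = D_Q^a f + \omega_Q^a E_Q f$, using $\omega_Q^a = 1_Q\omega_k^a$, $E_Q f = 1_Q E_k f$, and that $\omega_Q^a$ is supported on $Q$. This is \eqref{qr2}.

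I do not anticipate a genuine obstacle; the only slightly delicate point is the support bookkeeping in the second paragraph, in particular making sure the denominators $E_k b_k^a$, $E_{k-1}b_{k-1}^a$ are non-degenerate wherever they occur, which is exactly what \eqref{accre} together with \eqref{a_iso} provide. (Alternatively, one could bypass the support argument and simply repeat the computation preceding Lemma~\ref{ombasic} with every $E_j$, $D_j$ replaced by its local version $E_Q$, $D_Q$; but the route above reuses \eqref{qr} and keeps the algebra to a minimum.)
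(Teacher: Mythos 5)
Your proof is correct and takes essentially the same route as the paper: both reduce \eqref{qr2} to the global identity \eqref{qr} via the locality of $D_k^a$ on cubes of $\mathcal{D}_k$, and both obtain \eqref{loc_om} directly from the $L^\infty$-bound of Lemma~\ref{ombasic}~b). The paper compresses the localization into the one-line chain $(D_Q^a)^2 f = 1_Q D_k^a(1_Q D_k^a f) = 1_Q(D_k^a)^2(1_Q f)$; you simply make the underlying support-preservation arguments explicit, which is the same reasoning spelled out.
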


\begin{proof}
Assume that $Q\in\mathcal{D}_k$. Then, by
using \eqref{qr}, we get
\[
(D_Q^a)^2 f= 1_Q D_k^a (1_Q D_k^a f)= 1_Q (D_k^a)^2(1_Q f)= D_Q^a f + \omega_Q^a E_Q f.
\]
The estimate \eqref{loc_om} follows from Lemma \ref{ombasic}.
\end{proof}

\subsection*{A decomposition of functions}
Recall that $b$ is an $L^\infty$-accretive system.
In the sequel we assume that $b=b^1$ and consider the cube $Q_0\in\mathcal{D}_s$ 
that is defined in \eqref{rsubs}.
It is a large cube such that
the support of $\mu$ is contained in it.

We will show that
\[
f-b_{Q_0}\frac{\langle f\rangle_{\R^N}}{\langle b_{Q_0}\rangle_{\R^N}}
=\sum_{j=-\infty}^\infty D_j^a f
\]
where the convergence takes place both pointwise $\mu$-almost everywhere
and also in $L^p(\R^N,\mu;X)$-norm.
We begin with the
following lemma.

\begin{lem}\label{pist}
For $\mu$ almost every point $x$ in $Q_0$, we have
\[
b_{-\infty}^a(x):=\lim_{k\to -\infty}b_k^a(x)=\lim_{k\to -\infty}ÊE_k b_k^a(x).
\] 
Furthermore, the limit satisfies the
estimate $|b_{-\infty}^a (x)| \ge \delta^2$, where
$\delta>0$ is defined in connection with \eqref{accre}.
\end{lem}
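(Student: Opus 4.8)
The key observation is that on $Q_0$, as $k \to -\infty$, eventually every cube $Q \in \mathcal{D}_k$ containing a given point $x$ is so large that it equals its own $Q^a$ (indeed $Q^a = Q_0$ once $\ell(Q) \ge \ell(Q_0)$, and more generally $Q^a$ stabilizes), so the function $b_k^a(x) = b_{Q^a}(x)$ becomes eventually constant in $k$ for $\mu$-a.e.\ fixed $x$. This immediately gives the existence of the pointwise limit $b_{-\infty}^a(x) = \lim_{k\to-\infty} b_k^a(x)$, and in fact the sequence is eventually stationary. More precisely, I would argue as follows: for $x \in Q_0$ and $k \le s$, the cube $Q \in \mathcal{D}_k$ with $x \in Q$ satisfies $Q \subset Q_0$; by the nesting structure of the layers $\mathcal{D}^j$ and Remark~\ref{harva}, the sequence of ancestors $Q^a$ (as $k$ decreases) is a decreasing-then-stationary sequence of cubes from $\bigcup_{j\ge 0}\mathcal{D}^j$, because $\mu$-a.e.\ $x$ lies in only finitely many cubes of that family. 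Hence for $\mu$-a.e.\ $x$ there is $k(x)$ such that $Q^a$ is the same cube, call it $S_x$, for all $k \le k(x)$ with $x \in Q \in \mathcal{D}_k$; thus $b_k^a(x) = b_{S_x}(x)$ for all such $k$.

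For the second equality $\lim_{k\to-\infty} E_k b_k^a(x) = b_{-\infty}^a(x)$, I note that for $k \le k(x)$ the cube $Q\in\mathcal{D}_k$ containing $x$ has $Q^a = S_x$, so $b_k^a = 1_Q b_{S_x}$ on $Q$, and therefore $E_k b_k^a(x) = \langle 1_Q b_{S_x}\rangle_Q = \langle b_{S_x}\rangle_Q$. As $k \to -\infty$, $Q$ exhausts larger and larger cubes with $x \in Q \subset S_x$ eventually (more precisely $Q \supset S_x$ once $\ell(Q) \ge \ell(S_x)$, but then $Q^a$ could only be $S_x$ if $S_x = Q$ — so actually the stabilization forces $Q = S_x$ for the relevant range or $S_x$ is reached exactly). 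Here I would use that once $\ell(Q) = \ell(S_x)$ we have $Q = S_x$ (as both contain $x$ and lie in $\mathcal{D}$), so in fact $k(x)$ can be taken with $\mathcal{D}_{k(x)} \ni S_x$, and for all $k \le k(x)$ the relevant $Q$ equals... no: for $k < k(x)$, $\ell(Q) > \ell(S_x)$, forcing $Q \supsetneq S_x$, contradicting $Q^a = S_x \subsetneq Q$ unless $Q^a$ jumps. The clean resolution: the sequence $Q^a$ is eventually equal to $Q_0$ itself (once $\ell(Q) \ge \ell(Q_0)$, $Q \supset Q_0$ so $Q^a = Q_0$ by the convention, and for $\ell(Q_0) > \ell(Q)$ large the ancestor $Q^a$ is still $Q_0$ by maximality in the layer construction), so $b_{-\infty}^a(x) = b_{Q_0}(x)$ and $E_k b_k^a(x) = \langle b_{Q_0}\rangle_{Q_0} \cdot$ (something) — I'd verify $E_k b_k^a(x) \to b_{Q_0}(x)$ via the martingale convergence / density argument, or directly: for $k$ small enough $Q = Q_0^{(s-k)} \supset Q_0$... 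Actually the correct statement is that the limit $b_{-\infty}^a$ is a well-defined $\mathcal{D}$-measurable function and $E_k b_k^a \to b_{-\infty}^a$ because $E_k b_k^a = E_k(b_{-\infty}^a)$ for $k$ small (once the stabilization kicks in) and Lebesgue differentiation / martingale convergence as $k\to-\infty$ gives $E_k h \to \langle h\rangle$ on the ambient cube. Since $\mathrm{supp}\,\mu \subset Q_0$ and everything is supported there, $E_k b_k^a(x) \to b_{-\infty}^a(x)$ follows.

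For the lower bound $|b_{-\infty}^a(x)| \ge \delta^2$: once $b_k^a(x) = b_{S_x}(x)$ with $S_x \in \bigcup_j \mathcal{D}^j$, and $S_x = (S_x)^a$ is in some layer, I use the defining property \eqref{a_iso}: $|\langle b_{S_x}\rangle_{S_x}| = |\mu(S_x)^{-1}\int_{S_x} b_{S_x}\,d\mu| \ge \delta^2$. Hence $|E_k b_k^a(x)| = |\langle b_{S_x}\rangle_{Q}| \ge \delta^2$ for appropriate $Q$ (using that the layer construction stops exactly when the average drops below $\delta^2$, so on $S_x$ itself and its $\mathcal{D}$-ancestors up to the parent layer the average stays $\ge \delta^2$). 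Taking $k\to-\infty$ and using the first part, $|b_{-\infty}^a(x)| = \lim |E_k b_k^a(x)| \ge \delta^2$.

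\textbf{Main obstacle.} The delicate point is the interplay between the two filtrations: as $k$ decreases, the dyadic cube $Q \in \mathcal{D}_k$ containing $x$ grows, but its assigned accretive-system index $Q^a$ (the smallest layer-cube containing $Q$) behaves non-monotonically in a subtle way — it is the layer-cube, and I must argue it stabilizes $\mu$-a.e.\ using precisely Remark~\ref{harva} (finitely many layer-cubes through a.e.\ point). The second subtlety is proving $E_k b_k^a \to b_{-\infty}^a$ pointwise rather than merely the existence of the limit of $b_k^a$: this requires identifying $E_k b_k^a$ with $E_k$ applied to a fixed function for small $k$, and invoking (reverse) martingale convergence $E_k h \to \langle h \rangle_{\text{ambient}}$ as $k\to-\infty$; one must be careful that this ambient average coincides with the pointwise value $b_{-\infty}^a(x)$, which works because, after stabilization, $b_k^a$ is already $\mathcal{D}_{k}$-measurable and agrees with $b_{-\infty}^a$, so $E_k b_k^a = E_k b_{-\infty}^a \to b_{-\infty}^a$ by Lebesgue's differentiation theorem run backwards — and the hypothesis $\mathrm{supp}\,\mu\subset Q_0$ guarantees no mass escapes.
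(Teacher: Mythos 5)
Your core idea --- use Remark~\ref{harva} to obtain that $Q_{k,x}^a$ is eventually constant (equal to some layer cube $S_x$) as $k\to-\infty$, then use martingale convergence --- is exactly the paper's approach, but your write-up has two concrete problems.

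First, you repeatedly get the direction of $k$ backwards. In this paper $\mathcal{D}_k$ consists of cubes of side $2^k$, so as $k\to-\infty$ the cube $Q_{k,x}\ni x$ \emph{shrinks} to the point $x$, not the other way around. Your statements such as ``as $k$ decreases, the dyadic cube $Q\in\mathcal{D}_k$ containing $x$ grows,'' ``indeed $Q^a=Q_0$ once $\ell(Q)\ge\ell(Q_0)$ [as $k\to-\infty$],'' ``for $k$ small enough $Q=Q_0^{(s-k)}\supset Q_0$,'' and ``martingale convergence as $k\to-\infty$ gives $E_k h\to\langle h\rangle$ on the ambient cube'' are all manifestations of this reversal and would not compile into a correct proof. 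What is true: as $k$ decreases, $Q_{k,x}$ shrinks, so $Q_{k,x}^a$ (the \emph{smallest} layer cube containing $Q_{k,x}$) shrinks or stays constant, and stabilizes precisely because a.e.\ $x$ meets only finitely many layer cubes. And the relevant convergence as $k\to-\infty$ is $E_k h\to h$ a.e., i.e.\ Lebesgue differentiation / martingale convergence in the ``refining'' direction, not an averaging to an ambient mean.

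Second, the step ``$E_k b_k^a = E_k(b_{-\infty}^a)$ for $k$ small'' is not justified and is likely false: once $Q_{k,x}^a = S_x$ stabilizes, $b_k^a = b_{S_x}$ on $Q_{k,x}$, but $b_{-\infty}^a(y)$ for $y\in Q_{k,x}$ is governed by the stabilizing ancestor $S_y$ of $y$, which need not equal $S_x$. The paper sidesteps this entirely: it writes $E_k b_k^a(x) = \langle b_{S_x}\rangle_{Q_{k,x}}$ for $k\le k(x)$ and then applies martingale convergence to the single fixed function $b_{S_x}$ (invoking a null-set reduction over the countably many layer cubes to ensure $\langle b_{S_x}\rangle_{Q_{k,x}}\to b_{S_x}(x)$), rather than to any function that depends on $k$. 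Similarly, for the lower bound, the clean route is to note that \eqref{a_iso} gives $|\langle b_{Q_{k,x}^a}\rangle_{Q_{k,x}}|\ge\delta^2$ for every $k$ with $\mu(Q_{k,x})\ne0$, and then let $k\to-\infty$; your statement $|\langle b_{S_x}\rangle_{S_x}|\ge\delta^2$ is a single instance, not the needed uniform-in-$k$ bound. You should also note the standing reduction to a.e.\ $x$ with $\mu(Q_{k,x})\ne0$ for all $k$ (a countable union of null sets), which is needed to invoke \eqref{a_iso} in the first place.
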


\begin{proof}
By Remark \ref{harva}
we may restrict ourselves to those points $x\in Q_0$ that belong to at most finitely many cubes
in the family $\cup_{j\in\N_0} \mathcal{D}^j$. 
Because the family  $\cup_{j\in\N}\mathcal{D}^j$ is countable,
we can also assume that $\mu(Q_{x,k})\not=0$ for every $k\in\Z$ where
$(Q_{k,x})_{k\in \Z}$ is the unique sequence of cubes such that
$x\in Q_{k,x}\in\mathcal{D}_k$ 
for every $k\in\Z$. 
Finally, we can also assume that 
\begin{equation}\label{lim}
\lim_{k\to -\infty}\langle b_Q\rangle_{Q_{k,x}} =b_Q(x),\qquad \text{ if }Q\in\bigcup_{j\in\N_0}\mathcal{D}^j.
\end{equation}
Indeed, by martingale convergence,
the identity \eqref{lim} holds true 
for
almost every $x\in Q_0$ if $Q$ is fixed, and the family  $\cup_{j\in\N}\mathcal{D}^j$ is countable. 

Fix a point $x$ as described above
and consider the sequence $(Q_{k,x})_{k\in\Z}$.
Note that
$x\in Q_{k,x}\subset Q_{k,x}^a$ for every $k\le s$. In particular, there is an index $k(x)\le s$ such that $Q_{k,x}^a=Q_{k(x),x}^a$ if $k\le k(x)$. As a consequence,  for $k\le k(x)$, we can write
\[
b_k^a(x)=\sum_{Q\in\mathcal{D}_k} 1_Q(x) b_{Q^a}(x)=b_{Q_{k,x}^a}(x)=b_{Q_{k(x),x}^a}(x).
\]
It follows that $\lim_{k\to -\infty}Êb_k^a(x)=b_{Q_{k(x),x}^a}$.
On the other hand, if $k\le k(x)$, we use the assumption \eqref{lim} for 
\begin{equation}\label{valinta}
\begin{split}
E_k b_k^a(x)=\langle b_k^a\rangle_{Q_{k,x}} 
=\langle b_{Q_{k,x}^a}\rangle_{Q_{k,x}}&=\langle b_{Q_{k(x),x}^a}\rangle_{Q_{k,x}}
\\&\xrightarrow{k\to-\infty} b_{Q_{k(x),x}^a}(x)=\lim_{k\to -\infty}Êb_k^a(x).
\end{split}
\end{equation}
This is as required.
Finally, since $\mu(Q_{k,x})\not=0$ for every $k$, we can use
\eqref{a_iso} to conclude that
$|b_{-\infty}^a(x)|=\lim_{k\to -\infty}Ê|\langle b_{Q_{k,x}^a}\rangle_{Q_{k,x}}|\ge \delta^2$.
\end{proof}

With the aid of this lemma, we can establish useful
convergence results. For this purpose, we fix $f\in L^1_{\mathrm{loc}}(\R^N;X)$. 
By martingale convergence,
$\lim_{k\to -\infty} E_k f(x)=f(x)$
for
$\mu$-a.e. $x\in \R^N$ and,
as a consequence of Lemma \ref{pist}, we have
\begin{equation}\label{fkonv}
E_k^a f(x)=b_k^a(x)\frac{E_k f(x)}{E_k b_k^a(x)}\xrightarrow{k\to -\infty} f(x)
\end{equation}
for $\mu$-a.e. $x\in \R^N$.
Recall that $\ell(Q_0)=2^s$ and $\textrm{supp} \mu\subset Q_0$. Hence, for
points $x$ in $Q_0=Q_0^a$, we have
\[
E_s^a f(x) = b_s^a(x) \frac{E_s f(x)}{E_s b_s^a(x)}=b_{Q_0}(x)\frac{\langle f\rangle_{Q_0}}{\langle b_{Q_0}\rangle_{Q_0}}
=b_{Q_0}(x)\frac{\langle f\rangle_{\R^N}}{\langle b_{Q_0}\rangle_{\R^N}}.
\]
Using also \eqref{fkonv} yields  the decomposition
\begin{equation}\label{dec}
\begin{split}
f-b_{Q_0}\frac{\langle f\rangle_{\R^N}}{\langle b_{Q_0}\rangle_{\R^N}}&=\lim_{k\to-\infty} E_k^a f-
E_s^a f \\&=\lim_{k\to -\infty} \sum_{j=k+1}^s \underbrace{\big(E_{j-1}^a f- E_j^a f\big)}_{=D_j^a f}
=\sum_{j=-\infty}^\infty D_j^a f
\end{split}
\end{equation}
that is valid  $\mu$-almost everywhere in $\R^N$.
In the last step we used the 
identity $E_i^a f = E_s^a f$ if $i\ge s$ for $s$ defined in  \eqref{rsubs};
hence,
 $D_j^a f = 0$ $\mu$-almost everywhere if $j>s$.

Let us then consider the convergence in the $L^p$-norm with $1<p<\infty$.
In order to do this, we fix $f\in L^p(\R^N,\mu;X)$.
Let $j\le s$. Using \eqref{a_iso}, we
see that $|E_j b_j^a|\ge \delta^2$ almost everywhere and, by \eqref{accre}, we have $\|b_j^a\|_{L^\infty(\mu)}\le 1$. Hence the following
norm-estimates are valid pointwise $\mu$-almost everywhere
\begin{equation*}
\|E_j^a f\|_X=\bigg\|b_j^a \frac{E_j f}{E_j b_j^a}\bigg\|_X\le \delta^{-2}\|E_j f\|_X
\leq\delta^{-2}Mf\in L^p(\R^N,\mu;\R),
\end{equation*}
where $M$ is Doob's maximal operator.
Hence, by the dominated convergence theorem, we see that the
decomposition \eqref{dec} converges in $L^p(\R^N,\mu;X)$.

\section{Norm estimates for adapted martingales}\label{normest}

We prove various `square-function' estimates
for the adapted
martingale differences and their adjoints, see Theorems
\ref{nests}, \ref{haa} and \ref{normequ}.
The first result is true for general UMD spaces, whereas
the $\mathrm{UMD}$ function lattice property is needed for the last two theorems.
This dichotomy between the square-function estimates for the $D_j^a$ and their adjoints, $(D_j^a)^*$, seems somewhat unexpected: In the original scalar-valued argument of Nazarov, Treil and Volberg \cite[Section~3]{NTV}, only the estimate for $D_j^a$ is proven explicitly, while the dual case is just stated, suggesting that it should follow in a similar way. To some extent it does, but this similarity seems to break down in the vector-valued realm, and we will give a careful consideration of both estimates in this section. We begin with:

\begin{lause}\label{nests}
Let $X$ be a $\mathrm{UMD}$ space and $1<p<\infty$. Then
\begin{equation}\label{thal}
\bigg\|\sum_{j=-\infty}^\infty \epsilon_j D_j^a f\bigg\|_{L^p(\R^N\times\Omega,\mu\otimes \mathbf{P},X)}\lesssim\|f\|_{L^p(\R^N,\mu;X)}
\end{equation}
for every $f\in L^p(\R^N,\mu;X)$.
\end{lause}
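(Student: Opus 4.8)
The plan is to reduce the square-function estimate for the adapted differences $D_j^a$ to the corresponding (known) square-function estimate for the \emph{unadapted} martingale differences $D_j$ in the UMD space $X$, via the representation of $D_Q^a$ in Lemma \ref{phiprop}. First I would use that representation to write, for each $j$ and each $Q\in\mathcal{D}_j$, $D_Q^a f=\sum_{i=1}^{2^N}\langle f\rangle_{Q_i}\phi_{Q,i}^a$, so that summing over $Q\in\mathcal{D}_j$ gives $D_j^a f=\sum_{Q\in\mathcal{D}_j}\sum_i \langle f\rangle_{Q_i}\phi_{Q,i}^a$. The key point is that the coefficient $\langle f\rangle_{Q_i}$ is a cube-average, i.e. it equals $E_{j-1}f$ restricted to $Q_i$; and by Lemma \ref{phiprop} the functions $\phi_{Q,i}^a$ are uniformly bounded in $L^\infty(\mu)$, supported in $Q$, and have vanishing $\mu$-integral. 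So morally $D_j^a f$ is obtained from $E_{j-1}f$ by multiplication by a bounded, mean-zero, $\mathcal{D}_{j}$-measurable-type kernel.

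The core of the argument is then a randomized comparison. I would bring in the Rademacher variables $\epsilon_j$ and estimate $\big\|\sum_j\epsilon_j D_j^a f\big\|_{L^p(\R^N\times\Omega;X)}$ by first replacing the factor $E_{j-1}f = E_{j-1}f$ appearing in each term: writing $E_{j-1}f=\sum_{k\le j-1}(-D_k f)+ (\text{limit term})$ would be one route, but cleaner is to note $D_j^a f = \Phi_j E_{j-1} f$ where $\Phi_j$ is, on each $Q\in\mathcal D_j$, the bounded mean-zero multiplier $\sum_i \phi_{Q,i}^a \mathbf 1_{Q_i}/\ldots$ — more precisely, using $\langle f\rangle_{Q_i}=E_{Q_i}^{\phantom{a}}f = \langle E_{j-1}f\rangle_{Q_i}$, we have $D_j^a f$ is a pointwise operation applied to $E_{j-1}f$. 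Then apply the UMD property (unconditionality of the martingale differences $D_k f$) together with Stein's inequality (stated in the excerpt) to control $\big\|\sum_j \epsilon_j \Phi_j E_{j-1}f\big\|_p$. The mean-zero property (d) of $\phi_{Q,i}^a$ is what lets one re-expand $E_{j-1}f$ as a telescoping/martingale sum and absorb the cross terms, so that one is left estimating $\big\|\sum_j\epsilon_j \Phi_j \sum_{k\ge j} D_k f\big\|$ or similar; combined with the boundedness $\|\Phi_j\|_\infty\lesssim 1$, the contraction principle, and Stein's inequality one arrives at $\lesssim\|\sum_j\epsilon_j D_j f\|_p\lesssim \|f\|_p$, the last step being the UMD square-function bound. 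Alternatively one can cite directly the corresponding result from \cite{hytonen1} for the globally adapted differences and observe that the local $\phi_{Q,i}^a$ satisfy exactly the same three structural properties (a), (c), (d), so the proof given there applies verbatim with $Q^a$ in place of the global testing function.

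I expect the main obstacle to be the bookkeeping around the overdetermined/frame nature of the expansion: unlike in \cite{hytonen1}, the $\phi_{Q,i}^a$ are not a Haar basis, so one cannot simply invoke biorthogonality, and one must be careful that the reassembly of $\sum_j\epsilon_j D_j^a f$ into something controlled by $\sum_j\epsilon_j D_j f$ does not lose the randomized structure — in particular that the multipliers $\Phi_j$ are $\sigma(\mathcal D_{j})$-measurable (hence commute with the relevant conditional expectations up to controllable error) so that Stein's inequality genuinely applies. The vanishing-integral property (d) and the upper measure bound \eqref{mesas} (ensuring $\mu(Q_i)\lesssim\mu(Q)$ is \emph{not} what is needed here, rather that the $\phi$'s are genuinely mean-zero) are the features that make the telescoping collapse; once that is set up correctly, the remaining estimates are routine applications of the contraction principle, Stein's inequality, and the definition of UMD.
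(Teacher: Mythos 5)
Your plan starts from the representation $D_Q^a f=\sum_i\langle f\rangle_{Q_i}\phi_{Q,i}^a$ and hopes to close the argument with the contraction principle, Stein's inequality and the $\mathrm{UMD}$ square-function bound, possibly after a ``telescoping'' manipulation exploiting $\int\phi_{Q,i}^a\,d\mu=0$. This is not enough, and the gap is exactly at the step you flag as ``controlling $\|\sum_j\epsilon_j\Phi_jE_{j-1}f\|_p$.'' Randomized sums of the form $\sum_j\epsilon_j d_jE_{j-1}f$ with $d_j$ merely bounded and $\mathcal D_{j-1}$-measurable are \emph{not} bounded in $L^p$ by $\|f\|_p$ in general (take $d_j\equiv 1$): one needs a Carleson measure condition on $\{d_j\}$, which is the content of Theorem \ref{carleson}. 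Neither Stein's inequality nor the mean-zero property of $\phi_{Q,i}^a$ gives you that condition; the mean-zero property is in the ``output'' variable of $\phi_{Q,i}^a$, not the coefficient $\langle f\rangle_{Q_i}$, and no telescoping in $k$ collapses the conditional expectations $E_{j-1}f$ into bona fide martingale differences without leaving behind precisely the paraproduct piece that must be handled by Carleson embedding.

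The paper's proof is structured quite differently. It decomposes $D_j^a f$ according to whether $b_{j-1}^a=b_j^a$ or $b_{j-1}^a\ne b_j^a$ (i.e.\ whether the stopping function changes across level $j$), and then treats two paraproduct-type terms by Carleson embedding: Lemma \ref{carut} shows that the indicators $\chi_j=1_{\{b_j^a\ne b_{j+1}^a\}}$ form a Carleson sequence, and Lemma \ref{carut2} shows that $D_jb_j^a$ does so as well; both rest on the sparseness estimate of Lemma \ref{basicsum} for the stopping-time layers $\mathcal D^j$. This sparseness is the genuinely new ingredient in the local accretive setting, and it has no counterpart in the global $Tb$ argument of \cite{hytonen1} where $b_k^a\equiv b$ for all $k$. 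For this reason the remark that the proof from \cite{hytonen1} ``applies verbatim'' is also an over-reach: in the local case, the Carleson condition for the stopping indicators $\chi_j$ is a nontrivial additional input, and checking it is a major part of the argument. What is needed, and what you should add, is: (i) the split into $\{b_{j-1}^a=b_j^a\}$ and $\{b_{j-1}^a\ne b_j^a\}$; (ii) the reduction of the resulting ``coefficient times $E_{j-1}f$'' pieces to Carleson embedding via Theorem \ref{carleson}; and (iii) the Carleson-norm bounds from Lemma \ref{basicsum}.
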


In what follows
we prove Theorem \ref{nests}. For this purpose we first prove
various lemmata;  the following  is a consequence  of Theorem \ref{carleson}.
\begin{lem}\label{carut}
Let $X$ be a $\mathrm{UMD}$ space and $1<p<\infty$. Then
\begin{equation}\label{vals}
\bigg\|\sum_{j=-\infty}^\infty \epsilon_j 1_{\{b_{j-1}^a\not=b_j^a\}}E_{j-1}f \bigg\|_{L^p(\R^N\times\Omega,\mu\otimes\mathbf{P};X)}\lesssim \|f\|_{L^p(\R^N,\mu;X)}
\end{equation}
for every $f\in L^p(\R^N,\mu;X)$.
\end{lem}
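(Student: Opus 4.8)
The plan is to recognize \eqref{vals} as an instance of the Carleson embedding theorem, Theorem~\ref{carleson}, applied to a suitably chosen scalar sequence $\{d_j\}$. The natural candidate is $d_j := 1_{\{b_{j-1}^a \neq b_j^a\}} = \chi_{j-1}$, using the notation \eqref{bmaar}; then the left side of \eqref{vals} is exactly $\|\sum_j \epsilon_j d_j E_{j-1} f\|_{L^p}$. There is a harmless index shift between $E_{j-1}f$ here and $E_j f$ in Theorem~\ref{carleson}, which can be absorbed by reindexing (replacing $j$ by $j+1$ throughout), so the real content is to verify the two hypotheses of Theorem~\ref{carleson}: first, that $d_j = E_j d_j$ (measurability with respect to $\mathcal{D}_j$), and second, that $\{d_j\}_{j\in\Z}$ has finite $\mathrm{Car}^1(\mathcal{D})$ norm.

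For the measurability: by \eqref{bmaar}, $\{b_{j-1}^a \neq b_j^a\} = \bigcup\{Q : Q\in\mathcal{D}_{j-1},\ Q=Q^a\neq Q_0\}$, which is a union of cubes in $\mathcal{D}_{j-1}$, hence $1_{\{b_{j-1}^a\neq b_j^a\}}$ is constant on cubes of $\mathcal{D}_{j-1}$ and therefore also on cubes of $\mathcal{D}_j$ after the index shift aligns things; in any case it is $E_j$-measurable for the appropriate index, so $d_j = E_j d_j$ holds. The main work is the Carleson estimate. Fix $Q\in\mathcal{D}$ with $\mu(Q)\neq 0$; we must bound $\|1_Q \sum_{j: 2^j \le \ell(Q)} \epsilon_j d_j\|_{L^1(\R^N\times\Omega,\mu\otimes\mathbf{P})}$ by $C\mu(Q)$. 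Since the $d_j$ are indicator functions of unions of cubes from distinct generations, and by \eqref{bmaar} the cube $S\in\mathcal{D}_{j-1}$ contributes to $\{b_{j-1}^a\neq b_j^a\}$ precisely when $S = S^a$, i.e. when $S$ belongs to some layer $\mathcal{D}^m$ with $m\ge 1$ (it is its own "$a$-cube"). Thus $\sum_j 1_{\{b_{j-1}^a\neq b_j^a\}}(x)$ counts, up to the zeroth layer, the number of layer-cubes in $\bigcup_{m\ge 1}\mathcal{D}^m$ containing $x$. Using Khintchine's inequality in $L^1(\Omega)$, the Rademacher sum is comparable to the square-function $(\sum_j d_j)^{1/2}$, and since the $d_j(x)$ are $0$ or $1$ this is $(\#\{j : x\in\text{layer-cube of generation corresponding to }j\})^{1/2}$. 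Integrating over $Q$ and using Cauchy--Schwarz reduces matters to showing $\int_Q \sum_{j} d_j\, d\mu \lesssim \mu(Q)$, i.e. that the total $\mu$-mass of layer-cubes inside $Q$ is bounded by a constant times $\mu(Q)$ — which is exactly what Lemma~\ref{basicsum} (geometric decay $(1-\tau)^{j-1}$) combined with the argument of Remark~\ref{harva} gives: the function $\sum_{m\ge 1}\sum_{S\in\mathcal{D}^m} 1_S$ has $L^1$-norm on any layer-cube controlled by a geometric series, and this localizes to an arbitrary $Q$ by comparing $Q$ with $Q^a$.

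The step I expect to be the main obstacle is the bookkeeping needed to pass cleanly from the condition "$b_{j-1}^a \neq b_j^a$" to the statement about membership in the layers $\mathcal{D}^m$, and then to localize the Carleson sum to an arbitrary dyadic cube $Q$ (not just to a layer-cube $Q_0$). For a general $Q$, one restricts attention to $j$ with $2^j \le \ell(Q)$; the relevant layer-cubes $S\subsetneq Q$ are those with $S\in\mathcal{D}^m$, $S\subsetneq Q$, and one must sum $\mu(S)$ over all such $S$ across all generations $m$. Writing $Q^a\in\mathcal{D}^M$ and applying Lemma~\ref{basicsum} with this $Q$ gives $\sum_{S\in\mathcal{D}^{M+j}, S\subsetneq Q}\mu(S)\le (1-\tau)^{j-1}\mu(Q)$ for $j\ge 1$, and summing the geometric series yields the bound $C\mu(Q)$ with $C$ depending only on $\tau$ (hence on $\delta$). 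Once this is in hand, Theorem~\ref{carleson} applies verbatim and delivers \eqref{vals}. A minor point to double-check is the treatment of the cube $Q_0$ itself (the zeroth layer), which is excluded in \eqref{bmaar} and contributes nothing, so no special care is needed there.
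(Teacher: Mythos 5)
Your proposal is correct and follows the same route as the paper: recognize the left-hand side as a Carleson embedding with $d_k=\chi_k=1_{\{b_k^a\neq b_{k+1}^a\}}$, check $E_k d_k=d_k$ from \eqref{bmaar}, and bound the $\mathrm{Car}^1$ norm via Lemma \ref{basicsum} applied across the layer structure, which is exactly the argument the authors give. The only cosmetic deviation is in reducing the Carleson $L^1$-norm to $\int_Q\sum_k\chi_k\,d\mu\lesssim\mu(Q)$: you pass through Khintchine in $L^1(\Omega)$ and Cauchy--Schwarz, whereas the paper uses the plain triangle inequality $\bigl\|1_Q\sum_k\epsilon_k\chi_k\bigr\|_{L^1(\mu\otimes\mathbf{P})}\le\sum_k\|1_Q\chi_k\|_{L^1(\mu)}$, which is slightly shorter and does not need the fact that $\chi_k\in\{0,1\}$; both are valid and land on the same estimate.
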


\begin{proof}
If $j\in\Z$, then $\chi_{j-1}:=1_{\{b_{j-1}^a\not=b_j^a\}}=E_{j-1}1_{\{b_{j-1}^a\not=b_j^a\}}$
belongs to $L^1(\R^N,\mu;\R)$. Therefore we can invoke the Carleson
embedding theorem \ref{carleson}. Hence we can bound
the left hand side of \eqref{vals} by a constant multiple of
$\|\{\chi_j\}_{j\in\Z}\|_{\mathrm{Car}^1(\mathcal{D})}\|f\|_p$.
The first factor is estimated as follows:
\begin{align*}
\|\{\chi_j\}_{j\in\Z}\|_{\mathrm{Car}^1(\mathcal{D})}
\le 
\sup_{\substack{Q\in\mathcal{D}\\ \mu(Q)\not=0}} \frac{1}{\mu(Q)} \sum_{j\,:\,2^j\le \ell(Q)} \|1_Q\chi_j\|_{L^1}.
\end{align*}
Fix $Q\in\mathcal{D}$ with $\mu(Q)\not=0$. Fix $r\in\N$ such that
$Q^a\in\mathcal{D}^r$. Using the definition \eqref{bmaar} and
Lemma \ref{basicsum}, we obtain
\begin{align*}
 \sum_{j:2^j\le \ell(Q)} \|1_Q\chi_j\|_1
&\le \mu(Q)+\sum_{k=1}^\infty \sum_{S\in\mathcal{D}^{r+k}:S\subsetneq Q} \mu(S)\\&\le \mu(Q)+\sum_{k=1}^\infty (1-\tau)^{k-1}\mu(Q) \lesssim \mu(Q).
\end{align*}
Taking the supremum over $Q\in\mathcal{D}$ as above, we have
$\|\{\chi_j\}_{j\in\Z}\|_{\mathrm{Car}^1(\mathcal{D})}\lesssim 1$.
\end{proof}

Another useful estimate is the following.

\begin{lem}\label{carut2}
Let $X$ be a $\mathrm{UMD}$ space and $1<p<\infty$. Then
\begin{equation}\label{es12}
\bigg\|\sum_{j=-\infty}^\infty \epsilon_j 
1_{\{b_{j-1}^a=b_j^a\}}\bigg(\frac{E_{j-1}f}{E_{j-1} b_{j-1}^a}-\frac{E_j f}{E_j b_j^a}\bigg)
\bigg\|_{L^p(\R^N\times\Omega,\mu\otimes\mathbf{P};X)}\lesssim \|f\|_{L^p(\R^N,\mu;X)}
\end{equation}
for every $f\in L^p(\R^N,\mu;X)$.
\end{lem}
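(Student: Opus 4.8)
The plan is to reduce the estimate in Lemma~\ref{carut2} to an application of the vector-valued Carleson embedding theorem, Theorem~\ref{carleson}, by rewriting the telescoping difference inside the sum as a single martingale-type object with a controllable coefficient. First I would introduce, for each $j$, the scalar function $c_j := 1/E_j b_j^a$; by \eqref{a_iso} we have $|E_j b_j^a| \ge \delta^2$, so $\|c_j\|_{L^\infty(\mu)} \le \delta^{-2}$, and $c_j = E_j c_j$ is $\mathcal{D}_j$-measurable. On the set $\{b_{j-1}^a = b_j^a\}$ we also have $b_{j-1}^a = b_j^a$, hence (since $b_j^a$ is $\mathcal{D}_j$-measurable and $b_{j-1}^a$ is $\mathcal{D}_{j-1}$-measurable, so on this set the common value is $\mathcal{D}_j$-measurable too) the term becomes $1_{\{b_{j-1}^a = b_j^a\}}(c_{j-1} E_{j-1}f - c_j E_j f)$.

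Next I would split this into two pieces. The genuinely telescoping part is handled by noting that $1_{\{b_{j-1}^a = b_j^a\}} c_{j-1} E_{j-1} f - c_j E_j f = D_j(c\cdot f)$-like, but more carefully: write $1_{\{b_{j-1}^a = b_j^a\}}(c_{j-1}E_{j-1}f - c_jE_jf) = (c_{j-1}E_{j-1}f - c_jE_jf) - 1_{\{b_{j-1}^a\neq b_j^a\}}(c_{j-1}E_{j-1}f - c_jE_jf)$. The first bracket is, by the product rule for martingale differences, $E_{j-1}(\text{something})$ plus a lower-order term; more efficiently, since $c_{j-1}E_{j-1}f = E_{j-1}(c_{j-1}f)$ when $c_{j-1}$ is $\mathcal{D}_{j-1}$-measurable... but $c_{j-1}$ is only $\mathcal{D}_{j-1}$-measurable, so indeed $c_{j-1}E_{j-1}f = E_{j-1}(c_{j-1}f)$. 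Hence $c_{j-1}E_{j-1}f - c_jE_jf = E_{j-1}(c_{j-1}f) - E_j(c_jf) + E_j((c_j - c_{j-1})f)$, where the first two form a telescoping (adapted) martingale difference of the function $f$ with slowly varying coefficients, and the last is an error term with coefficient $c_j - c_{j-1}$, supported essentially where the accretive ancestor changes, i.e.\ on $\{b_{j-1}^a \ne b_j^a\}$. The telescoping main part sums (in the $\epsilon_j$-randomized norm) to something controlled by $\|f\|_p$ via UMD and Stein's inequality, much as in \eqref{thal}; and the error part, together with the second bracket above, is of the form $\sum_j \epsilon_j d_j E_{j-1}f$ with $d_j$ a bounded function supported on $\{b_{j-1}^a\neq b_j^a\}$, to which Theorem~\ref{carleson} applies exactly as in the proof of Lemma~\ref{carut}, the Carleson norm being $\lesssim 1$ by the same computation using Lemma~\ref{basicsum}.

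The main obstacle I anticipate is bookkeeping the measurability and the telescoping carefully enough that no term is lost: the coefficient $c_{j-1} = 1/E_{j-1}b_{j-1}^a$ is $\mathcal{D}_{j-1}$-measurable, so $E_{j-1}$ commutes with it, but when one telescopes $\sum_{j=k+1}^s E_j(c_jf) - E_{j-1}(c_{j-1}f)$ one gets $E_s(c_sf) - E_k(c_kf)$, and it is \emph{not} true that $c_jf$ is a single function independent of $j$, so the naive telescoping identity fails and one must instead keep the "commutator" terms $E_j((c_j-c_{j-1})f)$ explicitly; verifying that these commutators are, up to the indicator $1_{\{b_{j-1}^a = b_j^a\}}$, actually zero or else absorbed into the Carleson-type sum is the delicate point. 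A cleaner route, which I would actually pursue to avoid this, is: on $\{b_{j-1}^a = b_j^a\}$ we have $c_{j-1} = c_j$ as well? No --- $E_{j-1}b_{j-1}^a \ne E_j b_j^a$ in general even when $b_{j-1}^a = b_j^a$ pointwise. So instead I would write $1_{\{b_{j-1}^a = b_j^a\}} b_j^a(E_{j-1}f/E_{j-1}b_j^a - E_jf/E_jb_j^a)$ directly (using $b_{j-1}^a = b_j^a$ on this set to replace $b_{j-1}^a$ by $b_j^a$ in the denominator's argument) and recognize $E_{j-1}f/E_{j-1}b_j^a - E_jf/E_jb_j^a$ as the $j$-th martingale difference of the $b_j^a$-adapted expansion of $f$ restricted to where the ancestor does not change --- which is precisely an honest (globally) $b$-adapted martingale difference on the "stopped" region, so Theorem~\ref{nests} applied locally, combined with Lemma~\ref{carut}, finishes it. I would present whichever of these two routes turns out to have the shorter write-up, most likely the second.
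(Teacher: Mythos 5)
The second of your two proposed routes --- the one you say you would most likely present --- is circular: Theorem~\ref{nests} is proved in the paper \emph{using} Lemma~\ref{carut2} (it is invoked explicitly to estimate the $I(j)$ term in \eqref{es14}), so the square-function bound for $D_j^a$ cannot be invoked to prove this lemma. Once you notice that $E_{j-1}f/E_{j-1}b_j^a - E_jf/E_jb_j^a$ ``looks like a $b$-adapted martingale difference,'' you are in effect looking at the very quantity that Lemma~\ref{carut2} is designed to bound as a preliminary to Theorem~\ref{nests}; recognizing the shape does not discharge the estimate.

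Your first route is closer to the mark, but as you yourself observe toward the end, the claim that $c_j - c_{j-1}$ is ``supported essentially where the accretive ancestor changes'' is false: with $c_j = 1/E_j b_j^a$, even on $\{b_{j-1}^a = b_j^a\}$ one only has $E_{j-1}b_{j-1}^a = E_{j-1}b_j^a \ne E_j b_j^a$, so the coefficient does not vanish there. In fact on this set
\[
c_j - c_{j-1} \;=\; \frac{E_{j-1}b_j^a - E_j b_j^a}{E_jb_j^a\,E_{j-1}b_j^a} \;=\; \frac{D_j b_j^a}{E_jb_j^a\,E_{j-1}b_j^a},
\]
which is precisely a $D_j b_j^a$-type Carleson coefficient whose Carleson norm is \emph{not} controlled by the same counting argument as for $1_{\{b_{j-1}^a\neq b_j^a\}}$. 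It requires the separate estimate carried out in the paper: one has to bound $\bigl\|\sum_{R\subset Q}\epsilon_R D_R b_{R^a}\bigr\|_{L^1}$ by $\mu(Q)$, splitting the sum according to the layers $\mathcal{D}^j$ and applying Cauchy--Schwarz together with Lemma~\ref{basicsum} (the estimate called $\Sigma_Q$ in the paper). Without that input, your Carleson bound for the error term has no justification. The paper avoids the extra bookkeeping in your first route by not commuting $c_j$ inside $E_j$ at all: on $\{b_{j-1}^a=b_j^a\}$ it directly uses $E_{j-1}b_{j-1}^a = E_{j-1}b_j^a$ and the elementary identity
\[
\frac{E_{j-1}f}{E_{j-1}b_j^a} - \frac{E_jf}{E_jb_j^a}
\;=\; E_{j-1}f\cdot\frac{-D_j b_j^a}{E_{j-1}b_j^a\,E_jb_j^a} \;+\; D_j f\cdot\frac{1}{E_j b_j^a},
\]
treating the first term by the Carleson embedding (with the $\Sigma_Q$ estimate) and the second by the contraction principle and the UMD property --- which is essentially what your first route would become once the missing Carleson estimate for $D_j b_j^a$ is supplied.
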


\begin{proof}
Using  \eqref{a_iso} and
that \begin{equation}\label{1ids}
1_{\{b_{j-1}^a=b_j^a\}}E_{j-1} b_{j-1}^a=1_{\{b_{j-1}^a=b_j^a\}}E_{j-1} b_{j}^a,
\end{equation}
we see that the following identities hold pointwise $\mu$-almost everywhere in
${\{b_{j-1}^a=b_j^a\}}$:
\begin{align*}
\frac{E_{j-1}f}{E_{j-1} b_{j-1}^a}-\frac{E_j f}{E_j b_j^a}&=
\frac{E_{j-1}f}{E_{j-1} b_j^a}-\frac{E_j f}{E_j b_j^a}\\&=E_{j-1}f\bigg(\frac{1}{E_{j-1} b_j^a}-\frac{1}{E_j b_j^a}\bigg)
+(E_{j-1}f-E_j f)\frac{1}{E_j b_j^a}\\
&=E_{j-1}f\frac{-D_j b_j^a}{E_{j-1}b_j^aE_j b_j^a} + D_j f\frac{1}{E_j b_j^a}.
\end{align*}
Hence, the left hand side of \eqref{es12} is dominated by
\begin{equation}\label{es10}
\begin{split}
\bigg\|\sum_{j=-\infty}^\infty \epsilon_j 1_{\{b_{j-1}^a=b_j^a\}}\frac{D_j b_j^a}{E_{j-1}b_j^aE_j b_j^a}E_{j-1}f \bigg\|_{L^p}+
\bigg\|\sum_{j=-\infty}^\infty \epsilon_j 1_{\{b_{j-1}^a=b_j^a\}}\frac{1}{E_j b_j^a}D_j f \bigg\|_{L^p}.
\end{split}
\end{equation}
Observe that by \eqref{a_iso} we have
$|E_j b_j^a|\ge \delta^{2}$  for $\mu$-almost every point. Therefore the contraction 
principle gives
the following estimate for the second term in \eqref{es10}:
\[
\bigg\|\sum_{j=-\infty}^\infty \epsilon_j 1_{\{b_{j-1}^a=b_j^a\}}\frac{1}{E_j b_j^a}D_j f \bigg\|_{L^p}
\lesssim\bigg\|\sum_{j=-\infty}^\infty \epsilon_jD_j f \bigg\|_{L^p}.
\]
The $\mathrm{UMD}$-property of $X$ allows us to dominate
the right hand side by a constant multiple of $\|f\|_p$.   

For the first term in \eqref{es10} we use the Carleson embedding theorem.
Using \eqref{1ids} and \eqref{a_iso} we see that
$1_{\{b_{j-1}^a=b_j^a\}}|E_{j-1} b_j^a|$ and $|E_{j} b_j^a|$ are bounded from below by $\delta^2$ in $\mu$-almost every point.
Thus, the contraction principle gives the estimate
\begin{equation}\label{caruse}
\begin{split}
&\bigg\|\sum_{j=-\infty}^\infty \epsilon_j 1_{\{b_{j-1}^a=b_j^a\}} \frac{D_j b_j^a}{E_{j-1}b_j^aE_j b_j^a}E_{j-1}f \bigg\|_{L^p}
\lesssim\bigg\|\sum_{j=-\infty}^\infty \epsilon_j D_j b_j^aE_{j-1}f \bigg\|_{L^p}.
\end{split}
\end{equation}
If $j\in\Z$, then $d_{j-1}:=D_j b_j^a\in L^1(\R^N,\mu;\R)$ and 
$D_j b_j^a=E_{j-1}D_j b_j^a$.
Therefore the Carleson embedding Theorem \ref{carleson} applies, and it gives
the estimate
\begin{equation*}
\bigg\|\sum_{j=-\infty}^\infty \epsilon_j D_j b_j^aE_{j-1}f \bigg\|_{L^p}\lesssim\|f\|_{L^p} \sup_{\substack{Q\in\mathcal{D}\\ \mu(Q)\not=0}} \frac{1}{\mu(Q)} \bigg\|1_Q \sum_{k\,:\,2^k\le \ell(Q)} \epsilon_kd_k\bigg\|_{L^1(\R^N\times\Omega,\mu\otimes\mathbf{P};\R)}.
\end{equation*}
In order to estimate the right hand side, we fix a cube $Q\in\mathcal{D}$ for which $\mu(Q)\not=0$. We have
\begin{align*}
\bigg\|1_Q \sum_{k\,:\,2^k \le \ell(Q)} \epsilon_kd_k\bigg\|_{L^1}\le\bigg\|1_Q \sum_{k\,:\,2^k\le \ell(Q)} \epsilon_{k-1} D_{k}b_{k}^a\bigg\|_{L^1}+C\mu(Q).
\end{align*}
The first term on the right hand side is
\begin{align*}
&\bigg\|1_Q \sum_{k\,:\,2^k\le  \ell(Q)} \epsilon_{k-1} \sum_{R\in\mathcal{D}_k:R\subset Q}1_RD_{k}b_{k}^a\bigg\|_{L^1}\\&=
\bigg\| \sum_{k\,:\,2^k\le  \ell(Q)} \epsilon_{k-1} \sum_{R\in\mathcal{D}_k:R\subset Q}D_{R}b_{R^a}\bigg\|_{L^1}
=\bigg\|\sum_{R\in\mathcal{D}:R\subset Q}\epsilon_RD_{R}b_{R^a}\bigg\|_{L^1}
=:\Sigma_Q.
\end{align*}
Assume that $Q^a\in \mathcal{D}^u$, where $u\in \N$. We
write  $\Sigma_Q$ in terms of the layers of cubes as follows
\begin{align*}
\Sigma_Q=\bigg\|\bigg(\sum_{R\subset Q:R^a=Q^a}+\sum_{j=1}^\infty \sum_{S\subsetneq Q:S\in\mathcal{D}^{u+j}}\sum_{R\subset S:R^a=S}\bigg)\epsilon_RD_Rb_{R^a}\bigg\|_{L^1}.
\end{align*}
The triangle inequality gives
\begin{equation}\label{kaksi}
\begin{split}
\Sigma_Q&\le \bigg\|1_Q\sum_{R\subset Q:R^a=Q^a}\epsilon_RD_Rb_{Q^a}\bigg\|_{L^1} \\&\qquad+ \sum_{j=1}^\infty\sum_{S\subsetneq Q:S\in\mathcal{D}^{u+j}}\bigg\|1_S\sum_{R\subset S:R^a=S}\epsilon_RD_Rb_{S}\bigg\|_{L^1}=:\Sigma_{Q,1}+\Sigma_{Q,2}.
\end{split}
\end{equation}
Observe that
$D_R b_{Q^a} = D_R (1_Q b_{Q^a})$ if $R\subset Q$. Hence, by applying H\"older's inequality and
\eqref{accre},
\begin{align*}
\Sigma_{Q,1}\le \|1_Q\|_{L^2}\bigg\|\sum_{R\subset Q:R^a=Q^a}\epsilon_RD_R (1_Qb_{Q^a})\bigg\|_{L^2}\lesssim \mu(Q)^{1/2}\|1_Qb_{Q^a}\|_{L^2}\lesssim \mu(Q).
\end{align*}
The second term $\Sigma_{Q,2}$ is first estimated in a similar manner. Then we use
Lemma \ref{basicsum} as follows:
\begin{equation}\label{carsumma}
\Sigma_{Q,2}\lesssim \sum_{j=1}^\infty\sum_{S\subsetneq Q:S\in\mathcal{D}^{u+j}}\mu(S)\le \mu(Q)\sum_{j=1}^\infty (1-\tau)^{j-1}\lesssim\mu(Q).
\end{equation}
We have shown that $\Sigma_Q\le \Sigma_{Q,1}+\Sigma_{Q,2}\lesssim \mu(Q)$. 
Collecting the estimates above, we see
that the left hand side of \eqref{caruse} 
is bounded by a constant multiple of $\|f\|_{L^p}$.
\end{proof}

We are now ready for the proof of Theorem \ref{nests}.

\begin{proof}[Proof of Theorem \ref{nests}]
We decompose $D_j^a f$ as follows:
\begin{equation}\label{dec2}
\begin{split}
D_j^a f &= E_{j-1}^a f- E_j^a f= b_{j-1}^a \frac{E_{j-1} f}{E_{j-1} b_{j-1}^a}-b_j^a\frac{E_j f}{E_j b_j^a}\\
&=1_{\{b_{j-1}^a=b_j^a\}} b_j^a\bigg(\underbrace{\frac{E_{j-1}f}{E_{j-1} b_{j-1}^a}-\frac{E_j f}{E_j b_j^a}}_{:=I(j)}\bigg)
\\&\qquad\qquad+1_{\{b_{j-1}^a\not=b_j^a\}} \bigg(\underbrace{ b_{j-1}^a\frac{E_{j-1} f}{E_{j-1} b_{j-1}^a}-b_j^a\frac{E_j f}{E_j b_j^a}}_{:=II(j)}\bigg).
\end{split}
\end{equation}
First, using the contraction principle followed by Lemma
\ref{carut2} yields
\begin{equation}\label{es14}
\begin{split}
&\bigg\|\sum_{j=-\infty}^\infty \epsilon_j 1_{\{b_{j-1}^a=b_j^a\}} b_j^a I(j)\bigg\|_{L^p}\le
\bigg\|\sum_{j=-\infty}^\infty \epsilon_j 1_{\{b_{j-1}^a=b_j^a\}} I(j)\bigg\|_{L^p}\lesssim
\|f\|_{L^p}.
\end{split}
\end{equation}

In order to estimate the remaining quantity, we fix $j\in\Z$ and use  
the identity $E_j f=E_{j-1}f - D_j f$ for
\begin{align*}
II(j)
=\bigg(\frac{b_{j-1}^a}{E_{j-1} b_{j-1}^a}-\frac{b_j^a}{E_j b_j^a}\bigg)E_{j-1}f
+\frac{b_j^a}{E_j b_j^a}D_j f.
\end{align*}
This representation leads to the estimate
\begin{equation}\label{es22}
\begin{split}
&\bigg\|\sum_{j=-\infty}^\infty \epsilon_j 1_{\{b_{j-1}^a\not =b_j^a\}} II(j)\bigg\|_{L^p}\\&\le
\bigg\|\sum_{j=-\infty}^\infty \epsilon_j 1_{\{b_{j-1}^a\not=b_j^a\}} \bigg(\frac{b_{j-1}^a}{E_{j-1} b_{j-1}^a}-\frac{b_j^a}{E_j b_j^a}\bigg)E_{j-1}f \bigg\|_{L^p}\\
&\qquad\qquad+
\bigg\|\sum_{j=-\infty}^\infty \epsilon_j 1_{\{b_{j-1}^a\not=b_j^a\}} \frac{b_j^a}{E_j b_j^a}D_j f \bigg\|_{L^p}.
\end{split}
\end{equation}
The last term above is estimated by using first 
\eqref{accre} and \eqref{a_iso} with the contraction principle, and then followed by the $\mathrm{UMD}$-property of $X$. This results
in the required upper bound  $c\|f\|_{L^p}$ for the term in question. 

Applying the contraction principle to the 
first term in the right hand side of \eqref{es22} yields the estimate
\begin{align*}
&\bigg\|\sum_{j=-\infty}^\infty \epsilon_j 1_{\{b_{j-1}^a\not=b_j^a\}} \bigg(\frac{b_{j-1}^a}{E_{j-1} b_{j-1}^a}-\frac{b_j^a}{E_j b_j^a}\bigg)E_{j-1}f \bigg\|_{L^p}\\&\lesssim \bigg\|\sum_{j=-\infty}^\infty \epsilon_j 1_{\{b_{j-1}^a\not=b_j^a\}}E_{j-1}f \bigg\|_{L^p}.
\end{align*}
Using Lemma \ref{carut}, we see that the last term can be dominated
by $c\|f\|_{L^p}$.

By collecting the estimates beginning from \eqref{es22}, we get 
\begin{equation}\label{2arvio}
\bigg\|\sum_{j=-\infty}^\infty \epsilon_j 1_{\{b_{j-1}^a\not =b_j^a\}} \bigg(b_{j-1}^a\frac{E_{j-1} f}{E_{j-1} b_{j-1}^a}-b_j^a\frac{E_j f}{E_j b_j^a}\bigg)\bigg\|_{L^p}\lesssim \|f\|_{L^p}.
\end{equation}
The required estimate \eqref{thal} follows now by combining the identity \eqref{dec2} with  the estimates \eqref{es14} and \eqref{2arvio}.
\end{proof}

\subsection*{Estimate for the adjoints $(D_k^a)^*$}
Here we prove a norm estimate under  the $\mathrm{UMD}$ function lattice assumption.
The need for this assumption was somewhat unexpected to us, but with our present techniques, we were unable to avoid it. Proving (or disproving) the dual square-function estimate in the absence of the lattice assumption would be an interesting question for a deeper understanding of the vector-valued theory.
 
\begin{lause}\label{haa}
Let $X$ be a $\mathrm{UMD}$ function lattice and $1<p<\infty$. Then
\begin{equation}\label{eeds}
\bigg\|\sum_{k\in\Z} \epsilon_k (D_k^a)^* f\bigg\|_{L^p(\R^N\times\Omega,\mu\otimes\mathbf{P},X)} \lesssim \|f\|_{L^p(\R^N,\mu;X)}
\end{equation}
for every $f\in L^p(\R^N,\mu;X)$.
\end{lause}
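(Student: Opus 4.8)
The plan is to expand $(D_k^a)^* f = A_{k-1}^a f - A_k^a f$ using the representation \eqref{vaid}, namely $(D_k^a)^* f = \frac{E_{k-1}(b_{k-1}^a f)}{E_{k-1} b_{k-1}^a} - \frac{E_k(b_k^a f)}{E_k b_k^a}$, and to split the sum according to whether $b_{k-1}^a = b_k^a$ or not, exactly as in the proof of Theorem \ref{nests}. On the set $\{b_{k-1}^a = b_k^a\}$ one can use \eqref{1ids}-type identities to rewrite the difference in a ``telescoping plus correction'' form: writing $g_k := b_k^a f$ and noting $1_{\{b_{k-1}^a=b_k^a\}} E_{k-1}b_{k-1}^a = 1_{\{b_{k-1}^a=b_k^a\}}E_{k-1}b_k^a$, the difference becomes $E_{k-1}g_k\big(\frac{1}{E_{k-1}b_k^a} - \frac{1}{E_k b_k^a}\big) + D_k g_k \cdot \frac{1}{E_k b_k^a}$, with the first summand carrying a factor $D_k b_k^a$ which is an $E_{k-1}$-measurable Carleson sequence (as established inside the proof of Lemma \ref{carut2}). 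The denominators are all bounded below by $\delta^2$ via \eqref{a_iso}, so the contraction principle reduces everything to two model sums: $\big\|\sum_k \epsilon_k D_k b_k^a \, E_{k-1} g_k\big\|_{L^p}$, handled by the Carleson embedding Theorem \ref{carleson} together with the Carleson-norm bound $\|\{D_j b_j^a\}\|_{\mathrm{Car}^1(\mathcal{D})} \lesssim 1$ from Lemma \ref{carut2}, and $\big\|\sum_k \epsilon_k D_k g_k\big\|_{L^p}$, handled by the $\mathrm{UMD}$ property of $X$.

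The genuinely new difficulty — and the reason the lattice assumption appears — is controlling $\|g_k\|_{L^p(\R^N;X)} = \|b_k^a f\|_{L^p}$, or rather the relevant square-function norms of the $g_k$'s, since the $g_k$ are no longer related to a single function by a bounded multiplier (the multiplier $b_k^a$ changes with $k$). Here I would invoke the lattice structure: since $\|b_k^a\|_{L^\infty(\mu)} \le 1$ pointwise and $X$ is a function lattice, one has the pointwise-in-$(x,\omega)$ domination $\big|\sum_k \epsilon_k b_k^a(x) h_k(x)\big| \le \big(\text{something}\big)$, and more to the point one obtains a square-function domination of the form
\[
\bigg\|\sum_k \epsilon_k D_k(b_k^a f)\bigg\|_{L^p(\R^N\times\Omega;X)} \lesssim \bigg\|\sum_k \epsilon_k D_k f\bigg\|_{L^p(\R^N\times\Omega;X)} + (\text{lower-order terms}),
\]
by writing $D_k(b_k^a f) = b_k^a D_k f + (D_k b_k^a) E_k f + (\text{cross terms})$ and using the lattice property to absorb the $b_k^a$ factors, then the contraction principle and $\mathrm{UMD}$ on the first term, and the Carleson embedding (Theorem \ref{carleson}) on the second, since $D_k b_k^a$ is an $E_{k-1}$-measurable Carleson sequence. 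The cross terms, involving $E_{k-1}(b_{k-1}^a f) - b_k^a E_{k-1} f$ on the boundary set $\chi_{k-1}$, are estimated using Lemma \ref{carut} (the Carleson bound on $\{\chi_j\}$) together with $\|b^a_j f\|_X \le \|E_{j}f\|_X$-type pointwise lattice bounds.

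The remaining piece is the sum over $\{b_{k-1}^a \ne b_k^a\} = \chi_{k-1}$, where the two terms $\frac{E_{k-1}(b_{k-1}^a f)}{E_{k-1}b_{k-1}^a}$ and $\frac{E_k(b_k^a f)}{E_k b_k^a}$ need not telescope; here one estimates each of the two families $\big\{\chi_{k-1}\frac{E_{k-1}(b_{k-1}^a f)}{E_{k-1}b_{k-1}^a}\big\}_k$ and $\big\{\chi_{k-1}\frac{E_k(b_k^a f)}{E_k b_k^a}\big\}_k$ separately. The first is an $E_{k-1}$-measurable expression supported on the Carleson family $\chi_{k-1}$, so Theorem \ref{carleson} applies directly after noting $|E_{k-1}b_{k-1}^a|\ge\delta^2$ and using the lattice bound $|E_{k-1}(b_{k-1}^a f)|_X \le E_{k-1}|f|_X$; the second reduces to the first after one more application of $E_k f = E_{k-1}f - D_k f$, peeling off a $\sum_k \epsilon_k \chi_{k-1}\frac{b_k^a}{E_k b_k^a} D_k f$ term controlled by $\mathrm{UMD}$ and the contraction principle, plus an $E_{k-1}$-measurable remainder handled by Carleson embedding again. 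Assembling the pieces via the triangle inequality gives \eqref{eeds}. I expect the main obstacle to be making the lattice-based square-function domination of $\sum_k \epsilon_k D_k(b_k^a f)$ rigorous — in particular verifying that the pointwise lattice inequalities interact correctly with the Rademacher averages, which is precisely the ``one specific point'' where the lattice structure is genuinely used.
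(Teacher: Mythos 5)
Your overall skeleton is close to the paper's: you use the representation from \eqref{vaid}, split off telescoping parts, and aim to control everything by Carleson embeddings and the $\mathrm{UMD}$ property. And you correctly identify the estimate for $\sum_k \epsilon_k D_k(b_k^a f)$ (the paper's \eqref{vaikea}) as the crux, which is good diagnosis. But there are two genuine problems.

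First, a persistent technical error: several of your reductions appeal to Theorem~\ref{carleson}, which applies to $\sum_k \epsilon_k d_k E_k f$ for a \emph{fixed} $f$. In your expressions the argument is $E_{k-1}(b_{k-1}^a f)$ or $E_k(b_k^a f)$ with the multiplier $b_k^a$ changing with $k$, so Theorem~\ref{carleson} does not apply. What you need is Proposition~\ref{ylcarl}, the lattice Carleson embedding for $\sum_k\epsilon_k d_k E_k(c_k f)$ with $\|c_k\|_\infty\le 1$ --- and this is precisely the specific point where the paper invokes the lattice structure (via $|d_k E_k(c_k f)|\le |d_k|E_k|f|$ and the square-function description of Rademacher averages in a lattice). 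You never mention Proposition~\ref{ylcarl} at all, and it is the tool that makes the ``telescoping plus correction'' and ``$\chi_{k-1}$'' pieces work.

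Second, and more seriously, your treatment of $\sum_k \epsilon_k D_k(b_k^a f)$ is built on a decomposition that does not exist. You propose $D_k(b_k^a f)=b_k^a D_k f+(D_k b_k^a)E_k f+(\text{cross terms})$. There is no such Leibniz rule: $D_k=E_{k-1}-E_k$ and $b_k^a$ is neither $\mathcal{D}_k$- nor $\mathcal{D}_{k-1}$-measurable (on a cube $Q\in\mathcal{D}_k$ it equals the genuinely oscillating function $b_{Q^a}$), so $b_k^a$ cannot be pulled through either conditional expectation. Likewise, your ``pointwise lattice domination'' of $\sum_k\epsilon_k b_k^a h_k$ is true in a lattice (by the square-function equivalence), but it does not help here because $b_k^a$ sits \emph{inside} $D_k$, not outside, and extracting it is exactly what fails. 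The paper handles this term by a different and essentially unavoidable device: it exploits the layers $\mathcal{D}^n$ and the stopping times $\sigma_n(x)$. On each interval $\sigma_{n+1}(x)<k\le\sigma_n(x)$, the function $b_k^a$ restricted to $Q_k(x)$ is frozen to a single function $b^n$. One injects extra random signs $\tilde\epsilon_k^a$ that are $\mathcal{D}_k$-measurable and constant on these intervals, uses $\mathrm{UMD}$ decoupling to remove the $\epsilon_k$, rewrites the result as $\sum_n\tilde\epsilon_n\sum_{\sigma_{n+1}<k\le\sigma_n}D_k(b^n f)$, and then applies the telescoping identity $\sum_{\sigma_{n+1}<k\le\sigma_n}D_k=E_{\sigma_{n+1}}-E_{\sigma_n}+1_{\cup\mathcal{D}^n\setminus\cup\mathcal{D}^{n+1}}$. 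The two expectation terms are converted into Carleson-type sums via Lemmas~\ref{yksiks} and~\ref{kaksiks}, and only then does Proposition~\ref{ylcarl} close the argument. None of this structure is present in your proposal, and without it the estimate for $\sum_k\epsilon_k D_k(b_k^a f)$ remains open.
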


In order to prove Theorem \ref{haa}, we first prove the following Carleson embedding for $\mathrm{UMD}$ function lattices.

\begin{prop}\label{ylcarl}
Let $X$ be a $\mathrm{UMD}$ function lattice  and $1<p<\infty$. Let \[\{d_k\in L^1(\R^N,\mu;\R)\}_{k\in\Z},\qquad \{c_k\in L^\infty(\R^N,\mu;\R)\}_{k\in\Z},\] be such that $d_k=E_k d_k$ and $\|c_k\|_{L^\infty}\le 1$ for every $k\in\Z$. Then
\begin{equation}\label{adesa}
\bigg\|\sum_{k\in\Z} \epsilon_k d_k E_k(c_k f)\bigg\|_{L^p(\R^N\times\Omega,\mu\otimes \mathbf{P},X)}\lesssim\|\{d_k\}_{k\in\Z}\|_{\mathrm{Car}^1(\mathcal{D})}\|f\|_{L^p(\R^N,\mu;X)}
\end{equation}
for every $f\in L^p(\R^N,\mu;X)$.
\end{prop}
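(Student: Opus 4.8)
The plan is to reduce to the multiplier‑free Carleson embedding of Theorem~\ref{carleson}, using the function‑lattice structure to pass freely between Rademacher sums and square functions. First I would truncate, assuming that only finitely many $d_k$ are non‑zero; once the desired bound is proved uniformly, the general case follows by letting the truncation level tend to infinity.

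The opening step would be to replace the Rademacher average by a square function. Since $X$ is $\mathrm{UMD}$ it has finite cotype, as recalled above, and hence so does the Banach function lattice $L^p(\R^N,\mu;X)$; by the Khintchine--Maurey inequalities, valid in both directions in any Banach function lattice of finite cotype (see e.g.\ \cite{RdF}), the left‑hand side of \eqref{adesa} is comparable to
\[
\bigg\|\Big(\sum_{k\in\Z}\big|d_k\,E_k(c_k f)\big|^2\Big)^{1/2}\bigg\|_{L^p(\R^N,\mu;X)},
\]
where $|\cdot|$ denotes the modulus in $X$, taken pointwise in the measure space underlying the lattice.

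Next I would exploit positivity. Since $c_k$ is real‑valued with $\|c_k\|_{L^\infty}\le1$ we have $|c_k f|\le|f|$ pointwise, and since $E_k$ is a positive averaging operator, $\big|E_k(c_k f)\big|\le E_k|c_k f|\le E_k|f|$ pointwise in both the $\R^N$‑variable and the lattice variable. Hence the square function above is pointwise dominated by $\big(\sum_{k}|d_k|^2(E_k|f|)^2\big)^{1/2}$, and by compatibility of the $X$‑norm with the lattice order its $L^p(\R^N,\mu;X)$‑norm decreases accordingly. Applying the Khintchine--Maurey equivalence once more, this time to the vectors $d_k E_k|f|\in X$, I would bound this by a constant multiple of $\big\|\sum_{k\in\Z}\epsilon_k\,d_k\,E_k|f|\big\|_{L^p(\R^N\times\Omega,\mu\otimes\mathbf{P};X)}$. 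Since $|f|\in L^p(\R^N,\mu;X)$ with $\||f|\|=\|f\|$ and $d_k=E_kd_k$ by hypothesis, Theorem~\ref{carleson} applied to the non‑negative function $|f|$ closes the estimate, producing the bound $\|\{d_k\}_{k\in\Z}\|_{\mathrm{Car}^1(\mathcal{D})}\|f\|_{L^p(\R^N,\mu;X)}$.

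I expect the only genuinely non‑routine point — and precisely the place where the function‑lattice hypothesis is indispensable — to be the Khintchine--Maurey identification of Rademacher sums with square functions used twice above; the rest is only positivity and monotonicity of the conditional expectations $E_k$ together with the already‑established scalar‑multiplier‑free embedding of Theorem~\ref{carleson}.
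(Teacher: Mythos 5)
Your proof is correct and follows essentially the same route as the paper's: pass from the Rademacher average to a lattice square function via the Khintchine--Maurey equivalences, use positivity and $\|c_k\|_\infty\le1$ to dominate $|E_k(c_kf)|$ by $E_k|f|$ pointwise, return to a Rademacher sum, and apply Theorem~\ref{carleson} to $|f|$. The only cosmetic difference is that the paper carries $|d_k|$ (observing $E_k|d_k|=|d_k|$) into the final Carleson embedding while you keep $d_k$; both satisfy the required $E_k$-measurability, so the two are interchangeable.
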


\begin{proof}
Since $X$ is a lattice of functions, for $\xi\in X$, we can consider its pointwise absolute value $|\xi|\in X$, which satisfies $\|\xi\|_X=\|\,|\xi|\,\|_X$. Moreover, we have the following inequalities, which can be seen by the argument on \cite[p.~212]{RdF}:
\[
\int_\Omega \bigg|\sum_{k\in\Z} \epsilon_k \xi_k\bigg|_X^pd\mathbf{P}(\epsilon)
\eqsim \bigg|\bigg(\sum_{k\in\Z} |\xi_k|^2\bigg)^{1/2}\bigg|_X^p
\eqsim \int_\Omega \bigg|\sum_{k\in\Z} \epsilon_k |\xi_k|\bigg|_X^pd\mathbf{P}(\epsilon).
\]
Since $E_k d_k = d_k$, we have $E_k |d_k|= |d_k|$. Hence,
by the Carleson embedding Theorem
\ref{carleson}, we can estimate the left hand side of \eqref{adesa} as follows
\begin{align*}
LHS\eqref{adesa}
&\eqsim \bigg\|\bigg(\sum_{k\in\Z} |d_k E_k(c_k f)|^2\bigg)^{1/2}\bigg\|_{L^p(\R^N,\mu,X)}
\\&\lesssim \bigg\|\bigg(\sum_{k\in\Z} \big(|d_k| E_k |f|\big)^2\bigg)^{1/2}\bigg\|_{L^p(\R^N,\mu,X)}
\\&\eqsim \bigg\|\sum_{k\in\Z} \epsilon_k |d_k| E_k |f|\bigg\|_{L^p(\R^N,\mu,X)}
\lesssim \|\{d_k\}_{k\in\Z}\|_{\mathrm{Car}^1(\mathcal{D})}\big|\big| |f| \big|\big|_{L^p(\R^N,\mu,X)}.
\end{align*}
The required estimate now follows because $\big|\big| |f| \big|\big|_p=\|f\|_p$.
\end{proof}


We also need  the following notation and representation formulae.

If $k\in \Z$, we write $Q_k(x)$ for the unique cube in $\mathcal{D}_k$ containing
the point $x\in \R^N$.

For $n\in\N_0$ and $x\in\R^N$, we denote by $Q^n(x)\in\mathcal{D}^n$  the cube in the
$n$th layer that contains the point $x$ (if such a cube exists), see Section
\ref{layers}.
We also denote $\sigma_n(x)=\log_2(\ell(Q^n(x))$ if
$x\in Q^n(x)\in \mathcal{D}^n$ and $\sigma_n(x)=-\infty$ if there is
no cube in $\mathcal{D}^n$ which contains the point $x$.

If $x\in \cup_{Q\in\mathcal{D}^n} Q$, we denote
$b^n(x)=b_{Q^n(x)}(x)$.
Note that, for $k\le s$ (recall that $Q_0\in \mathcal{D}_s$) and $x\in Q_0$, 
\begin{equation}\label{hdh}
\sigma_{n+1}(x)<k \le \sigma_n(x)\iff (Q_k(x))^a=Q^n(x)\in\mathcal{D}^n.
\end{equation}
In particular, if \eqref{hdh} is valid, then 
$b_k^a(x)=b_{(Q_k(x))^a}(x) = b^n(x)$. 

We also denote \[E_{\sigma_n} = \sum_{Q\in\mathcal{D}^n} E_Q,\qquad n\in \{0,1,\ldots\}.\]

\begin{lem}\label{yksiks}
Let $x\in\R^N$ and $p\in (1,\infty)$. Then
\begin{align*}
&\int_\Omega \bigg|\sum_{n=0}^\infty \epsilon_n E_{\sigma_n}(b^n f)(x)\bigg|^pd\mathbf{P}(\epsilon)
\\&\lesssim 
\int_\Omega \bigg|\sum_{k\in\Z} \epsilon_k 1_{\{b_{k-1}^a \not= b_k^a\}}(x) E_{k-1}(b_{k-1}^a f)(x)\bigg|^pd\mathbf{P}(\epsilon)
+|E_{Q_0}(b_{Q_0} f)(x)|^p
\end{align*}
for every 
$f\in L^p(\R^N,\mu;X)$.
\end{lem}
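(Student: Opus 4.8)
The plan is to track, for a fixed point $x$, how the adapted ancestors $\bigl(Q_k(x)\bigr)^a$ partition the generations $k\in\Z$ into consecutive blocks on which $b_k^a(x)$ stays constant, and to match those blocks with the layer cubes $Q^n(x)$ through $x$. Since both sides of the asserted inequality vanish when $x\notin Q_0$, I may assume $x\in Q_0$; and by Remark~\ref{harva} I may assume that $x$ lies in only finitely many cubes of $\bigcup_{j\ge0}\mathcal{D}^j$, so that all the series below are in fact finite sums. Write $Q_0=Q^0(x)\supsetneq Q^1(x)\supsetneq\cdots\supsetneq Q^{N}(x)\ni x$ for the layer cubes through $x$, with generations $s=\sigma_0(x)>\sigma_1(x)>\cdots>\sigma_{N}(x)$, and recall that $\sigma_n(x)=-\infty$ and $E_{\sigma_n}(b^nf)(x)=0$ for $n>N$. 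By the equivalence \eqref{hdh}, for every $n$ and every integer $k$ with $\sigma_{n+1}(x)<k\le\sigma_n(x)$ we have $\bigl(Q_k(x)\bigr)^a=Q^n(x)$, hence $b_k^a(x)=b^n(x)$; and for $k>s$ the convention $\bigl(Q_k(x)\bigr)^a=Q_0$ gives likewise $b_k^a(x)=b_{Q_0}(x)=b^0(x)$.

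From this block picture, $b_{k-1}^a(x)\neq b_k^a(x)$ occurs precisely at the indices $k=\sigma_n(x)+1$ with $1\le n\le N$: for such $k$, $k-1=\sigma_n(x)$ is the top of the block $(\sigma_{n+1}(x),\sigma_n(x)]$ while $k$ is the bottom of the next block $(\sigma_n(x),\sigma_{n-1}(x)]$; the index $k=s+1$ associated to $Q_0$ is consistently excluded in the definition \eqref{bmaar} of $\chi_{k-1}=1_{\{b_{k-1}^a\neq b_k^a\}}$, matching $b_s^a=b_{Q_0}=b_{s+1}^a$. Equivalently, reading \eqref{bmaar} directly, $x\in\chi_{k-1}$ if and only if $Q_{k-1}(x)$ is itself a layer cube distinct from $Q_0$, that is, $k-1=\sigma_n(x)$ for some $n\ge1$. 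At such a $k$ one has $b_{k-1}^a=b_{Q^n(x)}$ on $Q_{k-1}(x)=Q^n(x)$, so
\[
E_{k-1}(b_{k-1}^a f)(x)=\langle b_{Q^n(x)}f\rangle_{Q^n(x)}=E_{\sigma_n}(b^n f)(x).
\]
Therefore the entire inner sum on the right-hand side collapses to a relabeled sub-series of the left-hand one:
\[
\sum_{k\in\Z}\epsilon_k\,1_{\{b_{k-1}^a\neq b_k^a\}}(x)\,E_{k-1}(b_{k-1}^a f)(x)=\sum_{n=1}^{N}\epsilon_{\sigma_n(x)+1}\,E_{\sigma_n}(b^n f)(x).
\]

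To finish, I would split off the $n=0$ term on the left, using $E_{\sigma_0}(b^0f)(x)=E_{Q_0}(b_{Q_0}f)(x)$ together with the elementary bound $|a+b|^p\le 2^{p-1}(|a|^p+|b|^p)$ and $|\epsilon_0|=1$, which estimates the left-hand side of the lemma by $2^{p-1}|E_{Q_0}(b_{Q_0}f)(x)|^p$ plus $2^{p-1}\int_\Omega\bigl|\sum_{n=1}^N\epsilon_n E_{\sigma_n}(b^nf)(x)\bigr|^p\,d\mathbf{P}$. Since $n\mapsto\sigma_n(x)+1$ is injective, the finite families $(\epsilon_n)_{1\le n\le N}$ and $(\epsilon_{\sigma_n(x)+1})_{1\le n\le N}$ are identically distributed under $\mathbf{P}$, so replacing $\epsilon_n$ by $\epsilon_{\sigma_n(x)+1}$ leaves that integral unchanged; by the displayed identity it then equals $\int_\Omega\bigl|\sum_{k\in\Z}\epsilon_k 1_{\{b_{k-1}^a\neq b_k^a\}}(x)E_{k-1}(b_{k-1}^a f)(x)\bigr|^p\,d\mathbf{P}$, which is exactly the claimed bound. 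The only genuinely delicate step is the bookkeeping of the first two paragraphs --- correctly extracting from \eqref{hdh} and \eqref{bmaar} which generations $k$ carry a jump of $b_k^a$, and verifying that each such jump reproduces precisely one layer average $E_{\sigma_n}(b^nf)(x)$; everything afterwards is the symmetry of Rademacher variables plus a convexity inequality.
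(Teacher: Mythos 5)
Your proof is correct and follows essentially the same route as the paper: identify the jump set $\{b_{k-1}^a\neq b_k^a\}$ at $x$ as precisely the generations $k-1=\sigma_n(x)$ for $n\ge1$, compute $E_{k-1}(b_{k-1}^af)(x)=E_{\sigma_n}(b^nf)(x)$ there, reindex the sum over $n$, relabel the Rademacher variables using injectivity of $n\mapsto\sigma_n(x)+1$, and absorb the missing $n=0$ term with the power-mean inequality. You merely spell out more explicitly the block picture via \eqref{hdh} and the convexity step for the $n=0$ term, both of which the paper leaves implicit.
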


\begin{proof}
Denote $A_k=\mathcal{D}_{k}\cap (\cup_{n=1}^\infty \mathcal{D}^n)$. Let $\epsilon\in \Omega$ and
$x\in\R^N$. By \eqref{bmaar},
\begin{equation}\label{ess}
\begin{split}
\sum_{k\in\Z} \epsilon_k 1_{\{b_{k-1}^a \not= b_k^a\}}(x) E_{k-1}(b_{k-1}^a f)(x)&=\sum_{k\in\Z} \sum_{Q\in A_{k-1}} \epsilon_kE_Q(b_{Q^a}f)(x)\\
&=\sum_{n=1}^\infty \epsilon_{\sigma_n(x)+1} \sum_{Q\in\mathcal{D}^n} E_Q(b_{Q^a}f)(x).
\end{split}
\end{equation}
If $x\in Q\in\mathcal{D}^n$, then $Q=Q^a =Q^n(x)$. Hence, $1_Q b_{Q^a} f = 1_Q b^nf$, and applying the last
identity
to the right hand side of \eqref{ess}, we get
\begin{align*}
LHS\eqref{ess}=\sum_{n=1}^\infty \epsilon_{\sigma_n(x)+1} \sum_{Q\in\mathcal{D}^n} E_Q(b^nf)(x) =\sum_{n=1}^\infty \epsilon_{\sigma_n(x)+1} E_{\sigma_n} (b^n f)(x).\end{align*}
The required identity follows by taking $p$-absolute values, integrating, and relabeling the random variables $\epsilon_{\sigma_n(x)+1}$.
\end{proof}

\begin{lem}\label{kaksiks}
Let $x\in\R^N$ and $p\in (1,\infty)$. Then
\begin{align*}
&\int_{\Omega} \bigg|\sum_{n=0}^\infty \epsilon_nE_{\sigma_{n+1}}(b^n f)(x)\bigg|^pd\mathbf{P}(\epsilon)
=\int_\Omega \bigg|\sum_{k\in\Z} \epsilon_k 1_{\{b_{k-1}\not=b_k\}}(x) E_{k-1}(b_k^af)(x)\bigg|^p d\mathbf{P}(\epsilon)
\end{align*}
for every $f\in L^p(\R^N,\mu;X)$.
\end{lem}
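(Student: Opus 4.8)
The plan is to follow the proof of Lemma~\ref{yksiks} essentially verbatim, with the understanding that $1_{\{b_{k-1}\neq b_k\}}$ stands for the indicator $1_{\{b_{k-1}^a\neq b_k^a\}}$ of notation \eqref{bmaar}. Two bookkeeping changes occur relative to Lemma~\ref{yksiks}: the accretive function inside the inner conditional expectation is now $b_k^a$ rather than $b_{k-1}^a$, and this shifts the relevant layer ancestor by one, which is precisely why the operators $E_{\sigma_{n+1}}$ (instead of $E_{\sigma_n}$) appear on the left. First I would recall from \eqref{bmaar} that $1_{\{b_{k-1}^a\neq b_k^a\}}=\sum_{Q\in A_{k-1}}1_Q$, where $A_{k-1}=\mathcal{D}_{k-1}\cap\bigcup_{n\ge1}\mathcal{D}^n$ as in the proof of Lemma~\ref{yksiks}, i.e.\ the family of layer cubes of generation $k-1$; hence
\[
\sum_{k\in\Z}\epsilon_k1_{\{b_{k-1}^a\neq b_k^a\}}(x)\,E_{k-1}(b_k^af)(x)=\sum_{k\in\Z}\sum_{Q\in A_{k-1}}\epsilon_k\,E_Q(b_k^af)(x).
\]

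The crux is the following local identity: if $Q\in\mathcal{D}^n\cap\mathcal{D}_{k-1}$ with $n\ge1$, and $\tilde R\in\mathcal{D}^{n-1}$ is the unique layer cube for which $Q$ is $\tilde R$-maximal, then $1_Qb_k^a=1_Qb^{n-1}$, so that $E_Q(b_k^af)=E_Q(b^{n-1}f)$. Indeed, the only $\mathcal{D}_k$-cube meeting $Q$ is its parent $Q^{(1)}$, whence $1_Qb_k^a=1_Qb_{(Q^{(1)})^a}$, and I would verify that $(Q^{(1)})^a=\tilde R$ by a short dyadic argument: $Q^{(1)}\subseteq\tilde R$ since $\tilde R$ strictly contains $Q$ while nothing lies strictly between $Q$ and $Q^{(1)}$; and no layer cube properly contained in $\tilde R$ can contain $Q^{(1)}$, because such a cube belongs to a layer $\ge n$, hence is contained in some $\mathcal{D}^n$-cube, which would then properly contain $Q\in\mathcal{D}^n$, contradicting disjointness of $\mathcal{D}^n$. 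Since $b^{n-1}=b_{\tilde R}$ on $\tilde R\supset Q$, the identity follows. Substituting it, and noting that for fixed $x$ only $Q=Q^n(x)$ survives in the sum over $\mathcal{D}^n\cap\mathcal{D}_{k-1}$ — which forces $k-1=\sigma_n(x)$ and collapses $\sum_{Q\in\mathcal{D}^n}E_Q(b^{n-1}f)(x)$ to $E_{\sigma_n}(b^{n-1}f)(x)$ — I would obtain
\[
\sum_{k\in\Z}\epsilon_k1_{\{b_{k-1}^a\neq b_k^a\}}(x)\,E_{k-1}(b_k^af)(x)=\sum_{n\ge1}\epsilon_{\sigma_n(x)+1}\,E_{\sigma_n}(b^{n-1}f)(x)=\sum_{n\ge0}\epsilon_{\sigma_{n+1}(x)+1}\,E_{\sigma_{n+1}}(b^nf)(x).
\]

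To conclude I would observe that the indices $\sigma_{n+1}(x)+1$, $n\ge0$, are pairwise distinct — the sequence $(\sigma_n(x))_n$ strictly decreases along the nested chain $Q^0(x)\supsetneq Q^1(x)\supsetneq\cdots$, which is finite for $\mu$-a.e.\ $x$ by Remark~\ref{harva} — while the terms for which $Q^{n+1}(x)$ fails to exist vanish; thus $(\epsilon_{\sigma_{n+1}(x)+1})_{n\ge0}$ is, after relabeling, an i.i.d.\ Rademacher sequence, and taking $p$-th powers of absolute values and integrating over $\Omega$ yields the claimed identity. In contrast to Lemma~\ref{yksiks}, the one-step shift makes the $n=0$ term on the left match the $n=1$ contribution on the right, so no residual boundary term survives and the statement is an exact equality. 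The only genuinely non-routine point is the identification $(Q^{(1)})^a=\tilde R$; everything else transcribes the argument for Lemma~\ref{yksiks}.
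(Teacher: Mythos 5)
Your proof is correct and follows essentially the same route as the paper's: both hinge on the identity $1_Qb_k^a=1_Qb_{(Q^{(1)})^a}=1_Qb^{n-1}$ for $Q\in\mathcal{D}^n\cap\mathcal{D}_{k-1}$, followed by collapsing the sum over $A_{k-1}$ to a sum over layer indices and relabeling the Rademacher variables $\epsilon_{\sigma_{n+1}(x)+1}$. You simply supply more detail on the dyadic verification that $(Q^{(1)})^a$ lies in $\mathcal{D}^{n-1}$, which the paper asserts without proof.
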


\begin{proof}
Let $A_k$ be as in the previous proof.
Let
$x\in\R^N$ and $\epsilon\in\Omega$. Then, if $x\in Q\in\mathcal{D}^{n}$ with $n\ge 1$, we 
have $x\in (Q^{(1)})^a\in\mathcal{D}^{n-1}$, and therefore
\[b^{n-1}(x)=b_{Q^{n-1}(x)}(x)=b_{(Q^{(1)})^a}(x).\] Using this identity, we get
\begin{align*}
&\sum_{k\in\Z} \epsilon_k 1_{\{b_{k-1}\not=b_k\}}(x) E_{k-1}(b_k^af)(x) = \sum_{k\in\Z} \epsilon_k\sum_{Q\in A_{k-1}}
E_Q(b_{(Q^{(1)})^a}f)(x)\\
&=
\sum_{n=1}^\infty \epsilon_{\sigma_{n}(x)+1}\sum_{Q\in\mathcal{D}^{n}} E_Q(b^{n-1} f)(x)=\sum_{n=0}^\infty \epsilon_{\sigma_{n+1}(x)+1} E_{\sigma_{n+1}}(b^n f)(x).
\end{align*}
The required estimate follows by taking $p$-absolute values, integrating, and relabeling the random
variables $\sigma_{n+1}(x)+1$.
\end{proof}

We are ready for the proof of Theorem \ref{haa}.

\begin{proof}[Proof of Theorem \ref{haa}]
By \eqref{vaid}, we get
\begin{equation}
\begin{split}
(D_k^a)^* f&= \frac{E_{k-1}(b_{k-1}^a f)}{E_{k-1} b_{k-1}^a}-\frac{E_k(b_k^a f)}{E_k b_k^a}\\
&= \frac{E_k b_k^a-E_{k-1} b_{k-1}^a}{E_k b_k^a E_{k-1}b_{k-1}^a}E_{k-1}(b_{k-1}^a f)+\frac{E_{k-1}(b_{k-1}^a f) - E_k(b_k^a f)}{E_k b_k^a}.
\end{split}
\end{equation}
Denote $d_k=E_k b_k^a-E_{k-1} b_{k-1}^a$ for $k\in \Z$. Then $|d_k|\le 2$ and $E_{k-1}d_k=d_k$.
By \eqref{a_iso}, we have $|E_k b_k^a E_{k-1}b_{k-1}^a|\ge \delta^4$ for $\mu$-almost every point in $\R^N$.
Hence, by the contraction principle and Proposition \ref{ylcarl}, we obtain 
\begin{equation}\label{ekasss}
\begin{split}
\bigg\| \sum_{k\in\Z} \epsilon_k\frac{E_k b_k^a-E_{k-1} b_{k-1}^a}{E_k b_k^a E_{k-1}b_{k-1}^a}E_{k-1}(b_{k-1}^a f)\bigg\|_{L^p(\R^N\times\Omega,\mu\otimes \mathbf{P},X)}
\lesssim \|\{d_k\}_{k\in\Z}\|_{\mathrm{Car}^1(\mathcal{D})}\|f\|_{L^p(\R^N,\mu;X)}.
\end{split}
\end{equation}
Let us prove that $\|\{d_k\}_{k\in\Z}\|_{\mathrm{Car}^1(\mathcal{D})}\lesssim 1$.
Let $Q\in\mathcal{D}$ be such that $\mu(Q)\not=0$. Then $1_{\{b_{k-1}^a = b_k^a\}}d_k= -1_{\{b_{k-1}^a = b_k^a\}} D_k b_k^a$, and
therefore
\begin{equation}\label{sssd}
\begin{split}
\bigg\| 1_Q \sum_{2^k\le \ell(Q)} \epsilon_k d_k\bigg\|_{L^1(\R^N\times\Omega,\mu\otimes\mathbf{P};\R)}
\lesssim \bigg\| 1_Q \sum_{2^k\le \ell(Q)} \epsilon_k 1_{\{b_{k-1}^a\not=b_k^a\}}d_k\bigg\|_1
+\bigg\|\sum_{P\subset Q} \epsilon_P D_P b_{P^a}\bigg\|_1.
\end{split}
\end{equation}

The first term on the right hand side of \eqref{sssd} is first estimated 
by using contraction principle. Then proceeding as in the proof of Lemma \ref{carut}, gives
the upper bound $c\mu(Q)$ for that  term.

The second term on the right hand side of \eqref{sssd} is estimated as in \eqref{kaksi}, with the same upper bound $c\mu(Q)$. Combining the estimates above, we find that
$\|\{d_k\}_{k\in\Z}\|_{\mathrm{Car}^1(\mathcal{D})}\lesssim 1$.

In order to estimate the second term on the right hand side of \eqref{sssd}, we denote
\begin{align*}
\Sigma_Q=\bigg\|\sum_{P\subset Q} \epsilon_P D_P b_{P^a}\bigg\|_1.
\end{align*}
Then estimating $\Sigma_Q$ as in \eqref{kaksi}, we find that
$\Sigma_Q\lesssim \mu(Q)$. Combining the estimates above, we find that
$\|\{d_k\}_{k\in\Z}\|_{\mathrm{Car}^1(\mathcal{D})}\lesssim 1$. 

It remains to prove that
\begin{equation}\label{remsss}
\bigg\| \sum_{k\in\Z} \epsilon_k\frac{E_{k-1}(b_{k-1}^a f) - E_k(b_k^a f)}{E_k b_k^a}\bigg\|_{L^p(\R^N\times\Omega,\mu\otimes \mathbf{P},X)}
\lesssim \|f\|_{L^p(\R^N,\mu;X)}.
\end{equation}
By \eqref{a_iso},  we have $|E_k b_k^a|\ge \delta^2$ for $\mu$-almost every point in $\R^N$.
Using the contraction principle we eliminate the
terms $1/E_k b_k^a$ from the left hand side of \eqref{remsss}. Then we consider  
the following decomposition, where $k\in\Z$,
\begin{equation}\label{edss}
\begin{split}
E_{k-1}(b_{k-1}^a f)-E_k(b_k^a f) &= E_{k-1}\big(\chi_{k-1} (b_{k-1}^a-b_k^a)f\big)
+(E_{k-1}-E_k)(b_k^a f)\\
&=\chi_{k-1} E_{k-1}\big((b_{k-1}^a - b_k^a) f\big) + D_k(b_k^a f),
\end{split}
\end{equation}
where we have denoted $\chi_{k-1}=1_{\{b_{k-1}^a \not=b_k^a\}}$.

Using Proposition \ref{ylcarl}, we obtain the following norm-estimate involving the first term on the right hand side
of \eqref{edss}
\begin{equation}
\begin{split}
&\bigg\| \sum_{k\in\Z} \epsilon_k \chi_{k-1} E_{k-1}\big((b_{k-1}^a - b_k^a) f\big)\bigg\|_{L^p(\R^N\times\Omega,\mu\otimes \mathbf{P},X)}
\\&\qquad\qquad\qquad\qquad\lesssim \|\{\chi_k\}_{k\in\Z}\|_{\mathrm{Car}^1(\mathcal{D})}\|f\|_{L^p(\R^N,\mu;X)}.
\end{split}
\end{equation}
On the other hand, the proof of Lemma \ref{carut} shows that
$\|\{\chi_k\}_{k\in\Z}\|_{\mathrm{Car}^1(\mathcal{D})}\lesssim 1$.

In order to complete the proof of \eqref{remsss}, we still need to prove the following estimate involving the second term
on the right hand side of \eqref{edss},
\begin{equation}\label{vaikea}
\begin{split}
&\bigg\| \sum_{k\in\Z} \epsilon_kD_k(b_k^a f)\bigg\|_{L^p(\R^N\times\Omega,\mu\otimes \mathbf{P},X)}
\lesssim \|f\|_{L^p(\R^N,\mu;X)}.
\end{split}
\end{equation}
For this purpose, we introduce independent Rademacher variables $\tilde \epsilon\in(\tilde\Omega,\tilde{\mathbf{P}})$.
For $x\in Q_0$ and $k\le s$, we 
denote $\tilde\epsilon_k^a(x) = \tilde \epsilon_n$ if $n$ is such that $\sigma_{n+1}(x)<k \le \sigma_n(x)$.
By the fact that $\mu(\R^N\setminus Q_0)=0$ and \eqref{hdh}, we find that
the functions $\tilde\epsilon_k^a$, for $k\le s$, are defined $\mu$-almost everywhere and they are $\mathcal{D}_k$-measurable.

Then, for every $x\in Q_0$ and $\tilde\epsilon\in\tilde\Omega$, we have
\[
\int_\Omega \bigg|\sum_{k\le s} \epsilon_k D_k(b_k^a f)(x)\bigg|_X^pd\mathbf{P}(\epsilon)=
\int_\Omega \bigg|\sum_{k\le s} \epsilon_k D_k(\tilde \epsilon_k^ab_k^a f)(x)\bigg|_X^pd\mathbf{P}(\epsilon).
\]
Recall that $D_k b_k^a = 0$ $\mu$-almost everywhere if $k>s$ and $\mu(\R^N\setminus Q_0)=0$. Hence,
by integrating, and using the UMD-property of $X$ and measurability of $\tilde \epsilon_k^a$, we obtain
\begin{align*}
LHS\eqref{vaikea}&=
\bigg\|\sum_{k\le s}Ê \epsilon_k D_k(\tilde \epsilon_k^a b_k^a f)\bigg\|_{L^p(\R^N\times\Omega,\mu\otimes \mathbf{P},X)} \\&\lesssim \bigg\|\sum_{k\le s} D_k(\tilde \epsilon_k^a b_k^a f)\bigg\|_{L^p(\R^N,\mu;X)}
=\bigg\|\sum_{k\le s} \tilde \epsilon_k^aD_k( b_k^a f)\bigg\|_{L^p(\R^N,\mu;X)}.
\end{align*}
Reindexing the last sum gives
\begin{align*}
LHS\eqref{vaikea}&\lesssim
\bigg\|\sum_{n=0}^\infty \sum_{k\le s} 1_{\sigma_{n+1}< k\le \sigma_n} \tilde \epsilon_k^aD_k(b_k^a f)\bigg\|_{L^p(\R^N,\mu;X)}.
\\
&\lesssim\bigg\|\sum_{n=0}^\infty \tilde\epsilon_n\sum_{k\le s} 1_{\sigma_{n+1}< k\le \sigma_n}D_k(b^n f)\bigg\|_{L^p(\R^N,\mu;X)}.
\end{align*}
In the last inequality we used the fact that the indicators $x\mapsto 1_{\sigma_{n+1}(x)< k\le \sigma_n(x)}$ 
are $\mathcal{D}_k$-measurable by \eqref{hdh}.
Taking the expectation over $\tilde\epsilon\in\tilde\Omega$, we find that
\begin{equation}\label{jatkettu}
LHS\eqref{vaikea}\lesssim \bigg\|\sum_{n=0}^\infty \tilde \epsilon_n\sum_{k\le s} 1_{\sigma_{n+1}< k\le \sigma_n}D_k(b^n f)\bigg\|_{L^p(\R^N\times\{0,1\}^{\N_0},\mu\otimes{\tilde{\mathbf{P}}},X)}.
\end{equation}
By \eqref{hdh} and martingale convergence,
\begin{align*}
\sum_{k\le s} 1_{\sigma_{n+1}< k\le \sigma_n}D_k &= \sum_{Q\,:\,Q^a\in \mathcal{D}^n} D_Q = \sum_{Q\,:\,Q^a\in \cup_{m\ge n} \mathcal{D}^m} D_Q - \sum_{Q\,:\,Q^a \in \cup_{m>n} \mathcal{D}^m} D_Q\\& = (1_{\cup\mathcal{D}^n} - E_{\sigma_n}) - (1_{\cup \mathcal{D}^{n+1}}-E_{\sigma_{n+1}})
=E_{\sigma_{n+1}}-E_{\sigma_n}+1_{\cup\mathcal{D}^n\setminus \cup \mathcal{D}^{n+1}}.
\end{align*}
Apply this operator identity to  $b^n f\in L^p(\R^N,\mu;X)$ and substitute the resulting function
to the right hand side of \eqref{jatkettu}.  Using the triangle inequality
results in three terms; one of them can be  estimated (using that
$\cup\mathcal{D}^n\supset \cup\mathcal{D}^{n+1}$  and $\|b^n\|_\infty\lesssim 1$ if $n\in\{0,1,\ldots\}$) as follows
\[
\bigg\|\bigg(\underbrace{\sum_{n=0}^\infty \tilde \epsilon_n1_{\cup\mathcal{D}^n\setminus \cup \mathcal{D}^{n+1}} b^n}_{\lesssim  1}\bigg) f\bigg\|_{L^p(\R^N\times\{0,1\}^{\N_0},\mu\otimes{\tilde{\mathbf{P}}},X)}\lesssim \|f\|_{L^p(\R^N,\mu;X)}.
\]
The two remaining terms can be first estimated by using lemmata \ref{yksiks} and \ref{kaksiks}, and
then invoking
Proposition \ref{ylcarl}. This leads to the upper bound
 \[
c\|\{1_{\{b_{k-1}^a \not= b_k^a\}}\}_{k\in\Z}\|_{\mathrm{Car}^1(\mathcal{D})}\|f\|_{L^p(\R^N,\mu;X)}+c\|E_{Q_0}(b_{Q_0}f)\|_{L^p(\R^N,\mu;X)}\lesssim \|f\|_{L^p(\R^N,\mu;X)}.
 \]
for these two terms. 

Combining the estimates above yields
$LHS\eqref{vaikea}\lesssim \|f\|_{L^p(\R^N,\mu;X)}$.
\end{proof}

\subsection*{A norm equivalence for $\mathrm{UMD}$ function lattices}
We conclude this section  by establishing a certain square-function norm equivalence in $L^p(\R^N,\mu;X)$, which follows from the square-function estimates for the operators $D_j^a$ and $(D_j^a)^*$ studied earlier. This norm equivalence will not be exploited in the proof of the main Theorem~\ref{mainth}, but it is recorded here for the sake of curiosity.

We begin with the following lemma.

\begin{lem}\label{tech}
Assume that $X$ (and then also $X^*$) is a $\mathrm{UMD}$ function lattice  and let $p\in (1,\infty)$. Suppose that $\{\lambda_k\in L^1(\R^N,\mu;\R)\}_{k\in\Z}$ is such that $\lambda_k=E_k \lambda_k$ for every $k\in\Z$. Let $f\in L^p(\R^N,\mu;X)$. Then
\begin{align*}
\bigg\| \sum_{k\in\Z} \lambda_k D_k^a f\bigg\|_{L^p(\R^N,\mu;X)} &\lesssim \bigg\|\sum_{k\in\Z} \epsilon_k \lambda_k D_k^a f\bigg\|_{L^p(\R^N\times\Omega,\mu\otimes\mathbf{P},X)}\\&\qquad + \bigg\|\sum_{k\in\Z}Ê\epsilon_k\lambda_k\chi_{k-1}E_k f\bigg\|_{L^p(\R^N\times\Omega,\mu\otimes\mathbf{P},X)},
\end{align*}
where we have denoted $\chi_{k-1}=1_{\{b_{k-1}^a\not=b_{k}^a\}}$.
\end{lem}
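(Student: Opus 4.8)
The plan is to start from the identity \eqref{qr}, which in its local form reads $(D_k^a)^2 f = D_k^a f + \omega_k^a E_k f$, and to use it to "unduplicate" one power of $D_k^a$ so that the bare (non-randomized) sum $\sum_k \lambda_k D_k^a f$ can be compared to a randomized sum via the UMD property. Concretely, write $D_k^a f = (D_k^a)^2 f - \omega_k^a E_k f$ and split
\[
\sum_{k\in\Z} \lambda_k D_k^a f = \sum_{k\in\Z} \lambda_k (D_k^a)^2 f - \sum_{k\in\Z} \lambda_k \omega_k^a E_k f.
\]
The second term is the easier of the two: since $\|\omega_k^a\|_{L^\infty(\mu)}\lesssim 1$ by Lemma \ref{ombasic}(b) and $\omega_k^a$ is supported on $\{b_{k-1}^a\not=b_k^a\}$, i.e. $\omega_k^a = \chi_{k-1}\omega_k^a$, one can replace the bare sum by a randomized one at the cost of the UMD constant of $X$ and then apply the contraction principle to remove the factor $\omega_k^a$, landing on $\|\sum_k\epsilon_k\lambda_k\chi_{k-1}E_kf\|_{L^p}$, which is exactly the second term on the right-hand side of the claimed inequality. (One should be mildly careful that the UMD randomization is applied legitimately; since $\lambda_k$ is $\mathcal D_k$-measurable by $\lambda_k = E_k\lambda_k$, and $\omega_k^a E_k f$ is a function, this is the standard randomization-of-a-sum-of-functions step, which UMD permits.)

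The first term, $\sum_k \lambda_k (D_k^a)^2 f$, is where the real work is. The idea is to view this as $\sum_k \lambda_k D_k^a(D_k^a f)$ and to realize the outer sum as a duality pairing against a test function $g\in L^q(\R^N,\mu;X^*)$ with $\|g\|_q\le 1$. Pairing and moving the adapted martingale difference onto $g$ via the adjoint $(D_k^a)^*$ (recall the representation \eqref{vaid}), one gets
\[
\Big\langle \sum_{k}\lambda_k D_k^a(D_k^a f),\, g\Big\rangle = \sum_k \big\langle D_k^a f,\, \lambda_k (D_k^a)^* g\big\rangle,
\]
and then estimates this by a Cauchy--Schwarz / Hölder step across the index $k$: one pairs the randomized square function of $\{\lambda_k^{1/2}D_k^a f\}$ against that of $\{\lambda_k^{1/2}(D_k^a)^* g\}$ (splitting $\lambda_k$, which one may take nonnegative after passing to $|\lambda_k|$ by the lattice structure and the contraction principle, or by an elementary reduction). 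This is the standard "$\sum_k\langle a_k,b_k\rangle \le \|\sum_k\epsilon_k a_k\|\cdot\|\sum_k\epsilon_k b_k\|$"-type inequality valid because $X$ is UMD (it has finite cotype and the Khintchine--Kahane machinery applies), together with Hölder in $L^p$–$L^q$. The factor involving $g$ is then controlled by Theorem \ref{haa} applied to $X^*$ (which is again a UMD function lattice): $\|\sum_k\epsilon_k\lambda_k^{1/2}(D_k^a)^*g\|_{L^q}\lesssim$ something like $\|g\|_q \le 1$, after absorbing $\lambda_k^{1/2}$ — here one needs $\|\lambda_k\|$ bounded in a suitable Carleson sense, which is where the hypothesis $\lambda_k = E_k\lambda_k$ and the implicit normalization of $\{\lambda_k\}$ enter; more precisely one uses that, up to the Carleson norm, $\sum_k\epsilon_k\lambda_k(D_k^a)^*g$ is controlled by $\|g\|_q$ via a combination of Proposition \ref{ylcarl}-type reasoning and Theorem \ref{haa}. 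The factor involving $f$ is left as $\|\sum_k\epsilon_k\lambda_k D_k^a f\|_{L^p}$, which is the first term on the right-hand side of the lemma.

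The main obstacle I expect is the bookkeeping around the coefficients $\lambda_k$: splitting $\lambda_k = \lambda_k^{1/2}\cdot\lambda_k^{1/2}$ requires $\lambda_k\ge 0$, and in general $\lambda_k$ need only be real and $\mathcal D_k$-measurable, so one must either argue that it suffices to treat $\lambda_k\ge 0$ (replacing $\lambda_k$ by $|\lambda_k|$ loses $\mathcal D_k$-measurability unless one is careful — but $|\lambda_k| = |E_k\lambda_k|$ is still $\mathcal D_k$-measurable, so this is fine) or keep one full copy of $\lambda_k$ on the $f$-side and none on the $g$-side. In the latter, cleaner route, one writes $\sum_k\lambda_k(D_k^a)^2 f = \sum_k (\lambda_k D_k^a f)\cdot$(second $D_k^a$ applied appropriately) and pairs with $g$: $\sum_k\langle \lambda_k D_k^a f, (D_k^a)^*g\rangle$, estimated by $\|\sum_k\epsilon_k\lambda_k D_k^a f\|_{L^p}\cdot\|\sum_k\epsilon_k(D_k^a)^*g\|_{L^q}\lesssim \|\sum_k\epsilon_k\lambda_k D_k^a f\|_{L^p}\cdot\|g\|_q$, the last step being exactly Theorem \ref{haa} for $X^*$. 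This avoids fractional powers of $\lambda_k$ entirely. Taking the supremum over $\|g\|_q\le 1$ then gives the bound on the first term, and combining with the estimate for the $\omega_k^a$-term completes the proof. The one genuinely delicate point to check is the legitimacy of the "pair-and-randomize" step $\sum_k\langle a_k,b_k\rangle\lesssim \|\sum_k\epsilon_k a_k\|_{L^p(X)}\|\sum_k\epsilon_k b_k\|_{L^q(X^*)}$ in the Bochner setting, which is standard for UMD (hence finite-cotype, hence Khintchine--Kahane with values) spaces but should be cited precisely, e.g. from the companion results already used in \eqref{cotype_s} and Proposition \ref{improved}.
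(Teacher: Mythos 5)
Your treatment of the first term, $\sum_k\lambda_k(D_k^a)^2f$, is fine in the ``cleaner route'' you settle on: pair with $g$, use $\lambda_kD_k^a=D_k^af\,$-commutation to write $\sum_k\langle \lambda_kD_k^af,(D_k^a)^*g\rangle$, randomize via the exact Rademacher-orthogonality identity, apply H\"older, and invoke Theorem \ref{haa} for $X^*$; this is precisely the paper's argument (and note the pairing-randomization step needs no UMD or cotype at all --- it is bilinearity plus $\int_\Omega\epsilon_\ell\epsilon_k\,d\mathbf{P}=\delta_{\ell k}$ --- while your $\lambda_k^{1/2}$-detour is a dead end, since the lemma imposes no positivity, boundedness or Carleson condition on $\lambda_k$).

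The genuine gap is in the second term. You claim that $\big\|\sum_k\lambda_k\omega_k^aE_kf\big\|_p$ can be replaced by the randomized norm ``at the cost of the UMD constant'' because this is ``the standard randomization-of-a-sum-of-functions step, which UMD permits.'' UMD permits no such thing for arbitrary sums: it gives unconditionality only for martingale difference sequences, and $d_k:=\lambda_k\omega_k^aE_kf$ is not one. The function $\omega_k^a$ is built from the raw accretive functions $b_{Q^a}$ and is not $\sigma(\mathcal{D}_{k-1})$-measurable; all one has is $E_{k-1}\omega_k^a=0$ (Lemma \ref{ombasic}c), so the $d_k$ are conditionally mean-zero but not adapted, and no filtration making them a martingale difference sequence is available (enlarging $\sigma(\mathcal{D}_k)$ by the coarser-scale $\omega_j^a$'s destroys the vanishing of the conditional expectation, since all the $\omega_j^a$ come from the same system $b$). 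The inequality you want is true, but its proof is exactly where the work of the lemma lies: the paper pairs with $g$ and uses self-adjointness of $E_{k-1}$ --- legitimate because $\lambda_k\chi_{k-1}E_kf$ is $\sigma(\mathcal{D}_{k-1})$-measurable --- to write $\langle g,\lambda_k\omega_k^aE_kf\rangle=\langle\chi_{k-1}E_{k-1}(\omega_k^ag),\lambda_k\chi_{k-1}E_kf\rangle$, then randomizes inside the pairing and applies H\"older, and finally must prove the dual bound $\big\|\sum_\ell\epsilon_\ell\chi_{\ell-1}E_{\ell-1}(\omega_\ell^ag)\big\|_q\lesssim\|g\|_q$, which requires Proposition \ref{ylcarl} (the lattice-valued Carleson embedding) together with $\|\omega_\ell^a\|_\infty\lesssim1$ and the Carleson estimate $\|\{\chi_\ell\}\|_{\mathrm{Car}^1(\mathcal{D})}\lesssim1$ from the proof of Lemma \ref{carut}. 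This is the step where the $\mathrm{UMD}$ function-lattice hypothesis actually enters, and it is entirely absent from your proposal; the subsequent contraction-principle removal of $\omega_k^a$ is the harmless part.
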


\begin{proof}
Let $q\in (1,\infty)$ be such that $1/p+1/q=1$. Using the
identities $\lambda_kD_k^a=D_k^a\lambda_k$ and $(D_k^a)^2 f=D_k^a f+\omega_k^a E_k f$ from \eqref{qr}, we get
\begin{equation}\label{duaid}
\begin{split}
&\bigg\| \sum_{k\in\Z} \lambda_k D_k^a f\bigg\|_{L^p(\R^N,\mu;X)}  \\&= \sup_{\|g\|_{L^q(\R^N,\mu;X^*)} \le 1} \bigg|\langle g,\sum_{k\in\Z} \lambda_kD_k^a f\rangle\bigg|
=\sup_{\|g\|_{q} \le 1} \bigg|\langle g,\sum_{k\in\Z} \lambda_k[(D_k^a)^2 - \omega_k^a E_k] f\rangle\bigg|\\
&\le\sup_{\|g\|_q\le 1} \bigg\{ \bigg|\sum_{k\in\Z} \langle (D_k^a)^* g,\lambda_kD_k^a f\rangle\bigg|
+\bigg|\sum_{k\in\Z} \langle g,\lambda_k\omega_k^a E_k f\rangle\bigg|\bigg\}.
\end{split}
\end{equation}
The first term inside the last supremum is estimated by
using Theorem \ref{haa} and the assumption $\|g\|_{L^q(\R^N,\mu;X^*)} \le 1$,
\begin{align*}
&\bigg|\sum_{k\in\Z} \langle (D_k^a)^* g,\lambda_kD_k^a f\rangle\bigg|
=\bigg| \int_\Omega \bigg\langle  \sum_{\ell\in\Z} \epsilon_\ell (D_\ell^a)^*g,\sum_{k\in\Z} \epsilon_k \lambda_k D_k^a f\bigg\rangle d\mathbf{P}(\epsilon)\bigg|\\
&\le \bigg\| \sum_{\ell\in\Z}\epsilon_\ell(D_\ell^a)^*g\bigg\|_q\bigg\|\sum_k \epsilon_k\lambda_kD_k^a f\bigg\|_p
\lesssim \bigg\|\sum_k \epsilon_k\lambda_k D_k^a f\bigg\|_p.
\end{align*}
Then we estimate the second term inside last supremum in \eqref{duaid}.
By Lemma \ref{ombasic}, the fact that $\lambda_k\chi_{k-1}$ is $\sigma(\mathcal{D}_{k-1})$ measurable, and identity 
$E_{k-1}E_k = E_k$, we get
\begin{align*}
\bigg|\sum_{k\in\Z} \langle g,\lambda_k \omega_k^a E_k f\rangle\bigg|
&=\bigg|\sum_{k\in\Z} \langle \chi_{k-1} E_{k-1}(\omega_k^a g),\lambda_k\chi_{k-1} E_k f\rangle\bigg|\\
&=\bigg|\int_\Omega \bigg\langle \sum_{\ell\in\Z} \epsilon_\ell \chi_{\ell-1} E_{\ell-1} (w_\ell^a g),\sum_{k\in\Z} \epsilon_k \lambda_k\chi_{k-1} E_k f\bigg\rangle d\mathbf{P}(\epsilon)\bigg|\\
&\le \bigg\|\sum_{\ell\in\Z} \epsilon_\ell\chi_{\ell-1}E_{\ell-1}(w_\ell^a g)\bigg\|_q
\bigg\|\sum_{k\in\Z} \epsilon_k \lambda_k\chi_{k-1} E_k f\bigg\|_p \\&\lesssim  \bigg\|\sum_{k\in\Z} \epsilon_k \lambda_k\chi_{k-1} E_k f\bigg\|_p,
\end{align*}
where the last step is justified by using Proposition \ref{ylcarl} and the estimate $\|w_\ell^a\|_\infty+\|\{\chi_{\ell}\}_{\ell\in\Z}\|_{\mathrm{Car}^1(\mathcal{D})}\lesssim 1$; see the proof of Lemma
\ref{carut}.
\end{proof}

As a consequence we obtain the following
norm-equivalence.

\begin{lause}\label{normequ}
Assume that $X^*$ is a $\mathrm{UMD}$ function lattice and  let
$f\in L^p(\R^N,\mu;X)$, $p\in (1,\infty)$. Then
\begin{align*}
&\|f\|_{L^p(\R^N,\mu;X)}\eqsim \|E_{Q_0}^a f\|_{f\in L^p(\R^N,\mu;X)}  \\& +\bigg\|\sum_{k\in\Z} \epsilon_k D_k^a f\bigg\|_{{L^p(\R^N\times\Omega,\mu\otimes\mathbf{P},X)}} + \bigg\|\sum_{k\in\Z}Ê\epsilon_k
\chi_{k-1}E_{k} f\bigg\|_{{L^p(\R^N\times\Omega,\mu\otimes\mathbf{P},X)}},
\end{align*}
where $\chi_{k-1}=1_{\{b_{k-1}^a\not=b_{k}^a\}}$ and $E_{Q_0}^a f=b_{Q_0}\frac{\langle f\rangle_{Q_0}}{\langle b_{Q_0}\rangle_{Q_0}}$.
\end{lause}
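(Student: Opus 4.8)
Throughout, note that since $X^*$ is a UMD space it is reflexive, so $X=X^{**}=(X^*)^*$ is the dual of the UMD function lattice $X^*$; in particular $X$ is itself a UMD function lattice, and the hypotheses of both Lemma \ref{tech} and Theorem \ref{nests} are available for $X$. I would prove the two inequalities of the claimed $\eqsim$ separately. Write $A=\|E_{Q_0}^a f\|_{L^p(\R^N,\mu;X)}$, $B=\|\sum_{k\in\Z}\epsilon_k D_k^a f\|_{L^p(\R^N\times\Omega,\mu\otimes\mathbf{P},X)}$, and $C=\|\sum_{k\in\Z}\epsilon_k\chi_{k-1}E_kf\|_{L^p(\R^N\times\Omega,\mu\otimes\mathbf{P},X)}$, where $\chi_{k-1}=1_{\{b_{k-1}^a\neq b_k^a\}}$. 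For the bound $\|f\|_{L^p(\R^N,\mu;X)}\lesssim A+B+C$ I would start from the decomposition
\[
f=E_{Q_0}^a f+\sum_{k\in\Z}D_k^a f,
\]
which holds $\mu$-a.e.\ and in $L^p(\R^N,\mu;X)$ by \eqref{dec} together with the identity $E_s^a f=E_{Q_0}^a f$ noted there (recall that $\mathrm{supp}\,\mu\subset Q_0$ forces $\langle f\rangle_{Q_0}=\langle f\rangle_{\R^N}$ and, likewise, $\langle b_{Q_0}\rangle_{Q_0}=\langle b_{Q_0}\rangle_{\R^N}$). The triangle inequality then gives $\|f\|_{L^p(\R^N,\mu;X)}\le A+\|\sum_{k\in\Z}D_k^a f\|_{L^p(\R^N,\mu;X)}$, and I would bound the remaining sum by applying Lemma \ref{tech} with the admissible choice $\lambda_k\equiv 1$ (which satisfies $\lambda_k=E_k\lambda_k$): this yields precisely $\|\sum_{k\in\Z}D_k^a f\|_{L^p(\R^N,\mu;X)}\lesssim B+C$.

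For the reverse bound $A+B+C\lesssim\|f\|_{L^p(\R^N,\mu;X)}$ I would treat the three terms in turn. Since $\|b_{Q_0}\|_{L^\infty(\mu)}\le 1$ and $|\langle b_{Q_0}\rangle_{Q_0}|\ge\delta$ by \eqref{accre}, one has the pointwise bound $\|E_{Q_0}^a f(x)\|_X\le\delta^{-1}1_{Q_0}(x)\langle\|f\|_X\rangle_{Q_0}$, so Jensen's inequality gives $A\le\delta^{-1}\|f\|_{L^p(\R^N,\mu;X)}$. The bound $B\lesssim\|f\|_{L^p(\R^N,\mu;X)}$ is precisely Theorem \ref{nests}. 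It remains to bound $C$, for which I would write $E_kf=E_{k-1}f-D_kf$, so that
\[
C\le\Big\|\sum_{k\in\Z}\epsilon_k\chi_{k-1}E_{k-1}f\Big\|_{L^p(\R^N\times\Omega,\mu\otimes\mathbf{P},X)}+\Big\|\sum_{k\in\Z}\epsilon_k\chi_{k-1}D_kf\Big\|_{L^p(\R^N\times\Omega,\mu\otimes\mathbf{P},X)}.
\]
The first summand is exactly the left-hand side of \eqref{vals}, hence $\lesssim\|f\|_{L^p(\R^N,\mu;X)}$ by Lemma \ref{carut}; the second is $\lesssim\|\sum_{k\in\Z}\epsilon_k D_kf\|_{L^p(\R^N\times\Omega,\mu\otimes\mathbf{P},X)}\lesssim\|f\|_{L^p(\R^N,\mu;X)}$ by the contraction principle applied pointwise in $x$ (using $0\le\chi_{k-1}\le 1$) followed by the UMD property of $X$. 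Assembling these estimates proves the equivalence.

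I do not expect a genuinely new obstacle here: the statement is entirely a corollary of the square-function bounds already established in this section, the only computation not literally available being the elementary splitting $E_kf=E_{k-1}f-D_kf$ used for the term $C$. The substantive content sits in Lemma \ref{tech}, and through it in Theorem \ref{haa}, which is where the UMD function lattice hypothesis --- and, via the delicate dual square-function estimate, the hardest input of this section --- actually enters.
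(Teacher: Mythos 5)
Your proof is correct and takes essentially the same route as the paper's: the bound $\|f\|_p\lesssim A+B+C$ follows from the decomposition \eqref{dec} and Lemma \ref{tech} with $\lambda_k\equiv 1$, and the reverse bound from Theorem \ref{nests}, the splitting $E_k=E_{k-1}-D_k$, Lemma \ref{carut}, and the $\mathrm{UMD}$ property. The opening observation that $X$ is itself a $\mathrm{UMD}$ function lattice is true but not actually needed: Lemma \ref{tech} really invokes Theorem \ref{haa} and Proposition \ref{ylcarl} for $X^*$-valued functions, so the stated hypothesis that $X^*$ is a $\mathrm{UMD}$ function lattice (together with $X$ merely $\mathrm{UMD}$, immediate by duality) already suffices.
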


\begin{proof}
Let $q\in (1,\infty)$ be such that $1/p+1/q=1$. By
\eqref{dec},
\begin{align*}
\|f\|_p &=  \bigg\|E_{Q_0}^a f+\sum_{k\in\Z} D_k^a f\bigg\|_p\le \|E_{Q_0}^af\|_p +\bigg\|\sum_{k\in\Z} D_k^a f\bigg\|_p
\end{align*}
The upper bound follows  from Lemma \ref{tech} with $\lambda_k=1$, $k\in\Z$.
For lower bound, we first have $\|E_{Q_0}^af\|_p\lesssim \|f\|_p$. The two
remaining  terms are then estimated by using Theorem \ref{nests} and Lemma \ref{carut};
for the last term we first write $E_k = -D_k +E_{k-1}$ and use the $\mathrm{UMD}$ property.
\end{proof}

\section{Decomposition of a Calder\'on--Zygmund operator}\label{dec_cald}
Let $T\in\mathcal{L}(L^p(\R^N,\mu;X))$ be a Calder\'on--Zygmund operator as in Theorem \ref{mainth}.
We establish the following estimate:
\begin{equation}\label{firstes}
|\langle g,Tf\rangle|\lesssim\|g\|_q\|f\|_p + \bigg|\sum_{Q\in\mathcal{D},\,R\in\mathcal{D}'} \langle D_R^{a,2} g,T(D_Q^{a,1} f)\rangle\bigg|,
\end{equation}
where $f\in L^p(\R^N,\mu;X)$ and $g\in L^q(\R^N,\mu,X^*)$. 

Estimate \eqref{firstes}
is uniform over all dyadic systems $\mathcal{D}=\mathcal{D}(\beta)$ and $\mathcal{D}'=\mathcal{D}(\beta')$, and
it is based on  decomposition of functions, treated in Section \ref{decfun}. In the subsequent
sections various good parts of the series
on the right hand side of \eqref{firstes} will be estimated. In Section
\ref{synthesis}, we finish the proof of Theorem \ref{mainth} by collecting
estimates of good parts, and also performing an estimate for the remaining bad part.

In order to prove \eqref{firstes}, we
recall the basic cubes $Q_0\in\mathcal{D}$ and $R_0\in\mathcal{D}'$ satisfying \eqref{rsubs}.
The $L^\infty$-accretive systems for $T$ and $T^*$, respectively, are denoted by $\{b^1_{Q}\}_{Q\in\mathcal{D}}$ and $\{b^2_R\}_{R\in\mathcal{D}'}$.
Let $q\in (1,\infty)$ be such that $p^{-1}+q^{-1}=1$ and let $f\in L^p(\R^N,\mu;X)$, $g\in L^q(\R^N,\mu; X^*)$. 

According to \eqref{dec}, and the reasoning therein, 
we have the decompositions
\begin{equation}\label{des2}
\begin{split}
&f-b^1_{Q_0}\frac{\langle f\rangle_{\R^N}}{\langle b^1_{Q_0}\rangle_{\R^N}}
=\sum_{j=-\infty}^\infty D_j^{a,1} f=\sum_{Q\in\mathcal{D}} D_Q^{a,1} f;\\
&g-b^2_{R_0}\frac{\langle g\rangle_{\R^N}}{\langle b^2_{R_0}\rangle_{\R^N}}
=\sum_{j=-\infty}^\infty D_j^{a,2} g=\sum_{R\in\mathcal{D}'} D_R^{a,2} g,
\end{split}
\end{equation}
which converge in $L^p$ and $L^q$, respectively. As a consequence, we see that
\begin{equation}\label{tarv}
\begin{split}
&\langle g, Tf\rangle = \langle g,T(\sum_{Q\in\mathcal{D}} D_Q^{a,1} f)\rangle+\langle g,T(b^1_{Q_0}\frac{\langle f\rangle_{\R^N}}{\langle b^1_{Q_0}\rangle_{\R^N}} )\rangle\\
&=
\langle \sum_{R\in\mathcal{D}'} D_R^{a,2} g,T(\sum_{Q\in\mathcal{D}} D_Q^{a,1} f)\rangle+
\langle T^*(b^2_{R_0}\frac{\langle g\rangle_{\R^N}}{\langle b^2_{R_0}\rangle_{\R^N}}),\sum_{Q\in\mathcal{D}} D_Q^{a,1} f\rangle
+\langle g,T(b^1_{Q_0}\frac{\langle f\rangle_{\R^N}}{\langle b^1_{Q_0}\rangle_{\R^N}} )\rangle.
\end{split}
\end{equation}
Using the H\"older's inequality and \eqref{accreass} for $T^*$, we have
\begin{equation}\label{es} 
\begin{split}
\bigg\|T^*(b^2_{R_0}\frac{\langle g\rangle_{\R^N}}{\langle b^2_{R_0}\rangle_{\R^N}})\bigg\|_q
&=\frac{|\langle g\rangle_{\R^N}|_{X^*}}{|\langle b^2_{R_0}\rangle_{\R^N}|}\|T^*(b^2_{R_0})\|_q\\
&\lesssim \frac{1}{\mu(R_0)}\int_{R_0} |g(x)|_{X^*}dx\cdot\|T^*(b^2_{R_0})\|_\infty\cdot\mu(R_0)^{1/q}\\
&\lesssim B\mu(R_0)^{1/q}\bigg(\frac{1}{\mu(R_0)}\int_{R_0}|g(x)|_{X^*}^qdx\bigg)^{1/q}\lesssim B\|g\|_{q}.
\end{split}
\end{equation}
In a similar manner, we have 
\begin{equation}\label{tsam}
\|b^1_{Q_0}\frac{\langle f\rangle_{\R^N}}{\langle b^1_{Q_0}\rangle_{\R^N}}\|_p\lesssim \|f\|_p.
\end{equation}
As a consequence of \eqref{es}, \eqref{tsam}, and \eqref{des2}, we have
\begin{align*}
|\langle T^*(b^2_{R_0}\frac{\langle g\rangle_{\R^N}}{\langle b^2_{R_0}\rangle_{\R^N}}),\sum_{Q\in\mathcal{D}} D_Q^{a,1} f\rangle|
\lesssim  B\|g\|_q\|f-b^1_{Q_0}\frac{\langle f\rangle_{\R^N}}{\langle b^1_{Q_0}\rangle_{\R^N}}\|_p\lesssim\|g\|_q\|f\|_p.
\end{align*}
Computing as above, we also find that
\begin{align*}
|\langle g,T(b^1_{Q_0}\frac{\langle f\rangle_{\R^N}}{\langle b^1_{Q_0}\rangle_{\R^N}} )\rangle|\lesssim\|g\|_q\|f\|_p.
\end{align*}
Combining the estimates above gives us \eqref{firstes}.
Within the summation on its right hand side we can tacitly assume that
the summation varies over cubes for which $Q\subset Q_0$ and $R\subset R_0$. Indeed,
otherwise $D_R^{a,2} g=0$ or $D_{Q}^{a,1} f=0$.






\section{Decoupling estimates}\label{decoul}

We begin with the following {\em tangent martingale trick} originating from \cite{McConnell}, and formulated in a way convenient for our purposes in \cite{hytonen1}.
Let $(E,\mathcal{M},\mu)$ be a $\sigma$-finite measure space having a refining
sequence of partitions as follows: For each $k\in\Z$, let
$\mathcal{A}_k$ be a countable partition of $E$ into sets of finite
positive measure so that
$\sigma(\mathcal{A}_k)\subset \sigma(\mathcal{A}_{k-1})\subset \mathcal{M}$,
and let $\mathcal{A}=\cup_{k\in\Z} \mathcal{A}_k$. 

For each $A\in\mathcal{A}$, let $\nu_A$ denote
the probability measure $\mu(A)^{-1}\cdot \mu|_A$. Let
$(F,\mathcal{N},\nu)$ be the space
$\prod_{A\in\mathcal{A}} A$ with the
product $\sigma$-algebra and measure. Its points
will be denoted by $y=(y_A)_{A\in\mathcal{A}}$.
By \cite[Theorem 6.1]{hytonen1}, the
following norm equivalence holds:

\begin{lause}\label{tangent} Suppose that $X$ is a $\mathrm{UMD}$ space
and $p\in (1,\infty)$. Then
\begin{align*}
&\iint_{E\times \Omega}\bigg|\sum_{k\in\Z} \epsilon_k \sum_{A\in\mathcal{A}_k} f_A(x) \bigg|_X^p d\mathbf{P}(\epsilon)\, d\mu(x)\\
&\qquad \eqsim \iiint_{F\times E\times \Omega} \bigg|\sum_{k\in\Z} \epsilon_k
\sum_{A\in\mathcal{A}_k} 1_A(x) f_A(y_A)\bigg|_X^p d\mathbf{P}(\epsilon)\,d\mu(x)\,d\nu(y).
\end{align*}
\end{lause}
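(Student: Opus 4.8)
This norm equivalence is quoted from \cite[Theorem~6.1]{hytonen1}; should one wish to reprove it here, the plan is to recognise it as an instance of the \emph{decoupling inequality for tangent martingale difference sequences}, which is available in every $\mathrm{UMD}$ space (see \cite{McConnell}). The only real work is to exhibit the two sequences; after that the Banach-space-valued decoupling theorem applies directly, with constants depending only on $p$ and the $\mathrm{UMD}$ constant of $X$. First I would reduce to sums over finitely many levels: since those constants do not depend on the number of nonzero terms, applying the finite-sum estimate to blocks of consecutive indices shows that the partial sums on the two sides are simultaneously Cauchy in $L^p$, so the estimate transfers to the full series. Thus fix finitely many relevant levels and enumerate them $k_1>k_2>\dots>k_M$, so that $\sigma(\mathcal{A}_{k_M})\supset\dots\supset\sigma(\mathcal{A}_{k_1})$.

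Next I would place both sequences on the common probability space $\Omega\times E\times F$ with the measure $\mathbf{P}\otimes\mu\otimes\nu$ (each $\nu_A$, hence $\nu$, is a probability measure, and $\mu$ may be normalised on the atoms in play), setting for $n=1,\dots,M$
\begin{align*}
d_n(\epsilon,x,y)&:=\epsilon_{k_n}\sum_{A\in\mathcal{A}_{k_n}}1_A(x)\,f_A(x),\\
e_n(\epsilon,x,y)&:=\epsilon_{k_n}\sum_{A\in\mathcal{A}_{k_n}}1_A(x)\,f_A(y_A),
\end{align*}
so that $\sum_n d_n$ is the integrand on the left-hand side and $\sum_n e_n$ the integrand on the right-hand side. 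I would filter by the $\sigma$-algebras $\mathcal{F}_n$ generated by $\epsilon_{k_1},\dots,\epsilon_{k_n}$, by the $\mathcal{A}_{k_n}$-atom containing $x$, and by $\{y_A:A\in\mathcal{A}_{k_m},\ m\le n\}$. Both $(d_n)$ and $(e_n)$ are then adapted martingale difference sequences for $(\mathcal{F}_n)$: conditionally on $\mathcal{F}_{n-1}$ the fresh sign $\epsilon_{k_n}$ is still uniform and independent, which forces conditional mean zero.

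The crux is to check that $(d_n)$ and $(e_n)$ are \emph{tangent} and that $(e_n)$ is \emph{decoupled}. For tangency: conditionally on $\mathcal{F}_{n-1}$ one knows the atom $A^{*}\in\mathcal{A}_{k_{n-1}}$ containing $x$, and given this $x\sim\nu_{A^{*}}$, the sign $\epsilon_{k_n}$ is an independent $\pm1$, and the fresh variables $y_A$ ($A\in\mathcal{A}_{k_n}$) are independent with $y_A\sim\nu_A$; hence both the conditional law of $d_n$ and that of $e_n$ equal the law of $\epsilon\,f_B(z)$, where $B$ is the $\mathcal{A}_{k_n}$-subatom of $A^{*}$ hit by a $\nu_{A^{*}}$-random point, $z\sim\nu_B$ is drawn independently in it, and $\epsilon$ is an independent sign. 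For the decoupled structure: conditionally on $x$ and on all the signs, the terms $e_n=\epsilon_{k_n}f_{B_n}(y_{B_n})$ involve the nested but pairwise distinct atoms $B_n$, so the $y_{B_n}$ are mutually independent and $(e_n)$ is conditionally independent. The decoupling theorem for tangent sequences then gives $\mathbb{E}\,\bigl\|\sum_n d_n\bigr\|_X^p\eqsim\mathbb{E}\,\bigl\|\sum_n e_n\bigr\|_X^p$ with constants depending only on $p$ and $X$, i.e.\ the asserted equivalence for the finite sum; undoing the first reduction finishes the proof.

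I expect the main obstacle to be the bookkeeping of the filtration $(\mathcal{F}_n)$: it has to be rich enough that $e_n$ is adapted --- so the fresh variables $y_A$ with $A\in\mathcal{A}_{k_n}$ must lie in $\mathcal{F}_n$ --- yet coarse enough at level $n-1$ that the conditional laws of $d_n$ and $e_n$ given $\mathcal{F}_{n-1}$ still coincide. Once this balance is struck, tangency, the decoupled property, and the limiting argument are all routine, and the $\mathrm{UMD}$ hypothesis enters only through the cited decoupling inequality.
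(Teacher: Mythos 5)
The route you sketch — exhibit a tangent pair of martingale difference sequences and invoke McConnell's decoupling theorem — is essentially the one behind \cite[Theorem~6.1]{hytonen1}, to which the paper simply defers without reproving. But as stated, your filtration does not make $(d_n)$ adapted, and this is a genuine gap rather than mere imprecision. Recall (cf.\ the hypotheses of Theorem~\ref{trick}) that $f_A$ for $A\in\mathcal{A}_{k}$ is $\sigma(\mathcal{A}_{k-1})$-measurable, i.e.\ measurable against the \emph{next finer} partition. Consequently $d_n=\epsilon_{k_n}\sum_{A\in\mathcal{A}_{k_n}}1_A(x)f_A(x)$ is not a function of the $\mathcal{A}_{k_n}$-atom of $x$, and your $\mathcal{F}_n$ is too coarse: $\mathcal{F}_n$ must reveal the $\mathcal{A}_{k_n-1}$-atom of $x$ (together with $\epsilon_{k_1},\dots,\epsilon_{k_n}$ and the $y_A$ with $A\in\mathcal{A}_{k_m}$, $m\le n$). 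With that repair both sequences are adapted, and the tangency you describe still goes through: $\mathcal{F}_{n-1}$ now reveals the $\mathcal{A}_{k_{n-1}-1}$-atom $A^{**}$ of $x$; since $k_n\le k_{n-1}-1$, conditionally $x\sim\nu_{A^{**}}$, the joint law of $\bigl(B,\ \text{point of }B\bigr)$ (where $B\in\mathcal{A}_{k_n}$ is the sub-atom of $A^{**}$ containing $x$) is the same whether the point is $x$ itself or the fresh $y_B$, and $\epsilon_{k_n}$ contributes an independent sign.

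Your verification of the decoupled (CI) structure also needs more care than you give it. Conditional independence of the $e_n$ given $\mathcal{G}=\sigma(x,\ \text{all signs})$ is only half of the condition; one also needs $\mathcal{L}(e_n\mid\mathcal{F}_{n-1})=\mathcal{L}(e_n\mid\mathcal{G})$, and this fails for this $\mathcal{G}$: the sub-atom $A_n(x)$ and the sign $\epsilon_{k_n}$ are deterministic given $\mathcal{G}$ but random given $\mathcal{F}_{n-1}$. Depending on the precise form of the decoupling theorem you invoke, you should either choose a more delicate $\mathcal{G}$, or pass through the canonical CI sequence on an enlarged probability space and use the transitivity of the tangency relation to compare both $(d_n)$ and $(e_n)$ to it. The central idea of the proposal is sound and matches the cited source; it is exactly the filtration bookkeeping — which you yourself flag as the main obstacle — that your sketch does not correctly resolve.
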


As a consequence, we obtain the
following extension of \cite[Corollary 6.3]{hytonen1}.

\begin{lause}\label{trick}
Let $X$ be a $\mathrm{UMD}$ space and $p\in (1,\infty)$. For each $A\in\mathcal{A}$, let
\[
k_A:A\times A\to \mathcal{L}(X)
\]
be a jointly measurable function for which there is a constant $C>0$ such that
\begin{equation}\label{rbound}
\mathcal{R}\big(\{k_A(x,y_A)\,:\,x\in A\in\mathcal{A}\}\big)\le C<\infty,\qquad \text{ if }x\in E\text{ and }y\in F.
\end{equation}
Suppose also that, for each $A\in\mathcal{A}_k$ with $k\in\Z$
we are given a $\sigma(\mathcal{A}_{k-1})$-measurable
function $f_A:E\to X$, supported
on $A$. Then
\begin{align}\label{lefts}
&\iint_{E\times \Omega} \bigg|\sum_{k\in\Z} \epsilon_k \sum_{A\in\mathcal{A}_k} \frac{1_A(x)}{\mu(A)}
\int_A k_A(x,z)f_A(z)d\mu(z)\bigg|_X^pd\mathbf{P}(\epsilon)d\mu(x)\\
&\qquad \lesssim C\iint_{E\times \Omega} \bigg|\sum_{k\in\Z} \epsilon_k 
\sum_{A\in\mathcal{A}_k} f_A(x)\bigg|_X^p d\mathbf{P}(\epsilon)d\mu(x)\notag.
\end{align}
\end{lause}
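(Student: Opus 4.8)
The plan is to deduce Theorem~\ref{trick} from the tangent martingale equivalence of Theorem~\ref{tangent} together with the $\mathcal{R}$-boundedness hypothesis \eqref{rbound}. First I would fix $y=(y_A)_{A\in\mathcal{A}}\in F$ and, for each $A\in\mathcal{A}_k$, introduce the function
\[
g_A(x):=\frac{1}{\mu(A)}\int_A k_A(x,z)f_A(z)\,d\mu(z),
\]
which is supported on $A$ and still $\sigma(\mathcal{A}_{k-1})$-measurable since $k_A(x,z)$ for $x\in A$ only involves the conditional expectation structure of $f_A$ over $A$. Applying Theorem~\ref{tangent} to the family $\{g_A\}$, the left side of \eqref{lefts} is comparable to
\[
\iiint_{F\times E\times \Omega}\bigg|\sum_{k\in\Z}\epsilon_k\sum_{A\in\mathcal{A}_k}1_A(x)\,g_A(y_A)\bigg|_X^p\,d\mathbf{P}(\epsilon)\,d\mu(x)\,d\nu(y).
\]
The point of moving to the tangent (decoupled) side is that the spatial variable $x$ and the averaging variable $z$ are now decoupled: in $g_A(y_A)=\mu(A)^{-1}\int_A k_A(y_A,z)f_A(z)\,d\mu(z)$ the operator $k_A(y_A,z)$ is evaluated at the frozen point $y_A$, so it no longer interacts with the sum over $x$.

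Next I would rewrite the averaging as a second tangent-type expansion. For fixed $y$ and fixed $x$, consider the inner sum $\sum_{A\in\mathcal{A}_k}1_A(x)\mu(A)^{-1}\int_A k_A(y_A,z)f_A(z)\,d\mu(z)$; I would like to replace $\mu(A)^{-1}\int_A(\cdot)\,d\mu(z)$ by evaluation at an independent copy of the variable, i.e. pass to a further product space $F'=\prod_{A\in\mathcal{A}}A$ with points $w=(w_A)$, so that the expression becomes $\int_{F'}\sum_k\epsilon_k\sum_{A\in\mathcal{A}_k}1_A(x)k_A(y_A,w_A)f_A(w_A)\,d\nu(w)$ up to the same norm equivalence — this is again Theorem~\ref{tangent}, applied to the functions $z\mapsto k_A(y_A,z)f_A(z)$ which are genuine (non-adapted) functions on $A$, so the hypothesis needed is just that the scalar-type tangent equivalence holds, which it does in a UMD space. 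After this second decoupling, by Fubini the left side of \eqref{lefts} is comparable to
\[
\iiiint_{F'\times F\times E\times \Omega}\bigg|\sum_{k\in\Z}\epsilon_k\sum_{A\in\mathcal{A}_k}1_A(x)\,k_A(y_A,w_A)\,f_A(w_A)\bigg|_X^p\,d\mathbf{P}(\epsilon)\,d\mu(x)\,d\nu(y)\,d\nu(w).
\]

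Now the $\mathcal{R}$-boundedness enters: for $\mu\otimes\nu\otimes\nu$-a.e. $(x,y,w)$, the operators $k_A(y_A,w_A)$ — as $A$ ranges over the (at most one per scale, since the sets $1_A(x)$ for $A\in\mathcal{A}_k$ partition $E$) cubes containing relevant mass — lie in the $\mathcal{R}$-bounded family of \eqref{rbound} with constant $C$. Hence, conditionally on $(y,w,x)$, the definition of $\mathcal{R}$-boundedness \eqref{r_bounded} applied in the variable realized by the Rademachers $\epsilon_k$ (one term per scale $k$, with $\xi_k=\sum_{A\in\mathcal{A}_k}1_A(x)f_A(w_A)$) gives
\[
\bigg\|\sum_k\epsilon_k\sum_{A\in\mathcal{A}_k}1_A(x)k_A(y_A,w_A)f_A(w_A)\bigg\|_{L^p(\Omega;X)}\le C\bigg\|\sum_k\epsilon_k\sum_{A\in\mathcal{A}_k}1_A(x)f_A(w_A)\bigg\|_{L^p(\Omega;X)}.
\]
Integrating this over $F'\times F\times E$, then undoing the two tangent equivalences in reverse (the second one collapses the $w$-integral back to the averaged form with $k_A\equiv\mathrm{id}$, i.e. to $\sum_{A\in\mathcal{A}_k}f_A(x)$, and the first collapses the $y$-integral), yields exactly the right side of \eqref{lefts} up to the implicit constants from Theorem~\ref{tangent}. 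I expect the main obstacle to be the bookkeeping in the second decoupling step: one must check that $z\mapsto k_A(y_A,z)f_A(z)$ is legitimately handled by Theorem~\ref{tangent} (joint measurability of $k_A$, integrability, and that the relevant $\sigma$-algebra structure is the trivial one over $A$, so no adaptedness is lost), and that the $\mathcal{R}$-bound \eqref{rbound} is invoked at the correct point — only the value $k_A(y_A,w_A)$ with both arguments in $A$ appears, which is exactly what \eqref{rbound} controls. The Khintchine--Kahane inequality is used freely to pass between $L^p(\Omega)$ and $L^2(\Omega)$ norms so that \eqref{r_bounded}, stated with $L^2$, applies for the given $p$.
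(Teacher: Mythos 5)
Your first application of Theorem~\ref{tangent} contains the gap. You apply the tangent martingale equivalence to the functions
\[
g_A(x)=\frac{1}{\mu(A)}\int_A k_A(x,z)f_A(z)\,d\mu(z),
\]
and justify this by asserting that $g_A$ is $\sigma(\mathcal{A}_{k-1})$-measurable ``since $k_A(x,z)$ for $x\in A$ only involves the conditional expectation structure of $f_A$ over $A$.'' This is not true: $k_A$ is an arbitrary jointly measurable function $A\times A\to\mathcal{L}(X)$, with no measurability assumption in $x$ beyond joint measurability and the $\mathcal{R}$-bound \eqref{rbound}. Generically $g_A$ is \emph{not} $\sigma(\mathcal{A}_{k-1})$-measurable, so Theorem~\ref{tangent} does not apply to the family $\{g_A\}$. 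Since the whole point of this lemma is to handle a non-trivial $x$-dependence in $k_A$, assuming it away in the first step is circular. Your ``second tangent-type expansion'' has a related confusion: passing from $\mu(A)^{-1}\int_A(\cdot)\,d\mu(z)$ to $\int_{F'}(\cdot)\,d\nu(w)$ is plain Fubini (by the definition of $\nu_A$), and pulling the $F'$-integral outside the $p$-th power is one-sided Jensen, not the two-sided Theorem~\ref{tangent}; and your remark that the theorem applies to ``genuine (non-adapted) functions'' is false --- adaptedness is an essential hypothesis of the tangent martingale equivalence.

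The paper never decouples the $x$-variable at all. It only rewrites the $z$-average as an $F$-integral by Fubini, bounds it by the decoupled form via Jensen, applies the $\mathcal{R}$-bound \eqref{rbound} pointwise in $(x,y)$ (using that for fixed $x$ there is exactly one $A\in\mathcal{A}_k$ with $1_A(x)\neq0$, so the operators $k_A(x,y_A)$ appear one per Rademacher, matching \eqref{r_bounded}), and only then invokes Theorem~\ref{tangent} \emph{once}, in the direction that collapses the $y$-integral, applied to the genuinely adapted $f_A$. That is where the standing adaptedness hypothesis on $f_A$ is used, and it is the only place the tangent martingale theorem is needed. You should restructure your argument along these lines; as written, the first step is unjustified and cannot be repaired without either an additional hypothesis on $k_A$ that the theorem does not grant, or abandoning the decoupling of $x$ entirely, which is what the intended proof does.
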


\begin{proof}
Observe that \eqref{lefts} can be written as
\begin{align*}
\iint_{E\times \Omega} \bigg|\int_F \sum_{k\in\Z} \epsilon_k\sum_{A\in\mathcal{A}_k} 1_A(x) k_A(x,y_A) f_A(y_A) d\nu(y)\bigg|_X^p d\mathbf{P}(\epsilon)\,d\mu(x).
\end{align*}
By H\"older's inequality and Fubini's theorem, this quantity is bounded by
\[
\iiint_{F\times E\times \Omega} \bigg|\sum_{k\in\Z} \epsilon_k\sum_{A\in\mathcal{A}_k} 1_A(x) k_A(x,y_A) f_A(y_A) \bigg|_X^p d\mathbf{P}(\epsilon)\,d\mu(x)\,d\nu(y).
\]
By the $\mathcal{R}$-boundedness assumption \eqref{rbound} and
the fact that each $\mathcal{A}_k$, $k\in\Z$, is a countable partition of $E$,
we can further bound \eqref{lefts} by 
\begin{align*}
C\iiint_{F\times E\times \Omega} \bigg|\sum_{k\in\Z} \epsilon_k\sum_{A\in\mathcal{A}_k} 1_A(x) 
f_A(y_A) \bigg|_X^p d\mathbf{P}(\epsilon)\,d\mu(x)\,d\nu(y).
\end{align*}
The proof is finished by using Theorem \ref{tangent}.
\end{proof}

Let $Q_v$ and $R_u$, $u,v\in \{1,2,\ldots,2^N\}$,
denote the son cubes of $Q\in\mathcal{D}$ and $R\in\mathcal{D}'$
in a fixed order. Fix $u$ and $v$ as above, and
assume that the elements of a matrix
\[\{T_{RQ} \in\C\,:\,R\in\mathcal{D}',\,Q\in\mathcal{D}_{R\rm{-good}},\,\ell(Q)\le \ell(R)\}
\]
satisfy the estimate
\begin{equation}\label{ades}
\frac{|T_{RQ}|}{\mu(R_u)\mu(Q_v)}\lesssim \frac{\ell(Q)^{\alpha/2}\ell(R)^{\alpha/2}}{D(Q,R)^{d+\alpha}}.
\end{equation}
Recall that
$D(Q,R)=\ell(Q)+\dist(Q,R)+\ell(R)$.

Assume that $\{f_k\in L^{1}_{\mathrm{loc}}(\R^N,\mu;X)\}_{k\in\Z}$ and 
$\{g_k\in L^1_{\mathrm{loc}}(\R^N,\mu;X^*)\}_{k\in\Z}$ are 
such that
$E_{k-1} f_k=f_k$ and $E_{k-1} g_k=g_k$ for every $k\in\Z$.
If $Q\in\mathcal{D}_k$, we denote
$f_{Q}=1_Q f_k$
and $g_R=1_R g_k$ if $R\in\mathcal{D}'_k$.

\begin{lem}\label{tut}
Assume that $E_{k-1}f_k=f_k$ and $E_{k-1}g_k=g_k$, where
$f_k$ and $g_k$, $k\in\Z$, are as above. Assume also
that the estimate \eqref{ades} holds. Then
\begin{equation}\label{treq}
\begin{split}
&\bigg|\sum_{R\in\mathcal{D}'} \sum_{\substack{Q\in\mathcal{D}_{R\textrm{-good}} \\ \ell(Q)\le \ell(R)}} \langle g_R\rangle_{R_u}T_{RQ}\langle f_Q\rangle_{Q_v}\bigg|\\
&\qquad\qquad\qquad\lesssim \bigg\|\sum_{k=-\infty}^\infty \epsilon_k g_{k}\bigg\|_{L^{q}(\mathbf{P}\otimes\mu;X^*)} 
\bigg\|\sum_{k=-\infty}^\infty \epsilon_k f_{k}\bigg\|_{L^p(\mathbf{P}\otimes \mu;X)}.
\end{split}
\end{equation}
\end{lem}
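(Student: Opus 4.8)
The plan is to reduce the bilinear estimate \eqref{treq} to the decoupling inequality of Theorem~\ref{trick} by recognizing the kernel of the matrix $T_{RQ}$ as an $\mathcal R$-bounded family (here, since the $T_{RQ}$ are scalars, $\mathcal R$-boundedness is just uniform boundedness) once the dyadic geometry is properly organized. The first step is to set up the index structure: for a fixed pair of sons $u,v$, I would regroup the double sum according to the relative position and relative scales of $Q$ and $R$. Writing $\ell(Q)=2^i$ and $\ell(R)=2^j$ with $i\le j$, and separating (as in \cite{NTV,hytonen1}) into the "deeply inside" regime $i\le j-r$ versus the "comparable" regime $j-r<i\le j$, the goal is to express the left-hand side of \eqref{treq} as (a sum over a controlled parameter of) a bilinear form to which Theorem~\ref{trick} applies after pairing with the functions $g_R\in X^*$ and $f_Q\in X$ through the duality $\langle\cdot,\cdot\rangle$.

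The key mechanism is the following: given $Q\in\mathcal D_{R\text{-good}}$ with $\ell(Q)\le\ell(R)$, the estimate \eqref{ades} together with the $R$-goodness of $Q$ (which via Remark~\ref{new_hyvmeas} forces $\dist(Q,\partial R)>\ell(Q)^\gamma\ell(R)^{1-\gamma}$, hence $D(Q,R)\gtrsim\ell(Q)^\gamma\ell(R)^{1-\gamma}$ when $R$ is an ancestor-scale cube or $Q$ and $R$ are well-separated) yields a geometrically decaying bound
\[
\frac{|T_{RQ}|}{\mu(R_u)\mu(Q_v)}\lesssim \Big(\frac{\ell(Q)}{\ell(R)}\Big)^{\tau}\cdot(\text{summable spatial factor}),
\]
for some $\tau>0$ depending on $\alpha,d,\gamma$ through \eqref{gdef}. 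Concretely I would fix the scale gap $m=j-i\ge 0$ and, inside that slice, organize the cubes $R$ at scale $2^j$ into the partition $\mathcal A_j$ (in the sense of Section~\ref{decoul}, with $E=\R^N$, $\mathcal A_k=\mathcal D_k'$ say), and for each $R$ collect the good descendants $Q\subset R$ at scale $2^{j-m}$; then $\langle f_Q\rangle_{Q_v}$ plays the role of an average of the $\sigma(\mathcal A_{k-1})$-measurable function $f_k$ over $Q_v$, matching the hypotheses "$f_A$ is $\sigma(\mathcal A_{k-1})$-measurable, supported on $A$" of Theorem~\ref{trick}. Pairing with $g$ via H\"older on $L^q(\mu;X^*)\times L^p(\mu;X)$ and invoking the decoupling bound on each factor, one gets for each fixed $m$ an estimate with constant $\lesssim 2^{-\tau m}$ times the right-hand side of \eqref{treq}; summing the geometric series in $m\ge 0$ closes the argument. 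The comparable regime $0\le m<r$ is finite and handled the same way (or more crudely) with a bounded constant.

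The main obstacle I expect is bookkeeping rather than a conceptual difficulty: one must verify that after the regrouping, the quantity $\sum_R\sum_{Q}\langle g_R\rangle_{R_u}T_{RQ}\langle f_Q\rangle_{Q_v}$ is genuinely of the form appearing on the left of \eqref{lefts} — i.e. that the "kernel" $k_A(x,z)$ can be taken to incorporate the normalized matrix entries $T_{RQ}/(\mu(R_u)\mu(Q_v))$, that the $\mathcal R$-boundedness hypothesis \eqref{rbound} reduces here to the uniform bound \eqref{ades} (with the scale-decay extracted and summed outside), and that the support/measurability requirements hold (this is where $E_{k-1}f_k=f_k$, $E_{k-1}g_k=g_k$ are used, giving $f_Q=1_Qf_k$ constant on each son $Q_v$ and measurable with respect to the coarser filtration). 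A secondary point requiring care is that the sum over $R$ of the spatial factors $\ell(Q)^{\alpha/2}\ell(R)^{\alpha/2}D(Q,R)^{-(d+\alpha)}$, for $Q$ ranging over the good descendants of a fixed scale inside $R$, is controlled using the measure bound \eqref{mesas} $\mu(B(x,t))\le t^d$; this is the standard non-homogeneous geometric summation and it is where the exponent $\alpha/2$ (rather than $\alpha$) in \eqref{ades} gives the needed room. Once these are in place, everything is linear in the decoupled square-function norms and the proof concludes by the triangle inequality over the finitely-or-geometrically-many parameters.
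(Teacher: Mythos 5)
Your proposal has a genuine gap, and the issue is structural rather than cosmetic.

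Lemma~\ref{tut} makes no assumption that $Q\subset R$: the sum runs over all $R\in\mathcal{D}'$ and all $R$-good $Q\in\mathcal{D}$ with $\ell(Q)\le\ell(R)$, and the first place it is used is precisely the \emph{separated} case (Proposition~\ref{ktest}) where $\dist(Q,R)\ge\ell(Q)$ and $Q$ can be arbitrarily far from $R$ relative to $\ell(R)$. Your regrouping --- ``for each $R$ collect the good descendants $Q\subset R$ at scale $2^{j-m}$'' and ``deeply inside vs. comparable'' --- only parametrizes the scale gap $m=j-i$ and tacitly assumes nesting; it leaves out the distance parameter entirely. But for fixed scale gap the cubes $Q$ at that scale sit at all distances $D(Q,R)/\ell(R)\sim 2^j$, $j\ge 0$, and the summability in $j$ is not a side remark you can absorb into a ``summable spatial factor'': the whole point of the exponent $\alpha/2$ in \eqref{ades} is to donate decay simultaneously in the scale gap and in the distance gap, and a single-parameter decomposition cannot see this.

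What you are missing is the key geometric step that makes Theorem~\ref{trick} applicable at all when $Q$ is far from $R$. One must show, using goodness and the definition of $\theta$ (display \eqref{ksis} in the paper), that for $Q$ good with $\ell(R)/\ell(Q)=2^n$ and $D(Q,R)/\ell(R)\sim 2^j$ one has $Q\subset S:=R^{(j+\theta(j+n))}$; only then can one replace $R$ by the common ancestor $S$ as the ``cell'' of the partition in the decoupling theorem, with a kernel $K_S(x,y)$ supported on $S\times S$. This is not a bookkeeping reformulation: without it the data $(\text{averages of }f_k\text{ over }Q_v)$ are not tied to a single partition cube containing both $Q$ and $R$, so the hypotheses of Theorem~\ref{trick} genuinely fail. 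Moreover, the ancestor $S$ sits $j+\theta(j+n)$ generations above $R$, so the partitions $\{\mathcal{D}'_{k+j+\theta(j+n)}\}_{k\in\Z}$ are not consecutive; the paper therefore splits the sum over $k$ into $j+\theta(j+n)+1$ subseries along residue classes $k\equiv k_0$ and applies Theorem~\ref{trick} to each. This produces the extra polynomial factor $(n+j+1)$ in front of $2^{-(n+j)\alpha/4}$, which is what one finally sums. Your proposal mentions none of this --- neither the two-parameter $(n,j)$ bookkeeping, nor the containment \eqref{ksis}, nor the mod-$(j+\theta(j+n)+1)$ subseries device --- and as written it cannot reach the conclusion of the lemma.
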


Here and throughout the paper in what follows, we use the following convention in order not to burden the notation too much: The duality pairing between elements $\phi\in X^*$ and $\xi\in X$ is written in the simple product notation as $\phi\xi$. Thus, above, the expression $\langle g_R\rangle_{R_u}T_{RQ}\langle f_Q\rangle_{Q_v}$ is the duality action of $\langle g_R\rangle_{R_u}\in X^*$ on $T_{RQ}\langle f_Q\rangle_{Q_v}\in X$, where this latter term is in turn the product of $T_{RQ}\in\C$ and $\langle f_Q\rangle_{Q_v}\in X$. (We keep the scalar $T_{RQ}$ in the middle to anticipate the operator-kernel case in which $T_{RQ}\in\mathcal{L}(X)$, in which case $\langle g_R\rangle_{R_u}T_{RQ}\langle f_Q\rangle_{Q_v}$ is the only logical order of the `product'.)

\begin{proof}
Consider first the part of the series where the ratio
$\ell(R)/\ell(Q)$ is a fixed
number $2^n$, $n\in\N_0$, and
$2^j<D(Q,R)/\ell(R)\le 2^{j+1}$ for a momentarily fixed
$j\in\N_0$. The last double inequality will be abbreviated as
$D(Q,R)/\ell(R)\sim 2^j$. If moreover $R\in\mathcal{D}'_k$, the estimate
\eqref{ades} reads as
\begin{equation}\label{mest}
\frac{|T_{RQ}|}{\mu(R_u)\mu(Q_v)}\lesssim \frac{2^{(k-n)\alpha/2}2^{k\alpha/2}}{2^{(k+j)(d+\alpha)}}=2^{-n\alpha/2}2^{-j\alpha}2^{-(k+j)d}.
\end{equation}
First of all, we have
\begin{equation}\label{estnorm}
\begin{split}
&\bigg|\sum_{k\in\Z}\sum_{R\in\mathcal{D}'_{k}} \sum_{\substack{Q\in\mathcal{D}^{R\textrm{-good}}_{k-n} \\ D(Q,R)/\ell(R)\sim 2^j}} \langle g_R\rangle_{R_u}T_{RQ}\langle f_Q\rangle_{Q_v}\bigg|\\
&=\bigg|\sum_{k\in\Z}\sum_{Q\in\mathcal{D}_{k-n}} \sum_{\substack{R\in\mathcal{D}'_{k} \\
 Q\text{ is }R\text{-good}\\
 D(Q,R)/\ell(R)\sim 2^j}} \langle g_R\rangle_{R_u}T_{RQ}\langle f_Q\rangle_{Q_v}\bigg|\\
&=\bigg|\iint_{\Omega\times\R^N} \sum_{\ell\in\Z}\sum_{S\in\mathcal{D}_{\ell-n}}
\epsilon_S  f_S(x)\\
&\qquad\cdot\sum_{k\in\Z}\sum_{Q\in\mathcal{D}_{k-n}}
\epsilon_Q \frac{1_{Q_v}(x)}{\mu(Q_v)}\sum_{\substack{R\in\mathcal{D}'_{k} 
\\Q\textrm{ is }R\textrm{-good}
\\ D(Q,R)/\ell(R)\sim 2^j}}T_{RQ}\langle g_R\rangle_{R_u}d\mathbf{P}(\epsilon)d\mu(x)\bigg|\\
&\le \bigg\| \sum_{S\in\mathcal{D}} \epsilon_Sf_{S}\bigg\|_{L^{p}(\mathbf{P}\otimes\mu;X)}
\bigg\|\sum_{k\in\Z}\epsilon_k\sum_{Q\in\mathcal{D}_{k-n}}
\sum_{\substack{R\in\mathcal{D}'_{k} 
\\Q\textrm{ is }R\textrm{-good}
\\ D(Q,R)/\ell(R)\sim 2^j}}
1_{Q_v}\frac{T_{RQ}}{\mu(Q_v)}\langle g_R\rangle_{R_u}\bigg\|_{L^q(\mathbf{P}\otimes \mu;X^*)}.
\end{split}
\end{equation}
Reorganizing the summation, we have
\[
\bigg\| \sum_{S\in\mathcal{D}} \epsilon_S f_{S}\bigg\|_{L^{p}(\mathbf{P}\otimes\mu;X)}= \bigg\|\sum_{k=-\infty}^\infty \epsilon_k f_{k}\bigg\|_{L^p(\mathbf{P}\otimes\mu;X)}
\]
and we are left with estimating the quantity
\begin{equation}\label{vtama}
\begin{split}
\bigg\|\sum_{k\in\Z}\epsilon_k\sum_{Q\in\mathcal{D}_{k-n}}
\sum_{\substack{R\in\mathcal{D}'_{k} 
\\Q\textrm{ is }R\textrm{-good}
\\ D(Q,R)/\ell(R)\sim 2^j}}
1_{Q_v}\frac{T_{RQ}}{\mu(Q_v)}\langle g_R\rangle_{R_u}\bigg\|_{L^q(\mathbf{P}\otimes \mu;X^*)}.\end{split}
\end{equation}
In order to estimate this quantity, we
first prove that
if $Q\in\mathcal{D}_{k-n}$ and $R\in\mathcal{D}'_k$ 
are such that $Q$ is $R$-good and
$D(Q,R)/\ell(R)\sim 2^j$, then
\begin{equation}\label{ksis}
Q\subset R^{(j+\theta(j+n))}\in\mathcal{D}'_{k+j+\theta(j+n)},\qquad \text{ where }\theta(j)=\Big\lceil\frac{\gamma j+r}{1-\gamma}\Big\rceil.
\end{equation}

For this purpose we first show the following: for $t\in \N_0$, we have
\begin{equation}\label{set}
r\le n+t\Rightarrow \text{either }ÊQ\subset R^{(t)}\text{ or }Q\subset \R^N\setminus R^{(t)}.
\end{equation}
The condition on the left hand side is equivalent
to that $\ell(Q)\le 2^{-r}\ell(R^{(t)})$.
In order to prove \eqref{set} we assume the opposite. Then there
are points $x\in Q\cap R^{(t)}$ and $y\in Q\cap (\R^N\setminus R^{(t)})$.
In particular, we find that $Q\cap \partial R^{(t)}\not=\emptyset$, so
that $\dist(Q,\partial R^{(t)})=0$. However, 
by using Remark \ref{new_hyvmeas} and that $Q$ is $R$-good, we 
have
\[
\dist(Q,\partial R^{(t)})>\ell(Q)^\gamma \ell(R^{(t)})^{1-\gamma}>0
\]
because $\ell(R^{(t)})\ge \ell(R)>2^{-1}\ell(R)$ and $\ell(R^{(t)})\ge 2^r\ell(Q)$. This leads to a contradiction,
and \eqref{set} follows. 

We are now ready to prove \eqref{ksis}. Assume the opposite,
that is, 
\begin{equation}\label{opp}
Q\not\subset R^{(j+\theta(j+n))}.
\end{equation}
Note that $t:=j+\theta(j+n)\ge r$, so that $r\le n+t$. Hence
\eqref{set} applies, and it implies that $Q\subset \R^N\setminus R^{(t)}$.
Using this relation and Remark \ref{new_hyvmeas}, we find that
\begin{align*}
2^{\gamma(k-n)}2^{(1-\gamma)(k+t)}&=\ell(Q)^\gamma\ell(R^{(t)})^{1-\gamma}<\dist(Q,\partial R^{(t)}) 
\\&= \dist(Q,R^{(t)})\le \dist(Q,R)\le D(Q,R)\le 2^{j+1}\ell(R)=2^{j+1+k}.
\end{align*}
Simplifying this inequality leads to the estimate
\[
t\le \frac{1}{1-\gamma}(j+1+\gamma n)=j + \frac{\gamma(j+n)+1}{1-\gamma}
<j+\theta(j+n),
\]
which is a contradiction by definition of $t$. It follows
that \eqref{opp} fails, so that \eqref{ksis} holds true as desired.
 
Using \eqref{ksis} we can now reorganize the summation over $Q$ in
\eqref{vtama} so that
\[
\sum_{Q\in\mathcal{D}_{k-n}}
\sum_{\substack{R\in\mathcal{D}'_{k} 
\\Q\textrm{ is }R\textrm{-good}
\\ D(Q,R)/\ell(R)\sim 2^j}}=\sum_{S\in\mathcal{D}'_{k+j+\theta(j+n)}}
\sum_{\substack{Q\in\mathcal{D}_{k-n}\\ Q\subset S}}
\sum_{\substack{R\in\mathcal{D}'_{k} 
\\Q\textrm{ is }R\textrm{-good}
\\ D(Q,R)/\ell(R)\sim 2^j}}
\]
For $S,Q,R$ as in the last summation, we denote
\begin{align*}
\frac{T_{RQ}}{\mu(R_u)\mu(Q_v)}=: 2^{-n\alpha/2}2^{-j\alpha}\frac{t_{RQ}}{2^{(k+j)d}}=:2^{-(n+j)\alpha/4}\frac{\tilde t_{RQ}}{\mu(S)},
\end{align*}
 where $|\tilde t_{RQ}|\lesssim |t_{RQ}|\lesssim 1$ by \eqref{mest} and the following estimates:
 $2^{-j\alpha/2}\le 1$ and
\[\mu(S)\le  2^{d(k+j+\theta(j+n))}\le 2^{d(1+r/(1-\gamma))}2^{d(k+j)+(j+n)\alpha/4}.
\]
In the first inequality above we used \eqref{mesas}, and in the last inequality we used the assumption $d\gamma/(1-\gamma)\le \alpha/4$,
see \eqref{gdef}.

For each $S\in\mathcal{D}'_{k+j+\theta(j+n)}$, define a kernel
\[
K_S(x,y) :=\sum_{\substack{Q\in\mathcal{D}_{k-n}\\ Q\subset S}}
\sum_{\substack{R\in\mathcal{D}'_{k} 
\\Q\textrm{ is }R\textrm{-good}
\\ D(Q,R)/\ell(R)\sim 2^j}}
1_{Q_v}(x)\tilde t_{RQ}1_{R_u}(y).
\]
Then $K_S$ is supported on $S\times S$ 
(notice that $R^{(j+\theta(j+n))}=S$
because both of these cubes from $\mathcal{D}'_{k+j+\theta(j+n)}$ 
contain $Q$) and $|K_S(x,y)|\lesssim 1$ since
there is at most one non zero term in the double sum for any given pair of points $(x,y)$.

The quantity inside the norm in \eqref{vtama}
is $2^{-(n+j)\alpha/4}$ times
\begin{equation}\label{tarbio}
\sum_{k_0=0}^{j+\theta(j+n)}\sum_{\substack{k\in\Z;k\equiv k_0 \\ \mod j+\theta(j+n)+1}}
\epsilon_k\sum_{S\in\mathcal{D}'_{k+j+\theta(j+n)}} \frac{1_S(x)}{\mu(S)}\int_S
K_S(x,y)1_S(y) g_{k}(y)d\mu(y),
\end{equation}
where the fact that $1_{R_u} g_{k}=1_{R_u}g_{R}$ for $R\in\mathcal{D}'_{k}$
was also used.
For a fixed $k_0$,  the series over
$k\equiv k_0\mod j+\theta(j+n)+1$ is of the form considered
in Theorem \ref{trick}. Indeed, $1_Sg_{k}$ is supported on $S\in\mathcal{D}'_{k+j+\theta(j+n)}$,
and it is constant on cubes $Q'\in\mathcal{D}'_{k-1}=\mathcal{D}_{k'+j+\theta(j+n)}$,
where $k'=k-(j+\theta(j+n)+1)$. By
Theorem \ref{trick} and the contraction principle,
the ${L^q(\mathbf{P}\otimes \mu;X^*)}$-norm of
the quantity \eqref{tarbio}, for a fixed $k_0$, is dominated  by a constant multiple of
\begin{align*}
\bigg\|\sum_{k\equiv k_0} \epsilon_k
\sum_{S\in\mathcal{D}'_{k+j+\theta(j+n)}} 1_S g_{k}\bigg\|_{L^q(\mathbf{P}\otimes \mu;X^*)}
\lesssim \bigg\|\sum_{k\in\Z} \epsilon_k
g_{k}\bigg\|_{L^q(\mathbf{P}\otimes \mu;X^*)}.
\end{align*}

The full series over $k\in\Z$ consists of $j+\theta(j+n)\lesssim n+j+1$ subseries
like this, which implies that the quantity in \eqref{vtama} is dominated by
\begin{align*}
C2^{-(n+j)\alpha/4}(n+j+1)\bigg\|\sum_{k\in\Z} \epsilon_k
g_{k}\bigg\|_{L^q(\mathbf{P}\otimes \mu;X^*)}
\end{align*}
Since this is summable over $n\in \N_0$ and $j\in\N$, this proves the goal \eqref{treq}.
\end{proof}

\section{Separated cubes}\label{separated}

This section begins the case by case analysis of different subseries of the series \eqref{firstes} to be estimated. We start by dealing with cubes well separated from each other, and more precisely we prove the following proposition.

\begin{prop}\label{ktest}
Under the assumptions of Theorem \ref{mainth}, we have
\begin{equation}\label{ofint}
\bigg|\sum_{R\in\mathcal{D}'} \sum_{\substack{Q\in\mathcal{D}_{R\text{-good}} \\ \ell(Q)\le \ell(R)\wedge \dist(Q,R)}} \langle D_R^{a,2} g,T(D_Q^{a,1} f)\rangle\bigg|
\lesssim \|g\|_q\|f\|_p
\end{equation}
for every $f\in L^p(\R^N,\mu;X)$ and $g\in L^q(\R^N,\mu;X^*)$. Here $1/p+1/q=1$.
\end{prop}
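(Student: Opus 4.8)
The plan is to reduce the proposition to Lemma~\ref{tut} by a standard kernel decomposition for the matrix elements $T_{RQ}:=\langle D_R^{a,2}g, T(D_Q^{a,1}f)\rangle$, after first peeling off the inner structure of the adapted martingale differences via Lemma~\ref{phiprop}.

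First I would write, for $Q\in\mathcal{D}_k$ and $R\in\mathcal{D}'_\ell$ with $\ell(Q)\le\ell(R)\wedge\dist(Q,R)$,
\[
D_Q^{a,1}f=\sum_{v=1}^{2^N}\langle f\rangle_{Q_v}\phi_{Q,v}^{a,1},\qquad
D_R^{a,2}g=\sum_{u=1}^{2^N}\langle g\rangle_{R_u}\phi_{R,u}^{a,2},
\]
so that
\[
\langle D_R^{a,2}g,T(D_Q^{a,1}f)\rangle=\sum_{u,v}\langle g\rangle_{R_u}\,\tau_{RQ}^{u,v}\,\langle f\rangle_{Q_v},
\qquad \tau_{RQ}^{u,v}:=\int\!\!\int \phi_{R,u}^{a,2}(x)\,K(x,y)\,\phi_{Q,v}^{a,1}(y)\,d\mu(y)\,d\mu(x),
\]
the use of the kernel representation \eqref{action} being legitimate because the supports of $\phi_{Q,v}^{a,1}$ and $\phi_{R,u}^{a,2}$ are disjoint (they lie in $Q$ and $R$ respectively, and $\dist(Q,R)\ge\ell(Q)>0$). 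Since there are only $2^{2N}$ choices of $(u,v)$, it suffices to fix one pair and treat the corresponding matrix $\{\tau_{RQ}^{u,v}\}$. Then I would identify $g_\ell:=\sum_{R\in\mathcal{D}'_\ell}\langle g\rangle_{R_u}1_R=\cdots$ — more precisely, choosing the functions $f_k,g_\ell$ of Lemma~\ref{tut} so that $\langle f_Q\rangle_{Q_v}=\langle f\rangle_{Q_v}$ and $\langle g_R\rangle_{R_u}=\langle g\rangle_{R_u}$ (e.g. $f_k:=E_{k-1}f$, restricted appropriately, which satisfies $E_{k-1}f_k=f_k$) and invoke Theorems~\ref{nests}/\ref{haa} to bound $\big\|\sum_k\epsilon_k f_k\big\|_p\lesssim\|f\|_p$ and $\big\|\sum_\ell\epsilon_\ell g_\ell\big\|_q\lesssim\|g\|_q$.

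The analytic heart is the verification that $\tau_{RQ}^{u,v}$ satisfies the decay hypothesis \eqref{ades}, namely
\[
\frac{|\tau_{RQ}^{u,v}|}{\mu(R_u)\mu(Q_v)}\lesssim\frac{\ell(Q)^{\alpha/2}\ell(R)^{\alpha/2}}{D(Q,R)^{d+\alpha}}.
\]
Here I would use the cancellation $\int\phi_{Q,v}^{a,1}\,d\mu=0$ from Lemma~\ref{phiprop}(d) to subtract the constant $K(x,y_Q)$ (for some fixed $y_Q\in Q$) inside the $y$-integral, then apply the Hölder-type smoothness \eqref{smooth}; this requires $2|y-y_Q|\le|x-y|$, which holds in the regime $\ell(Q)\le\dist(Q,R)$ because then $|x-y|\ge\dist(Q,R)\ge\ell(Q)\gtrsim|y-y_Q|$ (shrinking $\ell(Q)$ by the harmless factor $2$, or equivalently using that $2|y-y_Q|\le 2\ell(Q)\le 2\dist(Q,R)\le 2|x-y|$ — one arranges the constant, e.g. via $D(Q,R)\ge\ell(Q)+\dist(Q,R)$, so that $|x-y|\ge\tfrac12 D(Q,R)$ and $|y-y_Q|\le\ell(Q)\le\tfrac12 D(Q,R)$, hence $2|y-y_Q|\le|x-y|$ when $\dist(Q,R)\ge 2\ell(Q)$; the complementary range $\ell(Q)\le\dist(Q,R)<2\ell(Q)$ is a boundary strip handled below). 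This gives
\[
|\tau_{RQ}^{u,v}|\lesssim\int_R\int_Q\frac{\ell(Q)^\alpha}{|x-y|^{d+\alpha}}\,d\mu(y)\,d\mu(x)\lesssim\frac{\ell(Q)^\alpha}{D(Q,R)^{d+\alpha}}\,\mu(Q)\,\mu(R),
\]
using $|x-y|\gtrsim D(Q,R)$ on $R\times Q$ and $\mu(Q)\lesssim\mu(Q_v)\cdot(\ell(Q)/\ell(Q_v))^{?}$ — more simply, $\mu(Q)\ge\mu(Q_v)$ is the wrong direction, so I would instead note $|\tau_{RQ}^{u,v}|\lesssim\ell(Q)^\alpha D(Q,R)^{-d-\alpha}\mu(R_u)\mu(Q)$ after using $\|\phi_{R,u}^{a,2}\|_{L^1}\lesssim\mu(R_u)$ from Lemma~\ref{phiprop}(b) and $\|\phi_{Q,v}^{a,1}\|_{L^\infty}\lesssim 1$ from (a), then bound $\mu(Q)\le\ell(Q)^d$ by \eqref{mesas} and redistribute: $\ell(Q)^{\alpha}\mu(Q)\le\ell(Q)^{\alpha+d}\lesssim\ell(Q)^{\alpha/2}\ell(R)^{\alpha/2}\cdot\ell(Q)^{\alpha/2+d-\alpha/2}$ — the key inequality being $\ell(Q)^{d+\alpha/2}\le\ell(R)^{\alpha/2}\cdot\ell(Q)^{d}$ wait, this needs $\ell(Q)\le\ell(R)$, which is in our hypothesis. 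Cleaning this up: since $\ell(Q)\le\ell(R)$ we have $\ell(Q)^{\alpha}\le\ell(Q)^{\alpha/2}\ell(R)^{\alpha/2}$, so
\[
\frac{|\tau_{RQ}^{u,v}|}{\mu(R_u)\mu(Q_v)}\lesssim\frac{\ell(Q)^{\alpha}}{D(Q,R)^{d+\alpha}}\cdot\frac{\mu(Q)}{\mu(Q_v)}\lesssim\frac{\ell(Q)^{\alpha/2}\ell(R)^{\alpha/2}}{D(Q,R)^{d+\alpha}}\cdot\frac{\mu(Q)}{\mu(Q_v)},
\]
and here I must be careful: $\mu(Q)/\mu(Q_v)$ need not be bounded. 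This is the place I expect genuine friction. The fix is to not use the crude $L^\infty\times L^1$ split but rather to exploit that $\phi_{Q,v}^{a,1}$ is supported on $Q_v$ plus a multiple of $1_Q$; splitting $\phi_{Q,v}^{a,1}=\psi_v 1_{Q_v}-c_v 1_Q$ with $\|\psi_v\|_\infty,|c_v|\lesssim 1$ and $c_v\mu(Q)=$ (essentially) $\mu(Q_v)$ times a bounded factor, one sees $\|\phi_{Q,v}^{a,1}\|_{L^1}\lesssim\mu(Q_v)$, and combined with the cancellation-and-smoothness estimate this yields $|\tau_{RQ}^{u,v}|\lesssim\ell(Q)^\alpha D(Q,R)^{-d-\alpha}\mu(R_u)\mu(Q_v)$ directly, which is exactly \eqref{ades} after $\ell(Q)^\alpha\le\ell(Q)^{\alpha/2}\ell(R)^{\alpha/2}$.

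Finally, the boundary strip $\ell(Q)\le\dist(Q,R)<2\ell(Q)$ (where the smoothness estimate's hypothesis $2|y-y_Q|\le|x-y|$ may fail) must be handled separately: there one uses only the size bound \eqref{size} together with $R$-goodness of $Q$. Since $\ell(Q)\le\ell(R)$ and $Q$ is $R$-good, Remark~\ref{new_hyvmeas} gives $\dist(Q,\partial R)>\ell(Q)^\gamma\ell(R)^{1-\gamma}$, which forces $\dist(Q,R)$ to be comparable to $D(Q,R)$ with room to spare; the trivial estimate $|\tau_{RQ}^{u,v}|\lesssim\mu(Q_v)\mu(R_u)/\dist(Q,R)^d$ then suffices after noting $\dist(Q,R)\sim\ell(Q)\sim\ell(R)$ forces $\ell(Q)^{\alpha/2}\ell(R)^{\alpha/2}/D(Q,R)^{d+\alpha}\sim D(Q,R)^{-d}$, matching. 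So this regime is in fact the easy one. With \eqref{ades} verified in all cases, summing over $u,v$ and applying Lemma~\ref{tut} gives \eqref{ofint}. The main obstacle, as indicated, is the careful bookkeeping of $\phi_{Q,v}^{a,1}$'s mass so that the $\mu(Q_v)$ (rather than $\mu(Q)$) normalization appears — everything else is routine Calderón--Zygmund estimation plus the black-box square-function bounds from Section~\ref{normest}.
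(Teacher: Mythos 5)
Your reduction to Lemma~\ref{tut} breaks at the choice of the sequences $f_k,g_k$. You expand $D_Q^{a,1}f=\sum_v\langle f\rangle_{Q_v}\phi_{Q,v}^{a,1}$ and then take $f_k:=E_{k-1}f$ so that $\langle f_Q\rangle_{Q_v}=\langle f\rangle_{Q_v}$; but then the right-hand side of \eqref{treq} involves $\big\|\sum_k\epsilon_k E_{k-1}f\big\|_{L^p(\mathbf{P}\otimes\mu;X)}$, and no such bound by $\|f\|_p$ exists: Theorems~\ref{nests} and~\ref{haa} control randomized sums of the adapted \emph{differences} $D_k^a f$ and their adjoints, not of the conditional expectations themselves, and already for a nonnegative $f$ the randomized sum of the $E_{k-1}f$ over the infinitely many scales $k\le s$ diverges. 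This is precisely the point the paper's proof is built around: instead of the frame expansion with coefficients $\langle f\rangle_{Q_v}$, it uses the identity $(D_Q^{a})^2=D_Q^{a}+\omega_Q^{a}E_Q$ (Lemma~\ref{localdq}) to write $D_Q^{a,1}f=\sum_i\big(\langle D_Q^{a,1}f\rangle_{Q_i}\phi_{Q,i}^{a,1}-\omega_{Q,i}^{a,1}\langle f\rangle_Q\big)$, so that in Lemma~\ref{iavut} the admissible choices are $f_k=E_{k-1}D_k^{a,1}f$ (controlled by Stein's inequality plus Theorem~\ref{nests}) or $f_k=1_{\{b_k^{a,1}\neq b_{k-1}^{a,1}\}}E_kf$, where the indicator of the sparse stopping set makes the Carleson embedding (Lemma~\ref{carut}) applicable. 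Without some such device the coefficients $\langle f\rangle_{Q_v}$, $\langle g\rangle_{R_u}$ cannot be fed into the decoupling lemma, so this is a genuine missing idea, not a bookkeeping issue.

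There is a second, smaller gap in your verification of \eqref{ades}. The cancellation-plus-smoothness argument gives $|\tau_{RQ}|\lesssim \ell(Q)^{\alpha}\dist(Q,R)^{-d-\alpha}\|\psi_R\|_{L^1}\|\phi_Q\|_{L^1}$ (this is Lemma~\ref{tokaa}); your claim that $|x-y|\gtrsim D(Q,R)$ for $x\in R$, $y\in Q$ is false when $\ell(R)\gg\dist(Q,R)$, and the inequality $\ell(Q)^\alpha\le\ell(Q)^{\alpha/2}\ell(R)^{\alpha/2}$ only fixes the numerator. Passing from $\dist(Q,R)^{-d-\alpha}$ to $D(Q,R)^{-d-\alpha}$ in the regime $\dist(Q,R)\ll\ell(R)$ genuinely requires the $R$-goodness of $Q$, via $\dist(Q,R)\ge\dist(Q,\partial R)>\ell(Q)^{\gamma}\ell(R)^{1-\gamma}$ together with $\gamma\le\alpha/2(d+\alpha)$ from \eqref{gdef}; this is exactly the content of Lemma~\ref{ekaa}, which you only invoke (unnecessarily) in your ``boundary strip'' case, where in fact no separate case is needed because $|y-y_Q|\le\ell(Q)/2\le\tfrac12\dist(Q,R)\le\tfrac12|x-y_Q|$ already holds with $y_Q$ the center of $Q$. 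The kernel-estimate part of your argument is therefore salvageable by citing Lemmas~\ref{tokaa} and~\ref{ekaa} as the paper does, but the square-function gap in the first paragraph is the essential defect.
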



For the following lemma,
we denote $\langle g_R\rangle_{R_j}=\langle 1_R g_k\rangle_{R_j}=\langle g_k\rangle_{R_j}$ if
$R\in\mathcal{D}'_k$. Recall that the auxiliary functions below are defined
in \eqref{phideff} and \eqref{localde}.

\begin{lem}\label{iavut}
The left hand side of \eqref{ofint} is bounded (up to a constant) 
by sum of four terms of the following form:
\begin{equation}\label{comp}
\sum_{i,j=1}^{2^N}\bigg|\sum_{R\in\mathcal{D}'} \sum_{\substack{Q\in\mathcal{D}_{R\text{-good}} \\ \ell(Q)\le \ell(R)\wedge \dist(Q,R)}} \langle g_R\rangle_{R_j}\langle \psi_{R,j},T\phi_{Q,i}\rangle\langle f_Q\rangle_{Q_i}\bigg|,
\end{equation}
where, for fixed $i$ and $j$, 
\[
(g_k,\psi_{R,j})= \begin{cases} (E_{k-1}D_k^{a,2} g, \phi_{R,j}^{a,2})\qquad &\forall k\in\Z\,\wedge R\in\mathcal{D}'_k\text{ or }\\
(1_{\{b_k^{a,2}\not=b_{k-1}^{a,2}\}}E_k g,\omega_{R,j}^{a,2})\qquad &\forall k\in\Z\,\wedge R\in\mathcal{D}'_k
\end{cases}
\]
and
\[
(f_k,\phi_{Q,i})=
\begin{cases} (E_{k-1}D_k^{a,1} f, \phi_{Q,i}^{a,1})\qquad &\forall k\in\Z\,\wedge Q\in\mathcal{D}_k\text{ or}\\
(1_{\{b_k^{a,1}\not=b_{k-1}^{a,1}\}}E_k f,\omega_{Q,i}^{a,1})\qquad &\forall k\in\Z\,\wedge Q\in\mathcal{D}_k.
\end{cases}
\]
In every case, these satisfy $E_{k-1} g_k=g_k$ and
$E_{k-1} f_k=f_k$.
\end{lem}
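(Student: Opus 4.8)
The plan is to obtain the four terms directly from the two ways of rewriting each of $D_Q^{a,1}f$ and $D_R^{a,2}g$ that are afforded by the identity $(D_Q^a)^2=D_Q^a+\omega_Q^aE_Q$ of Lemma~\ref{localdq}, namely
\[
D_Q^{a,1}f=(D_Q^{a,1})^2f-\omega_Q^{a,1}E_Qf,\qquad
D_R^{a,2}g=(D_R^{a,2})^2g-\omega_R^{a,2}E_Rg .
\]
In the ``square'' term, for $Q\in\mathcal{D}_k$, I would apply Lemma~\ref{phiprop} to the function $D_Q^{a,1}f$, getting $(D_Q^{a,1})^2f=D_Q^{a,1}(D_Q^{a,1}f)=\sum_{i=1}^{2^N}\langle D_Q^{a,1}f\rangle_{Q_i}\phi_{Q,i}^{a,1}$; since $Q_i\in\mathcal{D}_{k-1}$, the average $\langle D_Q^{a,1}f\rangle_{Q_i}$ equals $\langle E_{k-1}D_k^{a,1}f\rangle_{Q_i}$, so with $f_k:=E_{k-1}D_k^{a,1}f$ (which trivially satisfies $E_{k-1}f_k=f_k$) and $\phi_{Q,i}:=\phi_{Q,i}^{a,1}$ this is of the required shape $\sum_i\langle f_Q\rangle_{Q_i}\phi_{Q,i}$. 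In the $\omega$ term I would use that $E_Qf=1_QE_kf$ equals the constant $\langle f\rangle_Q$ on $Q$ and that $\omega_Q^{a,1}=\sum_i\omega_{Q,i}^{a,1}$, where $\omega_{Q,i}^{a,1}$ is supported on $Q_i$ and (Lemma~\ref{ombasic}~a)) inside $\{b_{k-1}^{a,1}\neq b_k^{a,1}\}$; by \eqref{bmaar} the latter set is a union of cubes of $\mathcal{D}_{k-1}$, so each $Q_i$ either lies in it or is disjoint from it, and in both cases $\langle f\rangle_Q\,\omega_{Q,i}^{a,1}=\langle f_Q\rangle_{Q_i}\,\omega_{Q,i}^{a,1}$ with $f_k:=1_{\{b_{k-1}^{a,1}\neq b_k^{a,1}\}}E_kf$; here $E_{k-1}f_k=f_k$ because the indicator is $\mathcal{D}_{k-1}$-measurable and $E_{k-1}E_k=E_k$. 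The same computations on the $g$-side, with $b^2$ and $\mathcal{D}'$ in place of $b^1$ and $\mathcal{D}$, produce the two listed choices of $(g_k,\psi_{R,j})$.

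Next I would substitute both expansions into $\langle D_R^{a,2}g,T(D_Q^{a,1}f)\rangle$. Since $K$ is scalar-valued, $T$ commutes with multiplication by a fixed vector of $X$, so $T(\langle f_Q\rangle_{Q_i}\phi_{Q,i})=\langle f_Q\rangle_{Q_i}\,T\phi_{Q,i}$; combined with the expansion of $D_R^{a,2}g$, the pairing then factors as the product $\langle g_R\rangle_{R_j}\langle\psi_{R,j},T\phi_{Q,i}\rangle\langle f_Q\rangle_{Q_i}$, all pairings being given by absolutely convergent kernel integrals because in the separated regime $Q\cap R=\emptyset$, while $\mathrm{supp}\,\psi_{R,j}\subset R$ and $\mathrm{supp}\,\phi_{Q,i}\subset Q$. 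Multiplying out the two binomials yields four groups of terms carrying signs $\pm1$; summing over $R\in\mathcal{D}'$ and over the restricted set of $Q$, and then pulling the finite sum $\sum_{i,j=1}^{2^N}$ outside the modulus by the triangle inequality, leaves exactly four expressions of the form \eqref{comp}, one for each pairing $(\psi,\phi)\in\{(\phi^{a,2},\phi^{a,1}),(\phi^{a,2},\omega^{a,1}),(\omega^{a,2},\phi^{a,1}),(\omega^{a,2},\omega^{a,1})\}$, with the matching $(g_k,f_k)$ as stated.

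I do not expect a genuine obstacle here: the statement is an algebraic reduction, and most of the work is bookkeeping. The one point that really needs care is the measurability observation that $\{b_{k-1}^{a}\neq b_k^{a}\}$ is a union of cubes of generation $k-1$ (immediate from \eqref{bmaar}), since this is precisely what simultaneously makes the $\omega$-term identity $\langle f\rangle_Q\,\omega_{Q,i}^{a}=\langle f_Q\rangle_{Q_i}\,\omega_{Q,i}^{a}$ valid and gives $E_{k-1}f_k=f_k$ in the second case; and one should keep the order of the ``product'' $\langle g_R\rangle_{R_j}\langle\psi_{R,j},T\phi_{Q,i}\rangle\langle f_Q\rangle_{Q_i}$ as written, with the kernel factor in the middle, in anticipation of the operator-kernel setting of Theorem~\ref{mainth_operator}.
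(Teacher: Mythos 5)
Your proof is correct and takes essentially the same approach as the paper: expand both $D_Q^{a,1}f$ and $D_R^{a,2}g$ via Lemma~\ref{localdq} combined with Lemma~\ref{phiprop}, multiply out to get four groups of terms, and then verify the identities $\langle D_Q^{a,1}f\rangle_{Q_i}=\langle f_Q\rangle_{Q_i}$ and $\langle f\rangle_Q\,\omega_{Q,i}^{a,1}=\langle f_Q\rangle_{Q_i}\,\omega_{Q,i}^{a,1}$ (and their $g$-counterparts) using that $\{b_{k-1}^{a}\ne b_k^{a}\}$ is a union of $\mathcal{D}_{k-1}$-cubes. The only cosmetic difference is that the paper first writes $\omega_{Q,i}^{a,1}E_Qf=\omega_{Q,i}^{a,1}\langle f\rangle_Q$ and then separately checks the $\langle f_Q\rangle_{Q_i}$ identifications, whereas you fold these two steps together.
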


\begin{proof}
Using Lemma \ref{localdq} and Lemma \ref{phiprop}, we get
\begin{equation}\label{idec}
\begin{split}
&D_Q^{a,1} f= (D_Q^{a,1})^2 f -\omega_Q^{a,1} E_Q f=\sum_{i=1}^{2^N} \big(\langle D_Q^{a,1} f\rangle_{Q_i}\phi_{Q,i}^{a,1}-\omega_{Q,i}^{a,1} E_Q f\big),\\
&D_R^{a,2}g= (D_R^{a,2})^2 g -\omega_R^{a,2} E_R g=\sum_{j=1}^{2^N} \big(\langle D_R^{a,2} g\rangle_{R_j}\phi_{R,j}^{a,2}-\omega_{R,j}^{a,2} E_R g\big).
\end{split}
\end{equation}
Because $\mathrm{supp}(\omega_{Q,i}^{a,1})\subset Q_i\subsetneq Q$, we have
\[\omega_{Q,i}^{a,1} E_Q f=\omega_{Q,i}^{a,1}1_Q\langle f\rangle_Q=\omega_{Q,i}^{a,1}\langle f\rangle_Q.\] Similarly
$\omega_{R,j}^{a,2} E_R g=\omega_{R,j}^{a,2}\langle g\rangle_R$. 
As a consequence, 
for every $Q\in\mathcal{D}$ and $R\in\mathcal{D}'$, we can write
 $\langle D_R^{a,2} g,T(D_Q^{a,1} f)\rangle$ as
\begin{equation}\label{repr}
\begin{split}
\sum_{i,j=1}^{2^N} \bigg\{ \langle D_R^{a,2} g\rangle_{R_j}\langle &\phi_{R,j}^{a,2},T\phi_{Q,i}^{a,1}\rangle
\langle D_Q^{a,1}f\rangle_{Q_i}
 -\langle g\rangle_R\langle \omega_{R,j}^{a,2},T\phi_{Q,i}^{a,1}\rangle\langle D_Q^{a,1} f\rangle_{Q_i}
\\&- \langle D_R^{a,2} g\rangle_{R_j}\langle \phi_{R,j}^{a,2},T\omega_{Q,i}^{a,1}\rangle\langle f\rangle_Q
+\langle g\rangle_R\langle \omega_{R,j}^{a,2},T\omega_{Q,i}^{a,1}\rangle\langle f\rangle_Q\bigg\}.
\end{split}
\end{equation}
Thus, to conclude the proof, it suffices to consider the following computations and
their
symmetric counterparts for the function $f$. 

First,
if $(g_k,\psi_{R,j})= (E_{k-1}D_k^{a,2} g, \phi_{R,j}^{a,2})$, we have
\begin{align*}
\langle g_R\rangle_{R_j}=\langle 1_R g_k\rangle_{R_j}=\langle E_{k-1} (1_R D_k^{a,2} g)\rangle_{R_j}=\langle D_R^{a,2} g\rangle_{R_j}.
\end{align*}
Hence,
$\langle g_R\rangle_{R_j} \psi_{R,j} 
=\langle D_R^{a,2} g\rangle_{R_j} \phi_{R,j}^{a,2}$.

Next we assume that $(g_k,\psi_{R,j})= (1_{\{b_k^{a,2}\not=b_{k-1}^{a,2}\}}E_k g,\omega_{R,j}^{a,2})$.
Now
\begin{equation}\label{eids}
\langle g_R\rangle_{R_j}=  \langle g_k\rangle_{R_j}= \langle1_{\{b_k^{a,2}\not=b_{k-1}^{a,2}\}}\rangle_{R_j}\langle E_k g\rangle_{R_j} =  \langle1_{\{b_k^{a,2}\not=b_{k-1}^{a,2}\}}\rangle_{R_j}\langle g\rangle_R
\end{equation}
but also
\begin{equation}\label{tid}
\langle1_{\{b_k^{a,2}\not=b_{k-1}^{a,2}\}}\rangle_{R_j}\omega_{R,j}^{a,2}
=1_{R_j}(1_{\{b_k^{a,2}\not=b_{k-1}^{a,2}\}}\omega_k^{a,2})=1_{R_j} \omega_k^{a,2}=\omega_{R,j}^{a,2}.
\end{equation}
Combining the identities \eqref{eids} and \eqref{tid} above, we get
$\langle g_R\rangle_{R_j} \psi_{R,j}
 =\langle g\rangle_R \omega_{R,j}^{a,2}$.
 \end{proof}

To proceed further we need two lemmata.

\begin{lem}\label{tokaa}
Let $Q\in\mathcal{D}$, $R\in\mathcal{D}'$ satisfy $\ell(Q)\le \ell(R)\wedge \dist(Q,R)$.
Assume that $\phi_Q,\psi_R\in L^1(\R^N,\mu;\C)$ are such that 
$\mathrm{supp}(\phi_Q)\subset Q$, $\mathrm{supp}(\psi_R)\subset R$, and
\[
\int \phi_Q\,d\mu=0.
\]
Then 
\[
|\langle \psi_R,T\phi_Q\rangle|\le \frac{\ell(Q)^\alpha}{\dist(Q,R)^{d+\alpha}}\|\phi_Q\|_{L^1(\mu)}\|\psi_R\|_{L^1(\mu)}.
\]
\end{lem}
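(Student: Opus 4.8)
The plan is the classical Calder\'on--Zygmund estimate for separated supports: exploit the mean-zero property of $\phi_Q$ against the H\"older regularity \eqref{smooth} of the kernel in its second variable.

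First I would record that the hypothesis $\ell(Q)\le\dist(Q,R)$ makes the two supports genuinely disjoint: for $x\in R$ one has $\dist(x,Q)\ge\dist(Q,R)\ge\ell(Q)>0$, so $x\notin\mathrm{supp}\,\phi_Q\subset Q$. Hence the kernel formula \eqref{action} is valid at every point of $R$, and since $\psi_R$ is supported there,
\[
|\langle\psi_R,T\phi_Q\rangle|\le\int_R|\psi_R(x)|\,|T\phi_Q(x)|\,d\mu(x),\qquad T\phi_Q(x)=\int_Q K(x,y)\phi_Q(y)\,d\mu(y)\quad(x\in R).
\]
Using $\int_Q\phi_Q\,d\mu=0$, I would then subtract $K(x,y_Q)$, with $y_Q$ the centre of $Q$, to reach the cancellation-exploiting form
\[
T\phi_Q(x)=\int_Q\bigl(K(x,y)-K(x,y_Q)\bigr)\phi_Q(y)\,d\mu(y),\qquad x\in R.
\]

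Next I would apply \eqref{smooth} in the form giving regularity of $K$ in the second slot (its second summand $|K(y,x)-K(y,x')|$). For $x\in R$ and $y\in Q$, measuring everything in the $\ell^\infty$ norm fixed in Section~\ref{preparations}, the diameter of a cube equals its side length, so $|y-y_Q|\le\tfrac12\ell(Q)$, while $|x-y|\ge\dist(Q,R)\ge\ell(Q)$; thus $2|y-y_Q|\le\ell(Q)\le|x-y|$, the admissibility condition of \eqref{smooth} is met, and
\[
|K(x,y)-K(x,y_Q)|\le\frac{|y-y_Q|^\alpha}{|x-y|^{d+\alpha}}\le\frac{\ell(Q)^\alpha}{\dist(Q,R)^{d+\alpha}}.
\]
Substituting this bound gives $|T\phi_Q(x)|\le\dfrac{\ell(Q)^\alpha}{\dist(Q,R)^{d+\alpha}}\|\phi_Q\|_{L^1(\mu)}$ uniformly for $x\in R$, and plugging into the first display yields
\[
|\langle\psi_R,T\phi_Q\rangle|\le\frac{\ell(Q)^\alpha}{\dist(Q,R)^{d+\alpha}}\|\phi_Q\|_{L^1(\mu)}\|\psi_R\|_{L^1(\mu)},
\]
which is the assertion.

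There is no serious obstacle here: the only points requiring care are (i) that the geometric hypothesis $\ell(Q)\le\dist(Q,R)$ is exactly the disguised admissibility condition $2|y-y_Q|\le|x-y|$ needed to invoke \eqref{smooth}, and (ii) that it is the regularity of $K$ in the \emph{second} variable that is used --- available precisely through the second term $|K(y,x)-K(y,x')|$ of \eqref{smooth} --- since it is $\phi_Q$, the function sitting in the second slot of the kernel, that carries the cancellation.
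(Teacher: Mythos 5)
Your argument is the standard Calder\'on--Zygmund estimate for separated supports: disjointness allows the kernel representation on $R$, the mean-zero property of $\phi_Q$ permits subtracting $K(x,y_Q)$, and the $\ell^\infty$ geometry shows the regularity hypothesis $2|y-y_Q|\le|x-y|$ of \eqref{smooth} is exactly $\ell(Q)\le\dist(Q,R)$. This is precisely the proof of \cite[Lemma~4.1]{NTV}, which the paper simply cites, so your approach coincides with the intended one.
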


\begin{proof}
See \cite[Lemma 4.1]{NTV}.
%
\end{proof}

\begin{lem}\label{ekaa}
Suppose that $R\in\mathcal{D}'$ and $Q\in\mathcal{D}_{R\textrm{-good}}\cup \mathcal{D}_{R^{(1)}\textrm{-good}}$
are cubes such that $\ell(Q)\le \ell(R)\wedge \dist(Q,R)$. Then 
\begin{equation}\label{tget}
\frac{\ell(Q)^\alpha}{\dist(Q,R)^{d+\alpha}}\lesssim\frac{\ell(Q)^{\alpha/2}\ell(R)^{\alpha/2}}{D(Q,R)^{d+\alpha}}.
\end{equation}
\end{lem}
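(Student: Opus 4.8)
The plan is to clear denominators: \eqref{tget} is equivalent to
\[
\ell(Q)^{\alpha/2}\,D(Q,R)^{d+\alpha}\lesssim\ell(R)^{\alpha/2}\,\dist(Q,R)^{d+\alpha},
\]
which I would prove by splitting according to the relative sizes of $\dist(Q,R)$ and $\ell(R)$. First I record an elementary but essential observation: since $\dist(Q,R)\ge\ell(Q)>0$, the cubes $Q$ and $R$ are disjoint and neither contains the other, so the point of $\overline{R}$ nearest to a given point of $\overline{Q}$ lies on $\partial R$; hence $\dist(Q,R)=\dist(Q,\partial R)$. This lets the conclusion of Remark~\ref{new_hyvmeas}, which controls $\dist(Q,\partial R)$, serve as a lower bound for $\dist(Q,R)$.

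If $\dist(Q,R)\ge\ell(R)$, then from $\ell(Q)\le\ell(R)\le\dist(Q,R)$ we get $D(Q,R)=\ell(Q)+\dist(Q,R)+\ell(R)\le3\dist(Q,R)$, and together with $\ell(Q)\le\ell(R)$ the displayed inequality follows at once; the goodness hypothesis is not needed in this regime. If instead $\dist(Q,R)<\ell(R)$, then $D(Q,R)\le3\ell(R)$ (again using $\ell(Q)\le\ell(R)$), so it suffices to prove $\ell(Q)^{\alpha/2}\ell(R)^{d+\alpha/2}\lesssim\dist(Q,R)^{d+\alpha}$, i.e., after taking $(d+\alpha)$-th roots,
\[
\dist(Q,R)\gtrsim\ell(Q)^{\frac{\alpha}{2(d+\alpha)}}\,\ell(R)^{1-\frac{\alpha}{2(d+\alpha)}}.
\]

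To establish this I would distinguish whether the side lengths are well separated. If $\ell(R)\ge2^r\ell(Q)$, I invoke Remark~\ref{new_hyvmeas}: by hypothesis $Q$ is $\bar R$-good for $\bar R=R$ or $\bar R=R^{(1)}$, and in either case the condition $\ell(R)\ge2^{-1}\ell(\bar R)\vee2^r\ell(Q)$ is satisfied, so
\[
\dist(Q,R)=\dist(Q,\partial R)>\ell(Q)^\gamma\ell(R)^{1-\gamma}\ge\ell(Q)^{\frac{\alpha}{2(d+\alpha)}}\ell(R)^{1-\frac{\alpha}{2(d+\alpha)}},
\]
the last step being the chain recorded in that remark (it uses $\gamma\le\frac{\alpha}{2(d+\alpha)}$ from \eqref{gdef} together with $\ell(Q)\le\ell(R)$). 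If $\ell(R)<2^r\ell(Q)$, then $\ell(Q)\le\ell(R)<2^r\ell(Q)$, so $\ell(Q)^{\frac{\alpha}{2(d+\alpha)}}\ell(R)^{1-\frac{\alpha}{2(d+\alpha)}}<2^r\ell(Q)\le2^r\dist(Q,R)$, and the inequality holds with implicit constant $2^{-r}$.

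The step needing the most care is this last case split: Remark~\ref{new_hyvmeas} only yields its estimate once $\ell(R)\ge2^r\ell(Q)$, a scale separation that is \emph{not} among the hypotheses of the lemma; one must notice that in the complementary regime $\ell(Q)$ and $\ell(R)$ are comparable (up to the fixed factor $2^r$) and the desired bound becomes trivial from $\dist(Q,R)\ge\ell(Q)$. Everything else is bookkeeping with $D(Q,R)=\ell(Q)+\dist(Q,R)+\ell(R)$ and the constraint $\ell(Q)\le\ell(R)\wedge\dist(Q,R)$.
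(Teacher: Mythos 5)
Your proof is correct, and since the paper itself only cites \cite[Lemma 4.2]{NTV} here, you have given a self-contained reconstruction of the intended argument. The structure — split on whether $\dist(Q,R)\ge\ell(R)$ or $\dist(Q,R)<\ell(R)$, and in the latter case split again on whether $\ell(R)\ge 2^r\ell(Q)$ — is the natural one, and each step checks out: the identification $\dist(Q,R)=\dist(Q,\partial R)$ for disjoint cubes is valid and is exactly what makes Remark~\ref{new_hyvmeas} usable; the remark's hypothesis $\ell(R)\ge 2^{-1}\ell(\bar R)\vee 2^r\ell(Q)$ is satisfied for both admissible choices $\bar R\in\{R,R^{(1)}\}$ in the regime $\ell(R)\ge 2^r\ell(Q)$; and your observation that the complementary regime $\ell(R)<2^r\ell(Q)$ is trivial because then $\ell(Q)^{\alpha/2(d+\alpha)}\ell(R)^{1-\alpha/2(d+\alpha)}<2^r\ell(Q)\le 2^r\dist(Q,R)$ is precisely the point one must not overlook, since the lemma does not assume any scale separation between $Q$ and $R$.
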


\begin{proof}
See \cite[Lemma 4.2]{NTV}.
%
%
%
\end{proof} 

We are ready for the proof of Proposition \ref{ktest}.

\begin{proof}[Proof of Proposition \ref{ktest}]
By Lemma \ref{iavut}, it suffices to estimate
\eqref{comp}. To this end, we fix $i,j\in \{1,2,\ldots,2^N\}$ and denote $(\psi_R,\phi_Q)=(\psi_{R,j},\phi_{Q,i})$. Combining lemmata
\ref{tokaa} and \ref{ekaa} and using the properties of functions
 $\psi_R$ and $\phi_Q$ that are described
 in lemmata \ref{phiprop} and \ref{localdq}, we have
\begin{align*}
|\langle \psi_R,T\phi_Q\rangle|
&\lesssim \frac{\ell(Q)^{\alpha/2}\ell(R)^{\alpha/2}}{D(Q,R)^{d+\alpha}}\mu(R_j)\mu(Q_i)
\end{align*}
if  $R\in\mathcal{D}'$ and $Q\in\mathcal{D}_{R\text{-good}}$ 
satisfy $\ell(Q)\le \ell(R)\wedge \dist(Q,R)$.
Invoking Lemma \ref{tut} 
with a matrix whose elements are defined by
\begin{equation}\label{matrix_scalar}
T_{RQ}=\langle \psi_R,T\phi_Q\rangle1_{\ell(Q)\le \ell(R)\wedge \dist(Q,R)}1_{Q\in\mathcal{D}_{R\text{-good}}},
\end{equation}
we see that the quantity \eqref{comp}  can be dominated by a constant multiple of 
\begin{equation}\label{nmult}
\begin{split}\bigg\|\sum_{k=-\infty}^\infty \epsilon_k g_{k}\bigg\|_{L^{q}(\mathbf{P}\otimes\mu;X^*)}
\bigg\|\sum_{k=-\infty}^\infty \epsilon_k f_{k}\bigg\|_{L^p(\mathbf{P}\otimes \mu;X)}.
\end{split}
\end{equation}
To estimate these quantities, consider first
the case $f_k=1_{\{b_k^{a,1}\not=b_{k-1}^{a,1}\}}E_k f$. In this case, we have
\[
f_k = -1_{\{b_k^{a,1}\not=b_{k-1}^{a,1}\}}(E_{k-1}f-E_k f)+ 1_{\{b_k^{a,1}\not=b_{k-1}^{a,1}\}}E_{k-1}f.
\]
Using the contraction principle, UMD-property of $X$, and  Lemma \ref{carut} we get the
estimate
\begin{equation}\label{fgest}
\begin{split}
&\bigg\|\sum_{k=-\infty}^\infty \epsilon_k f_{k}\bigg\|_{L^p(\mathbf{P}\otimes \mu;X)}
\le \bigg\|\sum_{k=-\infty}^\infty \epsilon_k D_k f\bigg\|_{L^p(\mathbf{P}\otimes \mu;X)}
\\&\qquad\qquad\qquad+ \bigg\|\sum_{k=-\infty}^\infty \epsilon_k 1_{\{b_k^{a,1}\not=b_{k-1}^{a,1}\}}E_{k-1}f\bigg\|_{L^p(\mathbf{P}\otimes \mu;X)}\lesssim \|f\|_p.
\end{split}
\end{equation}
Next consider the case 
$f_k=E_{k-1}D_k^{a,1} f$.
Invoking Stein's inequality and then using Theorem \ref{nests}, we obtain the estimate
\[
\bigg\|\sum_{k=-\infty}^\infty \epsilon_k f_{k}\bigg\|_{L^p(\mathbf{P}\otimes \mu;X)}
 \lesssim \|f\|_p.
\]
in this case. Combining these estimates with analogous estimates for $g$,
we obtain the upper bound $C\|g\|_q\|f\|_p$
for \eqref{nmult}, and therefore also for
 \eqref{comp}. 
%
\end{proof}

\section{Preparations for deeply contained cubes}\label{prepare}

In the analysis of \eqref{firstes}, we move on from the separated cubes to ones contained inside another one. To streamline the actual analysis, we start with some preparations. We will be summing over cubes of the following type:


\begin{lem}\label{rcub}
Let $R\in\mathcal{D}'$ and $Q\in\mathcal{D}_{R\textrm{-good}}$ be such that $Q\subset R$
and $\ell(Q)<2^{-r}\ell(R)$. Then $Q\subset R_1$ for some child, denoted by $R_1$, of $R$.
\end{lem}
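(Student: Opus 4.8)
The plan is to show that $Q$ sits strictly inside one of the $2^N$ children of $R$: first prove that $Q$ stays at a positive distance from the boundary of \emph{every} child, and then conclude by a connectedness argument.

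First I would fix notation: let $R_1,\dots,R_{2^N}\in\mathcal{D}'$ be the children of $R$, so that $\ell(R_u)=\tfrac12\ell(R)$ and $\bigcup_{u}R_u=R$. The key point is to verify that each $R_u$ satisfies the hypothesis $\ell(R_u)\ge 2^{-1}\ell(R)\vee 2^r\ell(Q)$ of Remark~\ref{new_hyvmeas}. The first term is an equality. For the second, since $\ell(Q)<2^{-r}\ell(R)$ and all side lengths are integer powers of $2$, we get $\ell(Q)\le 2^{-r-1}\ell(R)=2^{-r}\ell(R_u)$, i.e.\ $2^r\ell(Q)\le\ell(R_u)$. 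Now, because $Q$ is $R$-good and $R\in\mathcal{D}'$, Remark~\ref{new_hyvmeas} applied with $\bar R=R$ and with $R_u$ playing the role of its ``$R$'' gives
\[
\dist(Q,\partial R_u)>\ell(Q)^\gamma\ell(R_u)^{1-\gamma}>0\qquad\text{for every }u=1,\dots,2^N.
\]
Since there are only finitely many children, this yields $\dist\bigl(Q,\bigcup_u\partial R_u\bigr)>0$, so $Q$ is disjoint from $\bigcup_u\partial R_u$.

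Then I would finish by connectedness. The cube $Q$ is convex, hence connected, and for each $u$ it is disjoint from $\partial R_u$; as $\R^N\setminus\partial R_u$ is the disjoint union of the two open sets $\inter R_u$ and $\R^N\setminus\overline{R_u}$, the set $Q$ is contained, for each $u$, either in $\inter R_u$ or in $\R^N\setminus\overline{R_u}$. It cannot lie in $\R^N\setminus\overline{R_u}$ for all $u$, for then $Q$ would be disjoint from $\bigcup_u\overline{R_u}\supset\bigcup_u R_u=R$, contradicting $\emptyset\ne Q\subset R$. Hence $Q\subset\inter R_u\subset R_u$ for some $u$, which is the assertion (with that $R_u$ relabelled $R_1$).

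There is no real obstacle here beyond bookkeeping. The only step requiring a little care is checking the side-length hypothesis of Remark~\ref{new_hyvmeas} for a child — the factor $2$ between $\ell(R)$ and $\ell(R_u)$ is absorbed precisely because side lengths are powers of $2$ — and phrasing the concluding dichotomy so that it also rules out the harmless case in which a point of $Q$ happens to lie on $\partial R$ itself (this is covered since $\partial R\subset\bigcup_u\partial R_u$).
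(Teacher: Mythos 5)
Your proof is correct and is essentially the argument the paper gives: the paper likewise verifies the side-length hypothesis of Remark~\ref{new_hyvmeas} for a child (using $\ell(Q)<2^{-r}\ell(R)$ and dyadic scales to get $\ell(R_1)\ge 2^r\ell(Q)$) and deduces $\dist(Q,\partial R_1)>0$, concluding $Q\subset R_1$ from $Q\cap R_1\neq\emptyset$ by the same connectedness observation you make explicit. The only cosmetic difference is that the paper first fixes a child meeting $Q$ and applies the remark to that child alone, whereas you apply it to all children and then run the dichotomy.
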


\begin{proof}
Denote by
$R_1$ any child of $R$ for which $R_1\cap Q\not=\emptyset$.
It suffices to show that $Q\subset R_1$. Note that
$\ell(R_1)=2^{-1}\ell(R)$ and $\ell(R_1)\ge 2^r \ell(Q)$. Because $Q$ is $R$-good, we can invoke
Remark \ref{new_hyvmeas} in order to see that
\[
\dist(Q,\partial R_1)>\ell(Q)^\gamma\ell(R_1)^{1-\gamma}>0.
\]
Because $Q\cap R_1\not=\emptyset$, it follows that $Q\subset R_1$.
\end{proof}

Let $Q$ and $R$ be such cubes that are considered in Lemma \ref{rcub}.
The children of $R$ are denoted by $R_1,\ldots,R_{2^N}$. However,
we choose
the indexing such that
$Q\subset R_1$, see Lemma \ref{rcub}.
The indexing of children depends on  $Q$; in particular,
$R_1=R_1(Q)$ depends on $Q$. We will not indicate this dependence explicitly.

The children of $Q$ are denoted by $Q_1,\ldots,Q_{2^N}$ in some order.

Let
$u,v\in \{1,2,\ldots,2^N\}$ be fixed.
Here we consider a matrix
$\{T^u_{RQ}\}$
satisfying the estimate
\begin{equation}\label{ades2}
\begin{split}
&\frac{|T^u_{RQ}|}{\mu(R_u)\mu(Q_v)}\lesssim 
\bigg(\frac{\ell(Q)}{\ell(R)}\bigg)^{\alpha/2}\cdot \begin{cases}\mu(R)^{-1}\quad &\textrm{ if }u\not=1,\\
\mu(R_1)^{-1}\quad &\textrm{ if }u=1,
\end{cases}
\end{split}
\end{equation}
 
\begin{lem}\label{tut2}
Let $\{f_k\in L^{1}_{\mathrm{loc}}(\R^N,\mu;X)\}_{k\in\Z}$ and 
$\{g_k\in L^1_{\mathrm{loc}}(\R^N,\mu;X^*)\}_{k\in\Z}$ be 
 such that
$E_{k-1} f_k=f_k$ and $E_{k-1} g_k=g_k$ for every $k\in\Z$. 
Then, under the assumption \eqref{ades2}, we have
\begin{equation}\label{treq2}
\begin{split}
&\bigg|\sum_{R\in\mathcal{D}'} \sum_{\substack{Q\in\mathcal{D}_{R\textrm{-good}} \\ Q\subset R_1 \\ \ell(Q)<2^{-r}\ell(R)}} \langle g_R\rangle_{R_u}T_{RQ}^u\langle f_Q\rangle_{Q_v}\bigg|\\
&\qquad\qquad\qquad\lesssim \bigg\|\sum_{k=-\infty}^\infty \epsilon_k g_{k}\bigg\|_{L^{q}(\mathbf{P}\otimes\mu;X^*)} \cdot
\bigg\|\sum_{k=-\infty}^\infty \epsilon_k f_{k}\bigg\|_{L^p(\mathbf{P}\otimes \mu;X)}.
\end{split}
\end{equation}
\end{lem}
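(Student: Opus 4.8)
The plan is to imitate the proof of Lemma~\ref{tut}, which here simplifies considerably: the bound \eqref{ades2} carries no long-distance factor $D(Q,R)$, so there is no secondary decomposition according to $D(Q,R)/\ell(R)\sim 2^j$, and the kernel to be decoupled will connect at most one dyadic level (rather than $\sim j+\theta(j+n)$ of them). First I would fix the scale ratio: since $\ell(Q)<2^{-r}\ell(R)$ we have $\ell(R)/\ell(Q)=2^n$ for some integer $n>r$, and I would estimate the corresponding subseries, recovering \eqref{treq2} by summing the geometric series $\sum_{n>r}2^{-n\alpha/2}$ produced below. For $R\in\mathcal{D}'_k$ and $Q\in\mathcal{D}_{k-n}$ occurring in the sum, \eqref{ades2} gives $|T^u_{RQ}|\lesssim 2^{-n\alpha/2}\mu(R_u)\mu(Q_v)\mu(S)^{-1}$, where $S=R$ if $u\neq 1$ and $S=R_1$ if $u=1$, so I would write $T^u_{RQ}=2^{-n\alpha/2}\mu(R_u)\mu(Q_v)\mu(S)^{-1}t_{RQ}$ with $|t_{RQ}|\lesssim 1$.

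Next I would reorganise the double sum so that $Q$ becomes the outer index. By Lemma~\ref{rcub}, for $Q\in\mathcal{D}_{k-n}$ the constraints ``$Q\subset R_1$ for some child $R_1$ of $R$'' and ``$Q$ is $R$-good'' (together with $\ell(Q)<2^{-r}\ell(R)$) amount to ``$Q\subset R$ and $Q$ is $(n-1)$-good'', and by Remark~\ref{new_hyvmeas} a $(n-1)$-good $Q\in\mathcal{D}_{k-n}$ is contained in a unique $R=R(Q,k)\in\mathcal{D}'_k$; write $R_1=R_1(Q)$ for the child of $R$ containing $Q$ (terms for which no such $R$ exists, or for which $\mu(Q_v)=0$, vanish). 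Dualising exactly as in the passage leading to \eqref{estnorm} --- expanding $\langle f_Q\rangle_{Q_v}=\mu(Q_v)^{-1}\int_{Q_v}f_{k-n}\,d\mu$, inserting auxiliary Rademacher variables and using their orthogonality, applying H\"older's inequality in $L^p(\mathbf{P}\otimes\mu)$, and using $\|\sum_k\epsilon_k f_k\|_{L^p(\mathbf{P}\otimes\mu;X)}=\|\sum_k\sum_{S\in\mathcal{D}_{k-n}}\epsilon_S1_Sf_{k-n}\|_p$ together with the contraction principle (which also lets me replace each $\epsilon_Q$ by $\epsilon_k$, the cubes of a fixed generation being pairwise disjoint) --- reduces the $n$-th subseries to $2^{-n\alpha/2}\|\sum_k\epsilon_k f_k\|_p$ times
\[
\Bigl\|\sum_k\epsilon_k\sum_{S}\frac{1_S}{\mu(S)}\int_S K_S(\cdot,z)\,g_k(z)\,d\mu(z)\Bigr\|_{L^q(\mathbf{P}\otimes\mu;X^*)},\qquad K_S(x,z):=\sum_{\substack{Q\in\mathcal{D}_{k-n},\ Q\subset R\\ Q\ (n-1)\text{-good}}}1_{Q_v}(x)\,t_{RQ}\,1_{R_u}(z),
\]
where $S$ runs over $\mathcal{D}'_k$ and $R=S$ when $u\neq 1$, while $S$ runs over $\mathcal{D}'_{k-1}$ and $R$ is the parent of $S=R_1$ when $u=1$ (here $R_u=R_1=S$, so $\langle g_R\rangle_{R_u}=\langle g_k\rangle_S$). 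In either case $K_S$ is supported on $S\times S$, and for each $x$ at most one term of the defining sum is nonzero, so $|K_S(x,z)|\lesssim 1$ for all $x,z$.

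Then I would invoke Theorem~\ref{trick} for the UMD space $X^*$, with $E=\R^N$ (carrying $\mu$), partitions $\mathcal{A}_k=\mathcal{D}'_k$ if $u\neq 1$ and $\mathcal{A}_k=\mathcal{D}'_{k-1}$ if $u=1$, kernels $k_A=K_A$, and functions $f_A:=1_A g_k$ for $A\in\mathcal{A}_k$: each $f_A$ is supported on $A$ and is $\sigma(\mathcal{A}_{k-1})$-measurable, because $g_k=E_{k-1}g_k$ is $\sigma(\mathcal{D}'_{k-1})$-measurable, while each $K_A(x,y_A)$ is a scalar of modulus $\lesssim 1$, so the contraction principle yields $\mathcal{R}(\{K_A(x,y_A):x\in A\in\mathcal{A}\})\lesssim 1$, i.e.\ \eqref{rbound} holds with $C\lesssim 1$. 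Theorem~\ref{trick} then dominates the displayed $L^q$-norm by a constant multiple of $\|\sum_k\epsilon_k\sum_{A\in\mathcal{A}_k}1_A g_k\|_{L^q(\mathbf{P}\otimes\mu;X^*)}=\|\sum_k\epsilon_k g_k\|_{L^q(\mathbf{P}\otimes\mu;X^*)}$, the last equality because $\sum_{A\in\mathcal{A}_k}1_A g_k=g_k$ $\mu$-a.e. Thus the $n$-th subseries is $\lesssim 2^{-n\alpha/2}\|\sum_k\epsilon_k g_k\|_q\|\sum_k\epsilon_k f_k\|_p$, and summing over $n>r$ gives \eqref{treq2}.

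The main obstacle I anticipate is purely organisational --- keeping straight the correspondence among $Q$, its parent $R$, the child $R_1$, the three dyadic scales involved, and the Rademacher indices, and verifying the measurability and $\mathcal{R}$-boundedness hypotheses of Theorem~\ref{trick} --- rather than anything conceptual: removing $D(Q,R)$ from \eqref{ades2} eliminates the $j$-summation and the $\theta(j+n)$-level drop that formed the technical core of the proof of Lemma~\ref{tut}.
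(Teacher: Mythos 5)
Your argument is correct and follows essentially the same route as the paper's proof: fix the scale ratio $2^n$, pull out the factor $2^{-n\alpha/2}$ from \eqref{ades2}, dualize against the $f$-side, package the remaining matrix into kernels $K_S$ of modulus $\lesssim 1$ supported on $S\times S$ (with $S\in\mathcal{D}'_k$ for $u\neq1$ and $S\in\mathcal{D}'_{k-1}$ for $u=1$), apply the decoupling Theorem \ref{trick} after checking the $\sigma(\mathcal{A}_{k-1})$-measurability of $1_S g_k$, and sum the geometric series over $n>r$. The only cosmetic point is that in the $u=1$ case the kernel sum should be restricted to $Q\subset S$ (i.e.\ to those $Q$ with $R_1(Q)=S$), though the extra terms you formally allow vanish anyway because of the indicators $1_S(x)$ and $1_{R_1(Q)}(z)$.
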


\begin{proof}
Consider first part of the series where the ratio
$\ell(Q)/\ell(R)$ is a fixed
number $2^{-n}$ with $n\in \{r+1,r+2,\ldots\}$.
If  $R\in\mathcal{D}'_k$, the estimate
\eqref{ades2} reads as
\begin{equation}\label{mest2}
\frac{|T^u_{RQ}|}{\mu(R_u)\mu(Q_v)}\lesssim 2^{-n\alpha/2}\cdot\begin{cases}\mu(R)^{-1}\quad &\textrm{ if }u\not=1,\\
\mu(R_1)^{-1}\quad &\textrm{ if }u=1.
\end{cases}
\end{equation}

Adapting \eqref{estnorm} to the present situation
yields the estimate
\begin{align*}
&\bigg|\sum_{k\in\Z}\sum_{R\in\mathcal{D}'_{k}} 
\sum_{\substack{Q\in\mathcal{D}_{k-n} \\ Q\textrm{ is }R\textrm{-good}Ê\\ÊQ\subset R_1}} \langle g_R\rangle_{R_u}T_{RQ}^u\langle f_Q\rangle_{Q_v}\bigg|\\
&\le \bigg\| \sum_{S\in\mathcal{D}} \epsilon_Sf_{S}\bigg\|_{L^{p}(\mathbf{P}\otimes\mu;X)}
 \bigg\|\sum_{k\in\Z}\epsilon_k
\sum_{Q\in\mathcal{D}_{k-n}}
\sum_{\substack{R\in\mathcal{D}'_{k} \\  Q\textrm{ is }R\textrm{-good}\\ Q\subset R_1}}
1_{Q_v}\frac{T_{RQ}^u}{\mu(Q_v)}\langle g_R\rangle_{R_u}\bigg\|_{L^q(\mathbf{P}\otimes \mu;X^*)}.
\end{align*}
Reorganizing the summation, we have
\[
\bigg\| \sum_{S\in\mathcal{D}} \epsilon_S f_{S}\bigg\|_{L^{p}(\mathbf{P}\otimes\mu;X)}= \bigg\|\sum_{k=-\infty}^\infty \epsilon_k f_{k}\bigg\|_{L^p(\mathbf{P}\otimes\mu;X)}
\]
so that we are left with estimating the quantity
\begin{equation}\label{vtama2}
\begin{split}
\bigg\|\sum_{k\in\Z}\epsilon_k\sum_{R\in\mathcal{D}'_{k}}
\sum_{\substack{Q\in\mathcal{D}_{k-n} \\ Q\textrm{ is }R\textrm{-good}\\Q\subset R_1}}
1_{Q_v}\frac{T_{RQ}^u}{\mu(Q_v)}\langle g_R&\rangle_{R_u}\bigg\|_{L^q(\mathbf{P}\otimes \mu;X^*)}.
\end{split}
\end{equation}
For each $R\in\mathcal{D}'_{k}$ and $m\in \{2,\ldots,2^N\}$, define kernel
\begin{align*}
K_R^{m}(x,y) &:=  
2^{n\alpha/2}\sum_{\substack{Q\in\mathcal{D}_{k-n} \\ Q\textrm{ is }R\textrm{-good}\\Q\subset R_1}}
\mu(R)\underbrace{ 1_{R}(x)1_{Q_v}(x)}_{=1_{Q_v}(x)}\frac{T_{RQ}^m}{\mu(Q_v)\mu(R_m)}1_{R_m}(y).
\end{align*}
For  $S\in\mathcal{D}'_{k-1}$, we define
\begin{align*}
K^{1}_{S}(x,y)&:=2^{n\alpha/2}\sum_{\substack{Q\in\mathcal{D}_{k-n} \\ Q\textrm{ is }S^{(1)}\textrm{-good}\\Q\subset S}}
\mu(S)\underbrace{1_{S}(x)1_{Q_v}(x)}_{=1_{Q_v}(x)}\frac{T_{S^{(1)}Q}^1}{\mu(Q_v)\mu(S)}1_{S}(y).
\end{align*}
We have
\[
|K^1_{S}(x,y)|+\sum_{m=2}^{2^N}|K^m_{R}(x,y)|\lesssim 1
\] by
 using \eqref{mest2} and the fact that
there is at most one non zero term in the sums above for any given pair of points $(x,y)$.
In the sequel we will use one of these kernels, depending on the
value of $u$.
If $u\not=1$, then $K^u_R$ is supported on $R\times R$. If $u=1$, then
$K^u_{S}=K^1_{S}$ is supported on $S\times S$.

The quantity inside the $L^p$-norm in \eqref{vtama2}
is $2^{-n\alpha/2}\Sigma_u$, where
\begin{equation}\label{tarbio2}
\begin{split}
\Sigma_u:=&\sum_{k\in\Z}
\epsilon_k\sum_{R\in\mathcal{D}'_{k}}
\frac{1_R(x)}{\mu(R)}\int_R
K^u_R(x,y)1_R(y) g_{k}(y)d\mu(y),\quad \text{ if }u\not=1;
\end{split}
\end{equation}
and
\begin{equation}\label{tarbio3}
\begin{split}
\Sigma_u:=&\sum_{k\in\Z}\epsilon_k
\sum_{\substack{S\in\mathcal{D}'_{k-1}}} \frac{1_{S}(x)}{\mu(S)}\int_{S}
K^1_{S}(x,y)1_{S}(y) g_{k}(y)d\mu(y),\quad \text{ if }u=1.
\end{split}
\end{equation}
Here the fact that $1_{R_u} g_{k}=1_{R_u}g_{R}$ for $R\in\mathcal{D}'_{k}$
was also used.

Then we do a case study;
assume first that $u\not=1$.
Then $1_R g_k$ is supported on $R\in\mathcal{D}'_k$, and
it is constant on cubes
$R'\in\mathcal{D}_{k-1}$. 
The tangent martingale trick (see Theorem \ref{trick})  implies that the
${L^q(\mathbf{P}\otimes \mu;X^*)}$-norm of
the quantity \eqref{tarbio2} is dominated  by a constant multiple of
\begin{equation}\label{fas1}
\begin{split}
\bigg\|\sum_{k\in\Z} \epsilon_k
\sum_{R\in\mathcal{D}'_{k}} g_R\bigg\|_{L^q(\mathbf{P}\otimes \mu;X^*)}= \bigg\|\sum_{k\in\Z} \epsilon_k
g_{k}\bigg\|_{L^q(\mathbf{P}\otimes \mu;X^*)}.
\end{split}
\end{equation}

Then we assume that $u=1$. In this case
$1_{S}g_k$ is supported on $S\in\mathcal{D}'_{k-1}$, and
it is constant on cubes $R'\in\mathcal{D}'_{k-2}$.
The tangent martingale trick (Theorem \ref{trick}) implies that 
${L^q(\mathbf{P}\otimes \mu;X^*)}$-norm of
the quantity \eqref{tarbio3} is dominated  by a constant multiple of
\begin{equation}\label{fas2}
\bigg\|\sum_{k\in\Z} \epsilon_k
\sum_{S\in\mathcal{D}'_{k-1}} 1_S g_k\bigg\|_{L^q(\mathbf{P}\otimes \mu;X^*)}
\lesssim \bigg\|\sum_{k\in\Z} \epsilon_k
g_{k}\bigg\|_{L^q(\mathbf{P}\otimes \mu;X^*)}.
\end{equation}


Combining
the estimates \eqref{fas1} and \eqref{fas2}, we find that
the quantity in \eqref{vtama2} is dominated by
\begin{align*}
C2^{-n\alpha/2}\bigg\|\sum_{k\in\Z} \epsilon_k
g_{k}\bigg\|_{L^q(\mathbf{P}\otimes \mu;X^*)}
\end{align*}
This is summable over $n\in \{r+1,r+2,\ldots\}$, and therefore we obtain \eqref{treq2}.
\end{proof}

\section{Deeply contained cubes}\label{nested}

During the course of Section \ref{nested} and Section \ref{paraproducts}
we establish the following estimate for the part of
the summation in \eqref{firstes} involving deeply contained cubes.

\begin{prop}\label{nestedlem}
Under the assumptions of Theorem \ref{mainth}, we have
\begin{equation}\label{ofint2}
\bigg|\sum_{R\in\mathcal{D}'} \sum_{\substack{Q\in\mathcal{D}_{R\text{-good}} \\Q\subset R \\ \ell(Q)<2^{-r}\ell(R)}} \langle D_R^{a,2} g,T(D_Q^{a,1} f)\rangle\bigg|\lesssim \|f\|_p\|g\|_q
\end{equation}
for every $f\in L^p(X)$ and $g\in L^q(X^*)$. Here $1/p+1/q=1$.
\end{prop}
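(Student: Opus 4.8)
The plan is to adapt the local $Tb$ argument of \cite{NTV} to the vector‑valued setting, following the scheme already used in Sections \ref{separated}--\ref{prepare}: reduce \eqref{ofint2} to bilinear sums over matrix elements $\langle\psi_R,T\phi_Q\rangle$, then split these into a \emph{separated} part, handled in the present section by Lemma \ref{tut2}, and a \emph{paraproduct} part, handled in Section \ref{paraproducts}. For the reduction, let $Q,R$ be as in \eqref{ofint2}. Lemma \ref{rcub} furnishes the child $R_1=R_1(Q)$ of $R$ with $Q\subset R_1$, and Remark \ref{new_hyvmeas} gives
\[
\dist(Q,\partial R_1)>\ell(Q)^\gamma\ell(R_1)^{1-\gamma}=:\rho_{QR},
\]
our substitute for the quantitative separation used in Section \ref{separated}; since $\rho_{QR}/\ell(Q)=(\ell(R_1)/\ell(Q))^{1-\gamma}\ge 2^{r(1-\gamma)}$, we have $\rho_{QR}\ge 2\diam Q$ once $r$ is large enough, which we may assume. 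Expanding $D_Q^{a,1}f$ and $D_R^{a,2}g$ exactly as in \eqref{idec}--\eqref{repr} (Lemma \ref{phiprop}, Lemma \ref{localdq}, and the computations in the proof of Lemma \ref{iavut}) reduces the left‑hand side of \eqref{ofint2}, up to a bounded number of terms, to bilinear sums
\[
\sum_{R\in\mathcal{D}'}\ \sum_{\substack{Q\in\mathcal{D}_{R\text{-good}}\\ Q\subset R_1,\ \ell(Q)<2^{-r}\ell(R)}} \langle g_R\rangle_{R_j}\,\langle\psi_{R,j},T\phi_{Q,i}\rangle\,\langle f_Q\rangle_{Q_i},
\]
with $i,j$ fixed, $\psi_{R,j}\in\{\phi_{R,j}^{a,2},\omega_{R,j}^{a,2}\}$, $\phi_{Q,i}\in\{\phi_{Q,i}^{a,1},\omega_{Q,i}^{a,1}\}$, and $(g_k,f_k)$ as in Lemma \ref{iavut}; in particular $E_{k-1}f_k=f_k$, $E_{k-1}g_k=g_k$, $\mathrm{supp}\,\phi_{Q,i}\subset Q$, $\int\phi_{Q,i}\,d\mu=0$ (using $E_{k-1}\omega_k^{a,1}=0$ from Lemma \ref{ombasic} in the $\omega$‑case), $\|\psi_{R,j}\|_\infty\lesssim 1$, and, exactly as at the end of the proof of Proposition \ref{ktest}, $\|\sum_k\epsilon_kf_k\|_{L^p(\mathbf P\otimes\mu;X)}\lesssim\|f\|_p$ and $\|\sum_k\epsilon_kg_k\|_{L^q(\mathbf P\otimes\mu;X^*)}\lesssim\|g\|_q$ — it is here, through Theorem \ref{haa}, that the $\mathrm{UMD}$ function lattice hypothesis is used.

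For the separated part, observe (inspecting \eqref{phideff}, \eqref{localde}) that each $\psi_{R,j}$ is the sum of a piece supported on a single child of $R$ and a piece of the form $c\,1_{R_1}$ with $\|c\|_\infty\lesssim\delta^{-2}$ (by \eqref{a_iso}), while $\phi_{Q,i}$ is, up to a constant $\lesssim\delta^{-2}$, of the shape $b^1_{Q_i^a}1_{Q_i}-\tfrac{\mu(Q_i)}{\mu(Q)}b^1_{Q^a}1_Q$ or the corresponding $\omega$‑term. Splitting the $c\,1_{R_1}$‑pieces by $1_{R_1}=1_S-1_{S\setminus R_1}$ for the relevant $a$‑ancestor $S\supseteq R_1$, and peeling off $b^1_{Q^a}1_Q=b^1_{Q^a}-b^1_{Q^a}1_{Q^a\setminus Q}$ (similarly for $Q_i^a$) on the $\phi_{Q,i}$‑pieces, isolates all contributions in which, in the pairing $\langle\cdot,T\cdot\rangle$, one of the two functions is supported in $\mathbb R^N\setminus R_1$ (or the two functions sit in disjoint children of $R$) while the other is supported in $Q$ with vanishing $\mu$‑integral. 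For these the smoothness \eqref{smooth}, the size bound \eqref{mesas}, and $\dist(Q,\partial R_1)>\rho_{QR}$ give — using the annular kernel bound $\int_{\mathbb R^N\setminus R_1}|x-c_Q|^{-(d+\alpha)}\,d\mu(x)\lesssim\rho_{QR}^{-\alpha}$ for the $u=1$ pieces and the crude bound $|x-c_Q|^{-(d+\alpha)}\le\rho_{QR}^{-(d+\alpha)}$ together with $\mu(R_u)\le\ell(R)^d$ for the pieces living in a child $R_u$ with $u\ne1$ — that the resulting matrices $\{T^u_{RQ}\}$ satisfy the hypothesis \eqref{ades2}; this is precisely where the constraint $\gamma\le\frac{\alpha}{2(d+\alpha)}$ of \eqref{gdef} enters, converting $\ell(Q)^\alpha\rho_{QR}^{-(d+\alpha)}\mu(R)\lesssim(\ell(Q)/\ell(R))^{\alpha-\gamma(d+\alpha)}$ into $\lesssim(\ell(Q)/\ell(R))^{\alpha/2}$. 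Lemma \ref{tut2}, combined with the norm bounds for $\sum\epsilon_kf_k$ and $\sum\epsilon_kg_k$ above, then yields $\lesssim\|f\|_p\|g\|_q$ for all separated contributions, completing the part of Proposition \ref{nestedlem} proved in this section.

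What remains are the terms in which the pairing has collapsed to $\langle T^*b^2_{R_1^a},\phi_{Q,i}^{a,1}\rangle$ or $\langle T^*b^2_{R^a},\phi_{Q,i}^{a,1}\rangle$ (controlled in $L^\infty$ by $B$ via \eqref{accreass}), and the symmetric ones with $Tb^1_{Q^a},Tb^1_{Q_i^a}$; these carry no geometric decay in $\ell(Q)/\ell(R)$ and assemble into (two symmetric copies of) a paraproduct operator with $Q$ ranging over $R$‑good cubes deeply inside $R_1$. In Section \ref{paraproducts} we will bound this operator: the point is that its coefficients satisfy a Carleson‑type condition — here the layer estimate of Lemma \ref{basicsum} together with \eqref{a_iso} and the testing bound \eqref{accreass} are used — so that it is controlled by the Rademacher maximal function Carleson embedding Theorem \ref{carlesonRMF} (this is where the $\mathrm{RMF}$ consequence of the $\mathrm{UMD}$ lattice hypothesis is invoked), together with the improved contraction principle of Proposition \ref{improved} and Stein's inequality, as for the paraproducts in \cite{hytonen1,NTV}. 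I expect this paraproduct estimate to be the main obstacle: specifically, verifying the Carleson condition for coefficients built from $a$‑ancestors that need not be aligned with the cubes $Q$ over which one sums, and passing from the scalar Carleson bound to its vector‑valued counterpart via $\mathrm{RMF}$ — and, where the dual operators $(D_k^a)^*$ intervene, via the lattice/cotype structure.
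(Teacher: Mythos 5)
Your treatment of the genuinely separated contributions follows the paper's route and is essentially sound: Lemma \ref{rcub} and the goodness bound $\dist(Q,\partial R_1)>\ell(Q)^\gamma\ell(R_1)^{1-\gamma}$, the kernel estimates as in \eqref{ssss}, the decoupling Lemma \ref{tut2} under hypothesis \eqref{ades2}, and the square-function bounds from Theorems \ref{nests} and \ref{haa} (your exponent bookkeeping $\alpha-\gamma(d+\alpha)\ge\alpha/2$ is the same computation as in Lemma \ref{szo}). The genuine gap is the paraproduct part, which is the substance of Proposition \ref{nestedlem}: you only sketch it and explicitly defer it (``I expect this paraproduct estimate to be the main obstacle''), so the proposition is not actually proved. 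Moreover the sketch omits the key structural step. The terms with no decay in $\ell(Q)/\ell(R)$ cannot simply ``assemble into a paraproduct'' and then be bounded term by term: for a fixed $Q$ there are about $\log_2\big(\ell(R_0)/\ell(Q)\big)$ cubes $R\supset Q$ in play, each contributing a quantity of size comparable to $\mu(Q_i)$ times averages of $f$ and $g$, so the summation over $R$ must be performed first. The paper's crucial observation is the telescoping identity $\sum_{R:\chi_{Q,R}=1}G_{Q,R}=b_{S^a}\langle g\rangle_S/\langle b_{S^a}\rangle_S-b_{R_0}\langle g\rangle_{\R^N}/\langle b_{R_0}\rangle_{\R^N}$ with $S=S(Q)$ the minimal admissible cube, which collapses the double sum into the single-sum paraproduct \eqref{parop}; only then does the Carleson-type bound of Lemma \ref{boundest} (proved via Theorem \ref{haa} with $X=\C$, Proposition \ref{ylcarl} and the layer estimate Lemma \ref{basicsum}), the RMF/cotype embedding of Lemma \ref{genemb} together with Proposition \ref{improved}, and the separate estimate \eqref{estpar2} for the top-level term, yield \eqref{post}. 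None of this is carried out in your proposal, and your appeal to Theorem \ref{carlesonRMF} alone would not substitute for it.

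A further structural flaw: the ``peeling'' $b^1_{Q^a}1_Q=b^1_{Q^a}-b^1_{Q^a}1_{Q^a\setminus Q}$ on the $f$-side is both unnecessary and harmful. The functions $\phi_{Q,i}^{a,1}$ and $\omega_{Q,i}^{a,1}$ are already supported in $Q$ with vanishing $\mu$-integral (Lemmas \ref{phiprop} and \ref{ombasic}), which is all that the separated estimates require; peeling off $b^1_{Q^a}$ produces leftover pairings involving $T\big(b^1_{Q^a}1_{Q^a\setminus Q}\big)$, whose support overlaps $R$ (recall $Q^a\in\mathcal{D}$ and $R\in\mathcal{D}'$ are in different grids) and which carry neither cancellation nor any decay in $\ell(Q)/\ell(R)$ — none of the tools you cite controls their sum. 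This same move leads you to expect ``two symmetric copies'' of the paraproduct, one built from $Tb^1_{Q^a}$; in fact the proof of this proposition uses only the testing condition $\|T^*b^2_R\|_{L^\infty(\mu)}\le B$ together with the RMF property of $X^*$, and the condition on $Tb^1$ plays no role here — it is needed only for the symmetric half of the summation with $\ell(R)\le\ell(Q)$, which is handled by duality at the level of Theorem \ref{mainth}.
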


Let $R$ and $Q$ be as in  \eqref{ofint2}.
Recall from 
beginning of Section \ref{prepare} that $R_1,\ldots,R_{2^N}$ are children of $R$ such that
$Q\subset R_1\subsetneq R$. 
By the proof of Lemma \ref{rcub}, we get
\begin{equation}\label{tkayt}
2^{r(1-\gamma)}\ell(Q)\le \ell(Q)^\gamma \ell(R_m)^{1-\gamma}<\dist(Q,\partial R_m),\qquad m\in \{1,2,\ldots, 2^N\}.
\end{equation}
This is a useful inequality later on.



Writing $1_R=\sum_{m=1}^{2^N} 1_{R_m}$ and using that
$\textrm{supp}(D_R^{a,2}g)\subset R$ 
yields
\begin{equation}\label{tsum}
\begin{split}
\langle D&_R^{a,2} g,T(D_Q^{a,1} f)\rangle = 
 \langle 1_{R_1}D_R^{a,2} g,T(D_Q^{a,1}f)\rangle
+\sum_{m=2}^{2^N} \langle 1_{R_m} D_R^{a,2}g,T(D_Q^{a,1}f)\rangle.
\end{split}
\end{equation}
The point is that $Q$ is contained in $R_1$, so $Q$ is separated from
the children $R_2,\ldots,R_m$. Hence, arguments
developed in Section \ref{separated} can be applied to these terms. Treating the main part of the term  associated with the child $R_1$
requires so called paraproducts; these are discussed in the following section.

Let us sketch what are the estimates that are performed
in the remaining part of this section.
First we will show that
\begin{equation}\label{equa}
\sum_{m=2}^{2^N} \bigg|\sum_{R\in\mathcal{D}'} \sum_{\substack{Q\in\mathcal{D}_{R\textrm{-good}}\\Q\subset R_1 \\ \ell(Q)<2^{-r}\ell(R)}} \langle 1_{R_m} D_R^{a,2}g,T(D_Q^{a,1}f)\rangle\bigg|\lesssim \|f\|_p\|g\|_q.
\end{equation}
Then, in order to treat the remaining (first) term on the right hand side of \eqref{tsum}, we write
$R_1^c = \R^N\setminus R_1$ and
\begin{align*}
1_{R_1}D_R^{a,2}g =  
1_{R_1}\bigg(b_{R_1^a}\frac{\langle g\rangle_{R_1}}{\langle b_{R_1^a}\rangle_{R_1}}-b_{R^a}\frac{\langle g\rangle_R}{\langle b_{R^a}\rangle_R}\bigg)
&=(1-1_{R_1^c})\bigg(b_{R_1^a}\frac{\langle g\rangle_{R_1}}{\langle b_{R_1^a}\rangle_{R_1}}-b_{R^a}\frac{\langle g\rangle_R}{\langle b_{R^a}\rangle_R}\bigg).
\end{align*}
In this section we establish the 
estimate
\begin{equation}\label{lesq}
\bigg|\sum_{R\in\mathcal{D}'} \sum_{\substack{Q\in\mathcal{D}_{R\textrm{-good}}\\Q\subset R_1 \\ \ell(Q)<2^{-r}\ell(R)}}
\bigg\langle 1_{R_1^c}\bigg(b_{R_1^a}\frac{\langle g\rangle_{R_1}}{\langle b_{R_1^a}\rangle_{R_1}}-b_{R^a}\frac{\langle g\rangle_R}{\langle b_{R^a}\rangle_R}\bigg),T(D_Q^{a,1}f)\bigg\rangle\bigg|\lesssim \|f\|_p\|g\|_q.
\end{equation}
The remaining term is treated in Section \ref{paraproducts} by using paraproducts.

\subsection*{Proving estimate \eqref{equa}}
Proceeding as in the proof of Lemma \ref{iavut}, we see that the
left hand side of \eqref{equa} is dominated by a series of four terms, each of them being of the form
\begin{equation}\label{comp2}
\sum_{m=2}^{2^N}\sum_{i,j=1}^{2^N}\bigg|\sum_{R\in\mathcal{D}'} \sum_{\substack{Q\in\mathcal{D}_{R\textrm{-good}}\\Q\subset R_1 \\ \ell(Q)<2^{-r}\ell(R)}} \langle g_R\rangle_{R_j}\langle \psi_{R,j,m},T\phi_{Q,i}\rangle\langle f_Q\rangle_{Q_i}\bigg|,
\end{equation}
where we denote $\langle g_R\rangle_{R_j}=\langle 1_R g_k\rangle_{R_j}=\langle g_k\rangle_{R_j}$ if
$R\in\mathcal{D}'_k$ (similarly for $f$), and the four summands are determined
by the following possibilities:
\begin{equation}\label{gchi}
(g_k,\psi_{R,j,m})\in \{(E_{k-1}D_k^{a,2} g, 1_{R_m}\phi_{R,j}^{a,2}),
(1_{\{b_k^{a,2}\not=b_{k-1}^{a,2}\}}E_k g, 1_{R_m}\omega_{R,j}^{a,2})\}
\end{equation}
and
\begin{equation}\label{fchi}
(f_k,\phi_{Q,i})\in \{(E_{k-1}D_k^{a,1} f, \phi_{Q,i}^{a,1}),
(1_{\{b_k^{a,1}\not=b_{k-1}^{a,1}\}}E_k f,\omega_{Q,i}^{a,1})\}.
\end{equation}
Note that, in any case, $E_{k-1} g_k=g_k$ and
$E_{k-1} f_k=f_k$.

\begin{lem}\label{rest}
Assume that $R\in\mathcal{D}'$ and $Q\in\mathcal{D}_{R\textrm{-good}}$,
$Q\subset R_1$ and $\ell(Q)<2^{-r}\ell(R)$.
Let $\psi_{R,j,m}$, $m\ge 2$, and $\phi_{Q,i}$ be any of those functions
that are quantified  in \eqref{gchi} and \eqref{fchi}, respectively.
Then  $T_{RQ}^j := \langle \psi_{R,j,m},T\phi_{Q,i}\rangle$, $i,j\in \{1,\ldots,2^N\}$,  satisfies
\[
\frac{|T_{RQ}^j|}{\mu(R_j)\mu(Q_i)}\lesssim \bigg(\frac{\ell(Q)}{\ell(R)}\bigg)^{\alpha/2}
\mu(R)^{-1}.
\]
\end{lem}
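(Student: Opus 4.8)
The idea is to bound $T_{RQ}^j=\langle\psi_{R,j,m},T\phi_{Q,i}\rangle$ by applying the mean-zero kernel estimate of Lemma~\ref{tokaa} followed by the good-cube conversion of Lemma~\ref{ekaa}, but to the pair $(Q,R_m)$ rather than $(Q,R)$, where $R_m$ (with $m\geq 2$) is the child of $R$ that does \emph{not} contain $Q$. First I would record the properties of the building blocks. On the $f$-side, whether $\phi_{Q,i}$ is $\phi_{Q,i}^{a,1}$ or $\omega_{Q,i}^{a,1}$, it is supported in $Q$, satisfies $\int\phi_{Q,i}\,d\mu=0$, and $\|\phi_{Q,i}\|_{L^1(\mu)}\lesssim\mu(Q_i)$: for $\phi_{Q,i}^{a,1}$ this is Lemma~\ref{phiprop}(b)--(d); for $\omega_{Q,i}^{a,1}$ the support and $L^1$-bound are \eqref{localde} and \eqref{loc_om}, and the vanishing integral follows from $E_{k-1}\omega_k^{a,1}=0$ in Lemma~\ref{ombasic}(c) applied on the parent $Q_i\in\mathcal{D}_{k-1}$. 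On the $g$-side, whether $\psi_{R,j,m}$ is $1_{R_m}\phi_{R,j}^{a,2}$ or $1_{R_m}\omega_{R,j}^{a,2}$, it is supported in $R_m$ and satisfies $\|\psi_{R,j,m}\|_{L^1(\mu)}\lesssim\mu(R_j)$: for $m\ne j$ the explicit formula \eqref{phideff} gives $1_{R_m}\phi_{R,j}^{a,2}=-\frac{\mu(R_j)}{\mu(R)}\frac{b_{R^a}}{\langle b_{R^a}\rangle_R}1_{R_m}$, whose $L^1$-norm is $\lesssim\frac{\mu(R_j)}{\mu(R)}\mu(R_m)\leq\mu(R_j)$ by \eqref{accre} and \eqref{a_iso}; for $m=j$ it is $\lesssim\mu(R_j)$ by Lemma~\ref{phiprop}(b); and $1_{R_m}\omega_{R,j}^{a,2}$ vanishes unless $m=j$, when \eqref{loc_om} gives the same bound. (We may assume $\mu(R_j)\mu(Q_i)\ne 0$, else $T_{RQ}^j=0$.)

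Next I would check the hypotheses needed to apply Lemmata~\ref{tokaa} and \ref{ekaa} with $R_m$ in place of $R$. Since $Q\subset R_1$ is disjoint from $\overline{R_m}$ for $m\geq 2$, we get $\dist(Q,R_m)\geq\dist(Q,\partial R_m)>2^{r(1-\gamma)}\ell(Q)\geq\ell(Q)$ by \eqref{tkayt}; combined with $\ell(Q)<2^{-r}\ell(R)=2^{1-r}\ell(R_m)\leq\ell(R_m)$ this gives the standing hypothesis $\ell(Q)\leq\ell(R_m)\wedge\dist(Q,R_m)$ of both lemmata. Moreover $R_m^{(1)}=R$, so the assumption that $Q$ is $R$-good says exactly that $Q\in\mathcal{D}_{R_m^{(1)}\textrm{-good}}$, which is what Lemma~\ref{ekaa} requires. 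Then Lemma~\ref{tokaa} (with $\psi_{R,j,m}$ supported in $R_m$ and $\phi_{Q,i}$ supported in $Q$ with vanishing integral) followed by Lemma~\ref{ekaa} gives
\[
|T_{RQ}^j|\leq\frac{\ell(Q)^\alpha}{\dist(Q,R_m)^{d+\alpha}}\,\|\phi_{Q,i}\|_{L^1(\mu)}\|\psi_{R,j,m}\|_{L^1(\mu)}\lesssim\frac{\ell(Q)^{\alpha/2}\ell(R_m)^{\alpha/2}}{D(Q,R_m)^{d+\alpha}}\,\mu(Q_i)\mu(R_j).
\]
Since $R_m$ is a child of $R$, $D(Q,R_m)\geq\ell(R_m)=\ell(R)/2$ and $\ell(R_m)=\ell(R)/2$, so the right-hand side is $\lesssim\ell(Q)^{\alpha/2}\ell(R)^{\alpha/2}\ell(R)^{-d-\alpha}\mu(Q_i)\mu(R_j)=(\ell(Q)/\ell(R))^{\alpha/2}\ell(R)^{-d}\mu(Q_i)\mu(R_j)$; finally $\ell(R)^{-d}\leq\mu(R)^{-1}$ by the measure bound \eqref{mesas}, and dividing by $\mu(R_j)\mu(Q_i)$ yields the claim.

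\textbf{Main obstacle.} There is no serious difficulty once the machinery of Sections~\ref{decfun}--\ref{separated} is available; the only point requiring care is the $L^1$-estimate $\|\psi_{R,j,m}\|_{L^1(\mu)}\lesssim\mu(R_j)$, since the cruder bound $\lesssim\mu(R_m)$ (which is all one gets from $\|\phi_{R,j}^{a,2}\|_{L^\infty(\mu)}\lesssim 1$) would not suffice — one must exploit the explicit shape of $\phi_{R,j}^{a,2}$ on the off-diagonal children. The structural condition $\gamma\leq\alpha/(2(d+\alpha))$ from \eqref{gdef} is what makes the passage $\dist(Q,R_m)^{-(d+\alpha)}\rightsquigarrow D(Q,R_m)^{-(d+\alpha)}$ of Lemma~\ref{ekaa} work, and the gain $(\ell(Q)/\ell(R))^{\alpha/2}$ — precisely what is needed later to sum a geometric series in $\log_2(\ell(R)/\ell(Q))$ — falls out automatically.
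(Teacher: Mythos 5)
Your proof is correct and follows essentially the same route as the paper: apply Lemma \ref{tokaa} and Lemma \ref{ekaa} to the pair $(Q,R_m)$ (using \eqref{tkayt} for the separation and $R_m^{(1)}=R$ for the goodness hypothesis), then use the $L^1$-bounds $\|\psi_{R,j,m}\|_{L^1(\mu)}\lesssim\mu(R_j)$, $\|\phi_{Q,i}\|_{L^1(\mu)}\lesssim\mu(Q_i)$ together with $D(Q,R_m)\gtrsim\ell(R)$ and \eqref{mesas}. Your explicit verification of the off-diagonal case $m\ne j$ via \eqref{phideff} and of the vanishing mean of $\omega_{Q,i}^{a,1}$ just spells out details the paper leaves to the cited lemmata.
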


\begin{proof}
Because $m\ge 2$, we have
$Q\cap R_m \subset R_1\cap R_m=\emptyset$ so that
\[\ell(Q)<\dist(Q,\partial R_m)=\dist(Q,R_m)\] by \eqref{tkayt} and the assumption that
$1\le 2^{r(1-\gamma)}$, see \eqref{gdef}. We also
have $\ell(Q)\le \ell(R_m)$. Hence, by using lemmata
\ref{tokaa} and \ref{ekaa}, the properties of
functions $\psi_{R,j,m}$ and $\phi_{Q,i}$ 
that follow from
lemmata \ref{phiprop} and \ref{ombasic},
and \eqref{mesas}, we obtain
\begin{equation}\label{sest}
\begin{split}
&|\langle \psi_{R,j,m},T\phi_{Q,i}\rangle| 
\lesssim\frac{\ell(Q)^{\alpha/2}\ell(R_m)^{\alpha/2}}{D(Q,R_m)^{d+\alpha}}\|\psi_{R,j,m}\|_{L^1(\mu)}\|\phi_{Q,i}\|_{L^1(\mu)}\\
&\lesssim \bigg(\frac{\ell(Q)}{\ell(R)}\bigg)^{\alpha/2}\frac{\|\psi_{R,j,m}\|_{L^1(\mu)}\|\phi_{Q,i}\|_{L^1(\mu)}}{\ell(R)^d}
\lesssim \bigg(\frac{\ell(Q)}{\ell(R)}\bigg)^{\alpha/2}\frac{\mu(R_j)\mu(Q_i)}{\mu(R)}.
\end{split}
\end{equation}
This is the desired estimate.
\end{proof}
Combining lemmata \ref{tut2} and \ref{rest} and then
estimating as in the end of Section \ref{separated}, we see that the quantity
\eqref{comp2} can be dominated by 
a constant multiple of $\|f\|_p\|g\|_q$. 
As a consequence, we see that the left hand side of
\eqref{equa} is be dominated by a constant multiple
of $\|f\|_p\|g\|_q.$

\subsection*{Proving estimate \eqref{lesq}}
Let $R\in\mathcal{D}'_k$. We write
\begin{align*}
&1_{R_1^c}\bigg(b_{R_1^a}\frac{\langle g\rangle_{R_1}}{\langle b_{R_1^a}\rangle_{R_1}}-b_{R^a}\frac{\langle g\rangle_R}{\langle b_{R^a}\rangle_R}\bigg)
=1_{R_1^c} b_{R^a}\langle s_k\rangle_{R_1}
+1_{R_1^c} b_{R_1^a}\langle h_k\rangle_{R_1}+1_{R_1^c} b_{R^a}\langle u_k\rangle_{R_1},
\end{align*}
where 
\begin{align*}
s_{k}=1_{\{b_{k-1}^{a,2}=b_{k}^{a,2}\}}\bigg(\frac{E_{k-1} g}{E_{k-1}b_{k-1}^{a,2}}
- \frac{E_k g}{E_k b_k^{a,2}}\bigg),
\end{align*}
and
\[
h_{k}=1_{\{b_{k-1}^{a,2}\not=b_{k}^{a,2}\}}\frac{E_{k-1} g}{E_{k-1}b_{k-1}^{a,2}},\quad u_{k}=
-1_{\{b_{k-1}^{a,2}\not=b_{k}^{a,2}\}}\frac{E_k g}{E_k b_k^{a,2}}.
\]
By \eqref{idec}, we see that the left hand side of \eqref{lesq}
can be dominated from above by a sum of six terms, each  of them being of
the form
\begin{equation}\label{eeq}
\sum_{i=1}^{2^N} \bigg|\sum_{R\in\mathcal{D}'} \sum_{\substack{Q\in\mathcal{D}_{\mathrm{good}}\\Q\subset R_1 \\ \ell(Q)<2^{-r}\ell(R)}}
\langle g_{R}\rangle_{R_1}
\langle \psi_R,T\phi_{Q,i}\rangle\langle f_Q\rangle_{Q_i}\bigg|,
\end{equation}
where  $\langle g_R\rangle_{R_j}=\langle 1_R g_k\rangle_{R_j}=\langle g_k\rangle_{R_j}$ if
$R\in\mathcal{D}'_k$ (similarly for $f$), and the six terms
are determined by the following choices:
\begin{equation}\label{gco}
(g_k,\psi_R)\in \{(s_k,1_{R_1^c} b_{R^a}), (h_k,1_{R_1^c} b_{R_1^a}),
(u_k,1_{R_1^c} b_{R^a})\}
\end{equation}
and
\begin{equation}\label{fco}
(f_k,\phi_{Q,i})\in \{(E_{k-1}D_k^{a,1} f, \phi_{Q,i}^{a,1}),
(1_{\{b_k^{a,1}\not=b_{k-1}^{a,1}\}}E_k f,\omega_{Q,i}^{a,1})\}.
\end{equation}
Note that, in any case, $E_{k-1} g_{k}=g_{k}$ and
$E_{k-1} f_k=f_k$.

\begin{lem}\label{szo}
Let $\psi_R$ and $\phi_{Q,i}$ be any of those functions
that are quantified  in \eqref{gco} and \eqref{fco}
for $R\in\mathcal{D}'$ and $Q\in\mathcal{D}_{R\textrm{-good}}$ satisfying
$Q\subset R_1$ and $\ell(Q)<2^{-r}\ell(R)$.
Then $T_{RQ}^1 := \langle \psi_R,T\phi_{Q,i}\rangle$ satisfies
the estimate
\[
\frac{|T_{RQ}^1|}{\mu(Q_i)}\lesssim \bigg(\frac{\ell(Q)}{\ell(R)}\bigg)^{\alpha/2}.
\]
\end{lem}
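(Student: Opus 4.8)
The plan is to exploit the geometric separation between the supports of $\psi_R$ and $\phi_{Q,i}$, together with the mean-zero and size properties of $\phi_{Q,i}$, by the usual Calder\'on--Zygmund mechanism. The only twist, compared with Lemma~\ref{tokaa}, is that $\psi_R$ is supported on the \emph{complement} $R_1^c=\R^N\setminus R_1$ rather than on a cube, so instead of an $L^1$-bound on $\psi_R$ I will use its $L^\infty$-bound together with the integrability of the Calder\'on--Zygmund tail; the missing factor $\mu(R_j)$ is exactly what is absent from the claimed estimate (it reappears as $\mu(R_u)^{-1}$ when this matrix is later fed into \eqref{ades2}).

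First I would record the ingredients. In every case listed in \eqref{gco}, the function $\psi_R$ equals $1_{R_1^c}$ times $b_{R^a}$ or $b_{R_1^a}$, hence $\mathrm{supp}\,\psi_R\subset R_1^c$ and $\|\psi_R\|_{L^\infty(\mu)}\le 1$ by \eqref{accre}. In every case listed in \eqref{fco}, $\phi_{Q,i}$ is either $\phi_{Q,i}^{a,1}$ or $\omega_{Q,i}^{a,1}$; by Lemma~\ref{phiprop}, resp.\ Lemma~\ref{ombasic}(c) with \eqref{loc_om}, it is supported in $Q$, has $\int\phi_{Q,i}\,d\mu=0$ (for $\omega_{Q,i}^{a,1}=1_{Q_i}\omega_k^{a,1}$, $Q_i\in\mathcal{D}_{k-1}$, this follows from $E_{k-1}\omega_k^{a,1}=0$), and $\|\phi_{Q,i}\|_{L^1(\mu)}\lesssim\mu(Q_i)$. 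Finally, writing $c_Q$ for the centre of $Q$ and $\rho:=\dist(Q,\partial R_1)$, the proof of Lemma~\ref{rcub} and \eqref{tkayt} give $\rho>\ell(Q)^\gamma\ell(R_1)^{1-\gamma}\ge 2^{r(1-\gamma)}\ell(Q)\ge\ell(Q)$; moreover, since $Q\subset R_1$, any segment from $x\in R_1^c$ to a point of $Q$ crosses $\partial R_1$, so $\dist(x,Q)\ge\rho$ for all $x\in R_1^c$, and for $y\in Q$ one has $|x-y|\ge\dist(x,Q)$ and $2|y-c_Q|\le\ell(Q)<\rho\le|x-y|$.

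Next I would carry out the estimate. Since $\mathrm{supp}\,\phi_{Q,i}\subset Q\subset R_1$ and $x\in R_1^c$ (so $x\neq c_Q$), \eqref{action} and the cancellation of $\phi_{Q,i}$ give $T\phi_{Q,i}(x)=\int_Q\big(K(x,y)-K(x,c_Q)\big)\phi_{Q,i}(y)\,d\mu(y)$ for $x\in R_1^c$; applying the smoothness bound \eqref{smooth} (legitimate because $2|y-c_Q|\le|x-y|$) and $|x-y|\ge\dist(x,Q)$ yields
\[
|T\phi_{Q,i}(x)|\lesssim \frac{\ell(Q)^\alpha}{\dist(x,Q)^{d+\alpha}}\,\|\phi_{Q,i}\|_{L^1(\mu)}\lesssim \mu(Q_i)\,\frac{\ell(Q)^\alpha}{\dist(x,Q)^{d+\alpha}},\qquad x\in R_1^c.
\]
Integrating against $|\psi_R|\le 1_{R_1^c}$ and decomposing $R_1^c\subset\{x:\dist(x,Q)\ge\rho\}$ into the dyadic annuli $\{2^k\rho\le\dist(x,Q)<2^{k+1}\rho\}$, $k\ge0$, each of which lies in $B(c_Q,2^{k+2}\rho)$ (using $\ell(Q)<\rho$), the measure bound \eqref{mesas} gives $\mu(B(c_Q,2^{k+2}\rho))\le(2^{k+2}\rho)^d$, so that $\int_{R_1^c}\dist(x,Q)^{-d-\alpha}\,d\mu(x)\lesssim\sum_{k\ge0}(2^k\rho)^{-\alpha}\lesssim\rho^{-\alpha}$. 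Hence $|T_{RQ}^1|\lesssim\mu(Q_i)\,\ell(Q)^\alpha\rho^{-\alpha}$.

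Finally I would combine everything: using $\rho>\ell(Q)^\gamma\ell(R_1)^{1-\gamma}$ and $\ell(R_1)=\ell(R)/2$,
\[
\frac{|T_{RQ}^1|}{\mu(Q_i)}\lesssim \ell(Q)^\alpha\rho^{-\alpha}\lesssim\Big(\frac{\ell(Q)}{\ell(R_1)}\Big)^{\alpha(1-\gamma)}\lesssim\Big(\frac{\ell(Q)}{\ell(R)}\Big)^{\alpha(1-\gamma)}\le\Big(\frac{\ell(Q)}{\ell(R)}\Big)^{\alpha/2},
\]
where the last step uses $\ell(Q)/\ell(R)<2^{-r}\le1$ together with $\alpha(1-\gamma)\ge\alpha/2$, which holds since $\gamma\le\alpha/2(d+\alpha)<1/2$ by \eqref{gdef}. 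I expect the only mildly delicate points to be checking the applicability of \eqref{smooth} — i.e.\ the inequality $2|y-c_Q|\le|x-y|$, which is precisely where the $R$-goodness of $Q$ enters via \eqref{tkayt} — and the (routine) dyadic-annuli tail bound; the rest is bookkeeping of the properties of $\psi_R$ and $\phi_{Q,i}$.
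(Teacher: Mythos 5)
Your proof is correct and follows essentially the same route as the paper's: cancellation of $\phi_{Q,i}$ plus the smoothness bound \eqref{smooth} (applicable thanks to the goodness estimate \eqref{tkayt}), a dyadic-annuli tail estimate using \eqref{mesas}, and the condition $\gamma<1/2$ from \eqref{gdef} to trade the distance $\dist(Q,R_1^c)$ for $(\ell(Q)\ell(R_1))^{1/2}$, hence the exponent $\alpha/2$. The only cosmetic differences are that you keep the exponent $\alpha(1-\gamma)$ and use $\ell(Q)/\ell(R)\le1$ at the end, whereas the paper bounds $\dist(Q,R_1^c)\ge\ell(Q)^{1/2}\ell(R_1)^{1/2}$ directly; both are equivalent bookkeeping.
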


\begin{proof}
Denote
by $y_Q$ the midpoint of $Q$.
Let  $x\in R_1^c$ and $y\in Q$. By \eqref{gdef}, \eqref{tkayt} and the fact that
$Q\subset R_1$, we have
\begin{align*}
2|y-y_Q|\le 2^{r(1-\gamma)}\ell(Q)<\dist(Q,\partial R_1)=\dist(Q,R_1^c)\le |x-y_Q|.
\end{align*}
Using the kernel estimate \eqref{smooth} and
the facts  $\int \phi_{Q,i}=0$ and $\textrm{supp}(\phi_{Q,i})\subset Q$, we get
\begin{equation}\label{sums}
\begin{split}
&|\langle \psi_R,T\phi_{Q,i}\rangle|\\&=
\bigg|\int_{\R^N} \int_{\R^N} \psi_R(x)1_{R_1^c}(x)\big(K(x,y)-K(x,y_Q)\big)\phi_{Q,i}(y)d\mu(y) d\mu(x)\bigg|\\
&\lesssim \int_{R_1^c} \int_{\R^N} \frac{|y-y_Q|^{\alpha}}{|x-y_Q|^{d+\alpha}}|\phi_{Q,i}(y)|d\mu(y) d\mu(x)\lesssim \|\phi_{Q,i}\|_1 \int_{R_1^c} \frac{\ell(Q)^{\alpha}}{|x-y_Q|^{d+\alpha}}d\mu(x).
\end{split}
\end{equation}
Denoting $A_k=\{x\,:\,2^k\mathrm{dist}(R_1^c,Q)\le |x-y_Q|<2^{k+1}\mathrm{dist}(R_1^c,Q)\}$, we can estimate 
the last integral as follows
\begin{equation}\label{ssss}
\begin{split}
\int_{R_1^c} \frac{\ell(Q)^{\alpha}}{|x-y_Q|^{d+\alpha}}d\mu(x)
&\le \sum_{k=0}^\infty \int_{A_k}\frac{\ell(Q)^\alpha}{(2^k\mathrm{dist}(R_1^c,Q))^{d+\alpha}}d\mu(x)\\
&\le\sum_{k=0}^\infty \frac{\ell(Q)^\alpha\mu(B(y_Q, 2^{k+1}\mathrm{dist}(R_1^c,Q)))}{(2^k\mathrm{dist}(R_1^c,Q))^{d+\alpha}}\\
&\lesssim 
\frac{\ell(Q)^\alpha}{{\mathrm{dist}(R_1^c,Q)^{\alpha}}}\sum_{k=0}^\infty \frac{1}{2^{\alpha k}}
\lesssim\bigg(\frac{\ell(Q)}{{\mathrm{dist}(R_1^c,Q)}}\bigg)^\alpha.
\end{split}
\end{equation}
This can be further estimated by using that $\gamma\le \alpha(2(d+\alpha))^{-1}<2^{-1}$, see \eqref{gdef}.
Combining this with \eqref{tkayt} yields the estimate
\begin{align*}
\ell(Q)^{1/2}\ell(R_1)^{1/2}\le\ell(Q)^\gamma
\ell(R_1)^{(1-\gamma)}\le \dist(Q,\partial R_1)=\dist(Q,R_1^c).
\end{align*}
Substituting this into  \eqref{sums}, we find that
\[
|\langle \psi_R,T\phi_{Q,i}\rangle|\lesssim
\bigg(\frac{\ell(Q)}{{\mathrm{dist}(R_1^c,Q)}}\bigg)^\alpha\|\phi_{Q,i}\|_1
\lesssim \bigg(\frac{\ell(Q)}{\ell(R_1)}\bigg)^{\alpha/2}\|\phi_{Q,i}\|_1.
\]
This is as required because $\|\phi_{Q,i}\|_1\lesssim \mu(Q_i)$
and $\ell(R_1)=2^{-1}\ell(R)$.
\end{proof}

Combining lemmata \ref{szo} and \ref{tut2} we find that each of the six terms of the form
\eqref{eeq} are bounded (up to a constant) by
by
\[
\bigg\|\sum_{k=-\infty}^\infty \epsilon_k g_{k}\bigg\|_{L^{q}(\mathbf{P}\otimes\mu;X^*)}\cdot
\bigg\|\sum_{k=-\infty}^\infty \epsilon_k f_{k}\bigg\|_{L^p(\mathbf{P}\otimes \mu;X)}.
\]
At the end of Section \ref{separated} we verified
that the second factor above can be  dominated by
$\|f\|_p$.
Hence, it remains to verify the following estimate,
\begin{equation}\label{ges}
\bigg\|\sum_{k=-\infty}^\infty \epsilon_k g_{k}\bigg\|_{L^q(\mathbf{P}\otimes \mu;X^*)}
\lesssim \|g\|_q.
\end{equation}
The cases $g_k\in \{h_k,u_k\}$ have been cleared
in connection with the separated cubes:  \eqref{ges} follows from the contraction principle and
\eqref{fgest}
 if we recall that $|E_k b_k^{a,2}|\ge \delta^2 $ $\mu$-almost everywhere. The remaining case
\begin{align*}
g_k = s_k&=1_{\{b_{k-1}^{a,2}=b_{k}^{a,2}\}}\bigg(\frac{E_{k-1} g}{E_{k-1}b_{k-1}^{a,2}}
- \frac{E_k g}{E_k b_k^{a,2}}\bigg).
\end{align*}
is treated by Lemma \ref{carut2}.

This concludes the proof of 
estimate \eqref{lesq}.

\section{Paraproducts}\label{paraproducts}

\noindent
In order to finish the proof of Proposition \ref{nestedlem}, we still need to establish the following
estimate
\begin{equation}\label{post}
\begin{split}
\bigg|\sum_{R\in\mathcal{D}'} \sum_{\substack{Q\in\mathcal{D}_{R\text{-good}} \\Q\subset R \\ \ell(Q)<2^{-r}\ell(R)}} 
\bigg\langle b_{R_1^a}\frac{\langle g\rangle_{R_1}}{\langle b_{R_1^a}\rangle_{R_1}}-b_{R^a}\frac{\langle g\rangle_R}{\langle b_{R^a}\rangle_R},T(D_Q^{a,1} f)\bigg\rangle\bigg|\lesssim \|f\|_p\|g\|_q.
\end{split}
\end{equation}
We will draw inspiration from the work of Hyt\"onen and Martikainen \cite{hm}, and
the following standing assumptions in Theorem \ref{mainth} are crucial while proving \eqref{post}:
\begin{itemize}
\item
$X^*$ is an RMF-space;
\item
$\|T^*b^2_R\|_{L^\infty(\R^N,\mu;\C)}\le 1$ if $R$ is a cube in $\R^N$.
\end{itemize}

For $Q\in\mathcal{D}$ and $R\in\mathcal{D}'$, we denote
\begin{equation*}
\chi_{Q,R}=\begin{cases}Ê
1,\qquad &\textrm{if }Q\textrm{ is }R\text{-good},\;Q\subset R,\textrm{ and }\ell(Q)<2^{-r}\ell(R);\\
0,\qquad &\textrm{otherwise}.
\end{cases}
\end{equation*}
Suppose that $\chi_{Q,R}=1$. Then we write
\[
G_{Q,R}:=b_{R_1^a}\frac{\langle g\rangle_{R_1}}{\langle b_{R_1^a}\rangle_{R_1}}-b_{R^a}\frac{\langle g\rangle_R}{\langle b_{R^a}\rangle_R}
\]
for a quantity that 
depends on $Q$ and $R$, as $R_1$ stands for the
child of $R$ for which $Q\subset R$.
Using the notation above, we can rewrite the left hand side of \eqref{post}
as follows
\begin{equation}\label{eteen}
\bigg|\sum_{R\in\mathcal{D}'} \sum_{\substack{Q\in\mathcal{D}\\\chi_{Q,R}=1}}
\big\langle G_{Q,R},T(D_Q^{a,1} f)\big\rangle \bigg|
=\bigg| \sum_{Q\in\mathcal{D}}
\bigg\langle \sum_{\substack{R\in\mathcal{D}'\\ \chi_{Q,R}=1}} G_{Q,R},T(D_Q^{a,1} f)\bigg\rangle \bigg|.
\end{equation}
It is straightforward to verify that, if
$\chi_{Q,R}=1$, then $\chi_{Q,R^{(m)}}=1$ for every $m\in\N_0$.
It follows that, if $Q\in\mathcal{D}$ and the inner sum on the right hand side is nonempty, there
exists a unique cube $S=S(Q)\in\mathcal{D}'$ containing $Q$ such that
 $\chi_{Q,R}=1$ if, and only if, $S\subsetneq R\in \mathcal{D}'$.
 If the inner sum in question is empty, we let $S=S(Q)=\emptyset$.
As a consequence, if $S(Q)\not=\emptyset$, 
\begin{align*}
\sum_{\substack{R\in\mathcal{D}'\\ \chi_{Q,R}=1}} G_{Q,R}
=\sum_{\substack{R\in\mathcal{D}'\\ S\subsetneq R}} G_{Q,R}
=b_{S^a}\frac{\langle g\rangle_S}{\langle b_{S^a}\rangle_S} - b_{R_0}\frac{\langle g\rangle_{\R^N}}{\langle b_{R_0}\rangle_{\R^N}}.
\end{align*}
Substituting this identity to the right hand side of \eqref{eteen}, we get
\begin{equation}\label{itsa}
\begin{split}
\bigg| \sum_{Q\in\mathcal{D}}
\bigg\langle \sum_{\substack{R\in\mathcal{D}'\\ \chi_{Q,R}=1}} G_{Q,R},T(D_Q^{a,1} f)\bigg\rangle \bigg|
&\le |\langle \Pi g,f\rangle| + \bigg|\sum_{\substack{Q\in\mathcal{D}\\S(Q)\not=\emptyset}} \bigg\langle T^*b_{R_0}\frac{\langle g\rangle_{\R^N}}{\langle b_{R_0}\rangle_{\R^N}},D_Q^{a,1} f\bigg\rangle\bigg|,
\end{split}
\end{equation}
where the {\em paraproduct operator} $g\mapsto \Pi g$ is defined by
\begin{equation}\label{parop}
\Pi g := \sum_{\substack{Q\in\mathcal{D}\\S(Q)\not=\emptyset}}
\frac{\langle g\rangle_S}{\langle b_{S^a}\rangle_S} (D_Q^{a,1})^*(T^* b_{S^a})= \sum_{R\in\mathcal{D}'}\sum_{\substack{Q\in\mathcal{D}\\ S(Q)=R}}
\frac{\langle g\rangle_R}{\langle b_{R^a}\rangle_R} (D_Q^{a,1})^*(T^* b_{R^a}).
\end{equation}
Throughout the rest of this section, we will prove the following estimates:

\begin{prop}
Under the standing assumptions, the paraproduct just defined satisfies
\begin{equation}\label{estpar}
|\langle \Pi g,f\rangle|\lesssim \|f\|_p\|g\|_q,
\end{equation}
and we also have the estimate
\begin{equation}\label{estpar2}
\bigg|\sum_{\substack{Q\in\mathcal{D}\\S(Q)\not=\emptyset}} \bigg\langle T^*b_{R_0}\frac{\langle g\rangle_{\R^N}}{\langle b_{R_0}\rangle_{\R^N}},D_Q^{a,1} f\bigg\rangle\bigg|\lesssim \|f\|_p\|g\|_q.
\end{equation}
\end{prop}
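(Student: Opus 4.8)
The plan is to recognize both sums as instances of the Carleson embedding theorems of Section~\ref{preparations}, the Carleson norm of the relevant symbol being controlled by the two standing assumptions: $\|T^{*}b^{2}_{R}\|_{L^{\infty}}\le 1$, and the fact that $X^{*}$ --- the K\"othe dual of a $\mathrm{UMD}$ lattice, hence again a $\mathrm{UMD}$ lattice, hence an $\mathrm{RMF}$ space by \cite{HMP} --- has the $\mathrm{RMF}$ property. For \eqref{estpar} I first reorganize the defining sum \eqref{parop} by the $\mathcal{D}'$-cube $R$: writing $Y_{R}:=\sum_{Q:\,S(Q)=R}(D_{Q}^{a,1})^{*}(T^{*}b^{2}_{R^{a}})$, which is a scalar function supported on $\bigcup_{S(Q)=R}Q\subset R$, and using $1_{R}\langle g\rangle_{R}=1_{R}E_{m}^{\mathcal{D}'}g$ for $R\in\mathcal{D}'_{m}$ together with the disjointness of the cubes in $\mathcal{D}'_{m}$, one obtains $\Pi g=\sum_{m}c_{m}\,E_{m}^{\mathcal{D}'}g$ with $c_{m}=\sum_{R\in\mathcal{D}'_{m}}\langle b^{2}_{R^{a}}\rangle_{R}^{-1}Y_{R}$.

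Since the $c_{m}$ carry detail at all scales below $m$, they are not $\mathcal{D}'_{m}$-measurable, so the cleaner embedding Theorem~\ref{carleson} is unavailable and one must invoke the $\mathrm{RMF}$ version, Theorem~\ref{carlesonRMF} --- this is precisely the point at which the lattice hypothesis is used in this part of the proof. Rather than dualize \eqref{estpar} to $\|\Pi g\|_{q}\lesssim\|g\|_{q}$ (which would force a derandomization of Theorem~\ref{carlesonRMF}), I would keep the pairing with a fixed $f\in L^{p}(X)$: after moving the adjoint across so that $\langle T^{*}b^{2}_{S(Q)^{a}},D_{Q}^{a,1}f\rangle=\langle(D_{Q}^{a,1})^{*}(T^{*}b^{2}_{S(Q)^{a}}),f\rangle$, insert Rademacher signs on both the $g$- and $f$-sides, estimate the $f$-side by the square-function bound of Theorem~\ref{nests} and Stein's inequality, and the $g$-side by Theorem~\ref{carlesonRMF}, using the contraction principle to remove the factor $\langle b^{2}_{R^{a}}\rangle_{R}^{-1}$, which is bounded by $\delta^{-2}$ via \eqref{a_iso}. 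Everything then reduces to the Carleson bound $\|\{c_{m}\}\|_{\mathrm{Car}^{q+\eta}(\mathcal{D}')}\lesssim 1$.

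That Carleson bound is the heart of the argument, and the step I expect to be the main obstacle. Fixing $P\in\mathcal{D}'$, one must bound $\mu(P)^{-1}\bigl\|1_{P}\sum_{m:\,2^{m}\le\ell(P)}\epsilon_{m}c_{m}\bigr\|_{L^{1}(\mu\otimes\mathbf{P};\C)}$ uniformly in $P$. The size of each individual $(D_{Q}^{a,1})^{*}(T^{*}b^{2}_{R^{a}})$ is controlled by $\|T^{*}b^{2}_{R^{a}}\|_{\infty}\le 1$, $|\langle b^{2}_{R^{a}}\rangle_{R}|\ge\delta^{2}$, and Lemma~\ref{phiprop}; but one cannot simply sum these absolute values over all scales $Q\subset P$, so the telescoping structure of the adjoint martingale differences $\sum_{Q}(D_{Q}^{a,1})^{*}$ must be exploited, and the error terms thereby produced --- stemming from the fact that $\{Q:S(Q)=R\}$ is only a sub-collection of the cubes deeply contained in $R$, the ``overdetermined frame'' feature emphasized in Section~\ref{normest} --- are absorbed using the sparseness of the two families of accretivity layers $\mathcal{D}^{j}$ and $(\mathcal{D}')^{j}$ through the geometric factors $(1-\tau)^{j}$ of Lemma~\ref{basicsum}. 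The computation runs parallel to the Carleson norm estimates in the proofs of Lemma~\ref{carut} and Lemma~\ref{carut2}, but is heavier because it must keep the two interleaved dyadic systems $\mathcal{D}$ and $\mathcal{D}'$ under simultaneous control.

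Finally, \eqref{estpar2} is a lighter, ``boundary'' variant of the same idea. Put $h:=T^{*}b^{2}_{R_{0}}\cdot\langle b^{2}_{R_{0}}\rangle_{\R^{N}}^{-1}$, a fixed bounded scalar function with $\|h\|_{\infty}\le\delta^{-1}$ by \eqref{accre} (recall $\mathrm{supp}\,\mu\subset R_{0}$, so $\langle b^{2}_{R_{0}}\rangle_{\R^{N}}=\langle b^{2}_{R_{0}}\rangle_{R_{0}}$). Then the sum in \eqref{estpar2} equals, up to the constant vector factor $\langle g\rangle_{\R^{N}}\in X^{*}$, the pairing $\langle\Theta,f\rangle$ where $\Theta:=\sum_{Q:\,S(Q)\neq\emptyset}(D_{Q}^{a,1})^{*}(h)$ is again scalar, so that the whole quantity is $\le|\langle g\rangle_{\R^{N}}|_{X^{*}}\,\|\Theta\|_{L^{q}(\mu)}\,\|f\|_{L^{p}(X)}$. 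Since the symbol $h$ is now a single fixed bounded function, $\|\Theta\|_{L^{q}(\mu)}\lesssim\|h\|_{\infty}\,\mu(\R^{N})^{1/q}\lesssim\mu(\R^{N})^{1/q}$ follows from exactly the Carleson-type estimate of the previous paragraph, with the variable symbol $T^{*}b^{2}_{S(Q)^{a}}$ replaced by the constant $T^{*}b^{2}_{R_{0}}$; combined with $|\langle g\rangle_{\R^{N}}|_{X^{*}}\le\mu(\R^{N})^{-1/q}\|g\|_{q}$ this yields \eqref{estpar2}.
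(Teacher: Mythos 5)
There is a genuine gap, and it sits exactly where you place it yourself: the Carleson bound that you call ``the heart of the argument'' and ``the main obstacle'' is never proved, only described (``the telescoping structure \ldots must be exploited'', ``error terms \ldots absorbed using the sparseness of the layers''). In the paper this step is the content of Lemma~\ref{boundest} combined with the layer decomposition \eqref{tseuraa} and Lemma~\ref{basicsum}, and its key mechanism is one you never identify: within each accretivity layer the symbol $T^*b^2_{U^a}$ is a \emph{single} bounded function, so the scalar-valued case of the adjoint square-function estimate, Theorem~\ref{haa} with $X=\C$, applied to $1_U T^*b^2_{U^a}$ (together with $\|T^*b^2_{U^a}\|_{L^\infty}\le B$) gives the bound $\mu(U)^{1/t}$ per layer, while the $\omega$-correction piece is handled by Proposition~\ref{ylcarl}; the layers are then summed via $(1-\tau)^{j}$. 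An analogy with Lemmas~\ref{carut} and \ref{carut2} will not produce this, since the symbol is built from $(D_Q^{a,1})^*(T^*b^2_{R^a})$ rather than from martingale differences of the $b^a_k$ themselves. Since \eqref{estpar2} is also deferred by you to ``exactly the Carleson-type estimate of the previous paragraph'', it inherits the same gap.

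The reduction preceding the Carleson bound is also structurally flawed. If you only ``move the adjoint across'', the pairing is $\sum_Q\lambda_{S(Q)}\langle (D_Q^{a,1})^*(T^*b^2_{S(Q)^a}),f\rangle$ against the \emph{whole} $f$; there is then no decomposition of $f$ compatible with your Rademacher signs, so you cannot invoke Theorem~\ref{nests}/Stein on the $f$-side --- you are back to needing $\|\Pi g\|_q\lesssim\|g\|_q$, i.e.\ the derandomization you explicitly wished to avoid. The paper's way out is the identity $D_Q^{a,1}=(D_Q^{a,1})^2-\omega_Q^{a,1}E_Q$ (Lemma~\ref{localdq}), which both creates the biorthogonal pairing $\sum_Q\langle A_Q,B_Q\rangle$ with $B_Q\in\{D_Q^{a,1}f,\chi_QE_Qf\}$ and produces a second symbol $E_{\log_2\ell(Q)-1}(\omega_Q^{a,1}T^*b^2_{S^a})$ that must also satisfy the Carleson bound; both are absent from your plan. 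Finally, after the correct decoupling the signs are indexed by cubes $Q\in\mathcal{D}$, not by $\mathcal{D}'$-generations, so the $g$-side is \emph{not} of the form $\sum_m\epsilon_m c_m E_m^{\mathcal{D}'}g$ and Theorem~\ref{carlesonRMF} does not apply as stated; bridging exactly this mismatch is what the paper's Lemma~\ref{genemb} (the $L^t(\Omega;\C)$-valued RMF embedding), the second randomization $\epsilon'_R$, and the cotype-$2$ step are for. Similarly, in \eqref{estpar2} the deterministic norm $\|\Theta\|_q$ of a sum over the restricted family $\{Q:S(Q)\neq\emptyset\}$ is not directly controlled by Theorem~\ref{haa}, which bounds randomized sums; the paper keeps the randomization coming from the decoupling and uses Theorem~\ref{haa} with $X=\C$, Stein's inequality, and Lemma~\ref{carut}.
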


Observe that these estimates imply \eqref{post} which in turn, combined with estimates
in Section \ref{nested}, implies
Proposition \ref{nestedlem}.

\subsection*{Proving estimate \eqref{estpar}}
Here we will concentrate on paraproducts, and begin with the following lemma.
\begin{lem}\label{genemb}
Suppose that $t>q\vee s$, where $X^*$ has cotype $s$.
Assume that  a sequence $\{d_j\}_{j\in\Z}$ of functions $\R^N\to L^t(\Omega;\C)$ satisfies
$d_j\in L^1(\R^N;L^t(\Omega;\C))$, then
\[
\bigg\| \sum_{j\in\Z} \epsilon_j^\star d_j E_j g\bigg\|_{L^q(\Omega^\star\times \R^N;L^t(\Omega;X^*))}
\lesssim \|\{|d_j(\cdot)|_{L^t(\Omega;\C)}\}_{j\in\Z}\|_{\mathrm{Car}^t(\mathcal{D}')}\cdot \|g\|_{L^q(\R^N;X^*)}.
\]
\end{lem}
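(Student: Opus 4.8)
The plan is to deduce the lemma from the scalar-valued $\mathrm{RMF}$ Carleson embedding (Theorem~\ref{carlesonRMF}), applied to the \emph{scalar} sequence $e_j:=|d_j(\cdot)|_{L^t(\Omega;\C)}$, the passage from the $L^t(\Omega;\C)$-valued coefficients $d_j$ to the scalars $e_j$ being carried out by the cotype-improved contraction principle (Proposition~\ref{improved}). The only real work is to keep the several independent randomness parameters apart.

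First I would normalize the coefficients: write $d_j(x)=e_j(x)\,\widehat d_j(x)$, where $e_j(x)=|d_j(x)|_{L^t(\Omega;\C)}$ and $\widehat d_j(x):=d_j(x)/e_j(x)\in L^t(\Omega;\C)$ on $\{e_j(x)\neq 0\}$, and $\widehat d_j(x):=0$ otherwise, so that $\|\widehat d_j(x)\|_{L^t(\Omega;\C)}\le 1$ for all $j$ and $\mu$-a.e.\ $x$. Fixing such an $x$ and applying Proposition~\ref{improved} with the space $X^*$ (of cotype $s<t$), with $\widetilde\Omega=\Omega$, with the Rademacher variables $\epsilon_j^\star$ in the role of $\epsilon_j$, with $\rho_j=\widehat d_j(x)$ and with $\xi_j=e_j(x)E_jg(x)\in X^*$, and noting that $\rho_j\xi_j=d_j(x)E_jg(x)$, I obtain the pointwise bound
\[
\bigg\|\sum_{j\in\Z}\epsilon_j^\star d_j(x)E_jg(x)\bigg\|_{L^t(\Omega;L^2(\Omega^\star;X^*))}
\lesssim\bigg\|\sum_{j\in\Z}\epsilon_j^\star e_j(x)E_jg(x)\bigg\|_{L^2(\Omega^\star;X^*)}.
\]

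Next I would assemble the global estimate. Writing the left-hand side of the lemma, by Fubini, as $\|x\mapsto\|\sum_j\epsilon_j^\star d_j(x)E_jg(x)\|_{L^q(\Omega^\star;L^t(\Omega;X^*))}\|_{L^q(\R^N,\mu)}$, then using Minkowski's integral inequality (valid since $t\ge q$, to move the $L^t(\Omega)$-norm outside the $L^q(\Omega^\star)$-norm) followed by the Khintchine--Kahane inequality (to replace $L^q(\Omega^\star;\cdot)$ by $L^2(\Omega^\star;\cdot)$), the left-hand side of the lemma is dominated by $\|x\mapsto\|\sum_j\epsilon_j^\star d_j(x)E_jg(x)\|_{L^t(\Omega;L^2(\Omega^\star;X^*))}\|_{L^q(\R^N,\mu)}$. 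Inserting the pointwise bound above and then undoing the Khintchine--Kahane and Fubini steps for the scalar series, this is $\lesssim\|\sum_j\epsilon_j^\star e_jE_jg\|_{L^q(\R^N\times\Omega^\star,\mu\otimes\mathbf{P};X^*)}$. Since $X^*$ is an $\mathrm{RMF}$ space and $\{e_j\}_{j\in\Z}\subset L^1(\R^N,\mu;\R)$, Theorem~\ref{carlesonRMF} with exponent $q$ and with $\eta:=\tfrac12(t-q)>0$ bounds this by $C\,\|\{e_j\}_{j\in\Z}\|_{\mathrm{Car}^{q+\eta}(\mathcal{D}')}\,\|g\|_{L^q(\R^N,\mu;X^*)}$. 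Finally, the Carleson norms $\|\{e_j\}_{j\in\Z}\|_{\mathrm{Car}^{p}(\mathcal{D}')}$ are non-decreasing in $p$ (for each fixed cube $S$ the quantity $\mu(S)^{-1/p}\|1_S\sum_{2^j\le\ell(S)}\epsilon_je_j\|_{L^p(\R^N\times\Omega,\mu\otimes\mathbf{P};\R)}$ is the $L^p$-norm of a fixed function over the probability measure $\mu(S)^{-1}\mu|_S\otimes\mathbf{P}$, hence increasing in $p$), and $q+\eta<t$, so $\|\{e_j\}_{j\in\Z}\|_{\mathrm{Car}^{q+\eta}(\mathcal{D}')}\le\|\{e_j\}_{j\in\Z}\|_{\mathrm{Car}^{t}(\mathcal{D}')}=\|\{|d_j(\cdot)|_{L^t(\Omega;\C)}\}_{j\in\Z}\|_{\mathrm{Car}^{t}(\mathcal{D}')}$, which is the asserted inequality.

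The step requiring the most care is the normalization together with the application of Proposition~\ref{improved}: this is exactly where the hypothesis $t>s$ on the cotype is used, and it is what lets us trade the $L^t(\Omega;\C)$-valued (hence non-adapted) coefficients $d_j$ for the scalar sequence $e_j$, to which the Carleson machinery of Section~\ref{preparations} applies. Beyond that, the only substantive input is the $\mathrm{RMF}$ Carleson embedding Theorem~\ref{carlesonRMF} (whose hypotheses are part of the standing assumptions of this section, $X^*$ being a dual of a $\mathrm{UMD}$ function lattice); convergence of all the series is routine by truncation in $j$ and passage to the limit, and is suppressed.
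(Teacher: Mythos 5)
Your argument is correct in substance, but it takes a genuinely different route from the paper. The paper disposes of the lemma by quoting the operator-multiplier Carleson embedding, Theorem~3.5 of \cite{hytonen1}, with $X_1=X^*$, $X_2=L^t(\Omega;\C)$, $X_3=L^t(\Omega;X^*)$; the only work is then to check, via Proposition~\ref{improved}, that the unit ball of $L^t(\Omega;\C)$ acts as an $\mathcal{R}$-bounded family in $\mathcal{L}(X^*,L^t(\Omega;X^*))$. You instead reprove the special case of that external theorem needed here using only results stated in this paper: the normalization $d_j=e_j\widehat d_j$ with $e_j=|d_j(\cdot)|_{L^t(\Omega;\C)}$, a pointwise (in $x$) application of Proposition~\ref{improved} to trade the $L^t(\Omega;\C)$-valued coefficients for the scalars $e_j(x)$, then the RMF Carleson embedding Theorem~\ref{carlesonRMF} with exponent $q$ and $\eta=\tfrac12(t-q)$, and finally the (correct) monotonicity $\mathrm{Car}^{q+\eta}\le\mathrm{Car}^{t}$, which holds since for a fixed cube the relevant quantity is an $L^p$-norm with respect to a probability measure. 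Both proofs thus rest on the same two ingredients, the cotype-improved contraction principle and an RMF Carleson embedding; yours buys self-containedness within the paper at the cost of extra mixed-norm bookkeeping, while the paper's is shorter but leans on the external reference.

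One step of yours needs repair. The passage from $\big\|\sum_j\epsilon_j^\star d_j(x)E_jg(x)\big\|_{L^q(\Omega^\star;L^t(\Omega;X^*))}$ to the $L^t(\Omega;L^2(\Omega^\star;X^*))$-norm cannot be justified by Minkowski's integral inequality: for $q\le t$ Minkowski gives $\|F\|_{L^t(\Omega;L^q(\Omega^\star;X^*))}\le\|F\|_{L^q(\Omega^\star;L^t(\Omega;X^*))}$, i.e.\ exactly the reverse of the swap you invoke, and the swapped inequality fails for general $F$. It does hold (up to constants) for the function at hand because it is a Rademacher sum in $\epsilon^\star$: apply the Kahane--Khintchine inequality to the $L^t(\Omega;X^*)$-valued sum to raise the outer exponent from $q$ to $t$, swap the two resulting $L^t$-norms by Fubini, and apply Kahane--Khintchine once more, for fixed $\epsilon$, to lower the inner exponent to $2$. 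With this substitution the rest of your chain---the pointwise use of Proposition~\ref{improved} (which is where $t>s$ enters), the return to the $L^q(\R^N\times\Omega^\star;X^*)$-norm of the scalar-coefficient series, Theorem~\ref{carlesonRMF} (legitimate, as $X^*$ is an RMF space by the standing assumptions and the $e_j$ need not be adapted for that theorem), and the Carleson-norm comparison---goes through as you describe.
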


\begin{proof}
This will be a special case of Theorem 3.5 in \cite{hytonen1}, which says that
\[
  \bigg\| \sum_{j\in\Z} \epsilon_j^\star d_j E_j g\bigg\|_{L^q(\Omega^\star\times \R^N;X_3)}
  \lesssim \|\{|d_j(\cdot)|_{X_2}\}_{j\in\Z}\|_{\mathrm{Car}^t(\mathcal{D}')}\cdot \|g\|_{L^q(\R^N;X_1)},
\]
whenever $X_1,X_2,X_3$ are three Banach spaces with $X_1$ having the RMF property, and $X_2\subseteq\mathcal{L}(X_1,X_3)$ embedded in such a way that the unit-ball $\bar{B}_{X_2}$ is $R$-bounded.

Denote $X_1=X^*$, $X_2=L^t(\Omega;\C)$, and $X_3=L^t(\Omega;X^*)$. 
Then $X_1$ is an $\mathrm{RMF}$ space by assumption. By the result of \cite{hytonen1} just stated,
it suffices to verify that the closed unit ball of $X_2$ is a Rademacher-bounded subspace of $\mathcal{L}(X_1,X_3)$ when the action of 
$\rho\in X_2$ is defined by
\[
X_1 \ni x\mapsto \rho(x):=\rho\otimes x:\rho\otimes x(\epsilon)= \rho(\epsilon)x.
\]
To this end, let $\{\rho_j\,:\,j\in\N\}$ be a sequence in $\bar B_{X_2}$ and $\{x_j\,:\,j\in\N\}$ be a sequence in $X^*$. By Fubini's theorem and Proposition \ref{improved},
\begin{align*}
\bigg(\mathbf{E}_{\epsilon^\star}\bigg\|\sum_{j=1}^\infty  & \epsilon_j^\star\rho_j\otimes x_j\bigg\|_{L^t(\Omega;X^*)}^t\bigg)^{1/t}
=\bigg\|\sum_{j=1}^\infty \epsilon_j^\star \rho_j\otimes x_j\bigg\|_{L^t(\Omega;L^t(\Omega^\star;X^*))}\\
&\lesssim \underbrace{\sup_{j\in\N} \|\rho_j\|_{L^t(\Omega)}}_{\le 1}\cdot \bigg\| \sum_{j=1}^\infty \epsilon^{\star}_j x_j\bigg\|_{L^t(\Omega^\star;X^*)}
\le \bigg(\mathbf{E}_{\epsilon^\star}Ê\bigg\|\sum_{j=1}^\infty \epsilon_j^\star x_j\bigg\|_{X^*}^t\bigg)^{1/t}.
\end{align*}
By Kahane--Khinchine inequality, this is as required.
\end{proof}

We need further preparations for establishing \eqref{estpar}. 

Recall that
$D_Q^{a,1} = (D_Q^{a,1})^2-\omega_Q^{a,1} E_Q$ by
\eqref{qr2}. Denote
\begin{equation}\label{chies}
\chi_Q:=1_Q\chi_{k-1}:=1_Q1_{\{b_{k-1}^{a,1}\not=b_k^{a,1}\}},\qquad \text{ if }Q\in\mathcal{D}_k.
\end{equation}
By Lemma \ref{ombasic}, we have
$\omega_Q^{a,1}=\chi_Q\omega_Q^{a,1}$. 
Furthermore, $\chi_Q E_Q f = E_{k-1}(\chi_Q E_Qf)$ if $Q\in\mathcal{D}_k$. Hence,
we can write
\begin{align*}
\langle \Pi g,f\rangle
&=\sum_{\substack{Q\in\mathcal{D}\\S(Q)\not=\emptyset}}
\frac{\langle g\rangle_S}{\langle b_{S^a}\rangle_S} \langle (D_Q^{a,1})^*(T^*b_{S^a}),D_Q^{a,1}f\rangle\\
&\qquad\qquad - \sum_{\substack{Q\in\mathcal{D}\\S(Q)\not=\emptyset}}
\frac{\langle g\rangle_S}{\langle b_{S^a}\rangle_S} \langle \omega_Q^{a,1}
T^*b_{S^a},E_{\log_2(\ell(Q))-1}(\chi_QE_Qf)\rangle\\
&=\int_\Omega \bigg\langle\sum_{\substack{Q\in\mathcal{D}\\S(Q)\not=\emptyset}}\epsilon_Q
\frac{\langle g\rangle_S}{\langle b_{S^a}\rangle_S}  (D_Q^{a,1})^*(T^*b_{S^a}),\sum_{\substack{Q'\in\mathcal{D}}} \epsilon_{Q'}D_{Q'}^{a,1}f\bigg\rangle d\mathbf{P}(\epsilon)\\
&\qquad -
\int_\Omega \bigg\langle \sum_{\substack{Q\in\mathcal{D}\\S(Q)\not=\emptyset}} \epsilon_Q
\frac{\langle g\rangle_S}{\langle b_{S^a}\rangle_S}  E_{\log_2(\ell(Q))-1}(\omega_Q^{a,1}T^*b_{S^a}),\sum_{\substack{Q'\in\mathcal{D}}} 
\epsilon_{Q'}\chi_{Q'}E_{Q'}f\bigg\rangle d\mathbf{P}(\epsilon).
\end{align*}
Taking the absolute values, and
using H\"older's inequality, we get
\begin{align*}
|\langle \Pi g,f\rangle| &\le
\bigg\|\sum_{\substack{Q\in\mathcal{D}\\S=S(Q)\not=\emptyset}} \epsilon_Q\frac{\langle g\rangle_S}{\langle b_{S^a}\rangle_S}  (D_Q^{a,1})^*(T^*b_{S^a})\bigg\|_q
\bigg\| \sum_{Q'\in\mathcal{D}} \epsilon_{Q'}D_{Q'}^{a,1}f\bigg\|_p\\
&\qquad\quad +\bigg\|\sum_{\substack{Q\in\mathcal{D}\\S=S(Q)\not=\emptyset}} \epsilon_Q
\frac{\langle g\rangle_S}{\langle b_{S^a}\rangle_S}  E_{\log_2(\ell(Q))-1}(\omega_Q^{a,1}T^*b_{S^a})\bigg\|_q\bigg\|\sum_{Q'\in\mathcal{D}} \epsilon_{Q'}\chi_{Q'}E_{Q'}f\bigg\|_p.
\end{align*}
Using \eqref{a_iso} and contraction principle, followed by
Theorem \ref{nests} and Lemma \ref{carut}, we see
that $|\langle \Pi g,f\rangle|$ is bounded by a sum
of two terms, both of them being (a constant multiple) of the general form
\begin{equation}\label{paraest}
\begin{split}
\bigg\| \sum_{R\in\mathcal{D}'}\sum_{\substack{Q\in\mathcal{D}\\ S(Q)=R}} \epsilon_Q
\pi_{Q,R^a}\langle g\rangle_R
\bigg\|_q\cdot \|f\|_p.
\end{split}
\end{equation}
Here the two terms are determined by the following choices:
\begin{equation}\label{pi_choice}
\pi_{Q,R^a}\in \{(D_Q^{a,1})^*(T^*b_{R^a}),E_{\log_2(\ell(Q))-1}(\omega_Q^{a,1}T^*b_{R^a})\}.
\end{equation}
Observe
that, if $R$ and $Q$ are as in \eqref{paraest}, then 
\begin{equation}\label{pieq}\pi_{Q,R^a}=1_R\pi_{Q,R^a}.\end{equation}

In order to estimate quantities of the form \eqref{paraest}, we will use the following lemma.

\begin{lem}\label{boundest}
Assume that $U\in\mathcal{D}'$ and  $t\in (1,\infty)$.
Then
\begin{equation}\label{sest2}
\bigg\|\sum_{\substack{R\in\mathcal{D}':R\subset U\\R^a=U^a}}\sum_{\substack{Q\in\mathcal{D} \\ÊS(Q)=R}} \epsilon_Q \pi_{Q,U^a}\bigg\|_{L^t(\R^N\times\Omega;\C)}\lesssim 
\mu(U)^{1/t}.
\end{equation}
\end{lem}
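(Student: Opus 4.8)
The plan is to convert the Rademacher sum into a pointwise square function and then to collapse that square function, cube by cube, onto a single $b^1$-adapted martingale square function of the fixed bounded function $h:=1_U T^*b_{U^a}$, to which the estimates of Section~\ref{normest} apply. First I would invoke Khintchine's inequality, so that the left-hand side of \eqref{sest2} is comparable to $\big\|\big(\sum_{R,Q}|\pi_{Q,U^a}|^2\big)^{1/2}\big\|_{L^t(\R^N,\mu;\C)}$, the sum running over $R\in\mathcal{D}'$ with $R\subset U$, $R^a=U^a$ and $Q\in\mathcal{D}$ with $S(Q)=R$; it thus suffices to show $\int_{\R^N}\big(\sum_{R,Q}|\pi_{Q,U^a}(x)|^2\big)^{t/2}d\mu(x)\lesssim\mu(U)$. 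Now $Q\subset S(Q)=R\subset U$, and by \eqref{pieq} each $\pi_{Q,U^a}$ is supported in $R$; in both cases of \eqref{pi_choice} it is in fact supported in $Q$, since $(D_Q^{a,1})^*=1_Q(D_k^{a,1})^*$ and $E_{k-1}(\omega_Q^{a,1}\,\cdot\,)=1_Q E_{k-1}(\omega_k^{a,1}\,\cdot\,)$ for $Q\in\mathcal{D}_k$. Hence, for a fixed $x\in U$ and a fixed scale $k$, the only cube $Q\in\mathcal{D}_k$ that can contribute to the sum with $x$ in its support is $Q=Q_k(x)$, and then $R=S(Q_k(x))$ is forced; so the blocks indexed by different $R$ do not overlap. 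Using that the operators act at $x$ only through $Q_k(x)\subset U$ (so that $T^*b_{U^a}$ may be replaced by $h$ there), this yields the pointwise bound
\[
\sum_{R,Q}|\pi_{Q,U^a}(x)|^2\le 1_U(x)\sum_{k\,:\,Q_k(x)\subset U}|\pi^{(k)}(x)|^2,
\]
where $\pi^{(k)}=(D_k^{a,1})^*h$ in the first case of \eqref{pi_choice} and $\pi^{(k)}=E_{k-1}(\omega_k^{a,1}h)$ in the second.

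In the first case I would simply drop the restriction on $k$ and the indicator $1_U$, and then apply Khintchine's inequality together with Theorem~\ref{haa} for the $\mathrm{UMD}$ lattice $X=\C$:
\[
\int_{\R^N}\Big(\sum_{k\in\Z}|(D_k^{a,1})^*h(x)|^2\Big)^{t/2}d\mu(x)\eqsim\Big\|\sum_{k\in\Z}\epsilon_k(D_k^{a,1})^*h\Big\|_{L^t(\R^N\times\Omega,\mu\otimes\mathbf{P};\C)}^t\lesssim\|h\|_{L^t(\R^N,\mu)}^t .
\]
Since $\|h\|_{L^t(\R^N,\mu)}\le\|T^*b^2_{U^a}\|_{L^\infty(\mu)}\,\mu(U)^{1/t}\lesssim\mu(U)^{1/t}$ by the standing assumption on $T^*b^2_R$, this gives the bound $\mu(U)$.

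In the second case I would argue more crudely. Since $\|h\|_\infty\lesssim1$, $\|\omega_k^{a,1}\|_{L^\infty(\mu)}\lesssim1$ and $\omega_k^{a,1}$ is supported in the $\sigma(\mathcal{D}_{k-1})$-measurable set $\chi_{k-1}=\{b_{k-1}^{a,1}\ne b_k^{a,1}\}$ (Lemma~\ref{ombasic}), one has $|E_{k-1}(\omega_k^{a,1}h)|\le C\,1_{\chi_{k-1}}$ and the summand vanishes off $\chi_{k-1}$, so
\[
\sum_{k\,:\,Q_k(x)\subset U}|E_{k-1}(\omega_k^{a,1}h)(x)|^2\lesssim\#\{k\,:\,Q_k(x)\subset U,\ x\in\chi_{k-1}\}\le\#\Big\{S\in\bigcup_{j\ge1}\mathcal{D}^j:\ x\in S\subset U\Big\},
\]
because $x\in\chi_{k-1}$ with $Q_k(x)\subset U$ forces $Q_{k-1}(x)$ to be a stopping cube contained in $U$. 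Splitting $U$ into the (disjoint) maximal stopping cubes $S_i$ it contains and using the geometric decay of Lemma~\ref{basicsum} inside each $S_i$ gives $\int_U\#\{S\in\bigcup_{j\ge1}\mathcal{D}^j:x\in S\subset U\}^{t/2}d\mu\lesssim\sum_i\mu(S_i)\le\mu(U)$, once again producing the bound $\mu(U)$ and finishing the proof.

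The only genuinely delicate point is the no-overlap reduction of the first paragraph: one has to check carefully that at each dyadic scale exactly one term survives for a given $x$, and that this term coincides with the corresponding entry of the martingale square function of the fixed bounded function $h=1_U T^*b_{U^a}$, so that the double sum over $R$ and $Q$ truly collapses to $\sum_k|\pi^{(k)}(x)|^2$. Once that bookkeeping (which rests on $Q\subset S(Q)\subset U$, on \eqref{pieq}, and on the locality of $(D_k^{a,1})^*$ and of $E_{k-1}(\omega_k^{a,1}\,\cdot\,)$) is in place, the first case is immediate from Theorem~\ref{haa} and the second is routine with Lemma~\ref{basicsum}.
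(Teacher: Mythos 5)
Your argument is correct. For the first choice of $\pi_{Q,U^a}$ it is essentially the paper's proof in square-function clothing: after the pointwise localization (each $\pi_{Q,U^a}$ lives on $Q\subset S(Q)\subset U$ and at $x$ only sees $T^*b^2_{U^a}$ on $Q_k(x)\subset U$, so one may pass to $h=1_UT^*b^2_{U^a}$), you drop the summation restrictions and invoke Theorem \ref{haa} with $X=\C$ together with $\|T^*b^2_{U^a}\|_{L^\infty(\mu)}\lesssim 1$, exactly as the paper does (the paper achieves the same reduction by the contraction principle on the randomized sum rather than by Khintchine and monotonicity of the square function). For the second choice you genuinely diverge: the paper again drops the restrictions by contraction and then applies the lattice Carleson embedding, Proposition \ref{ylcarl}, with $d_k=\chi_{k-1}$, $c_k=\omega_k^{a,1}$ and the bound $\|\{\chi_k\}\|_{\mathrm{Car}^1(\mathcal{D})}\lesssim 1$ from the proof of Lemma \ref{carut}, whereas you bypass the embedding altogether via the crude bound $|E_{k-1}(\omega_k^{a,1}h)|\lesssim 1_{\chi_{k-1}}$ and a direct count of stopping cubes $S\in\bigcup_{j\ge1}\mathcal{D}^j$ with $x\in S\subset U$. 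Both routes ultimately rest on the same stopping-time geometry (Lemma \ref{basicsum}); yours is more elementary and is available precisely because here the integrand is scalar and uniformly bounded, but it buys nothing extra in generality, while the paper's route recycles machinery (Proposition \ref{ylcarl}) already needed elsewhere. One point you should spell out when writing this up: since $t/2$ may exceed $1$, the estimate $\int_U N_U^{t/2}\,d\mu\lesssim\mu(U)$ for the counting function $N_U$ is not a consequence of the $L^1$ bound of Remark \ref{harva} alone; you need the distribution-function argument $\mu(\{x\in S_i:N_U(x)\ge m+1\})\le(1-\tau)^{m-1}\mu(S_i)$ coming from Lemma \ref{basicsum} inside each maximal stopping cube $S_i\subset U$, which indeed gives all moments with constants depending only on $t$ and $\tau$ — so this is a presentational gap, not a mathematical one.
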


\begin{proof}
Denote $h=T^*b_{U^a}$ and first consider the case $\pi_{Q,U^a} = (D_Q^{a,1})^*(h)$.
Because 
$Q\subset S(Q)$ if $S(Q)\not=\emptyset$, we see that the left hand side of \eqref{sest2} is 
\begin{align*}
\bigg\|\sum_{\substack{R\in\mathcal{D}':R\subset U\\R^a=U^a}}\sum_{\substack{Q\in\mathcal{D} \\ÊS(Q)=R}} \epsilon_Q (D_Q^{a,1})^*(1_Uh)\bigg\|_t \le \bigg\|\sum_{\substack{Q\in\mathcal{D}}} \epsilon_Q (D_Q^{a,1})^*(1_Uh)\bigg\|_{t}.
\end{align*}
Using Theorem \ref{haa} with $X=\C$, followed
by \eqref{accreass}, we find that
the last quantity is bounded
by a constant multiple of
\begin{align*}
\|1_U h\|_{L^t(\R^N,\mu;\C)}\le \|h\|_{L^\infty(\R^N,\mu;\C)}\|1_U\|_{L^t(\R^N,\mu;\C)}=\mu(U)^{1/t}\|h\|_{L^\infty(\R^N,\mu;\C)}\le B\mu(U)^{1/t}.
\end{align*}
This is the required estimate in the present case.

Then consider the case
\[\pi_{Q,U^a}=E_{\log_2(\ell(Q))-1}(\omega_Q^{a,1}h).\] Recall that
the expectation is taken with respect to $\mathcal{D}_{\log_2(\ell(Q))-1}$.
By the contraction principle and the facts that $Q\subset S(Q)$ if $S(Q)\not=\emptyset$ and $\omega_Q^{a,1}=\chi_Q\omega_Q^{a,1}$, see \eqref{chies},
we get
\begin{align*}
\bigg\| \sum_{\substack{R\in\mathcal{D}':R\subset U\\R^a=U^a}}
\sum_{\substack{Q\in\mathcal{D} \\ S(Q)=R}}
\epsilon_{Q} 
\pi_{Q,U^a}\bigg\|_t
&\lesssim \bigg\|\sum_{Q\in\mathcal{D}}
\epsilon_{Q} \chi_Q
E_{\log_2(\ell(Q))-1}(\omega_{Q}^{a,1} 1_Uh)\bigg\|_t\\
&\lesssim \bigg\|\sum_{k\in\Z}\epsilon_k\chi_{k-1}\sum_{Q\in\mathcal{D}_k}
1_Q
E_{k-1}(\omega_{Q}^{a,1} 1_Uh)\bigg\|_t\\
&\lesssim
\bigg\|\sum_{k\in\Z}\epsilon_k\chi_{k-1}E_{k-1}(\omega_{k}^{a,1} 1_Uh)\bigg\|_t.
\end{align*}
Here $\chi_{k-1}=1_{\{b_{k-1}^{a,1}\not=b_k^{a,1}\}}$ satisfies $\chi_{k-1}=E_{k-1} \chi_{k-1}$. Also, $\sup_{k\in\Z}Ê\|\omega_{k}^{a,1}\|_{L^\infty(\mu)}\lesssim 1$ 
by Lemma \ref{ombasic}. Hence, by Proposition \ref{ylcarl} with $X=\C$,
\begin{align*}
\bigg\|\sum_{k\in\Z}\epsilon_k\chi_{k-1}E_{k-1}(\omega_{k}^{a,1} 1_Uh)\bigg\|_t
\lesssim \|\{\chi_k\}_{k\in\Z}\|_{\mathrm{Car}^1(\mathcal{D})}\cdot \|1_Uh\|_{L^t(\R^N,\mu;\C)}.
\end{align*}
Using \eqref{accreass} and reasoning as in the proof of Lemma \ref{vals}, we 
conclude that the right hand side above is bounded by a constant multiple of $\mu(U)^{1/t}$.
\end{proof}

We finish the proof of \eqref{estpar}. 

Recall 
that it suffices to  estimate  \eqref{paraest}. 
Fix a real number $t>q\vee s$, where $s\in [2,\infty)$ is such that $X^*$ has cotype $s$. 
Let us also introduce Rademacher variables $\epsilon'=\{\epsilon'_R\}_{R\in\mathcal{D}'}\in\Omega'$ 
that are independent
of $\{\epsilon_Q\}_{Q\in\mathcal{D}}$. By \eqref{pieq}
\begin{align}\label{sell}
&\bigg\| \sum_{R\in\mathcal{D}'}\sum_{\substack{Q\in\mathcal{D}\\ S(Q)=R}} \epsilon_Q
\pi_{Q,R^a}\langle g\rangle_R
\bigg\|_{L^q(\R^N\times\Omega;X^*)}\notag\\&=\bigg\|\sum_{R\in\mathcal{D}'}{\epsilon'_R}\bigg(\sum_{\substack{Q\in\mathcal{D}\\ S(Q)=R}} \epsilon_Q \pi_{Q,R^a}\bigg)1_R\langle g\rangle_R\bigg\|_{L^q(\Omega'\times\R^N\times \Omega;X^*)}.
\end{align}
By H\"older's inequality 
\eqref{sell} is bounded by
\begin{equation}\label{difficult_start}
\bigg\| \sum_{j\in \Z}Ê\epsilon_j^\star d_j E_j g\bigg\|_{L^q(\Omega^\star\times\R^N\times \Omega;X^*)}\le 
\bigg\| \sum_{j\in \Z}Ê\epsilon_j^\star d_j E_j g\bigg\|_{L^q(\Omega^\star\times\R^N;L^t(\Omega;X^*))},
\end{equation}
where $\epsilon^\star=\{\epsilon_j^\star\,:\,j\in\Z\}\in \Omega^\star$ are Rademacher random variables and
\[
d_j:\R^N\to L^t(\Omega):x\mapsto \bigg(\epsilon \mapsto \sum_{R\in\mathcal{D}_j'} \sum_{\substack{Q\in\mathcal{D}\\S(Q)=R}} \epsilon_Q\pi_{Q,R^a}(x)\bigg).
\]
Concluding from above and using
Lemma \ref{genemb}, we see that left hand side of \eqref{sell} is bounded by (a constant multiple of)
\begin{equation}\label{Ies}
\begin{split}
\sup_{\substack{P\in\mathcal{D}'\\ \mu(P)\not=0}}\frac{1}{\mu(P)^{1/t}}\cdot
\underbrace{\bigg\| 1_P \sum_{j:2^j\le \ell(P)}\epsilon_j^\star
\|d_j(\cdot)\|_{L^t(\Omega;\C)}\bigg\|_{L^t(\R^N\times \Omega^\star;\R)}}_{=:\Sigma(P)}\cdot\|g\|_q.
\end{split}
\end{equation}
To estimate the Carleson norm, we fix $P\in\mathcal{D}'$ for which $\mu(P)\not=0$.  By \eqref{pieq},
\begin{align*}
\Sigma(P)
&=\bigg\| 
\sum_{\substack{R\in\mathcal{D}'\\R\subset P}} \epsilon_R'
\bigg\|\sum_{\substack{Q\in\mathcal{D}\\S(Q)=R}}
\epsilon_Q \pi_{Q,R^a}\bigg\|_{L^t(\Omega;\C)}\bigg\|_{L^t(\R^N\times \Omega';\C)}.
\end{align*}
By Khinchine and Kahane--Khinchine inequalities,
\begin{equation}\label{cotype_cont}
\Sigma(P)\lesssim
\bigg\|   \bigg(\sum_{\substack{R\in\mathcal{D}'\\R\subset P}} 
\bigg\|\sum_{\substack{Q\in\mathcal{D}\\S(Q)=R}}
\epsilon_Q \pi_{Q,R^a}\bigg\|^2_{L^2(\Omega;\C)}\bigg)^{1/2}
\bigg\|_{L^t(\R^N;\C)}.
 \end{equation}
Since  $L^2(\Omega;\C)$ has cotype $2$, see \eqref{cotype_s}, we obtain
\begin{align*}
&\Sigma(P)\lesssim\bigg\|\,
\bigg\|
\sum_{\substack{R\in\mathcal{D}'\\R\subset P}}\epsilon_{R}'
\sum_{\substack{Q\in\mathcal{D}\\S(Q)=R}}
\epsilon_Q \pi_{Q,R^a}\bigg\|_{L^2(\Omega';L^2(\Omega;\C))}
 \bigg\|_{L^t(\R^N;\C)}\\
 &=\bigg\|\,\bigg\|
\sum_{\substack{R\in\mathcal{D}'\\R\subset P}}
\sum_{\substack{Q\in\mathcal{D}\\S(Q)=R}}
\epsilon_Q \pi_{Q,R^a}\bigg\|_{L^2(\Omega;\C)}
 \bigg\|_{L^t(\R^N;\C)}
\lesssim
 \bigg\|
\sum_{\substack{R\in\mathcal{D}'\\R\subset P}}
\sum_{\substack{Q\in\mathcal{D}\\S(Q)=R}}
\epsilon_Q \pi_{Q,R^a}
 \bigg\|_{L^t(\R^N\times\Omega;\C)}.
\end{align*}

Suppose that $M\in\N_0$ is such that $P^a\in\mathcal{D}'^M$. Because
cubes in a fixed layer $\mathcal{D}'^m$, $m>M$, are disjoint, we can estimate
as follows
\begin{equation}\label{tseuraa}
\begin{split}
\Sigma(P)&\lesssim \bigg\|\sum_{\substack{R\in\mathcal{D}':R\subset P \\ R^a = P^a}}\sum_{\substack{Q\in\mathcal{D} \\ÊS(Q)=R}} \epsilon_Q\pi_{Q,{P^a}}\bigg\|_{L^t(\R^N\times\Omega;\C)}\\
&\qquad\qquad + \sum_{m=M+1}^\infty \bigg(\sum_{\substack{U\in\mathcal{D}'^m \\ÊU\subsetneq P}}
\bigg\|\sum_{\substack{R\in\mathcal{D}':R\subset U\\R^a=U}}\sum_{\substack{Q\in\mathcal{D} \\ÊS(Q)=R}}
\epsilon_Q\pi_{Q,U}\bigg\|_{L^t(\R^N\times\Omega;\C)}^t
\bigg)^{1/t}.
\end{split}
\end{equation}
Using Lemma \ref{boundest} and Lemma \ref{basicsum}, we can estimate the right hand side of \eqref{tseuraa}
as follows
\begin{align*}
\Sigma(P)&\lesssim \mu(P)^{1/t}+\sum_{m=M+1}^\infty\bigg(\sum_{U\in\mathcal{D}'^m:U\subsetneq P} \mu(U)\bigg)^{1/t}
\\
&\lesssim \mu(P)^{1/t}+\sum_{m=M+1}^\infty (1-\tau)^{((m-M)-1)/t}\mu(P)^{1/t}\lesssim \mu(P)^{1/t}.
\end{align*}
The proof of  \eqref{estpar} finishes by substituting the estimate above in \eqref{Ies}.

\subsection*{Proving estimate \eqref{estpar2}}
Randomizing and using H\"older's inequality as in connection
with the paraproduct operator, we get
\begin{align*}
&\bigg|\sum_{\substack{Q\in\mathcal{D}\\S(Q)\not=\emptyset}} \bigg\langle T^*b_{R_0}\frac{\langle g\rangle_{\R^N}}{\langle b_{R_0}\rangle_{\R^N}},D_Q^{a,1} f\bigg\rangle\bigg|\\&
\lesssim |\langle g\rangle_{\R^N}|\cdot \|f\|_p\cdot \bigg\{\bigg\|\sum_{\substack{Q\in\mathcal{D}}} \epsilon_Q(D_Q^{a,1})^*(T^*b_{R_0})\bigg\|_q
+\bigg\|\sum_{\substack{Q\in\mathcal{D}}} \epsilon_Q
E_{\log_2(\ell(Q))-1}(\omega_Q^{a,1}T^*b_{R_0})\bigg\|_q\bigg\}.
\end{align*}
Observe that $|\langle g\rangle_{\R^N}|\lesssim \mu(\R^N)^{-1/q}\|g\|_q$. Hence, it suffices to show
that the quantity inside the parentheses is bounded by a constant multiple of $\mu(\R^N)^{1/q}$.

To this end, we first use Theorem
\ref{haa} with $X=\C$ and \eqref{accreass}, we get
\[
\bigg\|\sum_{\substack{Q\in\mathcal{D}}} \epsilon_Q(D_Q^{a,1})^*(T^*b_{R_0})\bigg\|_q\lesssim \|T^* b_{R_0}\|_q\lesssim \mu(\R^N)^{1/q}.
\]
On the other
hand, since the family $\{E_k\}_{k\in\Z}$ 
of operators in $L^q(\R^N,\mu)$ 
is $\mathcal{R}$-bounded 
by Stein's inequality
\cite{stein}, we find that
\begin{align*}
&\bigg\|\sum_{\substack{Q\in\mathcal{D}}} \epsilon_Q
E_{\log_2(\ell(Q))-1}(\omega_Q^{a,1}T^*b_{R_0})\bigg\|_q\\
&=\bigg\|\sum_{k\in\Z} \epsilon_k
E_{k-1}(\omega_k^{a,1}T^*b_{R_0})\bigg\|_q\lesssim \|T^*b_{R_0}\|_\infty\bigg\|\sum_{k\in\Z}\epsilon_k \omega_k^{a,1}\bigg\|_q.
\end{align*}
By \eqref{accreass}, we have $\|T^*b_{R_0}\|_\infty\lesssim 1$.
Because
$|\omega_{k}^{a,1}|\lesssim 1_{\{b_{k-1}^a\not= b_{k}^a\}}$ $\mu$-almost everywhere, see
Lemma \ref{ombasic}, 
we can use Lemma \ref{carut} with $f\equiv 1$ for
\[
\bigg\|\sum_{k\in\Z}\epsilon_k \omega_k^{a,1}\bigg\|_q\lesssim \|1\|_q\lesssim \mu(\R^N)^{1/q}.
\]
This conclude the proof of estimate \eqref{estpar2}.

\section{Preparations for comparable cubes}\label{comparable}

During the course of the present and following section,
we prove Proposition~\ref{complem}. It controls a part of
the summation in \eqref{firstes}, involving
cubes that are close to each
other in their position and size. 

We write
$Q\sim R$ for $Q\in\mathcal{D}$ and $R\in\mathcal{D}'$ if 
\begin{equation}\label{rems}
2^{-r}\ell(R)\le \ell(Q)\le \ell(R)\text{ and }\dist(Q,R)<\ell(Q)=\ell(Q)\wedge \ell(R).
\end{equation} 
Note that if $Q\sim R$, then $\ell(Q)\le \ell(R)\le D(Q,R)\le (2+2^r)\ell(Q)$, so that
all of these quantities are comparable.

A few words about implicit constants:
In the previous sections we have performed estimates where the implicit constants
can depend on the parameter $r$, introduced in Section \ref{preparations}.
At this stage we introduce two new auxiliary parameters $\eta\in (0,1)$ and $\epsilon\in (0,1)$. In the sequel
we need to keep track of the dependence of estimates
on the parameters $r,\epsilon$ and $\eta$  explicitly.

For the following proposition, we recall that all $\mathrm{UMD}$ spaces have
a finite cotype.

\begin{prop}\label{complem}
Under the assumptions of Theorem \ref{mainth}, we have
\begin{equation}\label{remain}
\begin{split}
&\mathbf{E}_{\mathcal{D}}\mathbf{E}_{\mathcal{D}'}\bigg|\sum_{R\in\mathcal{D}'} \sum_{\substack{Q\in\mathcal{D}_{R\text{-good}}\\Q\sim R}} \langle D_R^{a,2} g,T(D_Q^{a,1} f)\rangle\bigg|\\
&\lesssim 
\big(C(r,\eta,\epsilon)
+(C(r,\eta)\epsilon^{1/t} + C(r)\eta^{1/t})\|T\|_{\mathcal{L}(L^p(\mu;X))}\big)\|g\|_{L^q(\mu;X^*)}\|f\|_{L^p(\mu;X)}
\end{split}
\end{equation}
for every $f\in L^p(X)$ and $g\in L^q(X^*)$. Here $1/p+1/q=1$ and $t>(s\vee q)\vee p$, where both
$X$ and $X^*$ have cotype $s\in [2,\infty)$.
\end{prop}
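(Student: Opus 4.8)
The plan is to estimate the sum over comparable cubes $Q\sim R$ by exploiting the fact that, for such pairs, the cubes $Q$, $R$ and their ancestors up to a bounded number of generations all have comparable side lengths, so the whole sum splits into boundedly many pieces indexed by the relative position of $Q$ inside $R$ and by the ratio $\ell(R)/\ell(Q)\in\{1,2,\dots,2^r\}$. Concretely I would first reduce, exactly as in Lemma~\ref{iavut} and the beginning of Section~\ref{nested}, the pairing $\langle D_R^{a,2}g,T(D_Q^{a,1}f)\rangle$ to a finite sum of terms of the form $\langle g_R\rangle_{R_j}\langle\psi_{R,j},T\phi_{Q,i}\rangle\langle f_Q\rangle_{Q_i}$, where $(f_k,\phi_{Q,i})$ and $(g_k,\psi_{R,j})$ run over the now-familiar list of adapted building blocks ($E_{k-1}D_k^a$ paired with $\phi^a$, and $\mathbf{1}_{\{b^a_{k-1}\neq b^a_k\}}E_k$ paired with $\omega^a$), all satisfying $E_{k-1}f_k=f_k$, $E_{k-1}g_k=g_k$. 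This reduces \eqref{remain} to estimating a bounded number of bilinear forms whose kernels are of the $T_{RQ}$ type.

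The core difficulty, and the reason the right-hand side of \eqref{remain} contains the $\|T\|$-dependent term with small constants $\epsilon^{1/t},\eta^{1/t}$, is that there is no kernel decay available when $Q\sim R$: the cubes genuinely overlap or touch, so $\langle\psi_{R,j},T\phi_{Q,i}\rangle$ cannot be controlled by the Calder\'on--Zygmund kernel bounds alone. The strategy is therefore a \emph{weak boundedness / self-improvement} argument: one writes the pairing in terms of the a priori bounded operator $T$ itself. Following Nazarov--Treil--Volberg (and \cite{hm}) I would split each $\phi_{Q,i}$ (and $\psi_{R,j}$) according to whether it is ``close to the boundary'' of the relevant cube or ``well inside'', introducing the parameters $\eta$ (measuring a thin collar near $\partial R$ of relative width $\eta$) and $\epsilon$ (a small-measure threshold). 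The interior pieces, being good test functions comparable to $f_Q$, $g_R$ at the scale $\ell(Q)\sim\ell(R)$, produce a term directly controlled by $\|T\|_{\mathcal{L}(L^p(\mu;X))}$ times the $L^p$/$L^q$ norms of the randomized sums $\|\sum_k\epsilon_k f_k\|_p$, $\|\sum_k\epsilon_k g_k\|_q$, but with a gain of $\epsilon^{1/t}$ or $\eta^{1/t}$ coming from Hölder's inequality applied to the small exceptional sets (here the cotype-$s$ hypothesis and the exponent $t>(s\vee q)\vee p$ enter, via Proposition~\ref{improved}, to convert the $\ell^2$ square function of the localized pieces into an $L^t$ estimate over the small sets). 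The boundary pieces and the ``large-measure'' pieces, on the other hand, are controlled purely by the size bound \eqref{size} on $K$ together with the lower Ahlfors-type regularity encoded in the $R$-goodness of $Q$ (Remark~\ref{new_hyvmeas}), at the cost of a constant $C(r,\eta,\epsilon)$ that blows up as $\eta,\epsilon\to 0$ but is harmless because it multiplies no factor of $\|T\|$.

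Once the splitting is in place, I would assemble the estimate by applying, to each resulting finite family of matrix elements, either Lemma~\ref{tut} or a variant of Lemma~\ref{tut2} (whichever matches the support configuration of the piece) to pass from the bilinear sum over $Q\sim R$ to the product $\|\sum_k\epsilon_k g_k\|_{L^q(\mathbf{P}\otimes\mu;X^*)}\|\sum_k\epsilon_k f_k\|_{L^p(\mathbf{P}\otimes\mu;X)}$; since the ratio $\ell(R)/\ell(Q)$ is bounded by $2^r$ the geometric series that previously gave summability is now just a finite sum, contributing a factor $C(r)$. Finally, exactly as at the end of Section~\ref{separated} (estimate \eqref{fgest} and the use of Stein's inequality with Theorem~\ref{nests}), one bounds $\|\sum_k\epsilon_k f_k\|_p\lesssim\|f\|_p$ and $\|\sum_k\epsilon_k g_k\|_q\lesssim\|g\|_q$; taking expectations $\mathbf{E}_{\mathcal{D}}\mathbf{E}_{\mathcal{D}'}$ (the bad-cube probabilities from Lemma~\ref{nmeas} are not even needed here since we sum over all comparable pairs) yields \eqref{remain}.

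The main obstacle I anticipate is the bookkeeping in the weak-boundedness step: making precise how the interior/exterior and large/small decomposition of $\phi_{Q,i}$ and $\psi_{R,j}$ interacts with the duality pairing so that exactly one factor of $\|T\|$ appears, multiplied by a genuinely small power $\epsilon^{1/t}$ or $\eta^{1/t}$, while the remainder is $\|T\|$-free with a (possibly huge) constant $C(r,\eta,\epsilon)$ — and to do this uniformly in the vector-valued setting, where ``orthogonality'' is replaced by the decoupling inequality of Theorem~\ref{trick} and the cotype-based Proposition~\ref{improved} rather than by an $L^2$ Bessel argument. The choice of the exponent $t>(s\vee q)\vee p$ is dictated precisely by the need to run both the $X$-valued and $X^*$-valued square-function estimates simultaneously through Proposition~\ref{improved}.
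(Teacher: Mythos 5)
Your reduction to terms $\langle g_R\rangle_{R_j}\langle\psi_{R,j},T\phi_{Q,i}\rangle\langle f_Q\rangle_{Q_i}$ and the idea of splitting into boundary collars and interior parts match the paper, but you have assigned the two roles the wrong way around, and this is not a cosmetic issue. In the paper's argument (Sections~\ref{comparable}--\ref{intersecting}) the thin \emph{boundary} pieces ($M_2$, $M_4$ and the $\alpha_2,\alpha_3,A_1,A_2,A_3'$ terms) are exactly the ones where the a priori operator norm $\|T\|_{\mathcal{L}(L^p(\mu;X))}$ is extracted, with the small factors $\eta^{1/t}$ and $\epsilon^{1/t}$; the deeply \emph{overlapping interior} part ($A_3(R)-A_3'(R)$, Lemma~\ref{talkaa}) is the one estimated \emph{without} any factor of $\|T\|$, and this step cannot be done ``purely by the size bound \eqref{size} on $K$ together with $R$-goodness'' as you propose: when $Q\sim R$ the supports of $\phi_{Q,i}$ and $\psi_{R,j}$ genuinely coincide on $Q_i\cap R_j$ and the diagonal singularity $|x-y|^{-d}$ is not integrable against $\mu\otimes\mu$ there. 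The paper handles this overlap by a genuine $Tb$ input: it covers $Q_i\cap R_j$ by a grid $\mathcal{G}$ of cubes $G$ of scale $\sim\eta\,\ell(Q_i)$, uses $\|T\phi_{Q,i}\|_{L^\infty(\mu)}\lesssim 1$ (from the accretive system for $T$) and, crucially, the accretive system for $T^*$ through the functions $b_{\tilde G}$ with $\|T^*b_{\tilde G}\|_{L^\infty(\mu)}\lesssim 1$, to replace the diagonal pairings $\langle 1_{\tilde G}\psi_{R,j},T(1_{\tilde G}\phi_{Q,i})\rangle$ by quantities of size $C(\eta,\epsilon)\mu(Q_i\cap R_j)$ plus a controlled remainder $A_3'$. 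Your sketch never invokes the accretive system for $T^*$ at this point, so the central term of the proposition is left unestimated.

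The second gap is your claim that the gains $\epsilon^{1/t},\eta^{1/t}$ come ``from H\"older's inequality applied to the small exceptional sets'' and that the expectations $\mathbf{E}_{\mathcal{D}}\mathbf{E}_{\mathcal{D}'}$ ``are not even needed here.'' Since $\mu$ is non-doubling, the collar $\delta^\eta_{R_j}$ of a \emph{fixed} cube can carry measure comparable to $\mu(R_j)$, so no deterministic smallness is available; the factors $\eta^{1/t}$ and $\epsilon^{1/t}$ in \eqref{remain} arise only probabilistically, as $\sup_k\mathbf{P}[\,x\in\delta^\eta(k)\,]^{1/t}\lesssim C(r)\eta^{1/t}$ after averaging over the random grids, fed into the randomized sums via Proposition~\ref{improved} (this is why the expectations appear in the statement, and why $t>(s\vee q)\vee p$). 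Moreover the $\epsilon$-collars live on a \emph{third} independent random system $\mathcal{D}^\star$ of scale $\sim\eta\ell(Q_i)$, introduced precisely so that $\mathbf{E}_{\mathcal{D}^\star}$ can produce $\epsilon^{1/t}$ in Lemma~\ref{aep}; this auxiliary randomization is absent from your plan. The separated pieces $M_1,M_5$ are indeed handled as you say, by the kernel size bound plus the decoupling Lemma~\ref{tut}, but without the corrected treatment of the boundary and overlap parts the proof of Proposition~\ref{complem} does not go through.
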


The strategy of the  proof of this proposition is as follows: 
at the end of this section we consider a separated part of the summation in \eqref{remain},
where expectations over dyadic systems are not required. In the following section
a (more complicated) intersecting part of the sum in \eqref{remain} is treated, and the expectations
are crucial therein.

Here are preparations for the proof of Proposition \ref{complem}:
given $R\in\mathcal{D}'$, there are at most $C=C(r,N)$
cubes $Q\in\mathcal{D}$ satisfying \eqref{rems}. Hence,
without essential loss of generality,
it suffices consider a finite number of subseries of the general form
\begin{equation}\label{subser}
\mathbf{E}_{\mathcal{D}}\mathbf{E}_{\mathcal{D}'}\bigg|\sum_{R\in\mathcal{D}'}  \langle D_R^{a,2} g,T(D_Q^{a,1} f)\rangle\bigg|,
\end{equation}
where $Q=Q(R)\in \mathcal{D}_{R\text{-good}}$ inside the summation satisfies $Q\sim R$. 
At this stage we fix one series like this, and  the convention that $Q$ is implicitly a function
of $R$ will be maintained without further notice. Furthermore, without 
loss of generality, it is possible to act as if the map $R\mapsto Q(R)$ was
invertible, so that \eqref{subser} could also be written in terms of the
summation variable $Q\in\mathcal{D}$.

Proceeding as in Section \ref{nested}, we find that
\eqref{subser}
can be dominated from above by a sum of nine terms, each  of them being of
the general form
\begin{equation}\label{eeqa}
\sum_{i,j=1}^{2^N} \mathbf{E}_{\mathcal{D}}\mathbf{E}_{\mathcal{D}'}\bigg|\sum_{R\in\mathcal{D}'} 
\langle g_{R}\rangle_{R_j}
\langle 1_{R_j}\psi_{R,j},T(1_{Q_i}\phi_{Q,i})\rangle\langle f_Q\rangle_{Q_i}\bigg|,\\
\end{equation}
where  $\langle g_R\rangle_{R_j}=\langle 1_R g_k\rangle_{R_j}=\langle g_k\rangle_{R_j}$ if
$R\in\mathcal{D}'_k$ (similarly for $f$'s), and the summands
are determined by the following choices:
\begin{equation}\label{gcoa_l}
(g_k,\psi_{R,j})\in \{(s_k, b_{R^a}), (h_k, b_{R_j^a}),
(u_k, b_{R^a})\}
\end{equation}
and
\begin{equation}\label{fcoa_l}
(f_k,\phi_{Q,i})\in \{(\bar s_k, b_{Q^a}), (\bar h_k,b_{Q_i^a}),
(\bar u_k,b_{Q^a})\}.
\end{equation}
Here
\begin{align*}
s_{k}=1_{\{b_{k-1}^{a,2}=b_{k}^{a,2}\}}\bigg(\frac{E_{k-1} g}{E_{k-1}b_{k-1}^{a,2}}
- \frac{E_k g}{E_k b_k^{a,2}}\bigg),\quad \bar s_{k}=1_{\{b_{k-1}^{a,1}=b_{k}^{a,1}\}}\bigg(\frac{E_{k-1} f}{E_{k-1}b_{k-1}^{a,1}}
- \frac{E_k f}{E_k b_k^{a,1}}\bigg);
\end{align*}
\[
h_{k}=1_{\{b_{k-1}^{a,2}\not=b_{k}^{a,2}\}}\frac{E_{k-1} g}{E_{k-1}b_{k-1}^{a,2}},\quad
\bar h_{k}=1_{\{b_{k-1}^{a,1}\not=b_{k}^{a,1}\}}\frac{E_{k-1} f}{E_{k-1}b_{k-1}^{a,1}};
\]
and
\[
u_{k}=
-1_{\{b_{k-1}^{a,2}\not=b_{k}^{a,2}\}}\frac{E_k g}{E_k b_k^{a,2}},\quad  \bar u_{k}=
-1_{\{b_{k-1}^{a,1}\not=b_{k}^{a,1}\}}\frac{E_k f}{E_k b_k^{a,1}}.
\]
Observe that, in any case, $E_{k-1} g_{k}=g_{k}$ and
$E_{k-1} f_k=f_k$.

Fix $i,j\in \{1,2,\ldots,N\}$.

For each cube $Q$ in $\R^N$, define the boundary region
\[
\delta_Q^\eta:= (1+\eta)Q\setminus (1-\eta)Q,
\]
where the parameter $\eta>0$ is to be chosen later.
If $R\in\mathcal{D}'$ and $Q=Q(R)$, we write
\begin{align*}
&Q_{i,\partial}:=Q_i\cap \delta^\eta_{R_j};\quad Q_{i,\mathrm{sep}} :=(Q_i\setminus Q_{i,\partial})\setminus Q_i\cap R_j;\quad \Delta_{Q_i}:=Q_i\cap R_j\setminus Q_{i,\partial};\\
&R_{j,\partial}:=R_j\cap \delta^\eta_{Q_i};\quad R_{j,\mathrm{sep}} :=(R_j\setminus R_{j,\partial})\setminus Q_i\cap R_j;\quad \Delta_{R_j}:=Q_i\cap R_j\setminus R_{j,\partial}.
\end{align*}
Observe that the following unions are disjoint:
\[
Q_i=\Delta_{Q_i}\cup Q_{i,\mathrm{sep}}\cup Q_{i,\partial},\quad R_j=\Delta_{R_j}\cup R_{j,\mathrm{sep}}\cup R_{j,\partial}.
\]
Hence, we  can
write the matrix coefficient in \eqref{eeqa}  as
\begin{equation}\label{tenf}
\begin{split}
\langle 1_{R_j}\psi_{R,j},T(1_{Q_i}\phi_{Q,i})\rangle &= \langle 1_{R_{j,\mathrm{sep}}}
\psi_{{R,j}},T(1_{Q_i}\phi_{Q,i})\rangle + \langle 1_{R_{j,\partial}}\psi_{R,j},T(1_{Q_i}\phi_{Q,i})\rangle\\
&\qquad\qquad +\langle 1_{\Delta_{R_j}}\psi_{R,j},T(1_{\Delta_{Q_i}}\phi_{Q,i})\rangle\\
&\quad +\langle 1_{\Delta_{R_j}} \psi_{R,j},T(1_{Q_{i,\partial}}\phi_{Q,i})\rangle + \langle
1_{\Delta_{R_j}} \psi_{R,j},T(1_{Q_{i,\mathrm{sep}}}\phi_{Q,i})\rangle\\
&=:M_1(R)+M_2(R)+M_3(R)+M_4(R)+M_5(R).
\end{split}
\end{equation}
Using these preparations, it suffices to estimate
the following quantity:
\begin{equation}\label{reds}
\mathbf{E}_{\mathcal{D}}\mathbf{E}_{\mathcal{D}'}\bigg|\sum_{\substack{R\in\mathcal{D}'}} \langle g_{R}\rangle_{R_j} (M_1(R)+M_2(R)+M_3(R)+M_4(R)+M_5(R)) \langle f_Q\rangle_{Q_i}
\bigg|,
\end{equation}
where $i,j\in \{1,2,\ldots,2^N\}$ and $Q=Q(R)\in\mathcal{D}_{R\text{-good}}$ satisfies
the condition $Q\sim R$ inside the summation.

\subsection*{The separated part}
Recall that our aim now is to prove Proposition \ref{complem}.
We have reduced
this to a problem of estimating the sum \eqref{reds} involving, among others, terms
of the form
\[
M_1(R)+M_5(R)=\langle 1_{R_{j,\mathrm{sep}}}
\psi_{{R_j}},T(1_{Q_i}\phi_{Q,i})\rangle+\langle
1_{\Delta_{R_j}} \psi_{R,j},T(1_{Q_{i,\mathrm{sep}}}\phi_{Q,i})\rangle,
\]
where $Q=Q(R)\in\mathcal{D}_{R\textrm{-good}}$ satisfies $Q\sim R$.
In both cases,  $M_1$ and $M_5$, the two indicators
are associated with sets separated from each other.  Hence, a
decoupling estimate can be used to establish the following lemma.

\begin{lem}\label{m1lem}
Suppose that $f\in L^p(\R^N,\mu;X)$ and $g\in L^q(\R^N,\mu;X^*)$. Then
\begin{align*}
\bigg|
\sum_{\substack{R\in\mathcal{D}'}} \langle g_{R}\rangle_{R_j} (M_1(R)+M_5(R)) \langle f_Q\rangle_{Q_i}
\bigg|
\lesssim C(r,\eta)\|g\|_{q}\|f\|_{p}.
\end{align*}
\end{lem}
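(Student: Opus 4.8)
The plan is to exploit that both $M_1(R)$ and $M_5(R)$ pair two functions localized to sets that are \emph{separated} by a distance comparable to $\eta\ell(Q)$, so that only the size bound \eqref{size} on the kernel is needed (no smoothness of $K$), and then to feed the resulting matrix estimate into the decoupling Lemma \ref{tut}.

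First one unwinds the definitions in \eqref{tenf}. Since $\delta^\eta_{Q_i}=(1+\eta)Q_i\setminus(1-\eta)Q_i$ and $0<\eta<1$, a short set computation gives $R_{j,\mathrm{sep}}=R_j\setminus(1+\eta)Q_i$, whence $\dist(R_{j,\mathrm{sep}},Q_i)\ge\tfrac{\eta}{2}\ell(Q_i)=\tfrac{\eta}{4}\ell(Q)$; symmetrically $Q_{i,\mathrm{sep}}=Q_i\setminus(1+\eta)R_j$ while $\Delta_{R_j}\subset R_j$, so $\dist(Q_{i,\mathrm{sep}},\Delta_{R_j})\ge\tfrac{\eta}{4}\ell(R)\ge\tfrac{\eta}{4}\ell(Q)$, using $Q\sim R$ from \eqref{rems}. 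In particular the supports of $1_{Q_i}\phi_{Q,i}$ and $1_{R_{j,\mathrm{sep}}}\psi_{R,j}$ (resp.\ of $1_{Q_{i,\mathrm{sep}}}\phi_{Q,i}$ and $1_{\Delta_{R_j}}\psi_{R,j}$) are disjoint, so $T$ applied to the inner function is given pointwise on the outer set by the kernel integral \eqref{action}. Combining this with $|K(x,y)|\le|x-y|^{-d}$ and with $\|\psi_{R,j}\|_{L^\infty(\mu)}\le1$, $\|\phi_{Q,i}\|_{L^\infty(\mu)}\le1$ (these functions are among $b_{R^a},b_{R_j^a}$ and $b_{Q^a},b_{Q_i^a}$, cf.\ \eqref{accre} and \eqref{gcoa_l}--\eqref{fcoa_l}) yields
\[
|M_1(R)|+|M_5(R)|\ \lesssim\ \Big(\tfrac{\eta}{4}\ell(Q)\Big)^{-d}\,\|1_{Q_i}\phi_{Q,i}\|_{L^1(\mu)}\,\|1_{R_j}\psi_{R,j}\|_{L^1(\mu)}\ \lesssim\ \eta^{-d}\,\ell(Q)^{-d}\,\mu(Q_i)\,\mu(R_j).
\]

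Next one sets $T_{RQ}:=M_1(R)+M_5(R)$ when $Q=Q(R)$ — which by hypothesis is $R$-good with $\ell(Q)\le\ell(R)$ — and $T_{RQ}:=0$ otherwise. Since $Q\sim R$ forces $\ell(Q)\le\ell(R)\le D(Q,R)\le(2+2^r)\ell(Q)$, the bound just obtained reads
\[
\frac{|T_{RQ}|}{\mu(R_j)\,\mu(Q_i)}\ \lesssim\ \eta^{-d}\,\ell(Q)^{-d}\ \le\ (2+2^r)^{d+\alpha}\,\eta^{-d}\,\frac{\ell(Q)^{\alpha/2}\ell(R)^{\alpha/2}}{D(Q,R)^{d+\alpha}},
\]
which is exactly the hypothesis \eqref{ades} of Lemma \ref{tut} with $u=j$, $v=i$ and implied constant $C(r,\eta)=C(r)\eta^{-d}$. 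Lemma \ref{tut} — the double sum over $Q$ collapsing to the single term $Q=Q(R)$ — then gives
\[
\bigg|\sum_{R\in\mathcal{D}'}\langle g_R\rangle_{R_j}\,(M_1(R)+M_5(R))\,\langle f_Q\rangle_{Q_i}\bigg|\ \lesssim\ C(r,\eta)\,\bigg\|\sum_{k\in\Z}\epsilon_k g_k\bigg\|_{L^q(\mathbf{P}\otimes\mu;X^*)}\bigg\|\sum_{k\in\Z}\epsilon_k f_k\bigg\|_{L^p(\mathbf{P}\otimes\mu;X)}.
\]

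Finally one disposes of the two square-function factors exactly as at the end of Section \ref{separated}: for $g_k\in\{h_k,u_k\}$ (resp.\ $f_k\in\{\bar h_k,\bar u_k\}$) the contraction principle removes the factors $1/E_kb_k^{a,2}$, $1/E_{k-1}b_{k-1}^{a,2}$, which are bounded below by $\delta^2$ (cf.\ \eqref{a_iso}), after which Lemma \ref{carut} together with the argument of \eqref{fgest} gives the bound $\lesssim\|g\|_q$ (resp.\ $\lesssim\|f\|_p$); for $g_k=s_k$ (resp.\ $f_k=\bar s_k$) the same bound follows from Lemma \ref{carut2}. Summing over the finitely many choices of $i,j\in\{1,\dots,2^N\}$ and of the pairs in \eqref{gcoa_l}--\eqref{fcoa_l} completes the proof. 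The only genuinely delicate point is the first step: reading off correctly from the definitions in \eqref{tenf} that the two indicator sets in $M_1$ and in $M_5$ are separated by $\gtrsim\eta\ell(Q)$, so that the $\eta$-dependence of the constant is merely the harmless factor $\eta^{-d}$ and no regularity of $K$ is invoked.
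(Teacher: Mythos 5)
Your proposal is correct and follows essentially the same route as the paper: it bounds the matrix entries $T_{RQ}=1_{Q=Q(R)}(M_1(R)+M_5(R))$ using only the size bound \eqref{size} together with the separation $\dist\gtrsim\eta\ell(Q)$ coming from the definitions of $R_{j,\mathrm{sep}}$, $Q_{i,\mathrm{sep}}$, verifies \eqref{ades} via $Q\sim R$, and then applies the decoupling Lemma \ref{tut} followed by the square-function estimates (contraction principle, Lemma \ref{carut}, \eqref{fgest}, Lemma \ref{carut2}) exactly as at the end of Sections \ref{separated}--\ref{nested}. The only cosmetic difference is that you treat $M_1$ and $M_5$ with a single matrix, whereas the paper handles $M_1$ and remarks that $M_5$ is analogous.
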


\begin{proof}
We focus on summation over the terms $M_1(R)$. The treatment of summation over the terms $M_5(R)$ is analogous.
If $R\in\mathcal{D}'$ and $Q\in\mathcal{D}_{R\text{-good}}$ satisfies $\ell(Q)\le \ell(R)$, we write
$T_{RQ}:=1_{Q=Q(R)}\langle 1_{R_{j,\mathrm{sep}}}\psi_{{R,j}},T(1_{Q_i}\phi_{Q,i})\rangle$.
It suffices to estimate the series
\[
\Sigma:=\sum_{\substack{R\in\mathcal{D}'}}\sum_{\substack{Q\in\mathcal{D}_{R\text{-good}} \\ \ell(Q)\le\ell(R)}} \langle g_{R}\rangle_{R_j} T_{RQ} \langle f_Q\rangle_{Q_i}.
\]
Assume that $T_{RQ}\not=0$ inside the summation. Then
 $Q=Q(R)$, so that $Q\sim R$ and, by
\eqref{action} and \eqref{size}, 
\begin{align*}
|T_{RQ}|&=\bigg|\int_{R_{j,\mathrm{sep}}} \int_{Q_i} \psi_{R,j}(x)K(x,y)\phi_{Q,i}(y)d\mu(y)d\mu(x)\bigg|\\
&\le \frac{\mu(R_{j,\mathrm{sep}})\mu({Q_i})}{\dist(R_{j,\mathrm{sep}},{Q_i})^d}\lesssim C(\eta)\frac{\mu({R_j})\mu({Q_i})}{\ell({R_j})^d}
\simeq C(\eta)\mu({R_j})\mu({Q_i})\frac{\ell(Q)^{\alpha/2}\ell(R)^{\alpha/2}}{D(Q,R)^{d+\alpha}}.
\end{align*}
Using Lemma \ref{tut} and then estimating as in the end of Section \ref{nested}, we find that
\[
|\Sigma|\lesssim C(r,\eta)\bigg\|\sum_{k=-\infty}^\infty \epsilon_k g_{k}\bigg\|_{L^{q}(\mathbf{P}\otimes\mu;X^*)}
\bigg\|\sum_{k=-\infty}^\infty \epsilon_k f_{k}\bigg\|_{L^p(\mathbf{P}\otimes \mu;X)}\lesssim C(r,\eta)\|g\|_q\|f\|_p.
\]
\end{proof}

\section{Intersecting part of comparable cubes}\label{intersecting}

In this section we deal with the remaining part of the comparable cubes, finishing the proof of Proposition \ref{complem}.
This will be the most technical part of the entire proof: It still involves various further decompositions and case-by-case analysis, until all different pieces are finally estimated.

Recalling the preparations in Section \ref{comparable}, we observe that it remains to estimate a summation like \eqref{reds} but involving 
only terms of the form $M_2(R)+M_3(R)+M_4(R)$. Part of this summation
involves boundary terms that are
handled by probabilistic methods, e.g. by taking
expectations over the random dyadic systems $\mathcal{D}$ and $\mathcal{D}'$, but
we will also introduce  a third random dyadic system $\mathcal{D}^\star$.
The assumption
that there is an $L^\infty$-accretive system for $T^*$ is
used to handle the non-boundary terms.

We aim to prove the following lemma.

\begin{lem}\label{m3lem}
We have
\begin{align*}
&\mathbf{E}_{\mathcal{D}}\mathbf{E}_{\mathcal{D}'}\bigg|\sum_{\substack{R\in\mathcal{D}'}} \langle g_{R}\rangle_{R_j}
  \big( M_2(R)+M_3(R)+M_4(R)\big)\langle f_Q\rangle_{Q_i}
\bigg|\\&
\lesssim \big(C(r,\eta,\epsilon)
+(C(r,\eta)\epsilon^{1/t} + C(r)\eta^{1/t})\|T\|_{\mathcal{L}(L^p(\mu;X))}\big)\|g\|_{L^q(\mu;X^*)}\|f\|_{L^p(\mu;X)}.
\end{align*}
\end{lem}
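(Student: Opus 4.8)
The proof of Lemma \ref{m3lem} naturally splits according to the three matrix pieces $M_2,M_3,M_4$, and the plan is to handle the two \emph{boundary} pieces $M_2$ and $M_4$ probabilistically, and the \emph{diagonal} piece $M_3$ by the testing condition on $T^*$. First I would observe that $M_2$ and $M_4$ are symmetric (one carries $1_{R_{j,\partial}}$, the other $1_{Q_{i,\partial}}$, with $R_{j,\partial}=R_j\cap\delta^\eta_{Q_i}$ and $Q_{i,\partial}=Q_i\cap\delta^\eta_{R_j}$), so it suffices to estimate, say, the $M_4$-sum and then quote symmetry for $M_2$. For $M_4$, the key point is that the boundary region $\delta^\eta_{R_j}$ has small measure \emph{on average} over the random dyadic system: for a fixed point $x$, the probability (over $\mathcal{D}$ or $\mathcal{D}'$, whichever governs the relevant cube) that $x$ lies within relative distance $\eta$ of the boundary of its dyadic cube is $O(\eta)$, by the standard argument going back to \cite{NTV} and used throughout the non-homogeneous theory. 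Thus after randomization and Fubini one picks up a factor $\eta^{1/t}$ (the power $1/t$ coming from inserting the expectation inside an $L^t$-type estimate via H\"older, exactly as in Section \ref{paraproducts}), at the cost of pulling out the operator norm $\|T\|_{\mathcal{L}(L^p(\mu;X))}$ since here we no longer have a usable kernel bound — the two indicators $1_{\Delta_{R_j}}$ and $1_{Q_{i,\partial}}$ are \emph{not} separated. Concretely I would write $\langle g_R\rangle_{R_j}M_4(R)\langle f_Q\rangle_{Q_i}=\langle \langle g_R\rangle_{R_j}1_{\Delta_{R_j}}\psi_{R,j},\,T(\langle f_Q\rangle_{Q_i}1_{Q_{i,\partial}}\phi_{Q,i})\rangle$, randomize over $R$ (equivalently over $Q=Q(R)$), apply H\"older in the duality pairing and the boundedness of $T$, and then estimate the two resulting square functions: the one on the $g$-side by the trivial bound $\|1_{\Delta_{R_j}}\|\lesssim 1$ together with the square-function estimates of Section \ref{normest} (Theorems \ref{nests}, \ref{haa}) applied to the pieces $s_k,h_k,u_k$ exactly as in Sections \ref{nested}--\ref{comparable}, and the one on the $f$-side by isolating the indicators $1_{Q_{i,\partial}}$ and using the averaged smallness $\mathbf{E}_{\mathcal{D}}1_{\delta^\eta_{R_j}}(x)\lesssim\eta$ via Proposition \ref{improved} (cotype) or a direct Carleson-type argument, producing the claimed $C(r)\eta^{1/t}\|T\|$ bound.

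For the diagonal piece $M_3=\langle 1_{\Delta_{R_j}}\psi_{R,j},T(1_{\Delta_{Q_i}}\phi_{Q,i})\rangle$, where now the two indicators live on the \emph{same} set $Q_i\cap R_j$ up to boundary layers, the kernel is genuinely singular and one must use the accretivity of the system for $T^*$ together with a further introduction of a third independent random dyadic grid $\mathcal{D}^\star$. The plan here is to expand $1_{\Delta_{Q_i}}\phi_{Q,i}$ (respectively the $g$-side) in $\mathcal{D}^\star$-adapted martingale differences, splitting off the $\mathcal{D}^\star$-mean part and the fluctuation part. The fluctuation part is again a square-function-controlled object, and after one more randomization it reduces — via the decoupling machinery of Section \ref{decoul} (Theorem \ref{trick}) and the good/bad splitting relative to $\mathcal{D}^\star$ — either to already-treated separated/nested configurations or to a term carrying the extra small parameter $\epsilon$ (which measures how deep into $\mathcal{D}^\star$ we go before truncating), giving the $C(r,\eta)\epsilon^{1/t}\|T\|$ contribution; the remaining \emph{mean} part is where $\|T^*b^2_R\|_{L^\infty}\le B$ enters, producing a testing-type bound with a constant $C(r,\eta,\epsilon)$ that does not involve $\|T\|$ (this is precisely the mechanism that, in the final Section \ref{synthesis}, allows the operator norm on the right of \eqref{remain} to be absorbed once $r$ is large and $\eta,\epsilon$ are small). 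I would organize this as: (i) insert $E^\star_{\sigma}$ for a suitable $\mathcal{D}^\star$-stopping level determined by $\epsilon$; (ii) the ``high'' part (fine $\mathcal{D}^\star$-scales) is estimated by decoupling plus the earlier separated-cube and paraproduct estimates, with gain $\epsilon^{1/t}$; (iii) the ``low'' part collapses, by the cancellation $\int\phi_{Q,i}=0$, $\int\psi_{R,j}$-type properties from Lemma \ref{phiprop}, onto an average of $\langle b^2_{R^a},T(\cdots)\rangle=\langle T^*b^2_{R^a},\cdots\rangle$ which is bounded using $\|T^*b^2_{R^a}\|_\infty\le B$ and a Carleson embedding (Theorem \ref{carlesonRMF} or \ref{carleson}).

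I expect the main obstacle to be step (ii)--(iii) in the $M_3$ analysis: keeping the bookkeeping straight across \emph{three} random dyadic systems while ensuring that every sub-term either (a) matches one of the already-estimated configurations (separated, deeply nested, paraproduct) after relabeling, (b) carries a genuine power of $\epsilon$ or $\eta$ so that it can later be absorbed, or (c) is controlled by the $T^*$-testing constant with no $\|T\|$ dependence. In particular one must verify that the bad cubes relative to $\mathcal{D}^\star$ occur with the small probability of Lemma \ref{nmeas}, and that the decoupling inequality of Theorem \ref{trick} applies with $\mathcal{R}$-bounded kernels built from the (bounded) matrix entries — this is routine in each individual piece but delicate to assemble. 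A secondary difficulty is that, unlike the scalar case, one cannot use orthogonality and must everywhere replace it by the randomized square-function estimates, so every appearance of ``$M_3$ is small'' has to be routed through Theorems \ref{nests} and \ref{haa} (hence the $\mathrm{UMD}$-lattice hypothesis is used here too, via Theorem \ref{haa} on the dual side). Once all pieces are collected with their explicit constants $C(r,\eta,\epsilon)$, $C(r,\eta)\epsilon^{1/t}$, $C(r)\eta^{1/t}$, summing the finitely many $(i,j)$ and the nine $(g_k,f_k)$ choices from \eqref{gcoa_l}--\eqref{fcoa_l} yields the stated bound.
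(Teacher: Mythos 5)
Your treatment of the $\eta$-boundary terms $M_2$ and $M_4$ is essentially the paper's: randomize, pull out $\|T\|_{\mathcal{L}(L^p(\mu;X))}$, dominate the boundary indicators by $1_{\delta^\eta(k)}$, and convert the probability bound $\mathbf{P}[x\in\delta^\eta(k)]\lesssim C(r)\eta$ into an $\eta^{1/t}$ gain via the cotype improvement of the contraction principle (Proposition \ref{improved}); this is exactly Lemma \ref{etalem}. The gap is in your plan for $M_3$, which is the heart of the lemma. First, a factual slip: in the comparable-cube regime the functions $\phi_{Q,i},\psi_{R,j}$ come from \eqref{gcoa_l}--\eqref{fcoa_l}, i.e.\ they are the accretive functions $b_{Q^a},b_{Q_i^a}$ and $b_{R^a},b_{R_j^a}$, \emph{not} the mean-zero frame functions of Lemma \ref{phiprop}; so the cancellation $\int\phi_{Q,i}\,d\mu=0$ you invoke in step (iii) is simply not available, and the relevant cancellation is instead $\|T\phi_{Q,i}\|_{L^\infty(\mu)}\lesssim 1$ coming from the testing condition \eqref{accreass} for $T$. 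Second, and more seriously, your high/low split with a $\mathcal{D}^\star$-conditional expectation at an $\epsilon$-determined level does not resolve the genuinely diagonal singularity: the low-pass part still contains the full interaction of $T$ between two bounded functions living on the \emph{same} set $Q_i\cap R_j$ at all scales below the stopping level, and there is no identity that "collapses" it onto $\langle T^*b^2_{R^a},\cdot\rangle$; the testing function $b^2_{R^a}$ lives at the large scale $\ell(R)$ and its $L^\infty$ bound on $T^*b^2_{R^a}$ gives no control of the diagonal blocks at scale $\sim\eta\,\ell(Q_i)$. Likewise, your claim that the "high" part reduces to the already-treated separated/nested/paraproduct configurations relative to $\mathcal{D}^\star$ (with a good/bad splitting and Lemma \ref{nmeas}) is unsupported: those estimates were proved for the adapted differences $D^{a,1}_Q$, $D^{a,2}_R$ of the pair $(\mathcal{D},\mathcal{D}')$ with their accretive layers, and re-running them against a third grid for the fixed truncated functions $1_{\Delta_{R_j}}\psi_{R,j}$, $1_{\Delta_{Q_i}}\phi_{Q,i}$ would require building the whole adapted martingale machinery anew on $\mathcal{D}^\star$; moreover a bad-cube splitting would produce a factor like $2^{-r\gamma/t}\|T\|$, not the needed $\epsilon^{1/t}$.

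What the paper actually does with $\mathcal{D}^\star$ is different and is the missing idea: it uses $\mathcal{D}^\star$ only to tile $Q_i\cap R_j$ by a grid $\mathcal{G}$ of cubes $G$ of side $\approx\eta\,\ell(Q_i)$ (see \eqref{sl}), writes $M_3(R)=\alpha_1+\alpha_2+\alpha_3$ as in \eqref{ekahaj}, and expands $\alpha_1$ over pairs $G,H\in\mathcal{G}$. Off-diagonal blocks $G\neq H$ are separated by $\gtrsim C(\eta,\epsilon)^{-1}\ell(Q_i)$ after removing the $\epsilon$-collars and are handled by the size bound \eqref{size} alone. For the diagonal blocks one performs a surgery (Lemma \ref{talkaa}): $T(1_{\tilde G}\phi_{Q,i})$ is written as $T\phi_{Q,i}$ minus the far-field piece $T(1_{\R^N\setminus5\tilde G}\phi_{Q,i})$, an annulus piece, and the thin collar piece; the far field is compared with its average against a \emph{fresh} accretive function $b_{\tilde G}$ for $T^*$ supported on the small cube $\tilde G$ (this is where the hypothesis of an accretive system for $T^*$ enters, through $\|T^*b_{\tilde G}\|_{L^\infty(\mu)}\lesssim 1$, not through $b^2_{R^a}$), and kernel smoothness \eqref{smooth} controls the comparison. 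Only the collar pieces $A_1,A_2,A_3'$ are treated probabilistically over $\mathcal{D}^\star$ (Lemma \ref{aep}), pulling out $\|T\|$ and gaining $\epsilon^{1/t}$ exactly by the $\mathbf{P}_{\beta^\star}[x\in\delta^\epsilon(k)]\lesssim C(r,\eta)\epsilon$ mechanism; the remainder $A_3-A_3'$ is bounded pointwise by $C(r,\eta,\epsilon)\mu(Q_i\cap R_j)$ uniformly in $\mathcal{D}^\star$ and summed by a simple randomization (Lemma \ref{alpha4}) with no $\|T\|$. Without this small-scale testing step your scheme leaves the truly diagonal part of $M_3$ unestimated, so as written the proposal does not prove Lemma \ref{m3lem}.
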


The 
 proof of this lemma is a consequence of various lemmata, namely: \ref{alpha4}, \ref{aep}, and \ref{etalem}.
Let us briefly indicate the structure of the proof.
Since $M_2(R)$ and $M_4(R)$ are so called $\eta$-boundary terms, the
main difficulties lie in estimating summation involving  the term
\[
M_3(R)=\langle 1_{\Delta_{R_j}}\psi_{R,j},T(1_{\Delta_{Q_i}}\phi_{Q,i})\rangle=\alpha_1(R)+\alpha_2(R)+\alpha_3(R),
\]
where the last decomposition depends
on a new random dyadic system $\mathcal{D}^\star$, see \eqref{ekahaj}. The terms $\alpha_2(R)$
and $\alpha_3(R)$ are also $\eta$-boundary terms.

The term $\alpha_1(R)$ will further
be expanded in \eqref{alpha1} and Lemma \ref{talkaa} as \[\alpha_1(R)=A_1(R)+A_2(R)+A_3(R)'+(A_3(R)-A_3'(R)),\] where
$A_1(R)$, $A_2(R)$, and $A_3'(R)$ are so called
$\epsilon$-boundary terms.  Hence, the main obstacle
is to estimate $A_3(R)-A_3'(R)$; 
the assumption that there is an $L^\infty$-accretive systems for
$T^*$ will be exploited here.

\subsection*{Decomposition of $M_3(R)$}
In order to decompose $M_3(R)$, we first  introduce a random dyadic system \[\mathcal{D}^\star=\mathcal{D}(\beta^\star)\] 
that is independent of both $\mathcal{D}$ and $\mathcal{D}'$.
Fix $j(\eta)\in\Z$ such that
$\eta/64\le 2^{j(\eta)}<\eta/32$.
Then, for every $R\in\mathcal{D}'$, we define a family
\[\mathcal{G}=\mathcal{G}(R):=\mathcal{D}^\star_{\log_2(s)}\] of cubes with side length \begin{equation}\label{sl}
s=2^{j(\eta)}\ell(Q_i)=2^{j(\eta)}\cdot (\ell(Q_i)\wedge \ell(R_j)),\end{equation}
where $Q=Q(R)\in\mathcal{D}$.
More precisely, $\mathcal{G}$ is a subfamily of $\mathcal{D}^\star$ that depends on $R$, $Q=Q(R)$, and $\eta$.

Let $\Delta_{Q_i}^\mathcal{G},\Delta_{R_j}^\mathcal{G}\subset Q_i\cap R_j$ be
the following adaptations of $\Delta_{Q_i}$ and $\Delta_{R_j}$ to $\mathcal{G}$: we enlargen the latter sets
so that the boundary of $\Delta_{Q_i}^\mathcal{G}\cap \Delta_{R_j}^\mathcal{G}$ doesn't intersect
interiors of cubes in $\mathcal{G}$, and
$5G\subset Q_i\cap R_j$ if the interior of $G\in \mathcal{G}$ intersects
$\Delta_{R_j}^\mathcal{G}\cap \Delta_{Q_i}^\mathcal{G}$. We also require that we can write
 \[
\Delta_{Q_i}^\mathcal{G}=\Delta_{Q_i}\cup \Delta^\partial_{Q_i},\quad \Delta_{R_j}^\mathcal{G} = \Delta_{R_j}\cup \Delta^\partial_{R_j}
 \]
such that the unions are disjoint,
$\Delta^\partial_{Q_i}\subset  Q_{i,\partial}\cap R_j$, and $\Delta^\partial_{R_j}\subset R_{j,\partial}\cap Q_i$.
Now
observe that 
\begin{equation}\label{ekahaj}
\begin{split}
&M_3(R)=\langle 1_{\Delta_{R_j}}\psi_{R,j},T(1_{\Delta_{Q_i}}\phi_{Q,i})\rangle
\\&=\langle 1_{\Delta_{R_j}^\mathcal{G}}\psi_{R,j},T(1_{\Delta_{Q_i}^\mathcal{G}}\phi_{Q,i})\rangle
-\langle 1_{\Delta_{R_j}^\partial}\psi_{R,j},T(1_{\Delta_{Q_i}^\mathcal{G}}\phi_{Q,i})\rangle
-\langle 1_{\Delta_{R_j}}\psi_{R,j},T(1_{\Delta_{Q_i}^\partial}\phi_{Q,i})\rangle\\
&=:\alpha_1(R)+\alpha_2(R)+\alpha_3(R).
\end{split}
\end{equation}
The terms in this decomposition depend on $\mathcal{D}^\star$.

In order to define $\epsilon$-boundary terms, we let $R\in\mathcal{D}'$ and write
\[
G_\epsilon=G_\epsilon(R) = \bigcup_{G\in \mathcal{G}(R)} \delta_G^\epsilon,\quad \delta_G^\epsilon=(1+\epsilon)G\setminus (1-\epsilon)G.
\]
We also write $\tilde G=G\setminus G_\epsilon$ if $G\in\mathcal{G}=\mathcal{G}(R)$. Define
\[\Delta'_{Q_i}=\Delta^\mathcal{G}_{Q_i} \cap G_\epsilon,\qquad \tilde \Delta_{Q_i}=\Delta^\mathcal{G}_{Q_i}\setminus G_\epsilon\] and
similarly for $\Delta_{R_j}^\mathcal{G}$. Then we have the disjoint unions
\[
\Delta_{Q_i}^\mathcal{G} = \Delta'_{Q_i}\cup \tilde \Delta_{Q_i},\qquad \Delta_{R_j}^\mathcal{G}= \Delta'_{R_j}\cup \tilde \Delta_{R_j}.
\]
Hence, we can write
\begin{equation}\label{alpha1}
\begin{split}
&\alpha_1(R)=\langle 1_{\Delta_{R_j}^\mathcal{G}}\psi_{R,j},T(1_{\Delta_{Q_i}^\mathcal{G}}\phi_{Q,i})\rangle
\\&=\langle 1_{\Delta'_{R_j}}\psi_{R,j},T(1_{\Delta_{Q_i}^\mathcal{G}}\phi_{Q,i})\rangle+ \langle 1_{\tilde \Delta_{R_j}}\psi_{R,j},T(1_{\Delta'_{Q_i}}\phi_{Q,i})\rangle
+\langle 1_{\tilde \Delta_{R_j}}\psi_{R,j},T(1_{\tilde \Delta_{Q_i}}\phi_{Q,i})\rangle
\\&=:A_1(R)+A_2(R)+A_3(R).
\end{split}
\end{equation}


\subsection*{Estimate for a non-boundary part}

We need to extract the non-boundary terms. This is done in the following
lemma which gives us a decomposition of $A_3(R)$; therein $A_3(R)-A_3'(R)$ is a non-boundary term.
The proof of the lemma uses the fact that there is an $L^\infty$-accretive system for $T^*$.

\begin{lem}\label{talkaa}
Let $R\in\mathcal{D}'$. Then
 $A_3(R)$ can be written as $A_3'(R)+\big(A_3(R)-A_3'(R)\big)$, where
\[
|A_3(R)-A_3'(R)|\lesssim C(r,\eta,\epsilon)\mu(Q_i\cap R_j)\]
and
there are functions $b_{R,G,j}:\R^N\to \C$, satisfying
$\|b_{R,G,j}\|_{L^\infty(\mu)}\lesssim 1$ if $R\in\mathcal{D}'$ and $G\in\mathcal{G}(R)$, such
that
\[
A_3'(R)=\sum_{\substack{G\in \mathcal{G}(R)\\\tilde G\subset \Delta_{Q_i}^\mathcal{G}\cap \Delta_{R_j}^\mathcal{G}}} \langle 1_{\tilde G}b_{R,G,j},T(1_{(1+\epsilon)\tilde G\setminus \tilde G}\phi_{Q,i})\rangle.
\]
Here $\tilde G=G\setminus G_\epsilon$ for every $G\in\mathcal{G}(R)$.
\end{lem}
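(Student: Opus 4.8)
Following Hyt\"onen--Martikainen \cite{hm}, the idea is to isolate inside $A_3(R)$ the genuinely ``diagonal'' interactions between $\psi_{R,j}$ and $\phi_{Q,i}$ restricted to the \emph{same} small cube $G\in\mathcal{G}(R)$ lying well inside $Q_i\cap R_j$, and to handle this diagonal through the single operator-theoretic input available here, namely the testing bound $\|T^\ast b_G^2\|_{L^\infty(\mu)}\le B$ for the $L^\infty$-accretive function $b_G^2$ of the cube $G$. Everything that is not one of the claimed summands of $A_3'(R)$ must be estimated by the kernel bounds \eqref{size}--\eqref{smooth} and the growth bound \eqref{mesas} \emph{alone}, because the error term in Lemma~\ref{talkaa} (and hence in Lemma~\ref{m3lem}) carries no factor $\|T\|$.

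\textbf{Step 1: reduction to diagonal interior cubes.} Expanding $1_{\tilde\Delta_{R_j}}=\sum_{G\in\mathcal{G}(R)}1_{\tilde\Delta_{R_j}\cap\tilde G}$ and $1_{\tilde\Delta_{Q_i}}=\sum_{G'\in\mathcal{G}(R)}1_{\tilde\Delta_{Q_i}\cap\tilde{G'}}$, the quantity $A_3(R)$ becomes a double sum of terms $\langle 1_{\tilde\Delta_{R_j}\cap\tilde G}\psi_{R,j},T(1_{\tilde\Delta_{Q_i}\cap\tilde{G'}}\phi_{Q,i})\rangle$. Since $\tilde G=(1-\epsilon)G$, the supports $\tilde G$ and $\tilde{G'}$ for $G\ne G'$ are separated by at least $\tfrac{\epsilon}{2}\ell(G)$; moreover, because $\Delta_{Q_i}^{\mathcal G},\Delta_{R_j}^{\mathcal G}$ are unions of cubes of $\mathcal{G}$, any $G$ that is not ``interior'' (i.e. $\tilde G\not\subset\Delta_{Q_i}^{\mathcal G}\cap\Delta_{R_j}^{\mathcal G}$) meets at most one of $\tilde\Delta_{Q_i}$, $\tilde\Delta_{R_j}$, so its only contributions are again between sets at distance $\gtrsim\epsilon\ell(G)$. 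All these off-diagonal and exterior terms are estimated by $|K(x,y)|\le|x-y|^{-d}$: for a fixed $x\in\tilde G$ one has $\int_{\{|x-y|\ge\frac{\epsilon}{2}\ell(G)\}\cap(Q_i\cap R_j)}|x-y|^{-d}\,d\mu(y)\lesssim\log\!\big(\operatorname{diam}(Q_i\cap R_j)/(\epsilon\ell(G))\big)\lesssim C(r,\eta,\epsilon)$, using $\operatorname{diam}(Q_i\cap R_j)\le(2+2^r)\ell(Q)$ and $\ell(G)=s\sim\eta\ell(Q)$; summing against $\|\psi_{R,j}\|_\infty\|\phi_{Q,i}\|_\infty\le1$ over the disjoint $\tilde G$'s, contained in $Q_i\cap R_j$, gives a total $\lesssim C(r,\eta,\epsilon)\mu(Q_i\cap R_j)$ with no use of the boundedness of $T$. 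These pieces go into $A_3(R)-A_3'(R)$, leaving the diagonal sum $\sum_{G:\tilde G\subset\Delta_{Q_i}^{\mathcal G}\cap\Delta_{R_j}^{\mathcal G}}\langle 1_{\tilde G}\psi_{R,j},T(1_{\tilde G}\phi_{Q,i})\rangle$ (for interior $G$ one has $\tilde\Delta_{R_j}\cap\tilde G=\tilde\Delta_{Q_i}\cap\tilde G=\tilde G$).

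\textbf{Step 2: the diagonal term via the accretive system for $T^\ast$.} For an interior cube $G$, subtract from $1_{\tilde G}\psi_{R,j}$ a bounded multiple of $b_G^2$: since $|\langle b_G^2\rangle_G|\ge\delta$, the function $b_{R,G,j}:=\psi_{R,j}-\langle\psi_{R,j}\rangle_G\langle b_G^2\rangle_G^{-1}b_G^2$ satisfies $\|b_{R,G,j}\|_{L^\infty(\mu)}\le1+\delta^{-1}\lesssim1$, and $1_{\tilde G}\psi_{R,j}=1_{\tilde G}b_{R,G,j}+\langle\psi_{R,j}\rangle_G\langle b_G^2\rangle_G^{-1}1_{\tilde G}b_G^2$. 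The part involving $1_{\tilde G}b_G^2$ is dispatched by writing $1_{\tilde G}b_G^2=b_G^2-1_{G\setminus\tilde G}b_G^2$, moving $T$ to $T^\ast$, using $\|T^\ast b_G^2\|_{L^\infty(\mu)}\le B$ together with $\|1_G\phi_{Q,i}\|_{L^1(\mu)}\lesssim\mu(G)$, and estimating the $1_{G\setminus\tilde G}b_G^2$ correction by kernel bounds on regions at distance $\gtrsim\epsilon\ell(G)$ (after the splitting $1_{\tilde G}\phi_{Q,i}=1_{(1+\epsilon)\tilde G}\phi_{Q,i}-1_{(1+\epsilon)\tilde G\setminus\tilde G}\phi_{Q,i}$, noting $(1+\epsilon)\tilde G=(1-\epsilon^2)G\subset G$). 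What remains from the $1_{\tilde G}b_{R,G,j}$ part is precisely $\langle 1_{\tilde G}b_{R,G,j},T(1_{(1+\epsilon)\tilde G\setminus\tilde G}\phi_{Q,i})\rangle$, the summand of $A_3'(R)$, plus a term $\langle 1_{\tilde G}b_{R,G,j},T(1_{(1+\epsilon)\tilde G}\phi_{Q,i})\rangle$ that is again reduced, via $1_{\tilde G}b_{R,G,j}=1_G b_{R,G,j}-1_{G\setminus\tilde G}b_{R,G,j}$, $\langle 1_G b_{R,G,j}\rangle_G=0$, and $T^\ast$ of accretive-system functions ($T^\ast b_{R^a}^2$, $T^\ast b_G^2$ bounded by $B$), to an error of size $C(r,\eta,\epsilon)\mu(G)$. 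Summing the errors over the disjoint interior cubes $G\subset Q_i\cap R_j$ yields $|A_3(R)-A_3'(R)|\lesssim C(r,\eta,\epsilon)\mu(Q_i\cap R_j)$, as required.

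\textbf{Main obstacle.} The whole difficulty is combinatorial--geometric bookkeeping: one must route \emph{every} comparison either through a pair of supports separated by a definite gap (so that \eqref{size} or \eqref{smooth} with \eqref{mesas} applies) or through a single invocation of $\|T^\ast b_G^2\|_{L^\infty(\mu)}\le B$, so that the error $A_3(R)-A_3'(R)$ never sees the a priori norm $\|T\|$. Fixing the exact auxiliary regions ($\tilde G$, the dilates $(1\pm\epsilon)\tilde G$, the annuli $(1+\epsilon)\tilde G\setminus\tilde G$) and the precise normalisation of $b_{R,G,j}$ so that the ``main'' part of the diagonal interaction collapses to the accretive test while the leftover is exactly the stated $A_3'(R)$ is the technical heart; it is the sole point where the hypothesis that $T^\ast$ admits an $L^\infty$-accretive system is used, and it is why the third random grid $\mathcal{D}^\star$ is introduced — $A_3'(R)$ still has adjacent supports, and only becomes a genuine $\epsilon$-boundary term (contributing the $C(r,\eta)\epsilon^{1/t}\|T\|$ part of Lemma~\ref{m3lem}) after one later averages over $\mathcal{D}^\star$.
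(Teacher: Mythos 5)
Your overall architecture (off-diagonal cubes of $\mathcal{G}$ by the kernel size bound, the diagonal through the accretive system for $T^*$, collar terms shipped into $A_3'$) is the same as the paper's, but your Step 2 does not close. After the splittings $1_{\tilde G}\psi_{R,j}=1_{\tilde G}b_{R,G,j}+c_G1_{\tilde G}b_G^2$ and $1_{\tilde G}b_G^2=b_G^2-1_{G\setminus\tilde G}b_G^2$ you are left with the term $\langle 1_{G\setminus\tilde G}b_G^2,\,T(1_{\tilde G}\phi_{Q,i})\rangle$, whose two supports touch (distance zero). This term is not of the form admitted in $A_3'(R)$ (there the tested function must be supported in $\tilde G$ and $T$ must act on the collar $(1+\epsilon)\tilde G\setminus\tilde G$); it cannot be estimated by \eqref{size} and \eqref{mesas}, because for a non-doubling $\mu$ the double integral of $|x-y|^{-d}$ over two touching sets is \emph{not} $O(\mu(Q_i\cap R_j))$ (mass can pile up on both sides of the common boundary at all scales — this is exactly why the paper only ever applies kernel bounds across a gap $\gtrsim_{\eta,\epsilon}\ell(G)$); and it cannot be routed through the testing hypothesis, which controls $T^*b_G^2$ but not $T^*(1_Eb_G^2)$ for a cutoff $E$. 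Your further splitting $1_{\tilde G}=1_{(1+\epsilon)\tilde G}-1_{(1+\epsilon)\tilde G\setminus\tilde G}$ makes things worse, since $(1+\epsilon)\tilde G$ overlaps $G\setminus\tilde G$ and the representation \eqref{action} is then not even available. The same defect recurs in your treatment of the $1_{\tilde G}b_{R,G,j}$ part: the leftover $\langle 1_{\tilde G}b_{R,G,j},T(1_{(1+\epsilon)\tilde G}\phi_{Q,i})\rangle$ has overlapping supports, and neither the mean-zero property of $1_Gb_{R,G,j}$ (useless without separation) nor the bounds on $T^*b^2_{R^a}$, $T^*b^2_G$ (which say nothing about $T^*$ of their cutoffs, nor about $T$ of the cutoff $1_{(1+\epsilon)\tilde G}\phi_{Q,i}$ paired with a non-accretive function) reduce it to a $C(r,\eta,\epsilon)\mu(G)$ error. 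Any estimate of these pieces would have to invoke $\|T\|_{\mathcal{L}(L^p(\mu;X))}$, which is precisely what the error term of the lemma must not contain.

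Two ingredients that make the paper's version work are absent from your argument. First, in the comparable-cube reduction \eqref{fcoa_l} the function $\phi_{Q,i}$ is itself one of the accretive functions of the system for $T$, so $\|T\phi_{Q,i}\|_{L^\infty(\mu)}\lesssim 1$; accordingly the paper splits the \emph{argument} of $T$ as $1_{\tilde G}\phi_{Q,i}=\phi_{Q,i}-1_{\R^N\setminus 5\tilde G}\phi_{Q,i}-1_{5\tilde G\setminus(1+\epsilon)\tilde G}\phi_{Q,i}-1_{(1+\epsilon)\tilde G\setminus\tilde G}\phi_{Q,i}$, handles the global term by this $L^\infty$ bound, the annulus $5\tilde G\setminus(1+\epsilon)\tilde G$ by the kernel across a genuine $\epsilon$-gap, and ships the last collar term into $A_3'$. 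Second, the far-field potential $\tau=T(1_{\R^N\setminus 5\tilde G}\phi_{Q,i})$ is shown via \eqref{smooth} to oscillate by $O(1)$ on $\tilde G$ and is replaced by the constant $\beta_{\tilde G}=\langle b_{\tilde G}/\mu(\tilde G),\tau\rangle$, where $b_{\tilde G}$ is the accretive function for $T^*$ of the set $\tilde G$ itself; the bound $\|T^*b_{\tilde G}\|_{L^\infty(\mu)}\lesssim 1$ is then used only against $1_{\tilde G}\phi_{Q,i}$, where supports match exactly, and the remaining collar term $\langle b_{\tilde G},T(1_{(1+\epsilon)\tilde G\setminus\tilde G}\phi_{Q,i})\rangle$ is again of the admissible $A_3'$ form. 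Without these two moves (neither $\|T\phi_{Q,i}\|_\infty\lesssim1$ nor the constant-approximation of the far field appears in your proposal), the decomposition cannot avoid adjacent-support terms outside $A_3'$. A smaller point: $\tilde G=G\setminus G_\epsilon$ is not exactly $(1-\epsilon)G$, and your Step 1 claim that a non-interior $G$ only produces contributions between sets at distance $\gtrsim\epsilon\ell(G)$ needs the precise construction of $\Delta^{\mathcal G}_{Q_i},\Delta^{\mathcal G}_{R_j}$ (the paper's dichotomy in the $G=H$ case), not just disjointness inside $G$.
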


\begin{proof}
We expand
$A_3(R)$ into a double series, where
a typical summand is of the
form
\begin{equation}\label{typsum}
\langle 1_G 1_{\tilde \Delta_{R_j}}\psi_{R,j},T(1_{H}1_{\tilde \Delta_{Q_i}}\phi_{Q,i})\rangle,\quad G,H\in \mathcal{G}.
\end{equation}
Let us begin with estimating these quantities, and
there are two 
cases to be treated.

First, if $G\not=H$, then 
\[\ell(Q_i)\lesssim C(\eta,\epsilon)\,\mathrm{dist}(G\cap \tilde \Delta_{R_j},H\cap \tilde \Delta_{Q_i}).\]
Hence, by \eqref{action} and \eqref{size},
\begin{equation}\label{kestim}
\begin{split}
&|\langle 1_G 1_{\tilde \Delta_{R_j}}\psi_{R,j},T(1_{H}1_{\tilde \Delta_{Q_i}}\phi_{Q,i})\rangle|
\\&=\bigg|\int_{G\cap \tilde \Delta_{R_j}} \int_{H\cap \tilde \Delta_{Q_i}} \psi_{R,j}(x)K(x,y)\phi_{Q,i}(y)d\mu(y)d\mu(x)\bigg|\\
&\lesssim C(\eta,\epsilon)\frac{\mu(Q_i\cap R_j)\mu(Q_i\cap R_j)}{\ell(Q_i)^d}\lesssim C(\eta,\epsilon)\mu(Q_i\cap R_j).
\end{split}
\end{equation}
Here we also used the facts that $\tilde\Delta_{R_j}\cup \tilde \Delta_{Q_i}\subset Q_i\cap R_j$
and $\mu(Q_i)\lesssim \ell(Q_i)^d$.

Then we consider the case $G=H$.  Using that boundary of $\Delta_{Q_i}^\mathcal{G}\cap \Delta_{R_j}^\mathcal{G}$ doesn't
intersect the interior of $G$, we see that
\begin{equation}\label{dsh}
\langle 1_G 1_{\tilde \Delta_{R_j}}\psi_{R,j},T(1_{H}1_{\tilde \Delta_{Q_i}}
\phi_{Q,i})\rangle
=\begin{cases}
\langle 1_{\tilde G} \psi_{R,j},T(1_{\tilde G}\phi_{Q,i})\rangle,\quad &\text{if }\tilde G\subset \Delta_{Q_i}^\mathcal{G}\cap \Delta_{R_j}^\mathcal{G};\\
0&\text{otherwise}.
\end{cases}
\end{equation}
In what follows, we assume that $\tilde G\subset \Delta_{Q_i}^\mathcal{G}\cap \Delta_{R_j}^\mathcal{G}$.

Consider the decomposition
\begin{equation}\label{hang}
\begin{split}
&\langle 1_{\tilde G} \psi_{R,j},T(1_{\tilde G}\phi_{Q,i})\rangle
=\langle 1_{\tilde G} \psi_{R,j},T(\phi_{Q,i})\rangle
-\langle 1_{\tilde G} \psi_{R,j},T(1_{\R^N\setminus 5\tilde G}\phi_{Q,i})\rangle\\
&\quad-\langle 1_{\tilde G} \psi_{R,j},T(1_{5\tilde G\setminus(1+\epsilon) \tilde G)}\phi_{Q,i})\rangle-\langle 1_{\tilde G} \psi_{R,j},T(1_{(1+\epsilon)\tilde G\setminus \tilde G}\phi_{Q,i})\rangle.
\end{split}
\end{equation}
The fourth term in the right hand side will only contribute to $A_3'(R)$.
The first and third terms in the right hand side are estimated as follows.

Using both \eqref{gcoa_l} and \eqref{fcoa_l} together with \eqref{accre} and \eqref{accreass}, we obtain the estimate
$\|T(\phi_{Q,i})\|_{L^\infty(\mu)}+\|\psi_{R,j}\|_{L^\infty(\mu)}\lesssim 1$. In particular,
\[
|\langle 1_{\tilde G} \psi_{R,j},T(\phi_{Q,i})\rangle|\lesssim \mu(\tilde G)\le \mu (Q_i\cap R_j).
\]
Then we consider the third term in the right hand side of \eqref{hang}. Since the interior of $G$ intersects $\Delta_{Q_i}^\mathcal{G}\cap \Delta_{R_j}^\mathcal{G}$, we see that $5\tilde G\subset 5G\subset Q_i\cap R_j$. For this reason we can repeat
the argument \eqref{kestim} which, in turn, gives
\[
|\langle 1_{\tilde G} \psi_{R,j},T(1_{5\tilde G\setminus(1+\epsilon) \tilde G)}\phi_{Q,i})\rangle|
\lesssim C(\eta,\epsilon)\mu(Q_i\cap R_j).
\]

It remains to consider the second term in the right hand side of \eqref{hang}. 
Part of it will contribute to $A_3(R)'$. We begin with
certain preparations, and first denote
\[\tau:=T(1_{\R^N\setminus 5\tilde G}\phi_{Q,i}).\]
Using \eqref{smooth}, reasoning as in \eqref{ssss}
with $R_1$ replaced by $5\tilde G$, and observing
the fact that  $\|\phi_{Q,i}\|_{L^\infty(\mu)}\lesssim 1$, 
we see that 
\begin{equation}\label{keres}
|\tau(x)-\tau(y)|\lesssim 1,\quad x,y\in \tilde G.
\end{equation}
{\em We use the fact that there exists an $L^\infty$-accretive system for $T^*$}.
Let $b_{\tilde G}$ be a function which is
supported on $\tilde G$, whose average over $\tilde G$ is one, and
\[\|b_{\tilde G}\|_{L^\infty(\mu)} + \|T^*(b_{\tilde G})\|_{L^\infty(\mu)} \lesssim 1.\]
Let
us denote $\beta_{\tilde G}=\langle b_{\tilde G}/\mu(\tilde G),\tau\rangle$. By
properties of $b_{\tilde G}$ and  \eqref{keres},
\begin{equation}\label{kerkay}
|\tau(x)-\beta_{\tilde G}|=|\langle b_{\tilde G}/\mu(\tilde G),\tau(x)-\tau\rangle|\lesssim 1,\quad x\in \tilde G.
\end{equation}

After these preparations, we write
\begin{align*}
\langle 1_{\tilde G} \psi_{R,j},T(1_{\R^N\setminus 5\tilde G}\phi_{Q,i})\rangle&
= \langle  1_{\tilde G} \psi_{R,j},\tau-\beta_{\tilde G}\rangle  +\langle  1_{\tilde G} \psi_{R,j},\beta_{\tilde G}\rangle.
\end{align*}
By \eqref{kerkay} and \eqref{gcoa_l}, we have the estimate $|\langle  1_{\tilde G} \psi_{R,j},\tau-\beta_{\tilde G}\rangle |\lesssim \mu(\tilde G)\le \mu(Q_i\cap R_j)$.
To treat the term $\langle  1_{\tilde G} \psi_{R,j},\beta_{\tilde G}\rangle=\beta_{\tilde G}\langle  1_{\tilde G} \psi_{R,j},1\rangle$, we write
\begin{equation}\label{montabe}
\begin{split}
\beta_{\tilde G}=\bigg\langle \frac{b_{\tilde G}}{\mu(\tilde G)},\tau\bigg\rangle&=\bigg\langle \frac{b_{\tilde G}}{\mu(\tilde G)},T(\phi_{Q,i})\bigg\rangle
-\bigg\langle \frac{b_{\tilde G}}{\mu(\tilde G)},T(1_{5\tilde G\setminus (1+\epsilon)\tilde G}\phi_{Q,i})\bigg\rangle
\\&\qquad-\bigg\langle \frac{b_{\tilde G}}{\mu(\tilde G)},T(1_{(1+\epsilon)\tilde G\setminus \tilde G}\phi_{Q,i})\bigg\rangle
-\bigg\langle \frac{b_{\tilde G}}{\mu(\tilde G)},T(1_{\tilde G}\phi_{Q,i})\bigg\rangle.
\end{split}
\end{equation}
Observe that the first and second term on the right hand side of \eqref{montabe} are
bounded in absolute value by a constant $C\lesssim C(\eta,\epsilon)$. 
This follows from
the properties of $b_{\tilde G}$ and the fact $\|T(\phi_{Q,i})\|_{L^\infty(\mu)}\lesssim 1$ for the first term
and,  by
reasoning as in \eqref{kestim}, for the second term.

For the last term in the right hand side of \eqref{montabe}, we use  $\|T^*(b_{\tilde G})\|_{L^\infty(\mu)} \lesssim 1$ for
\[
\bigg|\bigg\langle \frac{b_{\tilde G}}{\mu(\tilde G)},T(1_{\tilde G}\phi_{Q,i})\bigg\rangle\bigg|=|\langle T^*(b_{\tilde G}),1_{\tilde G}\phi_{Q,i})\rangle /\mu(\tilde G)|
\lesssim 1.
\]

Regrouping the terms shows that
$\langle 1_{\tilde G} \psi_{R,j},T(1_{\R^N\setminus 5\tilde G}\phi_{Q,i})\rangle$
can be expressed as a sum of two terms, the first one being
bounded in absolute value by a constant $C\lesssim C(\eta,\epsilon)\mu(Q_i\cap R_j)$, and
the second one being 
\[
-\langle b_{\tilde G},T(1_{(1+\epsilon)\tilde G\setminus \tilde G}\phi_{Q,i})\rangle\langle 1_{\tilde G}\psi_{R,j},1/\mu(\tilde G)\rangle.
\]

As said in the beginning of the proof, we expand $A_3(R)$ by using the cubes in $\mathcal{G}$ as follows:
\begin{align*}
A_3(R)&=\langle 1_{\tilde \Delta_{R_j}}\psi_{R,j},T(1_{\tilde \Delta_{Q_i}}\phi_{Q,i})\rangle
\\&= \sum_{\substack{G,H\in \mathcal{G}\\G\not=H}} \langle 1_G 1_{\tilde \Delta_{R_j}}\psi_{R,j},T(1_{H}1_{\tilde \Delta_{Q_i}}\phi_{Q,i})\rangle
+\sum_{G\in \mathcal{G}}
\langle  1_G 1_{\tilde \Delta_{R_j}}\psi_{R,j},T(1_{G}1_{\tilde \Delta_{Q_i}}\phi_{Q,i})\rangle.
\end{align*}
In both of the series above, 
the finite number of summands depends on
$N$ and $\eta$. Hence, using the estimates
above for a typical summand \eqref{typsum}, we get
\begin{align*}
A_3(R)&= A_3'(R)+(A_{3}(R)-A_3'(R)),
\end{align*}
where $|A_{3}(R)-A_3'(R)|\lesssim C(\eta,\epsilon)\mu(Q_i\cap R_j)$ and $A_3'(R)$ is the following quantity:
\begin{align*}
\sum_{\substack{G\in \mathcal{G}\\\tilde G\subset \Delta_{Q_i}^\mathcal{G}\cap \Delta_{R_j}^\mathcal{G}}} \Big[
\langle b_{\tilde G},T(1_{(1+\epsilon)\tilde G\setminus \tilde G}\phi_{Q,i})\rangle\langle 1_{\tilde G}\psi_{R,j},1/\mu(\tilde G)\rangle
-
\langle 1_{\tilde G}\psi_{R,j},T(1_{(1+\epsilon)\tilde G\setminus \tilde G}\phi_{Q,i})\rangle\Big].
\end{align*}
It is straightforward to verify that $A_3'(R)$ is of the required form.
\end{proof}

The summation involving the non-boundary terms $A_{3}(R)-A_{3}'(R)$, given by Lemma
\ref{talkaa}, is controlled by the following result. 
It gives a uniform estimate for the sum with respect to all systems of dyadic cubes $\mathcal{D}^\star$; in particular, no
expectations over $\mathcal{D}^\star$ are needed.

\begin{lem}\label{alpha4}
Let $f\in L^p(\R^N,\mu;X)$ and $g\in L^q(\R^N,\mu;X^*)$. Then estimate
\begin{align*}
\bigg|
\sum_{\substack{R\in\mathcal{D}'}} \langle g_{R}\rangle_{R_j} \big(A_3(R)-A_3'(R)\big) \langle f_Q\rangle_{Q_i}
\bigg|
\lesssim C(r,\eta,\epsilon)\|g\|_{q}\|f\|_{p}
\end{align*}
is valid for every dyadic system $\mathcal{D}^\star$.
\end{lem}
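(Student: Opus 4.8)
The plan is to start from the pointwise bound $|A_3(R)-A_3'(R)|\lesssim C(r,\eta,\epsilon)\,\mu(Q_i\cap R_j)$ supplied by Lemma~\ref{talkaa}, so that the quantity to be estimated is controlled by $C(r,\eta,\epsilon)\sum_{R}\mu(Q_i\cap R_j)\,\big|\langle \langle g_R\rangle_{R_j},\langle f_Q\rangle_{Q_i}\rangle\big|$, the sum running over those $R\in\mathcal{D}'$ for which $Q=Q(R)$ is defined; recall that $Q\sim R$, so $\ell(Q),\ell(R),D(Q,R)$ are comparable up to constants depending on $r$. Observe that this scalar coefficient does \emph{not} obey the Calder\'on--Zygmund bound \eqref{ades} (that would require $\mu(Q_i\cap R_j)\lesssim\mu(R_j)\mu(Q_i)/\ell(Q)^d$, which fails when the measures are small), so Lemma~\ref{tut} is not directly available and a hands-on argument is needed.

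First I would write $\langle g_R\rangle_{R_j}$ and $\langle f_Q\rangle_{Q_i}$ as the (constant) values of $g_k$ on $R_j$ and of $f_l$ on $Q_i$, which is legitimate since $E_{k-1}g_k=g_k$ and $E_{l-1}f_l=f_l$ (here $2^k=\ell(R)$, $2^l=\ell(Q)$). Dominating the $X^*$--$X$ pairing by the lattice pairing of absolute values, $|\langle\phi,\xi\rangle|\le\langle|\phi|,|\xi|\rangle$, and using that $|g_k|$ is constant on $R_j\supset Q_i\cap R_j$ while $|f_l|$ is constant on $Q_i\supset Q_i\cap R_j$, one gets
\[
\mu(Q_i\cap R_j)\,\big|\langle\langle g_R\rangle_{R_j},\langle f_Q\rangle_{Q_i}\rangle\big|\le\int_{Q_i\cap R_j}\langle|g_k(y)|,|f_l(y)|\rangle\,dy .
\]
Splitting the sum over $R$ according to the finitely many values $n=\log_2(\ell(R)/\ell(Q(R)))\in\{0,\dots,r\}$ allowed by \eqref{rems}, and using that for each fixed $k$ the cubes $\{R_j:R\in\mathcal{D}'_k\}$ are pairwise disjoint (so $\sum_{R\in\mathcal{D}'_k}\int_{Q(R)_i\cap R_j}(\cdots)\le\int_{\R^N}(\cdots)$), the whole expression is bounded by $C(r,\eta,\epsilon)\sum_{n=0}^{r}\sum_{k\in\Z}\int_{\R^N}\langle|g_k(y)|,|f_{k-n}(y)|\rangle\,dy$. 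I would then apply Cauchy--Schwarz in the index $k$ \emph{inside} the lattice pairing, obtaining $\sum_k\langle|g_k(y)|,|f_{k-n}(y)|\rangle\le\big\|\big(\sum_k|g_k(y)|^2\big)^{1/2}\big\|_{X^*}\big\|\big(\sum_k|f_{k-n}(y)|^2\big)^{1/2}\big\|_{X}$, followed by H\"older in $y$ and the reindexing $k-n\mapsto k$, which yields the bound $C(r,\eta,\epsilon)\,\big\|\big(\sum_k|g_k|^2\big)^{1/2}\big\|_{L^q(\mu;X^*)}\big\|\big(\sum_k|f_k|^2\big)^{1/2}\big\|_{L^p(\mu;X)}$.

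Finally, since $X$ and $X^*$ are UMD function lattices, the Rademacher/square-function equivalence of \cite[p.~212]{RdF} (already used in Proposition~\ref{ylcarl}) identifies these two square functions with $\big\|\sum_k\epsilon_k g_k\big\|_{L^q(\mathbf{P}\otimes\mu;X^*)}$ and $\big\|\sum_k\epsilon_k f_k\big\|_{L^p(\mathbf{P}\otimes\mu;X)}$, which for the functions listed in \eqref{gcoa_l}--\eqref{fcoa_l} are $\lesssim\|g\|_q$ and $\lesssim\|f\|_p$ by estimates already established for the separated and deeply contained cubes (Theorem~\ref{nests}, Lemma~\ref{carut}, Lemma~\ref{carut2}, the contraction principle together with $|E_kb_k^a|\ge\delta^2$; cf.\ \eqref{fgest} and \eqref{ges}). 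This gives the asserted bound, with a constant depending only on $r,\eta,\epsilon$ (and $p,X$) and, crucially, \emph{not} on $\|T\|_{\mathcal{L}(L^p(\mu;X))}$, since only the qualitative $L^\infty$-accretivity of $T^*$ entered---through Lemma~\ref{talkaa}. The crux I expect is precisely this passage from the scalar bound on $A_3(R)-A_3'(R)$ to a vector-valued estimate: the naive route of bounding the pairing by $\|g_k\|_{X^*}\|f_l\|_X$ and invoking cotype would force $X^*$ and $X$ to have cotype $\le q$ and $\le p$ respectively, which need not hold, whereas performing the Cauchy--Schwarz at the lattice level---so that it feeds the $\ell^2$-valued square function directly rather than an $\ell^q$- or $\ell^p$-power---is what keeps the argument within the stated hypotheses.
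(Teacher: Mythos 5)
Your proof is correct, but it follows a genuinely different route from the paper. You pass immediately to absolute values and exploit the lattice structure: dominating the duality pairing by $\langle|g_k(y)|,|f_l(y)|\rangle$ on $Q_i\cap R_j$ (legitimate because $g_k$, $f_l$ are constant there and $X^*$ acts by integral pairing), summing with disjointness of the $R_j$, performing Cauchy--Schwarz in $k$ at the lattice level, and converting the resulting $\ell^2$-square functions back to randomized norms via the Khintchine--Maurey equivalence of \cite[p.~212]{RdF}, after which the bounds $\|\sum_k\epsilon_k g_k\|_q\lesssim\|g\|_q$, $\|\sum_k\epsilon_k f_k\|_p\lesssim\|f\|_p$ for the choices \eqref{gcoa_l}--\eqref{fcoa_l} are exactly the ones established earlier (Lemmas \ref{carut}, \ref{carut2}, Theorem \ref{nests}, contraction principle with \eqref{a_iso}). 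The paper instead randomizes the bilinear sum directly: writing it as $\iint \sum_S\epsilon_S 1_{S_j}\langle g_S\rangle_{S_j}\cdot\sum_R\epsilon_R 1_{Q_i}\tfrac{A_4(R)}{\mu(Q_i\cap R_j)}\langle f_Q\rangle_{Q_i}\,d\mathbf{P}\,d\mu$ (the independent signs pick out the diagonal and $\int 1_{S_j}1_{Q_i}\,d\mu$ restores the factor $\mu(Q_i\cap R_j)$), then applying H\"older between the two random series and removing the uniformly bounded scalars $A_4(R)/\mu(Q_i\cap R_j)$ by the contraction principle, before invoking the same square-function estimates. Your observation that Lemma \ref{tut} is unavailable here (the coefficient $\mu(Q_i\cap R_j)$ does not satisfy \eqref{ades} for small non-doubling measures) is accurate, and so is your remark that the constant is independent of $\|T\|$. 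The trade-off: your argument is more elementary (no randomization/decoupling of the bilinear form), but it is wedded to the UMD-lattice hypothesis through the pointwise domination and lattice Cauchy--Schwarz, whereas the paper's contraction-principle argument uses no lattice structure at this point and therefore transfers verbatim to general UMD spaces and, via $\mathcal{R}$-boundedness, to the operator-valued kernels of Theorem \ref{mainth_operator}, where $A_3(R)-A_3'(R)\in\mathcal{L}(X)$ and absolute values of the pairing are no longer available.
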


\begin{proof}
Let us denote $A_4(R):=A_{3}(R)-A_{3}'(R)$.
By randomizing and using H\"older's inequality,
\begin{equation}\label{estnormad}
\begin{split}
&\bigg|\sum_{R\in\mathcal{D}'} \langle g_R\rangle_{R_j}A_4(R)\langle f_Q\rangle_{Q_i}\bigg|\\
&=\bigg|\iint_{\Omega\times\R^N} \sum_{S\in\mathcal{D}'}
\epsilon_S  1_{S_j}(x)\langle g_S\rangle_{S_j}\sum_{R\in\mathcal{D}'}
\epsilon_R 1_{Q_i}(x)\frac{A_4(R)}{\mu(Q_i\cap R_j)}\langle f_Q\rangle_{Q_i}d\mathbf{P}(\epsilon)d\mu(x)\bigg|\\
&\le \bigg\| \sum_{S\in\mathcal{D}'} \epsilon_S 1_{S_j}\langle g_S\rangle_{S_j}\bigg\|_{L^{q}(\R^N,\mathbf{P}\otimes\mu;X^*)} \bigg\|
\sum_{R\in\mathcal{D}'}\epsilon_R
1_{Q_i}\frac{A_4(R)}{\mu(Q_i\cap R_j)}\langle f_Q\rangle_{Q_i}\bigg\|_{L^p(\R^N,\mathbf{P}\otimes \mu;X)}.
\end{split}
\end{equation}
Note that
\[
1_{S_j}\langle g_S\rangle_{S_j} = 1_{S_j} \langle g_k\rangle_{S_j} = 1_{S_j}ÊE_{k-1} g_k = 1_{S_j}Êg_k,\quad S\in\mathcal{D}'_k.
\]
Hence, by using also the contraction principle, we have
\[
\bigg\| \sum_{S\in\mathcal{D}'} \epsilon_S 1_{S_j}\langle g_S\rangle_{S_j}\bigg\|_{L^{q}(\R^N,\mathbf{P}\otimes\mu;X^*)}\lesssim \bigg\|\sum_{k=-\infty}^\infty \epsilon_k g_{k}\bigg\|_{L^q(\R^N,\mathbf{P}\otimes\mu;X^*)}
\lesssim \|g\|_{L^q(\R^N,\mu;X^*)}.
\]
In the last step we reasoned as in the end of Section \ref{nested}.

Rewrite the second summation in the last line of \eqref{estnormad} in terms of $\mathcal{D}$. Then using the contraction principle and 
the fact that $|A_4(R)|\lesssim C(r,\eta,\epsilon)\mu(Q_i\cap R_j)$, given by Lemma \ref{talkaa}, results in estimates
\begin{align*}
&\bigg\|\sum_{R\in\mathcal{D}'}\epsilon_R
1_{Q_i}\frac{A_4(R)}{\mu(Q_i\cap R_j)}\langle f_Q\rangle_{Q_i}\bigg\|_{L^p(\R^N,\mathbf{P}\otimes \mu;X)}\\
&\lesssim C(r,\eta,\epsilon)\bigg\|\sum_{Q\in\mathcal{D}}\epsilon_Q
1_{Q_i}\langle f_Q\rangle_{Q_i}\bigg\|_{L^p(\R^N,\mathbf{P}\otimes \mu;X)}
\lesssim C(r,\eta,\epsilon)\bigg\|\sum_{k=-\infty}^\infty \epsilon_k f_{k}\bigg\|_{L^p(\R^N,\mathbf{P}\otimes\mu;X)}.
\end{align*}
Reasoning as in the end of Section \ref{nested} finishes the proof.
\end{proof}

\subsection*{Estimate for $\epsilon$-boundary terms}
The following lemma controls summation involving the $\epsilon$-boundary terms 
\[A_1(R)+A_2(R)+A_{3}'(R).\]  Taking the
expectations over the dyadic system $\mathcal{D}^\star$ is invaluable here and, on the other hand, this is the only place where 
these expectations are required. Elsewhere we obtain uniform estimates over these
systems.

\begin{lem}\label{aep}
Suppose that $s\in [2,\infty)$ is such that both $X$ and $X^*$ have cotype $s$. Let $t>(s\vee q)\vee p$ be a positive real number. Then
\begin{align*}
&\mathbf{E}_{\mathcal{D}^\star} \bigg|
\sum_{\substack{R\in\mathcal{D}'}} \langle g_{R}\rangle_{R_j} (A_1(R)+A_2(R)+A_{3}'(R)) \langle f_Q\rangle_{Q_i}
\bigg|\\&\qquad\qquad
\lesssim C(r,\eta)\epsilon^{1/t} \|T\|_{\mathcal{L}(L^p(\mu;X))}\|g\|_{L^q(\mu;X^*)}\|f\|_{L^p(\mu;X)}
\end{align*}
for every $f\in L^p(\R^N,\mu;X)$ and $q\in L^q(\R^N,\mu;X^*)$.
\end{lem}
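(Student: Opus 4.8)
The plan is to reduce the sum over the three $\epsilon$-boundary terms $A_1(R)+A_2(R)+A_3'(R)$ to an estimate that isolates an indicator of a thin set $G_\epsilon$, whose $\mu$-measure is controlled by $\epsilon$ times the full measure of the relevant cube \emph{on average over $\mathcal{D}^\star$}. First I would recall the geometric setup: each of $A_1(R)$, $A_2(R)$, $A_3'(R)$ involves the operator $T$ paired against functions one of which is supported in $G_\epsilon(R)=\bigcup_{G\in\mathcal{G}(R)}\delta_G^\epsilon$ (for $A_1$ and $A_3'$) or against $\psi_{R,j}$ localized to a set adapted to the boundary of the $\mathcal{G}$-cubes. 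Since we no longer have the structure that makes the separated/Carleson machinery applicable, we must pay a factor of $\|T\|_{\mathcal{L}(L^p(\mu;X))}$; the whole point is to extract a small factor $\epsilon^{1/t}$ to compensate. I would therefore write, schematically,
\[
\Big|\sum_{R\in\mathcal{D}'}\langle g_R\rangle_{R_j}\big(A_1(R)+A_2(R)+A_3'(R)\big)\langle f_Q\rangle_{Q_i}\Big|
\le \big|\langle \tilde g,T\tilde f\rangle\big|
\le \|T\|_{\mathcal{L}(L^p(\mu;X))}\,\|\tilde g\|_{L^q}\,\|\tilde f\|_{L^p},
\]
where $\tilde f$ and $\tilde g$ are the ``repackaged'' functions built from the $f_Q,g_R$, the bounded functions $\phi_{Q,i},\psi_{R,j},b_{R,G,j}$, and the indicators $1_{G_\epsilon(R)}$, $1_{\Delta^\partial}$, etc., distributed across the disjoint cubes $R\in\mathcal{D}'_k$ (and their $\mathcal{G}$-subcubes). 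The disjointness of the $R_j$'s at each scale and of the $G\in\mathcal{G}(R)$ lets me pass to a randomized/square-function formulation exactly as in the proof of Lemma \ref{alpha4} and the paraproduct section, so that, after the contraction principle, $\|\tilde f\|_{L^p}$ and $\|\tilde g\|_{L^q}$ reduce to $\|\sum_k\epsilon_k f_k\|_{L^p(\mathbf{P}\otimes\mu;X)}$ and $\|\sum_k\epsilon_k g_k\|_{L^q(\mathbf{P}\otimes\mu;X^*)}$ — multiplied, however, by a factor coming from the $L^t$-size of the indicator-weighted pieces.

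The key step — and the place where the expectation over $\mathcal{D}^\star$ enters — is the bound on the $L^t$-norm of the indicator $1_{G_\epsilon(R)}$, or more precisely the Carleson-type quantity
\[
\mathbf{E}_{\mathcal{D}^\star}\Big(\frac{1}{\mu(Q_i\cap R_j)}\int_{Q_i\cap R_j} 1_{G_\epsilon(R)}\,d\mu\Big)\lesssim \epsilon,
\]
which holds because, for a fixed point $x$ and a fixed side length $s$, the probability (over the shift $\beta^\star$) that $x$ falls within relative distance $\epsilon$ of the boundary of its $\mathcal{D}^\star$-cube of side $s$ is $O(\epsilon)$ in each of the $N$ coordinates, hence $O(\epsilon)$ overall; see the standard ``good-boundary'' estimate in \cite{NTV} and the analogous computation in Lemma \ref{nmeas}. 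Combining this with Hölder's inequality in the form $\|1_{G_\epsilon}h\|_{L^t}\le \|h\|_{L^\infty}\|1_{G_\epsilon}\|_{L^t}$ and Proposition \ref{improved} (which is where the cotype hypothesis on $X$ and $X^*$ and the condition $t>(s\vee q)\vee p$ are used, to convert the square function of the indicator-weighted terms into $\sup_j\|1_{G_\epsilon,j}\|_{L^t}\lesssim \epsilon^{1/t}$ times a square function of the unweighted terms), I get the gain of $\epsilon^{1/t}$. The residual factor $C(r,\eta)$ absorbs the finitely many ($N,\eta$-dependent) summands in the $\mathcal{G}$-expansion and the scale/shift reindexing. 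Finally, reasoning exactly as at the end of Section \ref{nested}, $\|\sum_k\epsilon_k f_k\|_{L^p}\lesssim\|f\|_{L^p(\mu;X)}$ and $\|\sum_k\epsilon_k g_k\|_{L^q}\lesssim\|g\|_{L^q(\mu;X^*)}$ for each admissible choice of $(f_k,\phi_{Q,i})$ and $(g_k,\psi_{R,j})$ in \eqref{gcoa_l}–\eqref{fcoa_l}, which closes the estimate.

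The main obstacle I anticipate is the bookkeeping needed to realize the sum $\sum_R\langle g_R\rangle_{R_j}(A_1+A_2+A_3')\langle f_Q\rangle_{Q_i}$ genuinely as a single pairing $\langle\tilde g,T\tilde f\rangle$ with $\tilde f,\tilde g$ of controlled $L^p$, $L^q$ norms: one has to check that, across all $R$ at a common scale and all $G\in\mathcal{G}(R)$, the pieces $1_{G_\epsilon(R)}\phi_{Q,i}$ (resp.\ the $\psi$-side pieces) have pairwise disjoint supports, so that no loss occurs in passing from the sum of pairings to one pairing, and that the randomization decouples the $R$-sum from the $\mathcal{G}$-sum correctly — this is where the careful choices in the definition of $\Delta^\mathcal{G}_{Q_i},\Delta^\mathcal{G}_{R_j}$ (the enlargement so that $5G\subset Q_i\cap R_j$ and the boundary of $\Delta^\mathcal{G}_{Q_i}\cap\Delta^\mathcal{G}_{R_j}$ avoids cube interiors) pay off. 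The extraction of the $\epsilon^{1/t}$ factor via Proposition \ref{improved} is routine once the $L^t$-Carleson bound on $1_{G_\epsilon}$ is in hand, but getting that Carleson bound in the vector-valued-weighted form (rather than just pointwise in $x$) requires the expectation $\mathbf{E}_{\mathcal{D}^\star}$ to be taken \emph{inside} the right norm, which is exactly why the lemma is stated with $\mathbf{E}_{\mathcal{D}^\star}$ out front.
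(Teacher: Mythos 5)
Your proposal is essentially the paper's own argument: randomize the $R$-sum, apply H\"older and extract $\|T\|_{\mathcal{L}(L^p(\mu;X))}$, dominate the thin pieces pointwise by $1_{\delta^\epsilon(k)}$ times the cube indicators via the contraction principle, and gain $\epsilon^{1/t}$ from the pointwise-in-$x$ probability $\mathbf{P}_{\beta^\star}(x\in\delta^\epsilon(k))\lesssim C(r,\eta)\epsilon$ through Proposition \ref{improved} (this is where the cotype of $X$, $X^*$ and $t>(s\vee q)\vee p$ enter), with the finitely many $\mathcal{G}$-summands absorbed into $C(r,\eta)$ and the remaining square functions bounded as at the end of Section \ref{nested}. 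Only a cosmetic remark: the displayed averaged-measure bound $\mathbf{E}_{\mathcal{D}^\star}\mu(G_\epsilon\cap Q_i\cap R_j)/\mu(Q_i\cap R_j)\lesssim\epsilon$ is not the form actually used; the operative estimate is the fixed-$x$ bound $\sup_k\|1_{\delta^\epsilon(k)}(x)\|_{L^t(\Omega^\star)}\lesssim C(r,\eta)\epsilon^{1/t}$, which you also state and which is exactly how the paper proceeds.
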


\begin{proof}
First we focus on the sum involving the terms $A_1$; these are
defined in \eqref{alpha1}. Randomize and use H\"older's inequality for the estimate
\begin{equation}\label{ranes}
\begin{split}
& \bigg|
\sum_{\substack{R\in\mathcal{D}'}} \langle g_{R}\rangle_{R_j} \langle 1_{\Delta'_{R_j}}\psi_{R,j},T(1_{\Delta_{Q_i}^\mathcal{G}}\phi_{Q,i})\rangle\langle f_Q\rangle_{Q_i}\bigg|\\
&=\bigg|\int_\Omega \Big\langle \sum_{S\in\mathcal{D}'} \epsilon_S
1_{\Delta'_{S_j}}\psi_{S,j}\langle g_{S}\rangle_{S_j},T\Big(\sum_{R\in\mathcal{D}'}\epsilon_R1_{\Delta_{Q_i}^\mathcal{G}}\phi_{Q,i}\langle f_Q\rangle_{Q_i} \Big)
 \Big\rangle d\mathbf{P}(\epsilon)\bigg|\\
&\le \bigg\| \sum_{S\in\mathcal{D}'} \epsilon_S1_{\Delta'_{S_j}}\psi_{S,j}\langle g_{S}\rangle_{S_j}\bigg\|_{L^{q}(\mathbf{P}\otimes\mu;X^*)} \bigg\|T\Big(\sum_{R\in\mathcal{D}'}\epsilon_R1_{\Delta_{Q_i}^\mathcal{G}}\phi_{Q,i}\langle f_Q\rangle_{Q_i}\Big)\bigg\|_{L^p(\mathbf{P}\otimes \mu;X)}.
\end{split}
\end{equation}
First extract the operator norm from the second factor
and index the summation in terms of $\mathcal{D}$. Then, by
using the contraction principle and estimate \[
|1_{\Delta_{Q_i}^\mathcal{G}}\phi_{Q,i}| \le 1_{Q_i}\] which is valid $\mu$-almost everywhere, we see that
the second factor in the last line of \eqref{ranes} is bounded (up to a constant multiple) by
\begin{align*} 
\|T\|_{\mathcal{L}(L^p(\mu;X))}\bigg\|\sum_{k=-\infty}^\infty \epsilon_k f_{k}\bigg\|_{L^p(\mathbf{P}\otimes\mu;X)}\lesssim \|T\|_{\mathcal{L}(L^p(\mu;X))}\|f\|_{L^p(\mu;X)}.
\end{align*}

In order to estimate the first factor in the last line of \eqref{ranes} we let
$S\in\mathcal{D}'_{k}$, where $k\in\Z$.
Due to 
\eqref{rems} and \eqref{sl}, we have 
\[
\Delta_{S_j}'\subset G_\epsilon(S)=\bigcup_{G\in \mathcal{G}} \delta_G^\epsilon\subset 
\bigcup_{m=j(\eta)+k-r-1}^{j(\eta)+k-1} \bigcup_{G\in\mathcal{D}^\star_m}  \delta^\epsilon_G=:
\delta^\epsilon(k).
\] 
As a consequence,  we have $|1_{\Delta'_{S_j}}\psi_{S,j}|\le 1_{\delta^\epsilon(k)}1_{S_j}$ pointwise
$\mu$-almost everywhere.
Using also the contraction principle and the assumption that $t\ge q$, we get
\begin{align*}
&\mathbf{E}_{\mathcal{D}^\star}\bigg\| \sum_{S\in\mathcal{D}'} \epsilon_S1_{\Delta'_{S_j}}\psi_{S,j}\langle g_{S}\rangle_{S_j}\bigg\|_{L^{q}(\mathbf{P}\otimes\mu;X^*)}
\\&\quad\lesssim
\mathbf{E}_{\mathcal{D}^\star}\bigg\|\sum_{k\in\Z} \epsilon_k1_{\delta^\epsilon(k)}\sum_{S\in\mathcal{D}'_{k}} 1_{S_j}\langle g_{S}\rangle_{S_j}\bigg\|_{L^{q}(\mathbf{P}\otimes\mu;X^*)}\\
&\quad\le \bigg(\int_{\R^N} \bigg[\mathbf{E}_{\mathcal{D}^\star}\bigg\|\sum_{k\in\Z} \epsilon_k1_{\delta^\epsilon(k)}(x)\sum_{S\in\mathcal{D}'_{k}} 1_{S_j}(x)\langle g_{S}\rangle_{S_j}\bigg\|^t_{L^{q}(\mathbf{P};X^*)}\bigg]^{q/t}
d\mu(x)\bigg)^{1/q}.
\end{align*}
If $x\in\R^N$, the last integrand evaluated at $x$ is of
the form as in Proposition \ref{improved} with
\[
\xi_k = \sum_{S\in\mathcal{D}'_{k}} 1_{S_j}(x)\langle g_{S}\rangle_{S_j}\in X^*.
\]
The random variables \[\rho_k:=1_{\delta^\epsilon(k)}(x)\] as functions of
$\beta^\star \in \Omega^\star$, where $\Omega^\star$ is the probability
space supporting the distribution of the random dyadic system
$\mathcal{D}^\star$, belong to $L^t(\Omega^\star)$,
and they satisfy
\[
\sup_{k\in\Z}\|1_{\delta^\epsilon(k)}(x)\|_{L^t(\Omega^\star)} = \sup_{k\in\Z}\mathbf{P}_{\beta^\star} (1_{\delta^\epsilon(k)}(x)=1)^{1/t}\lesssim C(r,\eta)\epsilon^{1/t}.
\]
Hence, by Proposition \ref{improved}
\begin{align*}
&\mathbf{E}_{\mathcal{D}^\star}\bigg\| \sum_{S\in\mathcal{D}'} \epsilon_S1_{\Delta'_{S_j}}\psi_{S,j}\langle g_{S}\rangle_{S_j}\bigg\|_{L^{q}(\mathbf{P}\otimes\mu;X^*)}\\&\quad\lesssim C(r,\eta)
\epsilon^{1/t}\bigg\| \sum_{S\in\mathcal{D}'} \epsilon_S 1_{S_j}\langle g_{S}\rangle_{S_j} \bigg\|_{L^{q}(\mathbf{P}\otimes\mu;X^*)}\lesssim 
C(r,\eta)\epsilon^{1/t}\|g\|_{L^q(\mu;X^*)}.
\end{align*}
Combining the estimates above, we obtain the required estimate for
summation involving terms $A_1(R)$.

Estimate for the sum involving terms $A_2(R)$, see \eqref{alpha1}, is similar to the estimate above, involving terms $A_1(R)$. We omit the details.

It remains to estimate the following sum involving terms $A_{3}'(R)$, see Lemma \ref{talkaa},
\begin{align*}
&\mathbf{E}_{\mathcal{D}^\star} \bigg|
\sum_{\substack{R\in\mathcal{D}'}}  \sum_{\substack{G\in \mathcal{G}(R)\\\tilde G\subset \Delta_{Q_i}^\mathcal{G}\cap \Delta_{R_j}^\mathcal{G}}} \langle g_{R}\rangle_{R_j}\langle 1_{\tilde G}b_{R,G,j},T(1_{(1+\epsilon)\tilde G\setminus \tilde G}\phi_{Q,i})\rangle\langle f_Q\rangle_{Q_i}\bigg|.
\end{align*}
Observe that the inner summation
involves only finitely many
terms for every fixed $R$ -- in fact, the number of terms is bounded by a constant depending
on $\eta$ and $N$. Hence, by reindexing these cubes and using the triangle-inequality,
we are left with estimating quantities of the form
\begin{align*}
E:=&\mathbf{E}_{\mathcal{D}^\star} \bigg|
\sum_{\substack{R\in\mathcal{D}'}} \langle g_{R}\rangle_{R_j}  \langle 1_{\tilde G}b_{R,G,j},T(1_{(1+\epsilon)\tilde G\setminus \tilde G}\phi_{Q,i})\rangle  \langle f_Q\rangle_{Q_i}\bigg|,
\end{align*}
where $G=G(R)\in \mathcal{G}(R)$ inside the summation satisfies $\tilde G\subset \Delta_{Q_i}^\mathcal{G}\cap \Delta_{R_j}^\mathcal{G}$. 

At this stage we randomize, apply H\"older's inequality, and extract the operator norm in order to obtain the estimate
\begin{equation}\label{lls}
\begin{split}
E&\le \|T\|_{\mathcal{L}(L^p(\mu;X))}\mathbf{E}_{\mathcal{D}^\star}\bigg\{\bigg\| \sum_{S\in\mathcal{D}'} \epsilon_S 1_{\tilde G}b_{R,G,j}\langle g_{S}\rangle_{S_j}\bigg\|_{L^{q}(\mathbf{P}\otimes\mu;X^*)} \\&\qquad\qquad\qquad\qquad\qquad\cdot \bigg\|\sum_{R\in\mathcal{D}'}\epsilon_R1_{(1+\epsilon)\tilde G\setminus \tilde G}\phi_{Q,i}\langle f_Q\rangle_{Q_i}\bigg\|_{L^p(\mathbf{P}\otimes \mu;X)}\bigg\}.
\end{split}
\end{equation}
By lemma \ref{talkaa}, 
\[
|1_{\tilde G(S)}b_{R,G,j}|\lesssim 1_{\tilde G(S)} \le 1_{\Delta_{S_j}^\mathcal{G}}\le 1_{S_j}
\]
pointwise $\mu$-almost everywhere.
Also,
$(1+\epsilon)\tilde G(R)\setminus \tilde G(R)\subset 5 G(R)\subset Q_i$ and
\begin{align*}
&(1+\epsilon)\tilde G(R)\setminus \tilde G(R)
\subset G_\epsilon(R)
\subset \delta^\epsilon(k),\qquad Q=Q(R)\in\mathcal{D}_k.
\end{align*}
It follows that
$|1_{(1+\epsilon)\tilde G\setminus \tilde G}\phi_{Q,i}|\lesssim 1_{\delta^\epsilon(k)}1_{Q_i}$
$\mu$-almost everywhere if $Q\in\mathcal{D}_k$.
Hence, by indexing the second summation in the right hand side of \eqref{lls}
in terms of $\mathcal{D}$,
the argument proceeds as above. We omit the details.
\end{proof}

\subsection*{Estimate for $\eta$-boundary terms}
Here we focus on a summation involving the $\eta$-boundary terms
\[
M_2(R)+M_4(R)+\alpha_2(R)+\alpha_3(R),
\]
see \eqref{tenf} and \eqref{ekahaj}.
Observe that  although
both $\alpha_2(R)$ and $\alpha_3(R)$ depend 
on the random dyadic system $\mathcal{D}^\star$, the estimate below
are uniform over all such systems.

\begin{lem}\label{etalem}
Suppose that $s\in [2,\infty)$ is such that both $X$ and $X^*$ have cotype $s$. Let $t>(s\vee q)\vee p$ be a positive real number. Then
\begin{align*}
&\mathbf{E}_{\mathcal{D}}\mathbf{E}_{\mathcal{D}'}\bigg|
\sum_{\substack{R\in\mathcal{D}'}} \langle g_{R}\rangle_{R_j} (M_2(R)+M_4(R)+\alpha_2(R)+\alpha_3(R)) \langle f_Q\rangle_{Q_i}
\bigg|\\&\quad
\lesssim C(r)\eta^{1/t} \|T\|_{\mathcal{L}(L^p(\mu;X))}\|g\|_{L^q(\mu;X^*)}\|f\|_{L^p(\mu;X)}
\end{align*}
for every $f\in L^p(\R^N,\mu;X)$ and $g\in L^q(\R^N,\mu;X^*)$.
\end{lem}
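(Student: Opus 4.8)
\textbf{Proof plan for Lemma~\ref{etalem}.} The four terms in question all share the common structure that they pair two functions whose supports have a small overlap concentrated near the boundary of a cube, of relative width comparable to $\eta$; the estimate we seek must consequently gain a factor $\eta^{1/t}$ after averaging over the random dyadic systems. The plan is to treat all four terms in a uniform way: randomize the summation over $\mathcal{D}'$, apply H\"older's inequality in the duality pairing, and extract the operator norm $\|T\|_{\mathcal{L}(L^p(\mu;X))}$ from the ``$f$-side'' factor (exactly as in the proof of Lemma~\ref{aep}, equations \eqref{ranes}--\eqref{lls}). This leaves two factors to control: one of the form $\bigl\|\sum_S \epsilon_S (\text{boundary-restricted }\psi_{S,j})\langle g_S\rangle_{S_j}\bigr\|_{L^q(\mathbf{P}\otimes\mu;X^*)}$ and one of the form $\bigl\|\sum_R \epsilon_R (\text{boundary-restricted }\phi_{Q,i})\langle f_Q\rangle_{Q_i}\bigr\|_{L^p(\mathbf{P}\otimes\mu;X)}$. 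For the terms $M_4(R)$, $\alpha_2(R)$, $\alpha_3(R)$ the boundary restriction falls on the $Q_i$-side (a set contained in $Q_{i,\partial}=Q_i\cap\delta^\eta_{R_j}$, respectively $\Delta^\partial_{Q_i}\subset Q_{i,\partial}\cap R_j$), while for $M_2(R)$ it falls on the $R_j$-side ($R_{j,\partial}=R_j\cap\delta^\eta_{Q_i}$); in either case only \emph{one} of the two factors carries the boundary indicator, and the other factor is handled by the contraction principle plus the end-of-Section~\ref{nested} reasoning to give $\lesssim\|g\|_q$ or $\lesssim\|f\|_p$.

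The heart of the matter is therefore to estimate the factor carrying the boundary indicator and to extract the $\eta^{1/t}$ gain from it. Consider, say, the $M_2(R)$ contribution, where the relevant factor (after the reductions above) is dominated by
\[
\mathbf{E}_{\mathcal{D}}\mathbf{E}_{\mathcal{D}'}\bigg\|\sum_{k\in\Z}\epsilon_k 1_{\delta^\eta(k)}\sum_{S\in\mathcal{D}'_k}1_{S_j}\langle g_S\rangle_{S_j}\bigg\|_{L^q(\mathbf{P}\otimes\mu;X^*)},
\]
where $\delta^\eta(k)$ denotes the union of the $\eta$-neighbourhoods of boundaries of the cubes $Q_i$ of the appropriate generation relative to $\mathcal{D}'_k$; here one uses that, by \eqref{rems}, the cube $Q=Q(R)$ has side length within a factor $2^r$ of $\ell(R)$, so that the relevant $\eta$-collar lives in a bounded range of generations $k-r-1,\dots,k$. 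Now one fixes $x\in\R^N$, pulls the $L^q(\mathbf{P};X^*)$-norm inside, raises to the power $t\ge q$, and invokes Proposition~\ref{improved} with $\xi_k=\sum_{S\in\mathcal{D}'_k}1_{S_j}(x)\langle g_S\rangle_{S_j}\in X^*$ and the scalar multipliers $\rho_k:=1_{\delta^\eta(k)}(x)$ regarded as functions on the probability space of $\mathcal{D}$ (and, if needed, $\mathcal{D}'$). Here $X^*$ has cotype $s$ and $t>(s\vee q)\vee p\ge s$, so Proposition~\ref{improved} applies and produces the bound $\sup_k\|\rho_k\|_{L^t}\cdot\|\sum_k\epsilon_k\xi_k\|_{L^q(\mathbf{P};X^*)}$. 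The key probabilistic estimate, analogous to the one at the end of the proof of Lemma~\ref{aep}, is
\[
\sup_{k\in\Z}\|1_{\delta^\eta(k)}(x)\|_{L^t(\Omega_{\mathcal{D}})}=\sup_{k\in\Z}\mathbf{P}_{\beta}\bigl(x\in\delta^\eta(k)\bigr)^{1/t}\lesssim C(r)\eta^{1/t},
\]
which holds because, for a dyadic cube of side length $2^k$ in a random dyadic system, the probability that a fixed point lies within relative distance $\eta$ of its boundary is $O(\eta)$ (uniformly, with the constant absorbing the bounded range of generations governed by $r$); the $\eta$-collar of $Q_i$ is comparable to the $\eta$-collar of the ambient dyadic cube of the correct generation, so the randomness of the dyadic grid does the work. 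Integrating back over $x$ and using the contraction principle together with the end-of-Section~\ref{nested} reasoning gives $\lesssim C(r)\eta^{1/t}\|g\|_q$ for this factor.

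Assembling the pieces: each of $M_2,M_4,\alpha_2,\alpha_3$ yields, after randomization and H\"older, a product of $\|T\|_{\mathcal{L}(L^p(\mu;X))}$, a $g$-factor, and an $f$-factor, in which exactly one factor contributes $C(r)\eta^{1/t}$ (via Proposition~\ref{improved} and the boundary-probability estimate) and the other contributes the plain norm $\|f\|_p$ or $\|g\|_q$ (via contraction principle, Stein's inequality / Theorem~\ref{nests}, Lemma~\ref{carut}, and the reductions at the end of Section~\ref{nested} that handle the six possible choices of $(g_k,\psi)$ and $(f_k,\phi)$ in \eqref{gcoa_l}--\eqref{fcoa_l}). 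Summing the finitely many ($O(r,N)$-many) index pairs $i,j$ and the finitely many terms gives the claimed bound. The main obstacle I expect is bookkeeping: verifying carefully that in each of the four cases the boundary indicator genuinely sits on only one side and is pointwise dominated by $1_{\delta^\eta(k)}$ times the appropriate cube indicator, and that the various test-function choices in \eqref{gcoa_l}--\eqref{fcoa_l} are all covered by the norm estimates of Section~\ref{normest}; the probabilistic $\eta$-gain itself, once set up, is essentially identical to the $\epsilon$-gain already carried out in Lemma~\ref{aep}, with $\eta$ in place of $\epsilon$ and the roles of the random systems permuted.
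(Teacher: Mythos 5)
Your proposal follows essentially the same route as the paper's proof: randomize over the grids, apply H\"older and extract $\|T\|_{\mathcal{L}(L^p(\mu;X))}$, dominate the boundary-free factor by $\|f\|_p$ or $\|g\|_q$ via the contraction principle and the Section~\ref{nested} reductions, and gain $C(r)\eta^{1/t}$ from the boundary-carrying factor through Proposition~\ref{improved} together with the estimate $\mathbf{P}\big(x\in\delta^\eta(k)\big)\lesssim C(r)\eta$, exactly as in Lemma~\ref{aep} with the roles of the random systems interchanged. The only slip is the grouping: $\alpha_2(R)$ carries its boundary indicator on the $R_j$-side (since $\Delta^\partial_{R_j}\subset R_{j,\partial}\cap Q_i$), so it pairs with $M_2(R)$ and gains from $\mathbf{E}_{\mathcal{D}}$, while $M_4(R)$ and $\alpha_3(R)$ pair together and gain from $\mathbf{E}_{\mathcal{D}'}$ --- precisely the bookkeeping you flagged, and it does not affect the argument.
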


\begin{proof}
By \eqref{tenf} and \eqref{ekahaj},
\begin{align*}
M_2(R)+\alpha_2(R)& =\langle 1_{R_{j,\partial}}\psi_{R,j},T(1_{Q_i}\phi_{Q,i})\rangle
-\langle 1_{\Delta_{R_j}^\partial}\psi_{R,j},T(1_{\Delta_{Q_i}^\mathcal{G}}\phi_{Q,i})\rangle;\\
M_4(R)+\alpha_3(R)&=\langle 1_{\Delta_{R_j}} \psi_{R,j},T(1_{Q_{i,\partial}}\phi_{Q,i})\rangle-\langle 1_{\Delta_{R_j}}\psi_{R,j},T(1_{\Delta_{Q_i}^\partial}\phi_{Q,i})\rangle.
\end{align*}
Observe that
\begin{equation}\label{ekat}
\begin{split}
&|1_{R_{j,\partial}}\psi_{R,j}|+|1_{\Delta_{R_j}^\partial}\psi_{R,j}| \lesssim 1_{R_{j,\partial}},\quad |1_{Q_i}\phi_{Q,i}|+|1_{\Delta_{Q_i}^\mathcal{G}}\phi_{Q,i}|\lesssim 1_{Q_i};\\
&|1_{\Delta_{R_j}}\psi_{R,j}|\lesssim 1_{R_j},\,\,\,\quad |1_{Q_{i,\partial}}\phi_{Q,i}|+|1_{\Delta_{Q_i}^\partial}\phi_{Q,i}|\lesssim 1_{Q_{i,\partial}}.
\end{split}
\end{equation}
pointwise $\mu$-almost everywhere.
By triangle inequality, it suffices
to estimate the following sums: one involving terms $m(R)\in\{M_2(R),\alpha_2(R)\}$,
and
the other involving terms in $\{M_4(R),\alpha_3(R)\}$. We
focus on the first sum; the second one is estimated
in an analogous manner.

Randomizing, using H\"older's inequality, extracting the operator norm of $T$,
and finally applying the contraction principle with \eqref{ekat} results in the estimate
\begin{equation}\label{eeka}
\begin{split}
\mathbf{E}_{\mathcal{D}}\bigg|
\sum_{\substack{R\in\mathcal{D}'}} \langle g_{R}\rangle_{R_j} m(R) \langle f_Q\rangle_{Q_i}
\bigg|\lesssim &\mathbf{E}_{\mathcal{D}}\bigg\| \sum_{S\in\mathcal{D}'} \epsilon_S1_{S_{j,\partial}} \langle g_{S}\rangle_{S_j}\bigg\|_{L^{q}(\mathbf{P}\otimes\mu;X^*)} \\&\cdot \|T\|_{\mathcal{L}(L^p(\mu;X))}\bigg\|\sum_{R\in\mathcal{D}'}\epsilon_R1_{Q_i}\langle f_Q\rangle_{Q_i}\bigg\|_{L^p(\mathbf{P}\otimes \mu;X)}.
\end{split}
\end{equation}
Indexing the summation in terms of $\mathcal{D}$ and using the contraction
principle, we see that the last factor is bounded $\|f\|_{L^p(\mu;X)}.$
For the first factor in the right hand side of \eqref{eeka}, we write
\[
\delta^\eta(k)=\bigcup_{m=k-r-1}^{k-1} \bigcup _{Q\in\mathcal{D}_m} \delta^\eta_Q.
\]
By \eqref{rems}, we have
\[
1_{S_{j,\partial}}\le 1_{S_j}1_{\delta_{Q_i}^\eta}\le 1_{S_j}1_{\delta^\eta(k)},\quad \text{ if }Q=Q(S),\,S\in\mathcal{D}'_{k}.
\]
Fix $x\in\R^N$. The random variables $\rho_k:=1_{\delta^\eta(k)}(x)$ as functions of
$\beta\in (\{0,1\}^N)^\Z$, $\mathcal{D}=\mathcal{D}(\beta)$, belong to $L^t((\{0,1\}^N)^\Z)$
and they satisfy
\[
\sup_{k\in\Z} \|1_{\delta^\eta(k)}(x)\|_{L^t((\{0,1\}^N)^\Z)} = \sup_{k\in\Z}\mathbf{P}_{\beta} (1_{\delta^\eta(k)}(x)=1)^{1/t}\lesssim C(r)\eta^{1/t}.
\]
Hence, proceeding as in the proof of Lemma \ref{aep}, we find that
\begin{align*}
\mathbf{E}_{\mathcal{D}}\bigg\| \sum_{S\in\mathcal{D}'} \epsilon_S1_{S_{j,\partial}}\langle g_{S}\rangle_{S_j}\bigg\|_{L^{q}(\mathbf{P}\otimes\mu;X^*)}\lesssim 
C(r)\eta^{1/t}\bigg\|  \sum_{R\in\mathcal{D}'} \epsilon_R1_{R_j}\langle g_{R}\rangle_{R_j} \bigg\|_{L^{q}(\mathbf{P}\otimes\mu;X^*)}.
\end{align*}
Noticing that the last term is bounded by a constant multiple of
$C(r)\eta^{1/t}\|g\|_{L^q(\mu;X^*)}$ finishes the proof.
\end{proof}

%

\section{Synthesis}\label{synthesis}

The proof of Theorem \ref{mainth} will be completed.  
This involves
choosing appropriate values for the auxiliary parameters $r,\eta,\epsilon$.
Hence, any dependence on these numbers will be indicated explicitly.

\begin{proof}[Proof of Theorem \ref{mainth}]
Let us fix $f\in L^p(\mu;X)$ and $g\in L^q(\mu;X^*)$ such
that 
\[
\|T\|_{\mathcal{L}(L^p(\mu;X))}\le 2|\langle g, Tf\rangle|,\qquad \|f\|_p = 1=\|g\|_q.
\]
Taking expectations over estimate \eqref{firstes} gives us
\[
|\langle g,Tf\rangle|\lesssim\|g\|_q\|f\|_p + \mathbf{E}_{\mathcal{D}}\mathbf{E}_{\mathcal{D}'}\bigg|\sum_{Q\in\mathcal{D},\,R\in\mathcal{D}'} \langle D_R^{a,2} g,T(D_Q^{a,1} f)\rangle\bigg|.
\]
Because $X$ is a $\mathrm{UMD}$ function lattice,
its dual $X^*$ is also a $\mathrm{UMD}$ function lattice. Hence, by symmetry, it suffices to consider the summation
over dyadic cubes $Q$ and $R$ for which $\ell(Q)\le \ell(R)$.

We decompose this series further as follows:
\begin{equation}\label{gbdec}
\sum_{R\in\mathcal{D}'}\sum_{\substack{Q\in\mathcal{D}\\ \ell(Q)\le \ell(R)}}
=
\sum_{R\in\mathcal{D'}}\sum_{\substack{Q\in\mathcal{D}_{R\text{-good}}\\\ell(Q)\le \ell(R)}}+
\sum_{R\in\mathcal{D'}}\sum_{\substack{Q\in\mathcal{D}_{R\text{-bad}}\\ \ell(Q)\le \ell(R)}}.
\end{equation}
Observe that this decomposition to good and bad parts depends
on  $\mathcal{D}'=\mathcal{D}(\beta')$. 

Let us first focus on  the good summation in the right hand side
of \eqref{gbdec}. 
We  denote
$Q\sim R$ if these cubes satisfy \eqref{rems}, that is, if they are close to each other both in position and size. Then
we  have the decomposition
\begin{equation}\label{hajo2}
\begin{split}
\sum_{R\in\mathcal{D}'} \sum_{\substack{Q\in\mathcal{D}_{R\text{-good}}\\\ell(Q)\le \ell(R)}} 
=&\sum_{R\in\mathcal{D}'} \sum_{\substack{Q\in\mathcal{D}_{R\text{-good}}\\ Q\sim R}} + 
\sum_{R\in\mathcal{D}'} \sum_{\substack{Q\in\mathcal{D}_{R\text{-good}}\\   Q\subset R \\ \ell(Q)<2^{-r}\ell(R) }}\\
&+\sum_{R\in\mathcal{D}'} \sum_{\substack{Q\in\mathcal{D}_{R\text{-good}}\\ Q\not\subset R \\ \ell(Q)<2^{-r}\ell(R)}} + 
\sum_{R\in\mathcal{D}'} \sum_{\substack{Q\in\mathcal{D}_{R\text{-good}} \\ 2^{-r}\ell(R)\le \ell(Q)\le  \ell(R) \\ \ell(Q)\le \dist(Q,R)}}.
\end{split}
\end{equation}

Let us consider the third double series on the right hand side further. Assume 
that $R\in\mathcal{D}'$ and $Q\in\mathcal{D}_{R\text{-good}}$ satisfies $Q\not\subset R$
and $\ell(Q)<2^{-r}\ell(R)$. Remark \ref{new_hyvmeas} implies that
$\dist(Q,R)=\dist(Q,\partial R) > \ell(Q)^\gamma\ell(R)^{1-\gamma}\ge \ell(Q)$.
As a consequence, we can write
\[
\sum_{R\in\mathcal{D}'} \sum_{\substack{Q\in\mathcal{D}_{R\text{-good}}\\Q\not\subset R \\ \ell(Q)<2^{-r}\ell(R)}} 
=
\sum_{R\in\mathcal{D}'} \sum_{\substack{Q\in\mathcal{D}_{R\text{-good}} \\ \ell(Q)<2^{-r}\ell(R) \\ \ell(Q)\le \dist(Q,R)}}.
\]
Hence, by combining 3rd and 4th term on the right hand side of \eqref{hajo2}, we obtain the 
identity
\begin{equation*}
\begin{split}
\sum_{R\in\mathcal{D}'} \sum_{\substack{Q\in\mathcal{D}_{R\text{-good}}\\\ell(Q)\le \ell(R)}} 
=&\sum_{R\in\mathcal{D}'} \sum_{\substack{Q\in\mathcal{D}_{R\text{-good}}\\ Q\sim R}} + 
\sum_{R\in\mathcal{D}'} \sum_{\substack{Q\in\mathcal{D}_{R\text{-good}}\\   Q\subset R \\ \ell(Q)<2^{-r}\ell(R) }}
+ 
\sum_{R\in\mathcal{D}'} \sum_{\substack{Q\in\mathcal{D}_{R\text{-good}}  \\ \ell(Q)\le \ell(R)\wedge \dist(Q,R)}}.
\end{split}
\end{equation*}
Invoking Propositions
\ref{ktest}, \ref{nestedlem}, and \ref{complem} we are able to estimate 
all of the summands above, and we reach the estimate
\begin{align*}
&
\mathbf{E}_{\mathcal{D}}\mathbf{E}_{\mathcal{D}'}
\bigg|\sum_{R\in\mathcal{D}'} \sum_{\substack{Q\in\mathcal{D}_{R\text{-good}}\\\ell(Q)\le \ell(R)}} \langle D_R^{a,2} g,T(D_Q^{a,1} f)
\rangle\bigg|\\&
\le
C(r,\eta,\epsilon)
+(C(r,\eta)\epsilon^{1/t} + C(r)\eta^{1/t})\|T\|_{\mathcal{L}(L^p(\mu;X))}.
\end{align*}

Then we concentrate on the remaining bad summation in the right hand side of \eqref{gbdec}.
By randomizing, using H\"older's inequality, and using Theorem \ref{nests}
with the identity $\|g\|_{L^q(X^*)}=1$, we get
\begin{equation}\label{tabad}
\begin{split}
&\mathbf{E}_{\mathcal{D}}\mathbf{E}_{\mathcal{D}'}\bigg|\sum_{R\in\mathcal{D}'}
\sum_{\substack{Q\in\mathcal{D}_{R\text{-bad}}\\ \ell(Q)\le \ell(R)}} \langle D_R^{a,2} g,T(D_Q^{a,1} f)
\rangle\bigg|\\
&= 
\mathbf{E}_{\mathcal{D}}\mathbf{E}_{\mathcal{D}'}\bigg|\sum_{k=0}^\infty\sum_{j\in\Z}
\sum_{R\in\mathcal{D}_j'} \sum_{Q\in\mathcal{D}_{j-k}^{(j-(j-k)-1)\text{-bad}}}
 \langle D_R^{a,2} g,T(D_Q^{a,1} f)\rangle\bigg|
\\
&=\mathbf{E}_{\mathcal{D}}\mathbf{E}_{\mathcal{D}'}\bigg|\sum_{k=0}^\infty
\int_{\Omega}
\sum_{j\in\Z}\sum_{i\in\Z}Ê\epsilon_j\epsilon_i
\sum_{R\in\mathcal{D}_j'} \sum_{Q\in\mathcal{D}_{i-k}^{(k-1)\text{-bad}}}
 \langle D_R^{a,2} g,T(D_Q^{a,1} f)\rangle d\mathbf{P}(\epsilon)\bigg|
\\
&\lesssim \sum_{k=0}^\infty\mathbf{E}_{\mathcal{D}}\mathbf{E}_{\mathcal{D}'}
\bigg\|T\bigg(\sum_{i\in\Z} \epsilon_i \sum_{Q\in\mathcal{D}_{i-k}^{(k-1)\text{-bad}}}D_Q^{a,1} f\bigg)\bigg\|_{L^p(\mathbf{P}\otimes\mu;X)}.
\end{split}
\end{equation}
In order to estimate this series, we fix $k\ge 0$.
Extracting the operator norm, we see that
the $k$'th summand is bounded by
\begin{align*}
\|T\|_{\mathcal{L}(L^p(X))}\cdot \mathbf{E}_{\mathcal{D}}\mathbf{E}_{\mathcal{D}'}
\bigg\|\sum_{i\in\Z} \epsilon_i \lambda_{\mathrm{bad},i}^kD_{i-k}^{a,1} f\bigg\|_{L^p(\mathbf{P}\otimes\mu;X)},
\end{align*}
where we have denoted
\[
\lambda_{\mathrm{bad},i}^k:=\sum_{Q\in\mathcal{D}_{i-k}^{(k-1)\text{-bad}}} 1_Q\in L^1(\R^N,\mu;\R).\]

Fix $t> (s\vee p)\vee q$, where $s$ is  such that both $X$ and $X^*$ have cotype $s\in [2,\infty)$.
Using Proposition \ref{improved}, we get the estimate
\begin{align*}
&\mathbf{E}_{\mathcal{D}}\mathbf{E}_{\mathcal{D}'}
\bigg\|\sum_{i\in\Z} \epsilon_i \lambda_{\mathrm{bad},i}^k D_{i-k}^{a,1} f\bigg\|_{L^p(\mathbf{P}\otimes\mu,X)}\\
&\le 
\mathbf{E}_{\mathcal{D}}\bigg(\int_{\R^N} \bigg[ \mathbf{E}_{\mathcal{D}'}\bigg\|\sum_{i\in\Z} \epsilon_i \lambda_{\mathrm{bad},i}^k(x)D_{i-k}^{a,1} f(x)\bigg\|^t_{L^{p}(\mathbf{P};X)}\bigg]^{p/t}
d\mu(x)\bigg)^{1/p}\\
&\lesssim \sup_{i,x} \|\lambda_{\mathrm{bad},i}^k(x)\|_{L^t(\mathbf{P}_{\beta'};\R)}
\bigg\|\sum_{i\in\Z}\epsilon_i D_{i-k}^{a,1} f\bigg\|_{L^p(\mathbf{P}\otimes\mu;X)}.
\end{align*}
Note that, by using Theorem \ref{nests}, we have the estimate
\[
\bigg\|\sum_{i\in\Z}\epsilon_i D_{i-k}^{a,1} f\bigg\|_{L^p(\mathbf{P}\otimes\mu;X)}
\lesssim 1.
\]
On the other hand, if $x\in\R^N$, then by Lemma \ref{nmeas} we have
\begin{align*}
&\sup_{i} \|\lambda_{\mathrm{bad},i}^k (x)\|_{L^t(\mathbf{P}_{\beta'};\R)}
\\&= \sup_{i}\big\{\mathbf{P}_{\beta'}[x\in Q\in\mathcal{D}_{i-k}\text{ and }Q\text{ is }(k-1)\text{-bad}(\gamma,r)]^{1/t}\big\}\lesssim 2^{-(r\vee (k-1))\gamma/t},
\end{align*}

All in all, we have established the following estimate
\begin{align*}
&\mathbf{E}_{\mathcal{D}}\mathbf{E}_{\mathcal{D}'}\bigg|\sum_{R\in\mathcal{D}'}
\sum_{\substack{Q\in\mathcal{D}_{R\text{-bad}}\\ \ell(Q)\le \ell(R)}} \langle D_R^{a,2} g,T(D_Q^{a,1} f)
\rangle\bigg|
\lesssim \|T\|_{\mathcal{L}(L^p(\mu;X))}\sum_{k=0}^\infty 2^{-(r\vee (k-1))\gamma/t}
\\&\lesssim r 2^{-r\gamma/t} \|T\|_{\mathcal{L}(L^p(\mu;X))}=\delta(r)\|T\|_{\mathcal{L}(L^p(\mu;X))}.
\end{align*}
Here $\delta(r)\to 0$ as $r\to \infty$.

Collecting the estimates above, we find that
\begin{equation}
\|T\|_{\mathcal{L}(L^p(\mu;X))} \le C(r,\eta,\epsilon)+(C\delta(r)+C(r)\eta^{1/t}+C(r,\eta)\epsilon^{1/t})\|T\|_{\mathcal{L}(L^p(\mu;X))}.
\end{equation}
Next we choose $r$ so large hat $C\delta(r)<1/4$. Then we choose
$\eta>0$ so small that $C(r)\eta^{1/t}<1/4$. Lastly we choose
$\epsilon>0$ so small that $C(r,\eta)\epsilon^{1/t}<1/4$. This results in the desired estimate
\[
\|T\|_{\mathcal{L}(L^p(\mu;X))} \le C(r,\eta,\epsilon)+\frac{3}{4}\|T\|_{\mathcal{L}(L^p(\mu;X))}.
\]
Indeed, it follows that $\|T\|_{\mathcal{L}(L^p(\mu;X))} \le 4C(r,\eta,\epsilon)$.
\end{proof}

\section{Operator-valued kernels}\label{oper_kernels}

In this section we explain the proof of Theorem \ref{mainth_operator}. This
proof is a straightforward modification of the proof of Theorem \ref{mainth}.

We define a {\em $d$-dimensional Rademacher--Calder\'on--Zygmund kernel}
as a function $K(x,y)$ of variables
$x,y\in \R^N$ with $x\not=y$ and taking values in $\mathcal{L}(X)$, which
satisfies 
\begin{equation}\label{kernel_R_bounds}
\begin{split}
&\mathcal{R}\big(\{|x-y|^d K(x,y)\,:\,x,y\in\R^N,\,x\not=y\}\big)\le 1;\\
&\mathcal{R}\bigg(\bigg\{\frac{|x-y|^{d+\alpha}}{|x-x'|^\alpha} [K(x,y)-K(x',y)],
\frac{|x-y|^{d+\alpha}}{|x-x'|^\alpha} [K(y,x)-K(y,x')]\\
&\qquad\qquad\qquad\qquad :\,x,x',y\in\R^N,\,0<|x-x'|\le |x-y|/2\bigg\}\bigg)\le 1
\end{split}
\end{equation}
for some $\alpha>0$. 
Recall that $\mathcal{R}(\mathcal{T})$ designates the Rademacher-bound 
of an operator family $\mathcal{T}\subset \mathcal{L}(X)$, as defined after \eqref{r_bounded}.

Let $T:f\mapsto Tf$ be a linear operator acting on some
functions $f:\R^N\to X$ or
$f:\R^N\to \C$, producing new functions
$Tf:\R^N\to X$ in the former case
and $Tf:\R^N\to \mathcal{L}(X)$ in the latter. If $\xi\in X$ and
$F:\R^N\to \C$ or
$F:\R^N\to \mathcal{L}(X)$, define
the function
$F\otimes \xi$ by $(F\otimes \xi)(x):=F(x)\xi$, where the
last expression is the product of a scalar and a vector, or the action of
an operator on a vector, respectively. With this notation, suppose that
$T(\phi\otimes \xi)=(T\phi)\otimes \xi$ for $\phi:\R^N\to \C$
and $\xi\in X$. The adjoint $T^*$ is defined via duality
$\langle g,f\rangle = \int_{\R^N} \langle g(x),f(x)\rangle\,d\mu(x)$
between functions $f:\R^N\to X$ and $g:\R^N\to X^*$: for
$\phi,\psi:\R^N\to \C$, $\xi\in X$ and
$\xi^*\in X^*$,
\[
\xi^*\big(\langle \psi,T\phi\rangle \xi\big) =\langle \psi\otimes \xi^*,T(\phi\otimes \xi)\rangle
:= \langle T^*(\psi\otimes \xi^*),\phi\otimes \xi\rangle =:\big(\langle T^*\psi,\phi\rangle \xi^*\big)(\xi),
\]
and hence $\langle T^*\psi,\phi\rangle = (\langle \psi,T\phi))^*\in\mathcal{L}(X^*)$
for scalar-valued functions $\phi,\psi$.

Such a $T$ is called an {\em $\mathcal{L}(X)$-valued Rademacher--Calder\'on--Zygmund operator}
with kernel $K$ if
\[
Tf(x) = \int_{\R^N} K(x,y) f(y)\,d\mu(y)
\] for points $x\in\R^N$ outside the support of $f$.
%

We are ready to explain the modifications in the proof of Theorem \ref{mainth}.
These occur in sections \ref{dec_cald}--\ref{synthesis} and,
roughly speaking, they 
are as follows: one repeats
the proof, and the assumed $\mathcal{R}$-boundedness conditions ensure
that whenever one ``pulled out'' bounded scalar coefficients from the randomized
series, which persist throughout the arguments, the same can be done with
the operator coefficients by the very definition \eqref{r_bounded} of $\mathcal{R}$-boundedness.
Some technicalities arise when treating the paraproducts in
Section \ref{paraproducts}.
Our goal is to provide a comprehensive treatment and, at the same time, avoid repeating arguments. To accomplish this task, we 
have chosen to explain the modifications in sections \ref{decoul}, \ref{separated} and \ref{paraproducts}.

\subsection*{Operator-valued decoupling estimates}
Let us begin with Section \ref{decoul}. 
Instead of scalars satisfying \eqref{ades}, we now consider the following
$\mathcal{R}$-bounded 
families of operators:
\begin{equation}\label{ades_operator}
\mathcal{R}\big(\{\lambda_{RQ}T_{RQ} \in \mathcal{L}(X)\,:\,R\in\mathcal{D}',\,Q\in\mathcal{D}_{R\rm{-good}},\,\ell(Q)\le \ell(R)\}\big)\lesssim 1,
\end{equation}
where 
$T_{RQ}\in\mathcal{L}(X)$ and
the scalar coefficients are
\[
\lambda_{RQ} := \frac{D(Q,R)^{d+\alpha}}{\mu(R_u)\mu(Q_v)\ell(Q)^{\alpha/2}\ell(R)^{\alpha/2}}.
\]
These $\mathcal{R}$-bounded families occur in the
following counterpart of Lemma \ref{tut}.

\begin{lem}\label{tut_operator}
Assume that $E_{k-1}f_k=f_k$ and $E_{k-1}g_k=g_k$, where
$f_k$ and $g_k$, $k\in\Z$, are as in Lemma \ref{tut}. Assume also
that the estimate \eqref{ades_operator} holds. Then
\begin{equation}\label{treq_operator}
\begin{split}
&\bigg|\sum_{R\in\mathcal{D}'} \sum_{\substack{Q\in\mathcal{D}_{R\textrm{-good}} \\ \ell(Q)\le \ell(R)}} \langle g_R\rangle_{R_u}T_{RQ}\langle f_Q\rangle_{Q_v}\bigg|\\
&\qquad\qquad\qquad\lesssim \bigg\|\sum_{k=-\infty}^\infty \epsilon_k g_{k}\bigg\|_{L^{q}(\mathbf{P}\otimes\mu;X^*)} 
\bigg\|\sum_{k=-\infty}^\infty \epsilon_k f_{k}\bigg\|_{L^p(\mathbf{P}\otimes \mu;X)}.
\end{split}
\end{equation}
\end{lem}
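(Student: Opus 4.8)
\textbf{Proof plan for Lemma \ref{tut_operator}.}

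The plan is to reread the proof of Lemma \ref{tut} and replace every step in which a bounded scalar factor was ``pulled out'' of a randomized sum (using the contraction principle) by the corresponding step for operator coefficients, now justified by the very definition \eqref{r_bounded} of $\mathcal{R}$-boundedness via the hypothesis \eqref{ades_operator}. Concretely, I would first carry out the same dyadic bookkeeping as before: fix the ratio $\ell(R)/\ell(Q)=2^n$ with $n\in\N_0$ and the annular parameter $D(Q,R)/\ell(R)\sim 2^j$ with $j\in\N_0$, and reduce to estimating, for each such pair $(n,j)$, the quantity in the analogue of \eqref{estnorm}. Writing $T_{RQ}=\lambda_{RQ}^{-1}\,(\lambda_{RQ}T_{RQ})$ and absorbing the scalar weight $\lambda_{RQ}^{-1}$ exactly as in \eqref{mest}, the only new point is that in \eqref{vtama} the ``kernel'' entries $\tilde t_{RQ}\in\mathcal{L}(X)$ are now operators rather than complex numbers; but by \eqref{ades_operator} the family $\{\tilde t_{RQ}\}$ is $\mathcal{R}$-bounded with constant $\lesssim 1$, after the same elementary estimates on $\mu(S)$ and $2^{-j\alpha/2}\le 1$ that were used before (invoking \eqref{mesas} and $d\gamma/(1-\gamma)\le\alpha/4$ from \eqref{gdef}).

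The kernels $K_S(x,y):=\sum_{Q\subset S}\sum_{R}1_{Q_v}(x)\tilde t_{RQ}1_{R_u}(y)$ are now $\mathcal{L}(X)$-valued, supported on $S\times S$, and for any fixed $(x,y)$ there is at most one nonzero term in the double sum; hence $K_S(x,y)$ is, for each $(x,y)$, one of the operators $\tilde t_{RQ}$ or $0$, so the family $\{K_S(x,y):S,x,y\}$ is $\mathcal{R}$-bounded with constant $\lesssim 1$. This is precisely the hypothesis \eqref{rbound} needed to apply the operator-kernel decoupling Theorem \ref{trick} (which was already stated for $k_A:A\times A\to\mathcal{L}(X)$, so no new version is needed). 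Splitting the sum over $k\in\Z$ into $j+\theta(j+n)+1\lesssim n+j+1$ residue classes mod $j+\theta(j+n)+1$ exactly as in the passage around \eqref{tarbio}, applying Theorem \ref{trick} together with the contraction principle to each class, and recombining, we obtain that the quantity in \eqref{vtama} is bounded by $C\,2^{-(n+j)\alpha/4}(n+j+1)\bigl\|\sum_k\epsilon_k g_k\bigr\|_{L^q(\mathbf P\otimes\mu;X^*)}$, just as before. Summing this over $n\in\N_0$ and $j\in\N_0$ gives \eqref{treq_operator}.

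I do not expect a genuine obstacle here: the argument is structurally identical to Lemma \ref{tut}, and Theorem \ref{trick} was deliberately formulated for operator-valued kernels so as to absorb exactly this generalization. The one place that requires a moment's care is the bound $|K_S(x,y)|\lesssim 1$ in the scalar proof, which must be upgraded to the $\mathcal{R}$-boundedness of $\{K_S(x,y)\}$; this follows because, pointwise in $(x,y)$, $K_S(x,y)$ coincides with a single $\tilde t_{RQ}$, and $\mathcal{R}$-boundedness of the full family $\{\tilde t_{RQ}\}$ is inherited from \eqref{ades_operator} together with the elementary scalar estimates on $\lambda_{RQ}^{-1}$ and $\mu(S)$. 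Everything else---the reorganization of the sum using \eqref{ksis}, Stein's inequality, the contraction principle, and the final geometric summation---goes through verbatim, since these operations only ever act on the $X$- and $X^*$-valued functions $f_k,g_k$ and never on the operator coefficients.
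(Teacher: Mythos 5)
Your proposal is correct and follows essentially the same route as the paper, whose own treatment is just a two-line remark: repeat the proof of Lemma \ref{tut}, noting that the normalized coefficients $\tilde t_{RQ}=2^{(n+j)\alpha/4}\mu(S)T_{RQ}/(\mu(R_u)\mu(Q_v))$ form an $\mathcal{R}$-bounded family by \eqref{ades_operator}, so that the scalar bound $|K_S(x,y)|\lesssim 1$ is upgraded to $\mathcal{R}$-boundedness of $\{K_S(x,y)\}$ (each value being a single $\tilde t_{RQ}$ or $0$) and Theorem \ref{trick} applies as before. Your write-up is in fact more detailed than the paper's sketch, and the remaining steps (the reorganization via \eqref{ksis}, the residue-class splitting, and the geometric summation in $n$ and $j$) indeed go through verbatim.
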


The proof of this lemma proceeds as the proof of Lemma \ref{tut} with appropriate modifications.
The key fact is that the operators 
\[
\tilde t_{RQ} = 2^{(n+j)\alpha/4} \mu(S) \frac{T_{RQ}}{\mu(R_u)\mu(Q_v)}\in \mathcal{L}(X),
\]
where the parameters are clear from the context, belong to an $\mathcal{R}$-bounded
family.
This follows from the normalizations and the condition \eqref{ades_operator}.

Then we can proceed to Section \ref{separated}, where the goal
is to prove a counterpart of Proposition \ref{ktest} under the
assumptions of Theorem \ref{mainth_operator}. For this purpose,
we need the following counterpart of Lemma \ref{tokaa}.

\begin{lem}\label{tokaa_operator}
Suppose
that for every pair of cubes $Q\in\mathcal{D}$ and $R\in\mathcal{D}'$, satisfying
$\ell(Q)\le \ell(R)\wedge \dist(Q,R)$, we are given functions
$\phi_Q,\psi_R\in L^1(\R^N,\mu;\C)$ such that 
$\mathrm{supp}(\phi_Q)\subset Q$, $\mathrm{supp}(\psi_R)\subset R$, and
\[
\int \phi_Q\,d\mu=0.
\]
Then 
\[
\mathcal{R}\big(\{\sigma_{RQ}\langle \psi_R,T\phi_Q\rangle\in\mathcal{L}(X)\,:\,\ell(Q)\le \ell(R)\wedge \dist(Q,R)\}\big)\le 1,
\]
where the normalizing factors are given by
\[
\sigma_{RQ}:=\frac{\dist(Q,R)^{d+\alpha}}{\ell(Q)^\alpha\|\phi_Q\|_{L^1(\mu)}\|\psi_R\|_{L^1(\mu)}}.
\]
\end{lem}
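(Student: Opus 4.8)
The plan is to transcribe the scalar argument behind Lemma~\ref{tokaa} (that is, \cite[Lemma 4.1]{NTV}), replacing the absolute-value bound by an $\mathcal{R}$-boundedness bound, in the same spirit as Lemma~\ref{tut_operator}. Fix an admissible pair $Q\in\mathcal{D}$, $R\in\mathcal{D}'$ with $\ell(Q)\le\ell(R)\wedge\dist(Q,R)$; since $\ell(Q)>0$ we have $\dist(Q,R)\ge\ell(Q)>0$, so $R$ and $Q$ are disjoint. Let $y_Q$ be the centre of $Q$. Using the cancellation $\int\phi_Q\,d\mu=0$ together with $\mathrm{supp}\,\phi_Q\subset Q$, write
\[
\sigma_{RQ}\langle\psi_R,T\phi_Q\rangle
=\sigma_{RQ}\int_R\int_Q\psi_R(x)\bigl(K(x,y)-K(x,y_Q)\bigr)\phi_Q(y)\,d\mu(y)\,d\mu(x)\in\mathcal{L}(X),
\]
where the integrand already vanishes where $y=y_Q$, so the integral may be taken over $y\ne y_Q$.

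The first step is the geometric observation that for $x\in R$ and $y\in Q$ one has, in the $\ell^\infty$ metric, $2|y-y_Q|\le\ell(Q)\le\dist(Q,R)\le|x-y|$. Hence, after relabelling the variables in the second $\mathcal{R}$-boundedness hypothesis of \eqref{kernel_R_bounds}, the operators
\[
\Theta(x,y):=\frac{|x-y|^{d+\alpha}}{|y-y_Q|^{\alpha}}\bigl(K(x,y)-K(x,y_Q)\bigr),
\]
with $(Q,R)$ ranging over all admissible pairs, $x\in R$ and $y\in Q\setminus\{y_Q\}$, form one fixed $\mathcal{R}$-bounded family of $\mathcal{R}$-bound $\le 1$. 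Since $|y-y_Q|\le\ell(Q)$ and $|x-y|\ge\dist(Q,R)$, we may then factor
\[
\sigma_{RQ}\langle\psi_R,T\phi_Q\rangle=\int_R\int_Q c_{RQ}(x,y)\,\Theta(x,y)\,d\mu(y)\,d\mu(x),
\qquad
c_{RQ}(x,y):=\sigma_{RQ}\,\psi_R(x)\,\phi_Q(y)\,\frac{|y-y_Q|^{\alpha}}{|x-y|^{d+\alpha}},
\]
and the very definition of $\sigma_{RQ}$ gives $\int_R\int_Q|c_{RQ}(x,y)|\,d\mu(y)\,d\mu(x)\le 1$. Thus each normalised matrix element $\sigma_{RQ}\langle\psi_R,T\phi_Q\rangle$ is an ``absolutely convex integral average'' of members of a single $\mathcal{R}$-bounded family of bound $\le 1$.

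It then remains to invoke the standard stability of $\mathcal{R}$-bounds under such averages: if $S_k=\int_{\Omega_k}u_k(\omega)\,\Theta_k(\omega)\,d\lambda_k(\omega)$ for $k=1,\dots,n$, with $\lambda_k$ probability measures, $|u_k|\le1$, and all $\Theta_k(\omega)$ lying in an $\mathcal{R}$-bounded family of bound $M$, then $\mathcal{R}(\{S_1,\dots,S_n\})\lesssim M$. This I would verify directly: pass to the product $\lambda=\bigotimes_k\lambda_k$ on $\prod_k\Omega_k$, write $\sum_k\epsilon_k S_k\xi_k=\int\bigl(\sum_k\epsilon_k u_k(\omega_k)\Theta_k(\omega_k)\xi_k\bigr)\,d\lambda(\omega)$, apply Minkowski's integral inequality in $L^2(\Omega;X)$, and then, for each fixed $\omega$, the contraction principle (to discard the unimodular scalars $u_k(\omega_k)$) followed by the $\mathcal{R}$-bound of the $\Theta$'s applied to the operators $\Theta_1(\omega_1),\dots,\Theta_n(\omega_n)$; the total mass $\lambda(\prod_k\Omega_k)=1$ is absorbed. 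Applying this with $M\le1$ — and either renormalising $\sigma_{RQ}$ by the resulting absolute constant, or simply noting that this constant is harmless wherever the lemma is used, cf.\ the proof of Proposition~\ref{ktest} — gives the claim. The only point that is not a routine transcription of the scalar proof is this last stability fact, and even that is classical; the true work is the bookkeeping of the $\mathcal{R}$-boundedness normalisations, exactly as in Lemma~\ref{tut_operator}.
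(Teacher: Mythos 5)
Your argument is essentially the paper's own proof: both use the cancellation of $\phi_Q$ to write $\sigma_{RQ}\langle\psi_R,T\phi_Q\rangle$ as an integral of the smoothness-normalized kernel differences (belonging, after relabelling, to the second $\mathcal{R}$-bounded family in \eqref{kernel_R_bounds}) against a scalar weight of total mass at most one, and then conclude by the stability of $\mathcal{R}$-bounds under closed absolutely convex hulls (which the paper cites as $\mathcal{R}(\overline{\mathrm{abs\,conv}}\,(\mathcal{T}))=\mathcal{R}(\mathcal{T})$ and you verify directly). The only differences are cosmetic — your normalization by $|x-y|$ rather than $|x-y_Q|$ and your harmless absolute constant in the convexity step — so the proposal is correct.
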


\begin{proof}
Suppose that $Q\in\mathcal{D}$ and $R\in\mathcal{D}'$ satisfy $\ell(Q)\le \ell(R)\wedge \dist(Q,R)$.
Let $y_Q$ be the center of the cube $Q$. Denoting
\[
F(x,y):=\frac{|y-y_Q|^\alpha}{|x-y_Q|^{d+\alpha}}\phi_Q(y)\psi_R(x)\sigma_{RQ},
\]
we obtain
\[
\int_{\R^N} \int_{\R^N}Ê|F(x,y)|\,d\mu(y)\,d\mu(x)\le 1.
\]
 Hence, by denoting
\[
\mathcal{T}=\bigg\{
\frac{|x-y|^{d+\alpha}}{|y-y'|^\alpha} [K(x,y)-K(x,y')]\,:\,y,y',x\in\R^N,\,|y-y'|\le |x-y|/2\bigg\},
\]
we obtain
\begin{align*}
\sigma_{RQ}\langle \psi_R,T\phi_Q\rangle
&=\int_{\R^N}\int_{\R^N} K(x,y)\phi_Q(y)\psi_R(x)\sigma_{RQ}\,d\mu(y)\,d\mu(x)\\
&=\int_{\R^N}\int_{\R^N} [K(x,y)-K(x,y_Q)]\phi_Q(y)\psi_R(x)\sigma_{RQ}\,d\mu(y)\,d\mu(x)\\
&= \int_{\R^N}\int_{\R^N} \frac{|x-y_Q|^{d+\alpha}}{|y-y_Q|^\alpha} 
[K(x,y)-K(x,y_Q)]\,F(x,y)\,d\mu(y)\,d\mu(x)\\
&\in \overline{\mathrm{abs\, conv}}\,(\mathcal{T}).
\end{align*}
Here the closure is taken in the strong operator topology and
the absolute convex hull, denoted by ${\mathrm{abs\, conv}\,(\mathcal{T})}$, is
the set of all vectors of the form $\sum_{j=1}^k \lambda_j x_j$ with
$\sum_{j=1}^k |\lambda_j|\le 1$ and
$x_j\in\mathcal{T}$ for $j=1,2,\ldots,k$.
Since,
\[\mathcal{R}(\overline{\mathrm{abs\, conv}}\,(\mathcal{T})) = \mathcal{R}(\mathcal{T}),\]
it remains to use the second $\mathcal{R}$-boundedness estimate
in \eqref{kernel_R_bounds}
\end{proof}

Proceeding as in the proof of Proposition \ref{ktest}, and using
Lemma \ref{tokaa_operator} instead of Lemma \ref{tokaa}, we find that the $\mathcal{R}$-boundedness
estimate \eqref{ades_operator} holds for the family of operators 
in $\mathcal{L}(X)$ defined by the equation
\eqref{matrix_scalar}. Hence, after applying
Lemma \ref{tut_operator} instead of Lemma \ref{tut}, the proof of Proposition \ref{ktest} continues
as before.


\subsection*{Operator-valued paraproducts}
We proceed to Section \ref{paraproducts}.
Let us first indicate the modifications in the proof of the estimate \eqref{estpar}, the boundedness of the paraproduct.
The first one  comes in the proof of Lemma \ref{boundest}:
Theorem \ref{haa} and Proposition \ref{ylcarl} are used with $\mathrm{UMD}$ function lattice $Z$
instead of $\C$.

The step from 
\eqref{difficult_start} to
\eqref{Ies} is now established by the following lemma
and assumption $\mathcal{R}(\bar B_Z)\lesssim 1$.

\begin{lem} Suppose that $t>q\vee s$, where $X^*$ has cotype $s$. Then
\begin{equation}\label{vect_quantity}
\begin{split}
&\bigg\| \sum_{j\in \Z}Ê\epsilon_j^\star d_j E_j g\bigg\|_{L^q(\Omega^\star\times\R^N\times \Omega;X^*)}
\\&\lesssim \mathcal{R}(\bar B_Z)\cdot \|\{||d_j(\cdot)||_{L^t(\Omega;Z)}\}_{j\in\Z}\|_{\mathrm{Car}^t(\mathcal{D}')}\cdot \|g\|_{L^q(\R^N;X^*)},
\end{split}
\end{equation}
where $\epsilon^\star=\{\epsilon_j^\star\,:\,j\in\Z\}\in \Omega^\star$ are Rademacher random variables and
\[
d_j:\R^N\to L^t(\Omega;Z):x\mapsto \bigg(\epsilon \mapsto \sum_{R\in\mathcal{D}_j'} \sum_{\substack{Q\in\mathcal{D}\\S(Q)=R}} \epsilon_Q\pi_{Q,R^a}(x)\bigg)
\]
for functions $\pi_{Q,R^a}:\R^N\to Z$ that are determined by \eqref{pi_choice}.
\end{lem}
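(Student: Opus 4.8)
The plan is to deduce \eqref{vect_quantity} from Theorem~3.5 of \cite{hytonen1}, in the same way that Lemma~\ref{genemb} was deduced in the scalar case, but now with the $\mathrm{UMD}$ function lattice $Z$ playing the role previously played by $L^t(\Omega;\C)$. Recall that this theorem asserts
\[
  \bigg\| \sum_{j\in\Z} \epsilon_j^\star d_j E_j g\bigg\|_{L^q(\Omega^\star\times \R^N;X_3)}
  \lesssim \|\{|d_j(\cdot)|_{X_2}\}_{j\in\Z}\|_{\mathrm{Car}^t(\mathcal{D}')}\cdot \|g\|_{L^q(\R^N;X_1)},
\]
whenever $X_1$ has the RMF property and $X_2\subseteq\mathcal{L}(X_1,X_3)$ is embedded so that $\bar B_{X_2}$ is $\mathcal{R}$-bounded. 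I would take $X_1=X^*$, which is a $\mathrm{UMD}$ function lattice (hence has RMF by \cite{HMP}) since $X$ is one, together with $X_2=L^t(\Omega;Z)$ and $X_3=L^t(\Omega;X^*)$, where $\rho\in L^t(\Omega;Z)$ acts on $\xi\in X^*$ by $\rho\otimes\xi:\epsilon\mapsto\rho(\epsilon)\xi$, using the inclusion $Z\subseteq\mathcal{L}(X^*)$. With this identification $|d_j(x)|_{X_2}=\|d_j(x)\|_{L^t(\Omega;Z)}$, matching the right-hand side of \eqref{vect_quantity}. Moreover, since $t>q$ and $\mathbf{P}$ is a probability measure, $L^q(\Omega;X^*)\hookrightarrow L^t(\Omega;X^*)$ contractively, so the left-hand side of \eqref{vect_quantity} is dominated by its $L^q(\Omega^\star\times\R^N;L^t(\Omega;X^*))$-version, which is exactly the quantity controlled by the quoted theorem. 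Hence everything reduces to verifying that $\bar B_{L^t(\Omega;Z)}$ is $\mathcal{R}$-bounded in $\mathcal{L}(X^*,L^t(\Omega;X^*))$ with constant $\lesssim\mathcal{R}(\bar B_Z)$.

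To carry this out, let $\rho_1,\dots,\rho_n\in\bar B_{L^t(\Omega;Z)}$ and $\xi_1,\dots,\xi_n\in X^*$. By the Kahane--Khintchine inequality in $\Omega^\star$ together with Fubini's theorem,
\[
  \bigg\|\sum_{j=1}^n \epsilon_j^\star \rho_j\otimes\xi_j\bigg\|_{L^2(\Omega^\star;L^t(\Omega;X^*))}^t
  \eqsim \int_\Omega \bigg\|\sum_{j=1}^n \epsilon_j^\star \rho_j(\epsilon)\xi_j\bigg\|_{L^t(\Omega^\star;X^*)}^t\,d\mathbf{P}(\epsilon).
\]
For a.e.\ fixed $\epsilon$ I would write $\rho_j(\epsilon)=\|\rho_j(\epsilon)\|_Z\,w_j$ with $w_j\in\bar B_Z$ (the $j$-th term vanishing when $\rho_j(\epsilon)=0$), so that $\rho_j(\epsilon)\xi_j=w_j\big(\|\rho_j(\epsilon)\|_Z\,\xi_j\big)$; then Kahane--Khintchine again and the $\mathcal{R}$-boundedness of $\bar B_Z$ give
\[
  \bigg\|\sum_{j=1}^n \epsilon_j^\star \rho_j(\epsilon)\xi_j\bigg\|_{L^t(\Omega^\star;X^*)}
  \eqsim \bigg\|\sum_{j=1}^n \epsilon_j^\star w_j\big(\|\rho_j(\epsilon)\|_Z\,\xi_j\big)\bigg\|_{L^2(\Omega^\star;X^*)}
  \lesssim \mathcal{R}(\bar B_Z)\bigg\|\sum_{j=1}^n \epsilon_j^\star \|\rho_j(\epsilon)\|_Z\,\xi_j\bigg\|_{L^2(\Omega^\star;X^*)}.
\]
Inserting this back and taking the $L^t(\Omega,d\mathbf{P}(\epsilon))$-norm, the scalar functions $\epsilon\mapsto\|\rho_j(\epsilon)\|_Z$ lie in $L^t(\Omega)$ with norm $\|\rho_j\|_{L^t(\Omega;Z)}\le1$, so Proposition~\ref{improved} (applied to the cotype-$s$ space $X^*$, with $t>s$ and $\tilde\Omega=\Omega$) yields
\[
  \bigg\|\sum_{j=1}^n \epsilon_j^\star \|\rho_j(\cdot)\|_Z\,\xi_j\bigg\|_{L^t(\Omega;L^2(\Omega^\star;X^*))}
  \lesssim \sup_j\big\|\,\|\rho_j(\cdot)\|_Z\,\big\|_{L^t(\Omega)}\cdot\bigg\|\sum_{j=1}^n \epsilon_j^\star\xi_j\bigg\|_{L^2(\Omega^\star;X^*)}
  \le \bigg\|\sum_{j=1}^n \epsilon_j^\star\xi_j\bigg\|_{L^2(\Omega^\star;X^*)}.
\]
Chaining the last three displays gives $\big\|\sum_j\epsilon_j^\star\rho_j\otimes\xi_j\big\|_{L^2(\Omega^\star;L^t(\Omega;X^*))}\lesssim\mathcal{R}(\bar B_Z)\big\|\sum_j\epsilon_j^\star\xi_j\big\|_{L^2(\Omega^\star;X^*)}$, which is the desired $\mathcal{R}$-boundedness of $\bar B_{L^t(\Omega;Z)}$; invoking Theorem~3.5 of \cite{hytonen1} with the above choice of $X_1,X_2,X_3$ then completes the proof.

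I expect the only genuine obstacle to be the $\mathcal{R}$-boundedness verification just sketched: one must cleanly separate the ``operator part'' of each $\rho_j(\epsilon)$, which is absorbed by $\mathcal{R}(\bar B_Z)$, from its scalar ``size'' $\|\rho_j(\epsilon)\|_Z$, which is absorbed by the improved contraction principle (this is where the cotype hypothesis on $X^*$ is used), all while keeping straight the bookkeeping of the three probability spaces $\Omega$ and $\Omega^\star$ and repeatedly trading $L^2$ for $L^t$ via Kahane--Khintchine. Everything else is the verbatim scalar argument of Lemma~\ref{genemb} with $L^t(\Omega;\C)$ replaced by $L^t(\Omega;Z)$; in particular the requisite integrability $d_j\in L^1(\R^N;L^t(\Omega;Z))$ is immediate from \eqref{pi_choice} and the testing conditions, exactly as before.
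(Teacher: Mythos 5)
Your proposal is correct, and it rests on exactly the same ingredients as the paper's proof --- the $\mathcal{R}$-boundedness of $\bar B_Z$ in $\mathcal{L}(X^*)$, the Kahane--Khintchine inequality, the improved contraction principle of Proposition \ref{improved} (where the cotype of $X^*$ and $t>s$ enter), and the abstract Carleson embedding \cite[Theorem 3.5]{hytonen1} --- but packaged differently. The paper first factors $d_j(x,\epsilon)=\frac{d_j(x,\epsilon)}{|d_j(x,\epsilon)|_Z}\,|d_j(x,\epsilon)|_Z$ and applies $\mathcal{R}(\bar B_Z)$ pointwise in $(x,\epsilon)$ to the $\epsilon^\star$-randomized sum, then uses H\"older ($t>q$) and finishes by quoting the already-proved scalar Lemma \ref{genemb} with $\tilde d_j=|d_j|_Z$. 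You instead invoke \cite[Theorem 3.5]{hytonen1} once, with the larger multiplier space $X_2=L^t(\Omega;Z)$, and prove directly that $\bar B_{L^t(\Omega;Z)}$ is $\mathcal{R}$-bounded in $\mathcal{L}(X^*,L^t(\Omega;X^*))$ with constant $\lesssim\mathcal{R}(\bar B_Z)$; your verification is essentially the proof of Lemma \ref{genemb} with the pointwise factorization $\rho_j(\epsilon)=\|\rho_j(\epsilon)\|_Z\,w_j(\epsilon)$, $w_j(\epsilon)\in\bar B_Z$, inserted before Proposition \ref{improved}, and it is sound. What your route buys is a clean standalone $\mathcal{R}$-boundedness statement (operator part absorbed by $\mathcal{R}(\bar B_Z)$, scalar size absorbed by cotype) that the paper never isolates; the paper's route buys the reuse of Lemma \ref{genemb} verbatim. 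Two small repairs: the inclusion you cite goes the other way, namely $L^t(\Omega;X^*)\hookrightarrow L^q(\Omega;X^*)$ contractively for $t>q$ on the probability space $\Omega$ (the inequality you actually use, dominating the left-hand side of \eqref{vect_quantity} by its $L^q(\Omega^\star\times\R^N;L^t(\Omega;X^*))$ version, is the correct one); and to get the explicit factor $\mathcal{R}(\bar B_Z)$ on the right of \eqref{vect_quantity} you should either record that the constant in \cite[Theorem 3.5]{hytonen1} depends linearly on $\mathcal{R}(\bar B_{X_2})$ or renormalize $X_2$ so that its unit ball has $\mathcal{R}$-bound $1$, which transfers the factor to the Carleson norm.
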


\begin{proof}
Note first that LHS\eqref{vect_quantity} can be written as
\begin{align*}
\bigg(\iiint_{\Omega^\star\times\R^N\times \Omega} \bigg|\sum_{j\in\Z} \epsilon_j^\star
\frac{d_j(x,\epsilon)}{|d_j(x,\epsilon)|_Z} |d_j(x,\epsilon)|_Z E_j g(x)\bigg|_{X^*}^q\,d\mathbf{P}(\epsilon)\,d\mu(x)\,d\mathbf{P}(\epsilon^\star)\bigg)^{1/q}.
\end{align*}
Using Fubini's theorem and the
fact that the closed unit ball of $Z$ is $\mathcal{R}$-bounded, we see that
 LHS\eqref{vect_quantity} can be bounded by a constant multiple of
\begin{align*}
\mathcal{R}(\bar B_Z)
\bigg(\iiint_{\Omega\times\R^N\times \Omega^\star} \bigg|\sum_{j\in\Z} \epsilon_j^\star
|d_j(x,\epsilon)|_Z E_j g(x)\bigg|_{X^*}^q\,d\mathbf{P}(\epsilon^\star)\,d\mu(x)\,d\mathbf{P}(\epsilon)\bigg)^{1/q}.
\end{align*}
Recall that $t>q$.
Using Fubini's theorem, followed by the H\"older's inequality,
we find that LHS\eqref{vect_quantity} is bounded by a constant multiple of
\begin{align*}
&\mathcal{R}(\bar B_Z)\bigg(\iint_{\Omega^{\star}\times\R^N} \bigg\|\sum_{j\in\Z} \epsilon_j^\star
|d_j(x)|_Z E_j g(x)\bigg\|_{L^t(\Omega;X^*)}^q\,d\mu(x)\,d\mathbf{P}(\epsilon^\star)\bigg)^{1/q}\\
&=\mathcal{R}(\bar B_Z)\bigg\| \sum_{j\in\Z} \epsilon_j^\star |d_j(\cdot)|_Z E_j g\bigg\|_{L^q(\Omega^\star\timesÊ\R^N;L^t(\Omega;X^*))}.
\end{align*}
Let us denote $\tilde d_j(x,\epsilon):=|d_j(x,\epsilon)|_Z$. Then, for
a fixed $x\in\R^N$,
\begin{align*}
\|\tilde d_j(x)\|_{L^t(\Omega;\C)} = \bigg(\int_\Omega |\tilde d_j(x,\epsilon)|^t\,d\mathbf{P}(\epsilon)
\bigg)^{1/t} = \|d_j(x)\|_{L^t(\Omega;Z)}.
\end{align*}
Hence, by using Lemma \ref{genemb}, we can conclude
that the estimate \eqref{vect_quantity} holds.
\end{proof}

In order to estimate
the right hand side of \eqref{cotype_cont}, we use the fact that $L^2(\Omega,Z)$ has cotype $2$ since
$Z$ has it. 
The described modifications suffice for obtaining estimate \eqref{estpar} in the context
of Theorem \ref{mainth_operator}.
Finally, in the proof of estimate \eqref{estpar2} we use
Theorem \ref{haa}, with $\mathrm{UMD}$ function lattice $Z$, and the fact that
the family $\{E_k\}_{k\in\Z}$ of operators
in $L^q(\mu;Z)$
is $\mathcal{R}$-bounded  by the $\mathrm{UMD}$-valued 
Stein's inequality \cite{bourgain}.

This concludes the description of modifications in Section \ref{paraproducts}.

\bibliographystyle{plain}
\bibliography{nonhomog}

\begin{thebibliography}{10}

\bibitem{Bourgain:83}
J.~Bourgain.
\newblock Some remarks on {B}anach spaces in which martingale difference
  sequences are unconditional.
\newblock {\em Ark. Mat.}, 21(2):163--168, 1983.

\bibitem{bourgain}
J.~Bourgain.
\newblock Vector-valued singular integrals and the {$H\sp 1$}-{BMO} duality.
\newblock In {\em Probability theory and harmonic analysis (Cleveland, Ohio,
  1983)}, volume~98 of {\em Monogr. Textbooks Pure Appl. Math.}, pages 1--19.
  Dekker, New York, 1986.

\bibitem{Burkholder}
D.~L. Burkholder.
\newblock A geometric condition that implies the existence of certain singular
  integrals of {B}anach-space-valued functions.
\newblock In {\em Conference on harmonic analysis in honor of Antoni Zygmund,
  Vol. I, II (Chicago, Ill., 1981)}, Wadsworth Math. Ser., pages 270--286.
  Wadsworth, Belmont, CA, 1983.

\bibitem{hytonen1}
T.~Hyt{\"o}nen.
\newblock The vector-valued non-homogeneous {$Tb$} theorem.
\newblock Preprint, arXiv:0809.3097, 2009.

\bibitem{HytKem}
T.~Hyt{\"o}nen and M.~Kemppainen.
\newblock On the relation of {C}arleson's embedding and the maximal theorem in
  the context of {B}anach space geometry.
\newblock {\em Math. Scand.}
\newblock (to appear).

\bibitem{hm}
T.~Hyt{\"o}nen and H.~Martikainen.
\newblock Non-homogeneous {$Tb$} theorem and random dyadic cubes on metric
  measure spaces.
\newblock {\em J. Geom. Anal.}
\newblock (to appear). Preprint, arXiv:0911.4387, 2009.

\bibitem{HMP}
T.~Hyt{\"o}nen, A.~McIntosh, and P.~Portal.
\newblock Kato's square root problem in {B}anach spaces.
\newblock {\em J. Funct. Anal.}, 254(3):675--726, 2008.

\bibitem{Kemppainen}
M.~Kemppainen.
\newblock On the {R}ademacher maximal function.
\newblock {\em Studia Math.}
\newblock (to appear). Preprint, arXiv:0912.3358, 2009.

\bibitem{MayVol}
Svitlana Mayboroda and Alexander Volberg.
\newblock Boundedness of the square function and rectifiability.
\newblock {\em C. R. Math. Acad. Sci. Paris}, 347(17-18):1051--1056, 2009.

\bibitem{McConnell}
T.~R. McConnell.
\newblock Decoupling and stochastic integration in {UMD} {B}anach spaces.
\newblock {\em Probab. Math. Statist.}, 10(2):283--295, 1989.

\bibitem{NTV}
F.~Nazarov, S.~Treil, and A.~Volberg.
\newblock Accretive system {$Tb$}-theorems on nonhomogeneous spaces.
\newblock {\em Duke Math. J.}, 113(2):259--312, 2002.

\bibitem{NTV:Tb}
F.~Nazarov, S.~Treil, and A.~Volberg.
\newblock The {$Tb$}-theorem on non-homogeneous spaces.
\newblock {\em Acta Math.}, 190(2):151--239, 2003.

\bibitem{RdF}
Jos{\'e}~L. Rubio~de Francia.
\newblock Martingale and integral transforms of {B}anach space valued
  functions.
\newblock In {\em Probability and {B}anach spaces ({Z}aragoza, 1985)}, volume
  1221 of {\em Lecture Notes in Math.}, pages 195--222. Springer, Berlin, 1986.

\bibitem{stein}
Elias~M. Stein.
\newblock {\em Topics in harmonic analysis related to the {L}ittlewood-{P}aley
  theory.}
\newblock Annals of Mathematics Studies, No. 63. Princeton University Press,
  Princeton, N.J., 1970.

\bibitem{Weis}
L.~Weis.
\newblock Operator-valued {F}ourier multiplier theorems and maximal {$L\sb
  p$}-regularity.
\newblock {\em Math. Ann.}, 319(4):735--758, 2001.

\end{thebibliography}

%
%
%
%
%
%
%
%

\end{document}